\newif\ifextraappendix
\newif\ifextraextraappendix
\newtcbox{\mymath}[1][]{%
nobeforeafter, math upper, tcbox raise base, enhanced, colframe=blue!30!black, colback=blue!30, boxrule=1pt,
    #1}
\DeclareFontFamily{U}{rcjhbltx}{}
\DeclareFontShape{U}{rcjhbltx}{m}{n}{<->s*[1.3]rcjhbltx}{}
\DeclareSymbolFont{hebrewletters}{U}{rcjhbltx}{m}{n}
\let\aleph\relax\let\beth\relax
\let\gimel\relax\let\daleth\relax
\DeclareMathSymbol{\aleph}{\mathord}{hebrewletters}{39}
\DeclareMathSymbol{\beth}{\mathord}{hebrewletters}{98}
\DeclareMathSymbol{\gimel}{\mathord}{hebrewletters}{103}
\DeclareMathSymbol{\daleth}{\mathord}{hebrewletters}{100}
\DeclareMathSymbol{\lamed}{\mathord}{hebrewletters}{108}
\DeclareMathSymbol{\mem}{\mathord}{hebrewletters}{109}
\DeclareMathSymbol{\ayin}{\mathord}{hebrewletters}{96}
\DeclareMathSymbol{\tsadi}{\mathord}{hebrewletters}{118}
\DeclareMathSymbol{\qof}{\mathord}{hebrewletters}{113}
\DeclareMathSymbol{\shin}{\mathord}{hebrewletters}{152}
\theoremstyle{plain}
\newtheorem{lem}[subsubsection]{Lemma}
\newtheorem{prop}[subsubsection]{Proposition}
\newtheorem{corollary}[subsubsection]{Corollary}
\newtheorem{thm}[subsubsection]{Theorem}
\newtheorem{cor}[subsubsection]{Corollary}
\newtheorem*{thmSewing}{Theorem~(\ref{thm:Sewing})}
\newtheorem*{corB}{Corollary~(\ref{cor:SewingAndFactorizationCorollary})}
\newtheorem*{thmBig}{Theorem~(\ref{thm:Big})}
\newtheorem*{thm*}{Theorem}
\newtheorem*{lem*}{Lemma}
\newtheorem*{cor*}{Corollary}
\newtheorem*{prop*}{Proposition}
\theoremstyle{definition}
\newtheorem{defn}[subsubsection]{Definition}
\newtheorem{claim}[subsubsection]{Claim}
\newtheorem{deflem}[subsubsection]{Definition/Lemma}
\newtheorem{notation}[subsubsection]{Notation}
\newtheorem{question}[subsubsection]{Question}
\newtheorem*{remark*}{Remark}
\newtheorem{remark}[subsubsection]{Remark}
\newtheorem{warning}[subsubsection]{Warning}
\newtheorem{ex}[subsubsection]{Example}
\newtheorem*{question*}{Question}
\newcommand{\ZZ}{\mathbb{Z}}
\newcommand{\RR}{\mathbb{R}}
\newcommand{\NN}{\mathbb{N}}
\newcommand{\CC}{\mathbb{C}}
\newcommand{\VV}{\mathbb{V}}
\newcommand{\UV}{\mathscr{U}}
\newcommand{\UVR}{\mathscr{U}^{\mathsf{R}}}
\newcommand{\UVL}{\mathscr{U}^{\mathsf{L}}}
\newcommand{\UL}{\mathscr{U}^{\mathsf{L}}}
\newcommand{\UuR}{\Uu^{\mathsf{R}}}
\newcommand{\UuL}{\Uu^{\mathsf{L}}}
\newcommand{\N}{\operatorname{N}\!}
\newcommand{\cN}{\mbox{}^c\!\operatorname{N}\!}
\newcommand{\NL}[1]{\N_\mathsf{L}^{#1}}
\newcommand{\NR}[1]{\N_\mathsf{R}^{#1}}
\newcommand{\Nn}[1]{\N^{#1}}
\newcommand{\hUu}{\widehat{\Uu}}
\newcommand{\hUuR}{\hUu^{\mathsf{R}}}
\newcommand{\hUuL}{\hUu^{\mathsf{L}}}
\newcommand{\LV}{{\mathfrak{L}}(V)}
\newcommand{\LVf}{{\mathfrak{L}}(V)^{\mathsf{f}}}
\newcommand{\LVR}{\LV^{\mathsf{R}}}
\newcommand{\LVL}{\LV^{\mathsf{L}}}
\newcommand{\PhiL}{\Phi^\mathsf{L}}
\newcommand{\PhiR}{\Phi^\mathsf{R}}
\newcommand{\coker}{\operatorname{coker}}
\newcommand{\dds}{d/ds}
\newcommand{\bMgn}[1][1]{{\overline{\mathcal{M}}_{g,#1}}}
\newcommand{\tMgn}[1][1]{{\widetriangle{\mathcal{M}}_{g,#1}}}
\newcommand{\ostar}{\mathbin{\mathpalette\make@circled\star}}
\newcommand{\make@circled}[2]{%
  \ooalign{$\m@th#1\smallbigcirc{#1}$\cr\hidewidth$\m@th#1#2$\hidewidth\cr}%
}
\newcommand{\smallbigcirc}[1]{%
  \vcenter{\hbox{\scalebox{0.7}{$\m@th#1\bigcirc$}}}%
}
\newcommand{\Hom}{\mathrm{Hom}}
\newcommand{\Vir}{\mathrm{Vir}}
\newcommand{\Spec}{\operatorname{\mathrm{Spec}}}
\newcommand{\val}{\mathsf{val}}
\newcommand{\id}{\mathrm{id}}
\newcommand{\Ac}{\mathfrak{A}}
\newcommand{\Oc}{{\mathcal{O}}}
\newcommand{\Lc}{\mathcal{L}}
\newcommand{\Aa}{\mathsf{A}}
\newcommand{\xx}{\mathfrak{a}}
\newcommand{\yy}{\mathfrak{b}}
\newcommand{\uu}{\mathfrak{u}}
\newcommand{\Os}{\mathscr{O}}
\newcommand{\Cs}{\mathscr{C}}
\newcommand{\Ws}{\mathscr{W}}
\newcommand{\Ls}{\mathscr{L}}
\newcommand{\Uu}{\mathit{U}}
\newcommand{\Is}{\mathscr{I}}
\newcommand{\mf}[1]{\mathfrak #1}
\newcommand{\mc}[1]{\mathcal #1}
\newcommand{\ms}[1]{\mathscr #1}
\newcommand{\ov}[1]{\overline{#1}}
\newcommand{\til}[1]{\widetilde{#1}}
\newcommand{\wh}{\widehat}
\newcommand{\End}{\operatorname{End}}
\newcommand{\im}{\operatorname{im}}
\newcommand{\innerstar}{\ostar}
\newcommand{\grcomp}[1]{
	{
		{\wh{#1}}^{\textsf{g}}
	}
}
\newcommand{\filcomp}[1]{
	{
		{\wh{#1}}^{\textsf{f}}
	}
}
\newcommand{\fctensor}{\filcomp{\otimes}}
\newcommand{\gctensor}{\grcomp{\otimes}}
\newcommand{\gr}{\operatorname{gr}}
\newcommand{\Cl}[1]{\CC(\!({#1})\!)}
\newcommand\opteq[1]{\mathrel{\mathpalette\opt@eq{#1}}}
\newcommand{\opt@eq}[2]{%
  \begingroup
  \sbox\z@{$#1#2$}%
  \sbox\tw@{\resizebox{!}{.5\ht\z@}{$\m@th#1($}}%
  \nonscript\hskip-\wd\tw@
  \mkern1mu
  \raisebox{-.35\ht\z@}[0pt][0pt]{\resizebox{!}{.5\ht\z@}{$\m@th#1($}}%
  \mkern-1mu
  {#2}%
  \mkern-1mu
  \raisebox{-.35\ht\z@}[0pt][0pt]{\resizebox{!}{.5\ht\z@}{$\m@th#1)$}}%
  \mkern1mu
  \nonscript\hskip-\wd\tw@
  \endgroup
}
\begin{document}
\extraextraappendixfalse
\extraappendixtrue

\title[Smoothings and mode transition algebras]{Conformal blocks on smoothings\\ via mode transition algebras
}

\subjclass[2020]{14H10, 17B69 (primary), 81R10, 81T40,14D21 (secondary)} \keywords{Vertex algebras, factorization, sewing, conformal blocks, vector bundles on moduli of curves, logarithmic conformal field theory}

\begin{abstract}

Here we introduce a series of associative algebras attached to a vertex operator algebra $V$, called mode transition algebras, and show they reflect both algebraic properties of $V$  and geometric constructions on moduli of curves.   Pointed and coordinatized curves, labeled by modules over $V$, give rise to sheaves of coinvariants.
We show that if the mode transition algebras admit multiplicative identities satisfying certain natural properties (called strong identity elements), then these sheaves deform as wanted on families of curves with nodes. This provides new contexts in which coherent sheaves of coinvariants  form vector bundles. We also show that mode transition algebras carry information about higher level Zhu algebras and generalized Verma modules. To illustrate, we explicitly describe the higher level Zhu algebras of the Heisenberg vertex operator algebra, proving a conjecture of Addabbo--Barron.
\end{abstract}

{
\author[C.~Damiolini]{Chiara Damiolini}
\address{Chiara Damiolini \newline \indent  Department of Mathematics, University of Texas at Austin,  Austin, TX  78712}
\email{chiara.damiolini@austin.utexas.edu}}
{
\author[A.~Gibney]{Angela Gibney}
\address{Angela Gibney \newline  \indent  Department of Mathematics, University of Pennsylvania,  Phil, PA 19104}
\email{agibney@math.upenn.edu}}
{
\author[D.~Krashen]{Daniel Krashen}
\address{Daniel Krashen \newline  \indent  Department of Mathematics, University of Pennsylvania,  Phil, PA 19104}
\email{dkrashen@math.upenn.edu}}

\maketitle

Mode transition algebras, introduced here, are a series of associative algebras that give insight into algebraic structures on moduli of stable pointed curves, and representations of the vertex operator algebras from which they are derived.  

Modules over vertex operator algebras (VOAs for short) give rise to vector bundles of coinvariants on moduli of smooth, pointed, coordinatized curves \cite{bzf}. To extend these to singular curves, coinvariants must deform as expected on smoothings of nodes, maintaining the same rank for singular curves as for smooth ones.  By \cref{thm:Sewing}, this holds when coinvariants form coherent sheaves and the mode transition algebras (defined below) admit multiplicative identities with certain properties.  Consequently, by \cref{cor:SewingAndFactorizationCorollary} one obtains a potentially rich source of vector bundles, including as in \cref{rmk:rational}~(\ref{it:rat}), the well-known class given by rational and $C_2$-cofinite VOAs \cite{tuy, bfm, NT,  DGT2}, and by \cref{ex:HeisVB}, a new family on moduli of stable pointed rational curves from modules over the Heisenberg VOA, which is neither $C_2$-cofinite nor rational.  Vector bundles are valuable---their characteristic classes, degeneracy loci, and section rings have been instrumental in the understanding of  moduli of curves (e.g.~\cite{HM, Mumford, HarrisEisenbud,  ELSV, FarkasVB, bchm}).

Known as essential to the study of the representation theory of VOAs, basic questions about the structure of higher level Zhu algebras remain open. Via \cref{thm:Big}, the mode transition algebras also give a new perspective on these higher level Zhu algebras.  As an application, we prove \cite[Conjecture 8.1]{addabbo.barrow:level2Zhu}, thereby giving an explicit description of the higher level Zhu algebras for the Heisenberg VOA. This is done in \cref{sec:Heisenberg} by analyzing the mode transition algebras associated to this VOA.

To describe our results more precisely,  we set a small amount of notation, with more details given below.  We assume that $V$ is a vertex operator algebra of CFT type.  While they have applications in both VOA theory and algebraic geometry, we begin by describing the geometric problem, which motivated the definition of the mode transition algebras.  By \cite{DGK}, the sheaves of coinvariants are coherent when defined by modules over a $C_2$-cofinite VOA. By \cite{DG}, coherence is also known to hold for some sheaves given by representations of VOAs that are $C_1$-cofinite and not $C_2$-cofinite.  It is natural then to ask when such coherent sheaves are vector bundles, as they were shown to be if $V$ is both $C_2$-cofinite and rational  \cite{tuy, bfm, NT, DGT2}. 

One may check coherent sheaves are locally free by proving they are flat.  This may be achieved using Grothendieck's valuative criteria of flatness.  The standard procedure first carried out by \cite[Theorem 6.2.1]{tuy}, and then followed in \cite[Theorem 8.4.5]{NT}, and \cite[VB Corollary]{DGT2}), is to argue inductively, using the factorization property (\cite[Theorem 6.2.6]{tuy}, \cite[Theorem 8.4.3]{NT}, \cite[Theorem 7.0.1]{DGT2}, \cite[Theorem 3.2]{DGK}).  However, here factorization is not available, since we do not assume that Zhu's algebra $A(V)$ is finite and semi-simple (\cite[Proposition 7.1]{DGK}). 

Instead, as explained in the proof of \cref{cor:SewingAndFactorizationCorollary}, the geometric insight here, is that in place of factorization, one may show that ranks of coinvariants are constant as nodes are smoothed in families.  This relies on the mode transition algebras admitting multiplicative identities that also act as identity elements on modules (we call these {\em{strong identity elements}}). The base case then follows from the assumption of coherence and that sheaves of coinvariants support a projectively flat connection on moduli of smooth curves \cite{bzf, dgt1}.  We refer to this process as smoothing of coinvariants.
Let us now summarize the strategy. 

For simplicity, let $\Cs_0$ be a projective curve over $\CC$ with a single node $Q$, $n$ smooth points  $P_\bullet=(P_1, \ldots, P_n)$, and formal coordinates $t_\bullet=(t_1,\ldots, t_n)$ at $P_\bullet$. Let $W^1, \ldots ,W^n$ be an $n$-tuple of $V$-modules, or equivalently of smooth $\mathscr{U}$-modules, where $\mathscr{U}$ is the universal enveloping algebra associated to $V$ (defined in detail in \cref{sec:UnivEnv}).

We assume each $V$-module $W^i$ is generated by a module $W^i_0$ over (the zero level) Zhu algebra $\Aa_0(V):= \Aa$. The vector space of coinvariants $[W^\bullet]_{(\Cs_0, P_\bullet, t_\bullet)}$ is the largest quotient of $W^\bullet= W^1\otimes \cdots \otimes W^n$ on which the Chiral Lie algebra $\Lc_{\Cs_0\setminus P_\bullet}(V)$ acts trivially (described here in \cref{sec:LimAndCoinv}).  Coinvariants at $\Cs_0$ are related to those on the normalization  $\eta:\widetilde{\Cs}_0\rightarrow \Cs_0$ of $\Cs_0$ at $Q$. Namely, by \cite{DGT2} the map
\begin{equation*}\label{eq:DGK0}
\alpha_0 \colon  W^\bullet_0{\overset{}{\rightarrow}} W^\bullet_0 \otimes \Aa, \qquad u \mapsto u \otimes 1^{\Aa},
\end{equation*}
gives rise to an $\Lc_{\Cs_0\setminus P_\bullet}(V)$-module map, inducing a map between spaces of coinvariants
\begin{equation*}\label{eq:DGK}
[\alpha_0] \colon [W^\bullet]_{\left(\Cs_0, P_\bullet, t_\bullet\right)}{\overset{}{\rightarrow}} [W^\bullet \otimes \Phi(\Aa)]_{\left(\widetilde{\Cs}_0, P_\bullet \sqcup Q_\pm, t_\bullet \sqcup s_\pm\right)}.
\end{equation*}
Here $\Phi(A)$ is a $\ms{U}$-bimodule assigned at points $Q_\pm$ lying over $\eta^{-1}(Q)$, and $s_\pm$  are formal coordinates at $Q_\pm$.  By \cite{DGK}, the map $[\alpha_0]$ is an isomorphism if $V$ is $C_1$-cofinite.

One may extend the nodal curve $\Cs_0$  to a smoothing family $(\Cs, P_\bullet, t_\bullet)$ over the scheme $S={\Spec}(\CC[\![q]\!])$, with special fiber  $(\Cs_0, P_\bullet, t_\bullet)$, and  smooth generic fiber, while one may trivially extend $\widetilde{\Cs}_0$  to a family $(\widetilde{\Cs}, P_\bullet \sqcup Q_\pm, t_\bullet \sqcup s_\pm)$ over $S$.  While the central fibers of these two families of curves are related by normalization, there is no map between $\widetilde{\Cs}$ and $\Cs$. However, for $V$ rational and $C_2$-cofinite, sheaves of coinvariants on $S$ are naturally isomorphic, an essential ingredient in the proof that such sheaves are locally free under these assumptions \cite{DGT2}.  

To obtain an analogous isomorphism of coinvariants under less restrictive conditions, our main idea is to generalize the algebra structure of $X_d\otimes X_d^\vee\subset \Phi(\Aa)$, which exists for simple admissible $V$-modules $X=\bigoplus_{d} X_d$, and for all $d\in \NN$ (see \cref{rmk:rational}).  Namely, we show that $\Phi(\Aa)$ has the structure of a bi-graded algebra, which we call the {\em{mode transition algebra}} and denote $\Ac=\bigoplus_{d_1,d_2\in \mathbb{Z}}\Ac_{d_1,d_2}$. 
We show that $\Ac$ acts on generalized Verma modules $\PhiL(W_0)=\bigoplus_{d}W_d$, such that the subalgebras $\Ac_d:=\Ac_{d,-d}$, which we refer to as the {\em{$d$th mode transition algebras}}, act on the degree $d$ components $W_d$ (these terms are defined in \cref{sec:PhiL} and \cref{sec:ModeAlgebra}). 

We say that $V$ satisfies {\text{smoothing}}  (\cref{def:Sewing2}), if for every pair $(W^\bullet,\Cs_0)$, 
consisting of $n$ admissible $V$-modules $W^\bullet$, not all trivial, a stable $n$-pointed curve  $\Cs_0$  with a node, there exist an element $\Is=\sum_{d \geq 0} \Is_d q^d \in \Ac[\![q]\!]$, such that the map 
\begin{equation*}\label{eq:sewIntro}
\alpha \colon W^{\bullet}[\![q]\!] \longrightarrow   (W^{\bullet} \otimes \Ac)[\![q]\!],   \qquad
u  \mapsto  \, u \otimes \Is , \end{equation*}
is an $\Lc_{\Cs\setminus P_{\bullet}}(V)$-module homomorphism which extends $\alpha_0$.

In  \cref{thm:Sewing}, we equate smoothing for $V$ with a property of multiplicative identity elements in the $d$th mode transition algebras $\Ac_d$, when they exist.  Specifically, if $\Ac_d$ admit identity elements $\Is_d \in \Ac_d$ for all $d \in \NN$, satisfying any of the equivalent properties of \cref{lem:StrongUnitConditions}, then we say that $\Is_d \in \Ac_d$ is a \textit{strong identity element}. 

\begin{thmSewing}
Let $V$ be a VOA of CFT-type.  The algebras $\Ac_d=\Ac_d(V)$ admit strong identity elements for all $d\in \NN$ if and only if $V$ satisfies smoothing. 
\end{thmSewing}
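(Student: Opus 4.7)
The plan is to establish the two directions of the equivalence by analyzing the constraint that $\alpha$ be an $\Lc_{\Cs \setminus P_\bullet}(V)$-module homomorphism order-by-order in the smoothing parameter $q$. Writing $\Is = \sum_{d \geq 0} \Is_d q^d$, the map $\alpha = \sum_d \alpha_d q^d$ splits into graded pieces, and the $\Lc$-equivariance of $\alpha$ becomes an infinite collection of identities, one at each order in $q$, linking the Chiral Lie algebra of the smoothing family $\Cs$ to the Chiral Lie algebra of the normalization $\widetilde{\Cs}$ together with the mode transition algebras $\Ac_d$ acting at the points $Q_\pm$.

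For the direction \emph{strong identities imply smoothing}, the $q^0$-coefficient of the required identity is exactly the statement that $\alpha_0 \colon u \mapsto u \otimes 1^\Aa$ is an $\Lc_{\Cs_0 \setminus P_\bullet}(V)$-module map, which is already known from \cite{DGT2,DGK} since $\Is_0 = 1^\Aa$ is the identity in $\Aa = \Ac_0$. For $d \geq 1$, expanding a section $X \in \Lc_{\Cs \setminus P_\bullet}(V)$ in local coordinates $(x,y)$ near the node with $xy = q$ produces, at order $q^d$, a \emph{normalization contribution} (sections regular on $\widetilde{\Cs}$) plus a \emph{nodal correction} whose modes live naturally in $\Ac_{d,-d} = \Ac_d$. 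The strong identity property of $\Is_d$—namely, that $\Is_d$ is multiplicative identity in $\Ac_d$ and acts as identity on the degree-$d$ piece $W_d$ of every admissible module—is precisely what makes the nodal corrections compatible with the factor $\Is_d \otimes q^d$ in $\Is$. Assembling these order-by-order compatibilities gives the desired global $\Lc_{\Cs \setminus P_\bullet}(V)$-module homomorphism.

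For the converse direction \emph{smoothing implies strong identities}, the plan is to reverse the above analysis by constructing, for each fixed $d$, a test configuration from which the strong identity property of $\Is_d$ can be extracted. A natural choice is a nodal rational curve with marked points carrying an arbitrary admissible module $W$ and its contragredient (or a module paired with a generalized Verma module), with the modules on each side chosen so that the relevant coinvariant condition projects onto the degree-$d$ component. The smoothing condition produces an element $\Is \in \Ac[\![q]\!]$ whose $q^d$-coefficient $\Is_d$, tested against this family of module placements, must act as the identity on $W_d$ for every admissible $W$; combined with the structure of the mode transition algebra action on generalized Verma modules $\PhiL(W_0) = \bigoplus_d W_d$, this forces $\Is_d$ to be the multiplicative identity of $\Ac_d$, and hence (by the equivalent conditions in \cref{lem:StrongUnitConditions}) a strong identity element.

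The principal obstacle will be the explicit local analysis near the node in the forward direction: one must show that the $q^d$-coefficient of the $\Lc$-action on $W^\bullet \otimes \Is$ decomposes cleanly into the normalization part plus a correction expressible through the $\Ac_d$-action, so that the strong identity hypothesis suffices to conclude intertwining. This requires carefully matching residue/contour expansions of vertex operators on the smoothing family with the bi-graded mode structure of $\Ac$, and verifying that no additional obstructions arise from higher-order interactions between the nodal corrections and the module factors—a compatibility that, if correct, is essentially tautological once the precise definitions of $\Ac_d$ and its action on $\PhiL(W_0)$ are unwound.
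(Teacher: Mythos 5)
Your forward direction captures the right geometric flavor---expand near the node, match coefficients of $q^m$---but it misses the pivotal intermediate object in the paper, namely the \emph{strong identity element equations} $J_n(a)\,\Is_d = \Is_{d-n}\,J_n(a)$ of \cref{strong unit equations}. The paper's \cref{prop:smoothing} shows that the $\Lc_{\Cs\setminus P_\bullet}(V)$-equivariance of $\alpha$, once you write out a section's expansion at the node in $s_\pm$ with $s_+s_-=q$, reduces \emph{exactly} to these equations, and then \cref{homogenize units} and \cref{lem:MotherLemma} complete the bridge to strong identity elements. Without naming that intermediate algebraic condition, the claim that ``the strong identity property of $\Is_d$ is precisely what makes the nodal corrections compatible'' is a restatement, not an argument.

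The converse direction contains a genuine gap: you misidentify the constraint that smoothing imposes. When you write out $\sigma\cdot(u\otimes\Is)=\sigma(u)\otimes\Is + u\otimes(\sigma^{\mathsf L}_+\otimes 1 + 1\otimes\sigma^{\mathsf R}_-)\Is$, equivariance of $\alpha$ is equivalent to $(\sigma^{\mathsf L}_+\otimes 1 + 1\otimes\sigma^{\mathsf R}_-)\Is=0$ for all $\sigma$; the modules $W^\bullet$ drop out entirely. This is why the paper's \cref{cor:smoothing one for all} holds: smoothing for a single $W^\bullet$ over a single family already gives the module-independent constraint. It is \emph{not} the case that the smoothing condition ``tested against $W$ and its contragredient must force $\Is_d$ to act as the identity on $W_d$''---nothing in the coinvariant condition directly probes the $\Ac_d$-action on $W_d$. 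What the constraint does give is the strong identity element equations, from which unitality of the bihomogeneous pieces $\Is_d' := (\Is_d)_{d,-d}$ must then be extracted by a purely algebraic induction (\cref{lem:MotherLemma}, using the compatibility \eqref{eq:lemStarUV1} between $\star$ and the $\UV$-actions). Your test-configuration strategy, as stated, conflates being a unit of $\Ac_d$ with being annihilated by the diagonal nodal action; recovering the former from the latter is a nontrivial step that your proposal neither anticipates nor sketches.
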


We remark that the analogue to $\alpha$ is called the sewing map in \cite{tuy, NT, DGT2}. 
As an application of \cref{thm:Sewing}, we obtain geometric consequences stated as \cref{cor:sewing} and \cref{cor:SewingAndFactorizationCorollary}.  A particular case of which is  as follows:


\begin{corB}If $V$ is $C_2$-cofinite and satisfies smoothing, then $\VV(V;W^\bullet)$ is a vector bundle on $\bMgn[n]$ for simple $V$-modules $W^1, \dotsm, W^n$.
\end{corB}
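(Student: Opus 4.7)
The strategy is to apply Grothendieck's valuative criterion of flatness: a coherent sheaf on the smooth Deligne--Mumford stack $\bMgn[n]$ is locally free exactly when its fiber dimension is locally constant, so it suffices to verify that the rank of $\VV(V;W^\bullet)$ is constant along one-parameter smoothing families. Two pieces are available at the start: by $C_2$-cofiniteness and \cite{DGK}, $\VV(V;W^\bullet)$ is a coherent sheaf on $\bMgn[n]$; on the open substack $\Mgn[n]$ of smooth pointed curves, the projectively flat connection of \cite{bzf, dgt1} implies local freeness of some rank $r$. What remains is to show that the fiber dimension at every boundary point also equals $r$.

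The main step handles a smoothing family $\Cs \to S = \Spec\CC[\![q]\!]$ whose central fiber $\Cs_0$ is a stable $n$-pointed curve with a single node $Q$ and whose generic fiber is smooth. By Theorem~\ref{thm:Sewing} the smoothing hypothesis provides, for the tuple $W^\bullet$ of simple (hence admissible) $V$-modules, an $\Lc_{\Cs\setminus P_\bullet}(V)$-equivariant map
\[
\alpha\colon W^\bullet[\![q]\!] \longrightarrow (W^\bullet \otimes \Ac)[\![q]\!], \qquad u \mapsto u \otimes \Is,
\]
that reduces modulo $q$ to the map $\alpha_0$ of the introduction. Its induced map $[\alpha]$ on coinvariants becomes, at $q = 0$, the map $[\alpha_0]$, which is an isomorphism by \cite{DGK} since $C_2$-cofinite implies $C_1$-cofinite. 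On the normalized side the family $\widetilde\Cs \to S$ is trivial, so the target coinvariants are pulled back from a finite-dimensional vector space on $\widetilde\Cs_0$ and form a free $\CC[\![q]\!]$-module. Combining coherence of the source with the isomorphism modulo $q$ via a Nakayama argument, and matching generic-fiber ranks with the rank $r$ coming from $\Mgn[n]$, forces both source and target to have rank $r$ over $S$. In particular, the fiber at $q=0$ has dimension $r$.

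For boundary points of $\bMgn[n]$ whose associated curve has several nodes, one smooths the nodes one at a time in an étale neighborhood, iterating the single-node analysis; this produces constant rank $r$ across every smoothing path, and together with coherence yields local freeness on all of $\bMgn[n]$. The decisive technical point -- and the main obstacle to be overcome -- is ensuring that $[\alpha]$ remains an isomorphism over all of $S$, and not merely modulo $q$. Here the strong identity elements $\Is_d \in \Ac_d$, whose existence is equivalent to the smoothing property via Theorem~\ref{thm:Sewing}, play the essential role: their characterization in \cref{lem:StrongUnitConditions} guarantees that the twist $u \mapsto u\otimes\Is = u\otimes\sum_d\Is_d q^d$ is compatible with the chiral Lie algebra action at every order in $q$, lifting the classical $q=0$ isomorphism of \cite{DGK} to a genuine deformation over the smoothing family and thereby propagating the rank $r$ from $\Mgn[n]$ to the full moduli space.
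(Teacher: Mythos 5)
Your plan matches the paper's proof of \cref{cor:SewingAndFactorizationCorollary} in its essential structure: reduce via the valuative criterion to $S=\Spec\CC[\![q]\!]$, use \cref{thm:Sewing} to produce the $\Lc_{\Cs\setminus P_\bullet}(V)$-equivariant map $\alpha$, lift $[\alpha_0]$ to an isomorphism $[\alpha]$ over $S$ by Nakayama (this is \cref{lem:alphaiso}, using coherence of the source and freeness of the target from \cref{contingent smoothing} and \cref{cor:DiagQues}), and close the induction on the number of nodes against the base case of a projectively flat connection on the smooth locus.

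The one gap is the final descent step, which the paper treats carefully and you omit. The coinvariants live naturally on the stack $\tMgn[n]$ of coordinatized curves; local freeness is established there, and the bundle descends first to $\overline{\mathcal{J}}_{g,n}$ and then, only when the conformal dimensions of $W^1,\dots,W^n$ are rational, to $\bMgn[n]$. For a $C_2$-cofinite $V$, simplicity of the $W^i$ is exactly what makes this hypothesis automatic, via Miyamoto's theorem \cite[Corollary 5.10]{MiyamotoC2} that simple modules over a $C_2$-cofinite VOA have rational conformal weight. This is where the hypothesis that the $W^i$ are simple actually enters; the flatness argument itself does not use simplicity, and your write-up gives no role to simplicity at all. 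Absent this descent step, what you have proved is local freeness of a sheaf on $\tMgn[n]$ (or $\overline{\mathcal{J}}_{g,n}$), not a vector bundle on $\bMgn[n]$ as claimed.
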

By \cref{rmk:rational}, rational VOAs satisfy smoothing,  so \cref{cor:SewingAndFactorizationCorollary} specializes to \cite[VB Corollary]{DGT2}.  As is shown in \cref{ex:HeisVB}, one can apply the full statement of \cref{cor:SewingAndFactorizationCorollary} to show that modules over VOAs derived from  Heisenberg lie algebras of  dimension one (which are $C_1$-cofinite, but neither $C_2$-cofinite nor rational), define vector bundles on moduli of stable pointed rational curves (see \cref{ex:HeisVB}).

 \cref{thm:Big}, described next, gives further tools for investigating other VOAs which may or may not satisfy smoothing by providing information about the relationship between mode transition algebras and higher level Zhu algebras and their representations.

 Recall that in \cite{ZhuMod} Zhu defines a two step induction functor, which in the first part takes $\Aa=\Aa(V)$-modules to a $V$-modules through a Verma module construction, and then in the second step takes a quotient.   In \cref{{def:PhiL}} we describe this first step with a different, although naturally isomorphic functor $\PhiL$, a crucial ingredient to this work.  Through this functor (naturally isomorphic to) $\PhiL$, Zhu shows that there is a bijection between simple $\Aa$-modules and simple $V$-modules, so that if $\Aa$ is finite dimensional, and semi-simple, $\PhiL$ describes the category of admissible $V$-modules.  However, if $\Aa$ is either not finite-dimensional or is not semi-simple, then there are indecomposable, but non-simple $V$-modules not induced from simple indecomposable modules over $\Aa$ via $\PhiL$. To describe such modules,  \cite{DongLiMason:HigherZhu} defined the higher level Zhu algebras $\Aa_d$ for $d\in \NN$ , further studied in \cite{BVY}.

The mode transition algebras $\Ac_d$ are related to the higher level Zhu algebras $\Aa_d$.
For instance,   $\Ac_0=\Aa_0=\Aa$ (\cref{rmkWhyEqualA}), and by \cref{lem:right exact seq}, there is an exact sequence 
\begin{equation}\label{eq:SES}
\xymatrix{\Ac_d \ar[r]^-{\mu_d} & \Aa_d \ar[r]^-{\pi_d} & \Aa_{d-1}\ar[r]&  0.} \end{equation}
When $\Ac_d$ admits an identity element (and not necessarily a strong identity element), $\mu_d$ injective and the sequence splits by Part (a) of \cref{thm:Big}. In particular, if  $V$ is $C_2$-cofinite, as observed in  \cite{BuhlSpanning, GN,  MiyamotoC2, He},  the $d$th Zhu algebras $\Aa_d$ are finite dimensional, hence if $\Ac_d$ has an identity element, it is finite dimensional as well.

We note that \eqref{eq:SES} may be exact when $\Ac_d$ does not admit an identity element. For instance, in \cref{eg:virasoro} we show exactness of \eqref{eq:SES}  when $d=1$ for the Virasoro VOA $\Vir_c$, for any values of $c$, and use it to show that $\Ac_1$ does not admit an identity element. In particular, by  \cref{thm:Sewing}, one finds that $\Vir_c$ never satisfies smoothing. When $c$ is in the discrete series, the maximal simple quotient $L_c$ of $\Vir_c$ is rational and $C_2$-cofinite, hence by  \cref{rmk:rational}~\ref{it:rat} (or by \cite{DGT2}) it will satisfy smoothing.


 \cref{thm:Big} allows one to  use the mode transition algebras to obtain other valuable structural information about the higher level Zhu algebras.  

\begin{thmBig}
\begin{enumerate}
\item If the $d$th mode transition algebra $\Ac_d$ admits an identity element, then \eqref{eq:SES} is split exact, and $\Aa_{d} \cong \Ac_d \times \Aa_{d-1}$ as rings. In particular, if $\Ac_d$ admits an identity element for every $d \in \mathbb Z_{\geq 0}$ then $\Aa_d \cong \Ac_d \oplus \Ac_{d-1} \oplus \cdots \oplus \Ac_0.$
\item For $\Ac=\Ac(V)$, if   $\Ac_d$ admits a strong identity for all $d\in \NN$, so that smoothing holds for $V$, then  given any generalized Verma module $W=\PhiL(W_0)=\bigoplus_{d\in \NN} W_d$ where $L_0$ acts on $W_0$ as a scalar with eigenvalue $c_W \in \CC$, there is no proper submodule $Z \subset W$ with $c_Z - c_W\in \ZZ_{>0}$ for every eigenvalue $c_Z$ of $L_0$ on $Z$. 
\end{enumerate}
\end{thmBig}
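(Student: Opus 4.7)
The plan is to prove the two parts of the theorem separately, both hinging on the idempotent/grading structure of mode transition algebras.

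For part (a), I would first verify that $\mu_d : \Ac_d \to \Aa_d$ from \eqref{eq:SES} is an algebra homomorphism (immediate from its construction as the natural map between the relevant subquotients of $\ms{U}$). If $\Is_d \in \Ac_d$ is the identity, then $e_d := \mu_d(\Is_d) \in \Aa_d$ is an idempotent. The crucial step is to show $e_d$ is \emph{central}. I would argue this by examining the action of $\Aa_d$ on truncations $\bigoplus_{0 \leq j \leq d} W_j$ of admissible $V$-modules: since $\Ac_d$ lives in bidegree $(d,-d)$, the element $e_d$ acts as the identity on $W_d$ and as zero on $W_j$ for $j<d$, thereby realizing the projection onto the top graded piece. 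Since every element of $\Aa_d$ respects this grading, $e_d$ commutes with all of $\Aa_d$, giving the ring decomposition $\Aa_d \cong e_d\Aa_d \oplus (1-e_d)\Aa_d$ with $e_d\Aa_d \cong \Ac_d$ (via $\mu_d$, which is therefore injective) and $(1-e_d)\Aa_d \cong \Aa_{d-1}$ (via $\pi_d$). Iterating yields $\Aa_d \cong \bigoplus_{j=0}^d \Ac_j$ when every $\Ac_j$ admits an identity.

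For part (b), I would argue by contradiction. Suppose $0 \neq Z \subsetneq W = \PhiL(W_0) = \bigoplus_{d \geq 0} W_d$ is a $V$-submodule with every $L_0$-eigenvalue on $Z$ of the form $c_W + d'$ for some $d' \in \ZZ_{>0}$. Since $W_j$ is the $(c_W+j)$-eigenspace of $L_0$, we have $Z = \bigoplus_{d' \geq 1}(Z \cap W_{d'})$ and in particular $Z \cap W_0 = 0$. Let $d \geq 1$ be minimal with $Z \cap W_d \neq 0$, and pick $0 \neq v \in Z \cap W_d$. By minimality of $d$ and $Z \cap W_0 = 0$, any element of $\ms{U}$ strictly lowering $L_0$-weight must annihilate $v$, since otherwise its image would be a nonzero element of $Z \cap W_{d'}$ with $0 \leq d' < d$. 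Now the strong identity $\Is_d \in \Ac_d = \Ac_{d,-d}$, viewed in the bigraded completion of $\ms{U}$, decomposes as a sum of products each beginning with a strictly lowering factor of total weight-shift $-d$ followed by a raising factor of weight-shift $+d$: this is the geometric meaning of the $(d,-d)$-bigraded sewing element, which realizes the passage $W_d \to W_0 \to W_d$. Consequently $\Is_d \cdot v = 0$. On the other hand, the strong identity property gives $\Is_d \cdot v = v \neq 0$, a contradiction, forcing $Z = 0$.

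The main obstacle is the factorization claim in part (b): one must justify that a strong identity in $\Ac_{d,-d}$ admits an expression whose summands literally factor through the bottom component $W_0$ before returning to $W_d$. This is precisely the content that distinguishes a \emph{strong} identity from merely an algebraic identity in $\Ac_d$, and I expect it to follow from the equivalent formulations in \cref{lem:StrongUnitConditions} together with the bigraded structure of $\ms{U}$. A secondary technical point in part (a) is checking that $e_d$ induces the claimed projection on every $\Aa_d$-module, not only on generalized Verma truncations; this should reduce to the fact that every $\Aa_d$-module arises as a quotient of such a truncation.
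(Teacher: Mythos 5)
Your argument for Part~(b) is essentially the paper's: pick a nonzero homogeneous $v\in Z$ of degree $d>0$, write the strong identity $\Is_d$ as a sum $\sum_i\alpha_i\otimes 1\otimes\beta_i$ with $\deg\beta_i=-d$, observe $\beta_i v\in Z\cap W_0=0$, and conclude $v=\Is_d\cdot v=0$. But the "main obstacle" you flag at the end is misidentified: every element of $\Ac_d=\Ac_{d,-d}$ is, by definition of the triple tensor product, a sum of terms of the stated form, so the factorization through the degree-$0$ component is automatic and needs no justification. What genuinely requires the \emph{strong} identity hypothesis is the equality $\Is_d\cdot v=v$, i.e.\ that $\Ac_d$ acts unitally on $W_d=\PhiL(W_0)_d$ — this is exactly condition~\ref{it:H3} of \cref{lem:StrongUnitConditions}, and does not follow from $\Is_d$ being merely a multiplicative identity in the ring $\Ac_d$.

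That same confusion opens a genuine gap in your proof of Part~(a). You argue centrality of $e_d=\mu_d(\Is_d)$ by claiming $e_d$ "acts as the identity on $W_d$ and as zero on $W_j$ for $j<d$," hence realizes a grading projection commuting with $\Aa_d$. But Part~(a) assumes only that $\Is_d$ is an identity of $\Ac_d$, not a strong identity. That an identity of $\Ac_d$ acts as the identity on $\PhiL(\Aa)_d$ (equivalently on arbitrary $W_d$) is condition~\ref{it:H4}/\ref{it:H3} of \cref{lem:StrongUnitConditions} and is what distinguishes strong identities from mere ones; a mere identity need only act unitally on $\Ac_d$ itself. So your proof invokes a hypothesis not available in Part~(a). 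You also assert injectivity of $\mu_d$ as a byproduct of the direct sum decomposition, but this requires a separate argument. The paper's proof (\cref{abstract splitting}) sidesteps both issues by staying inside the ring: injectivity of $\mu_d$ uses only that $\Is_d$ is a right identity for the $\star$-action of $\Ac_d$ on itself (so $\xx=\xx\star\Is_d=0$ whenever $\mu_d(\xx)=0$), and then, viewing $\Ac_d$ as a two-sided ideal of $\Aa_d$ with internal identity $e=\Is_d$, centrality follows from the one-line computation $ae=e(ae)=(ea)e=ea$ (valid because $ae,ea\in\Ac_d$), with no appeal to module actions.
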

We refer to \cref{sec:PhiL} for a discussion about generalized Verma modules. 

 We note that by \cref{lem:right exact seq} and  \cref{abstract splitting}, the exact sequence \eqref{eq:SES} as well as Part \textit{(a)} of \cref{thm:Big} hold for generalized higher Zhu algebras and generalized $d$th mode transition algebras (see \cref{def:GHZ} and \cref{def:GoodTripleMTA notriple}).  For further discussion  see \cref{sec:GoodTriples}.

We now describe some further consequences of \cref{thm:Sewing} and \cref{thm:Big}.

In \cref{sec:Heisenberg} we describe the $d$th mode transition algebras $\Ac_d$  for the Heisenberg vertex algebra $M_a(1)$ defined by a one dimensional Heisenberg Lie algebra (denoted $\pi$ in \cite{bzf}), and show the $\Ac_d$ admit strong identity elements for all $d\in \NN$. In particular, \cref{thm:Big} and \cref{claim:HigherHeis} imply that the conjecture of Addabbo and Barron \cite[Conj 8.1]{addabbo.barrow:level2Zhu} holds, and one can write
\begin{equation}\label{eq:ABCon}\Aa_d(\pi)= \Aa_d(M_a(1))\cong  \prod_{j=0}^d\mathrm{Mat}_{p(j)}(\CC[x]),\end{equation}
where $p(j)$ is the number of ways to decompose $j$ into a sum of positive integers, with $p(0)=1$. The level one Zhu algebra $A_1(M_a(1))$ was first constructed in the paper \cite{BVYFirst}, and then later announced in \cite{BVY}. In \cite{ABCon} the authors determine $A_2(M_a(1))$ using the infrastructure for finding generators and relations for higher level Zhu algebras they had developed in \cite{addabbo.barrow:level2Zhu}.

To illustrate their computability, we note that in \cite{DGK3}  the $d$-th mode transition algebras for VOAs derived from Heisenberg Lie algebras of arbitrary rank are shown to admit strong identities for all $d$, and further results developed there allow us to compute higher Zhu algebras for these and other examples.

In \cref{sec:EvidenceTriplet}, we use Part (b) of \cref{thm:Big} to show that the family of triplet vertex operator algebras $\mathcal{W}(p)$ does not satisfy smoothing.  We do this by giving an explicit pair of modules $W\subset Z$ with $W=\PhiL(W_0)$ and such that $c_Z > c_W$.  The actual pair of modules used was suggested to us by Thomas Creutzig (with some details filled in by Simon Wood). Dra\v{z}en Adamovi\'c had also sketched for us the existence of such an example.  The importance of this example is that it establishes that smoothing is not guaranteed to hold for a $C_2$-cofinite VOA if rationality is not assumed.  In particular, while sheaves of coinvariants defined by the representations of $C_2$-cofinite VOAs are coherent, this can be seen as an indication  that they may not necessarily be locally free. Taken together with the family of Heisenberg vector bundles from \cref{ex:HeisVB}, this example illustrates the subtlety of the problem of determining which sheaves of coinvariants define vector bundles.

We also expect that smoothing will not hold for the family of symplectic fermion algebras ${\mathrm{SF}^+_d}$ which are $C_2$-cofinite and not rational, since ${\mathrm{SF}^+_1}=\mathcal{W}(2)$.  It is natural to ask whether there is an example of a vertex operator algebra that is $C_2$-cofinite, is not rational, and satisfies smoothing (see \cref{sec:SecondQuestion}).   

Finally, we emphasize that our procedure to use smoothing to show that sheaves of coinvariants are locally free is just one approach to this problem (see \cref{sec:BundleQuestions}).

\subsection*{Plan of the paper} \ In \cref{sec:Background},  we set the terminology used here for vertex operator algebras and their representations.  In \cref{sec:UnivEnv}, we provide detailed descriptions of the universal enveloping algebra $\UV$ associated to a vertex operator algebra $V$. Technical details are given in \cref{sec:Appendix}, where an axiomatic treatment of the constructions of the graded and filtered enveloping algebras as topological or semi-normed algebras is given.  The concepts discussed involving filtered and graded completions can be found throughout the VOA literature (for instance in \cite{tuy, FrenkelZhu, bzf,FrenkelLanglands, NT, MNT}), but little is said about how they relate to one another.  We discuss these relations in \cref{sec:UnivEnv}. 
In \cref{sec:ZhuFunctor} we give an alternative construction of the generalized Verma module functor $\PhiL$ (and the right-analogue $\PhiR$) from the category of $\Aa$-modules, to the category of smooth left (and right) $\UV$-modules. We use a combination of $\PhiL$ and $\PhiR$ to define the mode transition algebras $\Ac_d \subset \Ac$.  More general versions of these constructions are defined in \cref{sec:Generalized}, where their analogous properties are proved. In \cref{sec:LimAndCoinv}, smoothing is formally defined, and we describe sheaves of coinvariants on families of pointed and coordinatized curves in general terms, and cite the relevant references.  In  \cref{sec:SewingThmProof} we prove \cref{thm:Sewing}, \cref{cor:sewing}, and \cref{cor:SewingAndFactorizationCorollary}.
In \cref{sec:BigThmProof} we  prove \cref{thm:Big} Part \textit{(b)}, while Part \textit{(a)} is detailed in \cref{sec:Generalized}.  In \cref{sec:Heisenberg} we compute the mode transition algebras $\Ac_d$ for the Heisenberg algebra for all $d$. In \cref{eg:virasoro} we compute the 1st mode transition algebras for the non-discrete series Virasoro VOAs. We ask a number of questions in \cref{sec:OtherQuestions}.  In \cref{sec:SecondQuestion} and in \cref{sec:BundleQuestions}  questions are discussed about  $C_2$-cofinite and non-rational VOAs that may not satisfy smoothing, and whether their induced sheaves of coinvariants may still  define vector bundles.  Finally, as noted, many of the results here are stated and proved for generalizations of higher level Zhu algebras and of mode transition algebras, and in \cref{sec:GoodTriples} we raise the question of finding other examples and applications of such algebraic structures, beyond those naturally associated to a vertex operator algebra.

\subsection*{Acknowledgements} We would like to thank Dra\v{z}en Adamovi\'c, Katrina Barron, Thomas Creutzig,  Haisheng Li, Jianqi Liu, Antun Milas, Rahul Pandharipande, Dennis Sullivan, and Simon Wood for helpful discussions and for answering our questions. Thomas Creutzig and Simon Wood gave crucial assistance with the details of \cref{sec:EvidenceTriplet}. We thank Katrina Barron for her valuable feedback at various points. Most of all, we thank  Yi-Zhi Huang who encouraged us from the beginning to consider questions regarding sewing, when rationality is not assumed.  Gibney was supported by NSF DMS -- 2200862, and Krashen was supported by NSF DMS--1902237.

\section{Background on VOAs and their modules}
\label{sec:Background}
In \cref{sec:VOAsAndModules} we state the conventions we follow for vertex operator algebras and their representations.  Throughout this paper, by an algebra we mean an associative algebra which is not necessarily commutative and by a ring we mean an algebra over $\ZZ$.   We refer to \cite{FrenkelZhu,ZhuMod,bfm,NT} for more details about vertex operator algebras and their modules.

\subsection{VOAs and their representations}\label{sec:VOAsAndModules}
We recall here the definition of a vertex operator algebra of CFT type, which in the paper will be simply denoted \textit{VOA}. 
\begin{defn} A {\textit{vertex operator algebra of CFT-type}} is four-tuple $(V, {\textbf{1}}, \omega, Y(\cdot,z))$:
\begin{enumerate}
\item $V=\bigoplus_{i\in \NN} V_i$ is a vector space with $\dim V_i<\infty$, and $\dim V_0 = 1$;
\item ${\textbf{1}}$ is an element in $V_0$, called the \textit{vacuum vector};
\item $\omega$ is an element in $V_2$, called the \textit{conformal vector};
\item $Y(\cdot,z)\colon V \rightarrow  \textrm{End}(V)[\![ z,z^{-1} ]\!]$ is a linear map
  $a \mapsto  Y(a,z) :=\sum_{m\in\mathbb{Z}} a_{(m)}z^{-m-1}$.   The series $Y(a,z)$ is called the \textit{vertex operator}  assigned to $a\in V$,
\end{enumerate}
satisfying the following axioms:
\begin{enumerate}
\item \textit{(vertex operators are fields)} for all $a$, $b \in V$,  $a_{(m)}b=0$, for $m \gg 0$;
\item \textit{(vertex operators of the vacuum)}  $Y({\bm{1}}, z)=\mathrm{id}_V$, that is
\[
{\bm{1}}_{(-1)} = \mathrm{id}_V \qquad \mbox{and} \qquad {\bm{1}}_{(m)} = 0, \quad\mbox{for $m\neq -1$,}
\]
and for all $a\in V$, \ $Y(a,z){\bm{1}} \in a+ zV[\![ z ]\!]$, that is
\[
a_{(-1)}{\bm{1}} = a \qquad \mbox{and} \qquad a_{(m)}{\bm{1}} =0, \qquad \mbox{for $m\geq 0$};
\]
\item \textit{(weak commutativity)} for all $a$, $b\in V$, there exists an $N\in\NN$ such~that
\[
(z_1-z_2)^N \, [Y(a,z_1), Y(b,z_2)]=0 \quad \mbox{in }\textrm{End}(V)[\![ z_1^{\pm 1}, z_2^{\pm 1} ]\!];
\]
\item \textit{(conformal structure)} for 
$Y(\omega,z)=\sum_{m\in\mathbb{Z}} \omega_{(m)}z^{-m-1}$,
\[
\left[\omega_{(p+1)}, \omega_{(q+1)} \right] = (p-q) \,\omega_{(p+q+1)} + \frac{c}{12} \,\delta_{p+q,0} \,(p^3-p) \,\textrm{id}_V. 
\]
Here $c\in\mathbb{C}$ is  the \textit{central charge} of $V$.  Moreover:
\[
\omega_{(1)}|_{V_m}=m\cdot\textrm{id}_V, \quad \text{for all } m, \qquad \mbox{and} \qquad Y\left(\omega_{(0)}a,z \right) = \frac{d}{dz} Y(a,z).
\]
\end{enumerate}
\end{defn}


\begin{defn}
An admissible $V$-module is an $\mathbb{C}$-vector space $W$ together with a linear map
\[Y^W(\cdot,z) \colon  V \rightarrow  \operatorname{End}(W)[\![z,z^{-1}]\!], \ a \in V \mapsto Y^W(a,z) :=\sum_{m\in\mathbb{Z}} a^W_{(m)}z^{-m-1},\] which 
satisfies the following axioms:
\begin{enumerate}
    \item \textit{(vertex operators are fields)} if $a \in V$ and $u \in W$, then $a^W_{(m)}u=0$, 
    for $m \gg 0$;
    \item \textit{(vertex operators of the vacuum)} $Y^W\left({\bm{1}},z\right)=\mathrm{id}_W$;
    \item \textit{(weak commutativity)} for all $a$, $b \in V$, there exists an $N\in \NN$ such that for all $u \in W$ 
    \[
(z_1-z_2)^N \left[ Y^W(a,z_1), Y^W(b,z_2) \right]u=0;
\]
    \item \textit{(weak associativity)} for all $a \in V$ and $u \in W$,  there exists an $N\in\NN$, such that for all $b \in V$, one has
     \[
(z_1+z_2)^N \left( Y^W(Y(a,z_1)b,z_2)- Y^W(a,z_1+z_2)Y^W(b,z_2)\right)u =0;
\]
    \item \textit{(conformal structure)}\label{VirAct} for   $Y^W(\omega,z) = \sum_{m \in \mathbb{Z}} \omega_{(m)}^W z^{-m-1}$, one has
    \[
\left[\omega^W_{(p+1)}, \omega^W_{(q+1)} \right] = (p-q) \,\omega^W_{(p+q+1)} + \frac{c}{12} \,\delta_{p+q,0} \,(p^3-p) \,\textrm{id}_W. 
\]
	where $c \in \mathbb{C}$ is the central charge of $V$.  Moreover $Y^W\left(L_{-1}a,z \right) = \frac{d}{dz} Y^W(a,z)$;
    \item \textit{($\mathbb N$-gradability)} $W$ admits a grading $W = \bigoplus\limits_{n \in \NN} W_n$ with $a^W_{(m)}W_n \subset W_{n + \deg(a) - m - 1}$.
\end{enumerate}\end{defn} 
 
As one can see in the literature,  e.g. by \cite{dl, fhl,  lepli, lvertexsuperalg},  weak associativity and  weak commutativity together are equivalent to the \textit{Jacobi identity}: for $\ell$, $m$, $n \in \mathbb{Z}$, and $a$, $b\in V$ 
\begin{equation*}
    \sum_{i\ge 0}(-1)^i{\binom{\ell}{i}} \left( a^W_{(m+\ell-i)}b^W_{(n+i)}-(-1)^{\ell}b^W_{(n+\ell-i)}a^W_{(m+i)} \right)
=\sum_{i\ge 0}{\binom{m}{i}}(a_{(\ell+i)}(b))^W_{(m+n-i)}.\end{equation*}  Moreover, by \cite[Lemma 2.2]{DongLiMasonRegular}, axiom (\ref{VirAct}) is redundant. 


\section{The universal enveloping algebra of a VOA}\label{sec:UnivEnv}
Here we describe constructions of the universal enveloping algebra associated to a VOA $V$, as quotients of certain graded, as well as (left and right) filtered completions of the universal enveloping algebra of the Lie algebra associated to $V$. Filtered completions are essential to our constructions, as they are compatible with crucial restriction maps from the Chiral Lie algebra to certain ancillary Lie algebras, allowing for the definition of the action of the Chiral Lie algebra on (tensor products of) $V$-modules.  The graded completion, on the other hand, allows both for ease in computation, and simpler descriptions of induced modules, and bimodules, and in \cref{sec:ModeAlgebra} of the mode transition algebras.  While these concepts are treated in one way or another throughout the VOA literature, for instance in \cite{FrenkelZhu, bzf,FrenkelLanglands, NT,  MNT},  we provide here and  in \cref{sec:Appendix}, a uniform description, where many details are given, clarifications are made, and the different constructions are compared to one another. We further remark that, although this section assumes that $V$ is a vertex operator algebra, however all the arguments and construction here in \cref{sec:UnivEnv} hold assuming only that $V$ is a graded vertex algebra since the conformal structure does not play a role  (see also \cref{sec:GoodTriples}).

\subsection{Graded and filtered completions} We recall the constructions of the universal enveloping algebra  \cite{FrenkelZhu} and the current algebra  \cite{NT} associated to a VOA $V$. 

\subsection{Split filtrations} The underlying vector spaces of the objects we will need to consider will either be graded or filtered (sometimes both), and these filtrations and gradings will be related to each other. The basic example of this is the space of Laurent polynomials $\CC[t, t^{-1}]$ which we choose to grade by $\CC[t, t^{-1}]_n = \CC t^{-n-1}$, and the space of Laurent series $\CC(\!(t)\!)$, which admits an increasing filtration by setting $\CC(\!(t)\!)_{\leq n} := t^{-n-1}\CC[\![t]\!]$. We will refer to this filtration as a \textit{left filtration} of $\CC(\!(t)\!)$ (see \cref{filtration def}). In this situation, when we have a graded subspace of a filtered space $\CC[t, t^{-1}] \subset \CC(\!(t)\!)$ which identifies the degree $n$ part of the graded subspace with the degree $n$ part of the associated graded space, we say that this pair gives a \textit{split filtration} (see \cref{split filter def}). Similarly, we refer to the filtration of $\CC(\!(t^{-1})\!)$ given by $\CC(\!(t^{-1})\!)_{\geq n} = t^{-n-1}\CC[\![t^{-1}]\!]$ as a \textit{right filtration}, and the pair $\CC[t, t^{-1}] \subset \CC(\!(t^{-1})\!)$ is also a split filtration.

\subsection{Lie algebras} Let $V$ be a VOA (or a graded vertex algebra). Since it is a graded vector space, it admits a trivial left (respectively right) split-filtration, given by $V_{\leq n} = \bigoplus_{d \leq n} V_{d}$ (respectively $V_{\geq n} = \bigoplus_{d \geq n} V_{d}$). In view of \cref{split tensor}, tensor products of split-filtered modules are naturally split-filtered, and consequently
\[V \otimes_{\CC} \CC[t, t^{-1}] \subset V \otimes_{\CC} \CC(\!(t)\!) \quad \text{ and } \quad V \otimes_{\CC} \CC[t, t^{-1}] \subset V \otimes_{\CC} \CC(\!(t^{-1})\!)\] define splittings of their left and right filtrations.

\begin{remark} Concretely, we can define the map $\val \colon V \otimes \CC(\!(t)\!) \to \ZZ$ by
\[ \val(a \otimes f(t)) = \deg(a)-N-1.
\]for a homogeneous element $a \in V$ and $f(t) \in t^{N}\CC[\![t]\!] \setminus t^{N-1}\CC[\![t]\!]$. The natural left filtration on $V \otimes_\CC \CC(\!(t)\!)$ is then given by $(V \otimes \CC(\!(t)\!))_{\leq n} := \val^{-1}(-\infty, n]$. \end{remark}

The linear map $\nabla = L_{-1} \otimes \id + \id \otimes \frac{d}{dt}$ is a linear endomorphism of each of these spaces of degree $1$ (see \cref{split maps def}). We define
\[ \LVL = \dfrac{V \otimes_\CC \CC(\!(t)\!)}{\mathrm{Im}(\nabla)}, \quad
\LVR = \dfrac{V \otimes_\CC \CC(\!(t^{-1})\!)}{\mathrm{Im}(\nabla)},
\ \mbox{ and } \  \LVf = \dfrac{V \otimes_\CC \CC[t,t^{-1}]}{\mathrm{Im}(\nabla)}. 
\]
These have induced split-filtrations via $\LVf \subset \LVL, \LVR$ by \cref{cokernel splitting}.
These filtered and graded vector spaces admit (filtered and graded) Lie algebra structures, with Lie brackets defined by:
\[ \left[a \otimes f(t),b \otimes g(t)\right]:= \sum_{k \geq 0} \frac{1}{k!} \left(a_{(k)}(b)\right) \otimes g(t)\dfrac{d^k(f(t))}{dt^k},
\]for all $a,b \in V$ and $f(t),g(t) \in \CC(\!(t)\!)$ or $f(t),g(t) \in \CC(\!(t^{-1})\!)$. 

More concretely in the case of $\LVf$, for $a \in V$ and $i \in \ZZ$ we denote by $a_{[i]}$ the class of the element $a \otimes t^{i}$ in $\LVf \subset \LVL, \LVR$. 
The restriction of the Lie bracket on $\LVf$ then is given by the following formula
\[ \left[a_{[i]},b_{[j]}\right]:= \sum_{k \geq 0} \binom{i}{k} \left(a_{(k)}(b)\right)_{[i+j-k]},
\] for all $a,b \in V$ and $i,j \in \ZZ$. 
Extending the notation introduced in \cite{DGT2}, we call $\LVL$ and $\LVR$ the left and right \textit{ancillary Lie algebras} and $\LVf$ the \textit{finite ancillary Lie algebra}. Note that $\LVL$ is isomorphic to the \textit{current Lie algebra} $\mathfrak{g}(V)$ from \cite{NT}.

\subsection{Enveloping algebras} We now let $\UuL, \UuR, \Uu$ be the universal enveloping algebras of $\LVL, \LVR, \LVf$ respectively. These enveloping algebras are left filtered, right filtered, and graded respectively, and we note that $\Uu \subset \UuL, \UuR$ again give splittings to the respective filtrations (see \cref{universal split filtration}). In the language of \cref{good definition} we will say that $(\UuL, \Uu, \UuR)$ forms a good triple of associative algebras. 

\begin{ex} These induced filtrations can be explicitly described. For instance we have that $\Uu_d$ is linearly spanned by elements $\ell^1 \cdots \ell^k$  such that $\ell^i \in \LVf_{d_i}$ and  $\sum_{i}d_i = d$ (and $k$ possibly zero if $d=0$). Analogously $(\UuL)_{\leq d}$ is linearly spanned by elements $\ell^1 \cdots \ell^k$   such that $\ell^i \in \LV_{\leq d_i}$ and $\sum_{i}d_i = d$ (and $k$ possibly zero if $d \geq 0$).
\end{ex}

These enveloping algebras have an additional structure of a topology induced by seminorms, which can be described in terms of systems of neighborhoods of $0$ (as in \cref{seminorm def} and \cref{seminorm remark}). These neighborhoods of the identity are given by left ideals $\NL{n}\UuL$ of $\UuL$ and $\NL{n}\Uu$ of $\Uu$ and right ideals $\NR{n}\UuR$ of $\UuR$ and $\NR n \Uu$ of $\Uu$ defined by:
\[
\NL{n}\UuL= \UuL \UuL_{\leq -n}, \quad
\NL{n}\Uu = \Uu \Uu_{\leq -n},  \quad
\NR{n}\UuR = \UuR_{\geq n} \UuR,  \quad
\NR{n}\Uu= \Uu_{\geq n} \Uu. 
\]
This definition coincides with that of a canonical seminorm on a (split-)filtered algebra, as described in \cref{canonical def}, and in particular gives a good seminorm on the triple (see \cref{good seminorm def} and \cref{good seminorm remark}). Most useful for us is that the category of good triples of algebras with good seminorms is closed under quotients and completions (\cref{good quotients} and \cref{good completions}). 

These neighborhoods can also be described alternately as follows:
\begin{lem}\label{lem:Ann}
One has an identification of left ideals $\NL{n}\Uu = \Uu \Uu_{\leq -n} = \Uu \LVf_{\leq -n}$. Similarly, one has $\NL{n}\UuL= \UuL \LVL_{\leq -n}$, $\NR{n}\UuR = \LVR_{\geq n} \UuR$ and $\NR{n}\Uu = \LVf_{\geq n} \Uu$.
\end{lem}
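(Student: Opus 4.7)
The first equality $\NL{n}\Uu = \Uu\Uu_{\leq -n}$ holds by definition, and since $\LVf_{\leq -n} \subseteq \Uu_{\leq -n}$, the inclusion $\Uu\LVf_{\leq -n} \subseteq \Uu\Uu_{\leq -n}$ is automatic. The substance of the lemma lies in the reverse inclusion $\Uu_{\leq -n} \subseteq \Uu\LVf_{\leq -n}$. Using the explicit description of $\Uu_{\leq -n}$ as the span of products $\ell^1 \cdots \ell^k$ with $\ell^i \in \LVf_{d_i}$ and $\sum_i d_i \leq -n$, this reduces to showing every such product lies in $\Uu\LVf_{\leq -n}$.

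The plan is to induct on the length $k$. The base case $k=1$ is immediate, since the product is itself in $\LVf_{\leq -n}$. For the inductive step $k \geq 2$, the main tool is the commutator identity $\ell\ell' = \ell'\ell + [\ell, \ell']$ in $\Uu$ together with the fact that the Lie bracket on $\LVf$ adds degrees, so $[\ell,\ell'] \in \LVf_{\deg \ell + \deg \ell'}$. Iteratively applying this identity to adjacent factors of $\ell^1 \cdots \ell^k$ produces a sum of terms each of which either (i) ends in a single factor from $\LVf_{\leq -n}$, hence lies in $\Uu\LVf_{\leq -n}$ directly, or (ii) is a strictly shorter product of total degree still $\leq -n$, handled by the inductive hypothesis. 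Termination is guaranteed by the bound $\sum_i d_i \leq -n$: since degrees add under the bracket, an appropriate iterated commutator of a suffix of the factors yields a single element of $\LVf$ of degree $\leq -n$, giving the desired rightmost factor.

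The hard part will be organizing the bookkeeping so that the rearrangements actually terminate in a factor in $\LVf_{\leq -n}$ at the rightmost position after collecting the shorter correction terms; a secondary induction on the degree of the last factor, combined with the primary induction on length, keeps track of the process. The remaining three identifications follow by entirely parallel arguments. For $\NL{n}\UuL = \UuL\LVL_{\leq -n}$, the grading on $\LVf$ is replaced by the left filtration on $\LVL$, which is compatible with the Lie bracket (degrees still add appropriately under $[\cdot,\cdot]$), so the same inductive strategy applies. The right-ideal identifications $\NR{n}\UuR = \LVR_{\geq n}\UuR$ and $\NR{n}\Uu = \LVf_{\geq n}\Uu$ are the mirror-image statements, proved by the analogous argument in which one absorbs the accumulated degree into the leftmost factor of the product rather than the rightmost.
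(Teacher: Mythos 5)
There is a genuine gap. Your plan rests on the claim that iterating the identity $\ell\ell' = \ell'\ell + [\ell,\ell']$ on adjacent factors of $\ell^1 \cdots \ell^k$ will eventually produce a sum of terms that either end in a factor from $\LVf_{\leq -n}$ or are strictly shorter. But look at what repeated application of this identity actually yields: the terms of maximal length $k$ are always of the form $\ell^{\sigma(1)}\cdots\ell^{\sigma(k)}$ for some permutation $\sigma$, since the commutator correction has length $k-1$. So if no individual $\ell^i$ lies in $\LVf_{\leq -n}$ --- for instance, take $k = n$ with each $\ell^i \in \LVf_{-1}$ --- then no permutation can put a factor from $\LVf_{\leq -n}$ on the right, and the induction never closes on the top-length part. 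Your remark that "an appropriate iterated commutator of a suffix yields a single element of $\LVf$ of degree $\leq -n$" is correct but beside the point: that iterated commutator is one summand of the full expansion, and the complementary summands (of the same length) remain. Concretely, with $\ell^1,\ell^2 \in \LVf_{-1}$ and $n=2$, we get $\ell^1\ell^2 = \ell^2\ell^1 + [\ell^1,\ell^2]$; the bracket sits in $\LVf_{-2} \subset \Uu\LVf_{\leq -2}$, but $\ell^2\ell^1$ has exactly the same problem, and the symmetrized part cannot be reached by commutation at all. (Indeed $\LVf_{\leq -2}$ is a graded Lie subalgebra, so by PBW the quotient $\Uu/\Uu\LVf_{\leq -2}$ is a Verma-type module with $\gr \cong \Sym(\LVf_{\geq -1})$, which is nonzero in degree $-2$.)

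The paper's proof is not a commutator-reorganization argument at all. It invokes He's Lemma A.2.1, which provides the key normal-ordering decomposition: for $\alpha = a^1_{n_1}\cdots a^r_{n_r} \in \Uu_{-n}$ and any $M \gg 0$, one can write $\alpha = \beta + \gamma$ with $\beta \in \LVf_{-n}$ a \emph{single} Lie-algebra element and $\gamma \in \Uu\LVf_{\leq -M}$. This decomposition exploits the specific mode structure of the ancillary Lie algebra of a vertex algebra --- in particular relations that go beyond the bare Lie bracket --- and is not something that falls out of the universal-enveloping-algebra commutator identity alone. To repair your argument you would need to locate and use that external input; the elementary inductive rearrangement does not suffice.
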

We note that in \cref{lem:Ann}, we identify $\LVf_{\leq -n}$ with its image in $\Uu$.
\begin{proof} 
We will prove only the first equality, the others following by similar methods. Clearly $\Uu \Uu_{\leq -n}$ contains $\Uu \LVf_{\leq -n}$, so we need only prove the reverse inclusion, which amounts to showing $\Uu_{-n} \subset \Uu \LVf_{\leq -n}$. For this, suppose we have an element $\alpha = a^1_{n_1} a^2_{n_2} \cdots a^r_{n_r} \in \Uu_{-n}$. By inductively using \cite[Lemma~A.2.1]{He}, for $M >> 0$, we may write  
\(\alpha = a^1_{n_1} a^2_{n_2} \cdots a^r_{n_r} = \beta + \gamma\),
with $\beta \in \LVf_{-n}$ and $\gamma \in \Uu \LVf_{\leq -M}$. In particular, choosing $M \geq n$, we find $\gamma \in \Uu \LVf_{\leq -n}$, showing $\alpha \in \Uu \LVf_{\leq -n}$ as desired.
\end{proof}

We can restrict these seminorms to various filtered and graded parts of these algebras as in \cref{seminorm restriction def}. For example, we write $\NL{n}\UuL_{\leq p}$ to denote $\NL{n}(\UuL) \cap \UuL_{\leq p}$. Concretely we obtain systems of neighborhoods as follows (see \cref{split the products} for more details):
\[
    \NL{n}\UuL_{\leq p} = (\UuL \UuL_{\leq -n})_{\leq p} = \sum_{j\leq -n} \UuL_{\leq p-j}\UuL_{\leq j}, \qquad \NR{n}\UuR_{\geq p} = (\UuR_{\geq n} \UuR)_{\geq p} = \sum_{i\geq n} \UuR_{\geq i}\UuR_{\geq p-i}\]
\[
    \NL{n}\Uu_{p} = (\Uu \Uu_{\leq -n})_{p} = \sum_{j\leq -n} \Uu_{p-j}\Uu_{j}, \ \mbox{ and } 
    \qquad \NR{n}\Uu_{p} = (\Uu_{\geq n} \UuR)_{p} = \sum_{i\geq n} \Uu_{i}\Uu_{p-i}.
\]

We note that in particular, we have $\NR{n+p}\Uu_p = \NL{n} \Uu_p$. Through the restriction of the seminorm to these subspaces, we then define a filtered completions of $\UuL$ and $\UuR$, both containing a graded completion of $\Uu$ (see \cref{fg completion def}).
Specifically, we set 
\[\hUuL_d := \varprojlim_n \dfrac{\UuL_{\leq d}}{\NL{n}\UuL_{\leq d}}, 
\qquad \hUuR_d := \varprojlim_n \dfrac{\UuR_{\geq d}}{\NL{n}\UuR_{\geq d}},
\qquad \hUu_d := \varprojlim_n \dfrac{\Uu_{ d}}{\NL{n}\Uu_{d}} = \varprojlim_n \dfrac{\Uu_{ d}}{\NR{n+d}\Uu_{d}},
\]
And set
\[  
\hUuL := \bigcup_d \hUuL_d,
  \qquad \hUuR := \bigcup_d \hUuR_d, \qquad \hUu := \bigoplus_d \hUu_d.
 \]
As previously mentioned, it follows from \cref{good completions} that this will result in a good triple $(\hUuL, \hUu, \hUuR)$ of associative algebras with good seminorms.

Finally, one may construct a graded ideal $J$ of $\hUu$ generated by the Jacobi relations, namely $J$ is generated by for $\ell$, $m$, $n \in \mathbb{Z}$, and $a$, $b\in V$, by
\begin{equation*}
    \sum_{i\ge 0}(-1)^i{\binom{\ell}{i}}\left(a_{[m+\ell-i]}b_{[n+i]}-(-1)^{\ell}b_{[n+\ell-i]}a_{[m+i]}\right)
=\sum_{i\ge 0}{\binom{m}{i}}(a_{(\ell+i)}(b))_{[m+n-i]}.
\end{equation*}

If we let $J^{\mathsf{R}}$ and $J^{\mathsf{L}}$ be the ideals of $\hUuR$ and $\hUuL$ generated by $J$, and we let $\ov J, \ov J^{\mathsf{R}}, \ov J^{\mathsf{L}}$ be the respective closures (see \cref{closure split}), then we find that $(\ov J^{\mathsf{L}}, \ov J, \ov J^{\mathsf{R}})$ form a good triple (of nonunital algebras) 
by \cref{good ideals from homogeneous} and \cref{good ideal closures}. Finally, by \cref{good quotients}, 
we find that the resulting quotient algebras 
\[
\UVL = \hUuL/\ov J^{\mathsf{L}}, \quad
\UV = \hUu/\ov J, \quad
\UVR = \hUuR / \ov J^{\mathsf{R}},
\]
form a good triple of associative algebras with good seminorms (actually norms).
\begin{defn} \label{enveloping algebra defs}
We call $\UVL, \UVR, \UV$ the left, right and finite universal enveloping algebras of $V$, respectively.	
\end{defn}

We note that a $V$-module corresponds, in this language, to a $\UV$-module $W$ (or a $\UVL$-module) such that the action of this normed (and hence topological) algebra is continuous. That is, such that the multiplication map
\[\UV \times W \to W \quad \text{ or equivalently, } \quad \UVL \times W \to W\]
is continuous, where $W$ is given the discrete topology, and $\UV$ and $\UVL$ are topologized according to their norms.

\subsubsection{Relation to the literature} We note that $\UV$ coincides with the {universal enveloping algebra} of $V$ introduced in \cite{FrenkelZhu}, while we can identify $\UVL$ with the {current algebra} introduced in \cite{NT} or with the {universal enveloping algebra} $\widetilde{U}(V)$ introduced in \cite{bzf}, with a minor modification (see \cite[footnote on p.74]{FrenkelLanglands}).

\subsection{Subalgebras and subquotient algebras}

We describe here some algebras built from the algebras $\UV, \UVL, \UVR$ which will play a special role. By definition, $\UVL_{\leq -n} \triangleleft \UVL_{\leq 0}$ and $\UVR_{\geq n} \triangleleft \UVR_{\geq 0}$ are two-sided ideals when $n > 0$ with
\[\UVL_{\leq 0}/\UVL_{\leq -1} \cong \UV_0 \cong \UVR_{\geq 0}/\UVR_{\geq 1},\] 
by the fact that these algebras are part of a good triple. We now look more closely at $\UV_0$, which forms a subring of $\UV$. As our triple of algebras is good, the seminorms on our algebras are almost canonical (\cref{alm can def}), and in particular by \cref{alm can def}\eqref{alm can 3}, it follows that $\NL{n}\UV_0=\NR{n}\UV_0$ for every $n$, so that there is no ambiguity in denoting these neighborhoods by $\Nn{r}\UV_0$. We also see in the same way that $\Nn r \UV_0$ is a two-sided ideal of $\UV_0$. 

\begin{defn}\label{def:GZA}The \textit{$d$th higher level Zhu algebra} of $V$ is  the quotient 
\[\Aa_d(V)=\UV_0/\Nn{d+1}\UV_0.\] For an element $\alpha \in \UV_0$, we will write $[\alpha]_d$ for its image in $\Aa_d(V)$.  When $V$ is understood, we will denote $\Aa_d(V)$ simply by $\Aa_d$.
\end{defn}

\subsubsection{Relation to the literature} In \cite{ZhuMod} the author defines an associative algebra, now referred to as the (zeroth) Zhu algebra as the quotient of $V$ by an appropriate subspace $O(V)$. In \cite{FrenkelZhu,NT} it is shown that this algebra is isomorphic to an appropriate quotient of the degree zero piece of the universal enveloping algebra of $V$ (or of the current algebra of $V$). As mentioned in the introduction, higher level Zhu algebras $\Aa_d$ have been introduced in \cite{DongLiMason:HigherZhu} as quotients of $V$ by subspaces $O_d(V)$, and proved to be realized as quotients of the degree zero piece of the universal enveloping algebra of $V$ in \cite{He}. We notice further that the map that realizes the isomorphism between $V/O_d(V)$ and $\Aa_d$ is explicitly realized by identifying $[a] \in V/O_d(V)$ with the class of the element $a_{\deg(a)-1}$ in $\Uu_0/\Nn{d+1}\Uu_0$ for every homogeneous element $a \in V$.

\section{Induced modules
 and the mode transition algebra}\label{sec:ZhuFunctor}

The constructions and results discussed here are true in greater generality, as detailed in \cref{sec:GHZ} and in \cref{sec:GMTA}. For instance, as  in \cref{sec:UnivEnv}, the constructions mentioned in \cref{sec:PhiL} and in \cref{sec:ModeAlgebra} hold for  a graded vertex algebra. The conformal structure is however used in \cref{sec:BimodFun}.

\subsection{Induced modules}\label{sec:PhiL}
As is the convention, throughout we denote $\Aa_0$  by $\Aa$.

\begin{defn}\label{def:PhiL}
For a left module $W_0$ over $\Aa$, we define the \textit{left generalized Verma module} $\PhiL(W_0)$ as
\begin{equation}\label{eq:PhiL}\PhiL(W_0) = \bigoplus_{p = 0}^{\infty} \PhiL(W_0)_p = \left(\UV/\NL 1\UV\right) \otimes_{\UV_{0}} W_0 \cong \left(\UVL/\NL 1\UVL\right) \otimes_{\UV_{0}} W_0. \end{equation}
For $Z_0$ a right module over $\Aa$, we define the \textit{right generalized Verma module} $\PhiL(W_0)$ as
\begin{equation}\label{eq:PhiR}\PhiR(Z_0) =  \bigoplus_{p = 0}^{\infty} \PhiL(Z_0)_{-p} = Z_0 \otimes_{\UV_{0}} \left(\UV/\NR 1\UV\right) \cong  Z_0 \otimes_{\UV_{0}} \left(\UVR/\NR 1\UVR\right). 
\end{equation}
\end{defn}

We note that this is well defined. In fact, the claimed isomorphisms of \eqref{eq:PhiL} and \eqref{eq:PhiR} follow from \cref{discrete lemma}, while the grading is explained in \cref{induced grading}.

Moreover, from \cref{discrete lemma} we have that $\UV/\NL 1\UV \cong \UVL/\NL 1\UVL$, so that this quotient can be regarded both as a $(\UV,\UV_{0})$ bimodule and as a $(\UVL ,\UV_{0})$ bimodule. In particular, this shows that the ancillary algebra acts on the left on $\PhiL(W_0)$.

The functors $\PhiL_d$ have a universal property, described in \cref{prop:UnivPaper}, and proved more generally in \cref{universal property}.  Namely, given a continuous left $\UV$-module $W$, we define an $\Aa_d$-module $\Omega_d(W)$ by
\[\Omega_d(W) = {\mathrm{rAnn}}_{W}(\NL {d+1} U)= \{w \in W \mid (\NL {d+1} U)w = 0\},\]
where by  \cref{lem:Ann}, one has that the right annihilator ${\mathrm{rAnn}}_{W}(\NL {d+1} U)$  coincides with 
${\mathrm{rAnn}}_{W}(\LVL_{\le -d-1})$. It therefore follows that the definition of $\Omega_d(W)$ agrees with
\[\Omega_d(W) =  \{w \in W \mid \LVL_{-d-1} w = 0\},\]
as originally given in \cite{DongLiMason:HigherZhu}.

\begin{prop}\label{prop:UnivPaper}
Let $M$ be a $\UV$-module and $W_0$ an $\Aa_d$-module. Then there is a natural isomorphism of bifunctors:
\[\Hom_{\Aa_d}(W_0, \Omega_d(M)) = \Hom_{\UV}(\PhiL_d(W_0), M).\]
\end{prop}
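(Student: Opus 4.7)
My plan is to exhibit the isomorphism as the standard tensor--hom adjunction, once the required bimodule structures are put in place.

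First, I would verify that $\PhiL_d(W_0):=(\UV/\NL{d+1}\UV)\otimes_{\UV_0} W_0$ (the evident extension of \eqref{eq:PhiL}) is well-defined when $W_0$ is only assumed to be an $\Aa_d$-module. The key point is that $\UV/\NL{d+1}\UV$ carries the structure of a $(\UV,\Aa_d)$-bimodule: the left action is automatic since $\NL{d+1}\UV$ is a left ideal, while the graded compatibility $\UV_{\le -d-1}\cdot\UV_0\subseteq\UV_{\le -d-1}$ together with \cref{lem:Ann} gives $\NL{d+1}\UV\cdot\UV_0\subseteq\NL{d+1}\UV$, and since $\Nn{d+1}\UV_0\subseteq\NL{d+1}\UV$ the induced right $\UV_0$-action kills the defining ideal of $\Aa_d$ and so descends to $\Aa_d$.

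Next, given $\phi\in\Hom_{\UV}(\PhiL_d(W_0),M)$, I would define its adjoint by $\bar\phi(w):=\phi(1\otimes w)$. Its image lies in $\Omega_d(M)$: for $\alpha\in\NL{d+1}\UV$ one has $\alpha\cdot\phi(1\otimes w)=\phi(\alpha\otimes w)=0$, invoking \cref{lem:Ann} and the discussion preceding the proposition to identify the two descriptions of $\Omega_d(M)$. The $\UV_0$-balancedness of the tensor product yields $\Aa_d$-linearity of $\bar\phi$. Conversely, for $\psi\in\Hom_{\Aa_d}(W_0,\Omega_d(M))$, I would set $\tilde\psi(u\otimes w):=u\cdot\psi(w)$; the map $(u,w)\mapsto u\cdot\psi(w)$ from $\UV\times W_0$ is $\UV_0$-balanced because $\psi$ is $\UV_0$-linear, and factors through the left quotient by $\NL{d+1}\UV$ precisely because $\NL{d+1}\UV$ annihilates $\psi(w)\in\Omega_d(M)$.

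Finally, a direct computation gives $\widetilde{\bar\phi}(u\otimes w)=u\cdot\phi(1\otimes w)=\phi(u\otimes w)$ and $\overline{\tilde\psi}(w)=\psi(w)$, so the two constructions are mutually inverse, and naturality in $W_0$ and $M$ is immediate from these formulas. The main obstacle is not computational but the preliminary bookkeeping: confirming that $\UV/\NL{d+1}\UV$ really carries a well-defined right $\Aa_d$-action, and that the mode-theoretic definition of $\Omega_d(M)$ from \cite{DongLiMason:HigherZhu} coincides with the annihilator description $\{w\in M : \NL{d+1}\UV\cdot w=0\}$ used here. Both are settled by \cref{lem:Ann} and the surrounding discussion in \cref{sec:UnivEnv}, after which the argument reduces to the classical induction--restriction adjunction.
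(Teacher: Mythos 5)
Your proposal is correct and follows essentially the same route as the paper's own proof (given in the appendix as the more general \cref{universal property}): one exhibits mutually inverse maps by restricting a $\UV$-module map to $W_0 \cong 1 \otimes W_0$, landing in $\Omega_d(M)$ because $\NL{d+1}\UV$ annihilates $1 \otimes W_0$, and conversely extending an $\Aa_d$-linear map to the induced module via the module action. You are somewhat more explicit about verifying the $(\UV,\Aa_d)$-bimodule structure on $\UV/\NL{d+1}\UV$ and about checking the compositions, where the paper leaves those routine verifications to the reader, but the underlying argument is the same induction--restriction adjunction.
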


\begin{remark}\label{sec:GeneratingSets}
 If $W_0$ if finite dimensional over $\mathbb{C}$, and if $V$ is $C_1$-cofinite,
then there are a finite number of elements $x^1$, $x^2$, $\ldots$, $x^r \in V$
such that $\PhiL(W_0)$ is  spanned by elements of the form
\[
x^1_{(-m_1)}\cdot x^2_{(-m_2)}\cdots x^r_{(-m_r)} \otimes u,\]
for some $u \in W_0$ and positive integers $m_1 \ge m_2 \ge \cdots \ge m_r \ge 1$.
\end{remark}

\subsubsection{Relation to the literature}\label{sec:Quotient}
By the universal property in \cref{prop:UnivPaper}, the functor $\PhiL$ is naturally isomorphic  the functor denoted $\overline{M}_0$ in \cite[page 258]{ZhuMod}, \cite[(4.4)]{DongLiMason:HigherZhu}, and \cite[page 3301]{BVY}.  Moreover, one can relate  $\PhiL$ to $Q_n(d)$ from  \cite{MNT},  which in the language used here, can be written $Q_n(d)={\ms{U}_d}/{\NL{n} \ms{U}_d}$. In particular for $n=1$ this gives the degree $d$ part of a generalized Verma module 
    \[\PhiL(W_0)_d={\ms{U}_d}/{\NL{1} \ms{U}_d}  \otimes_{\ms{U}_0} W_0 = Q_1(d)\otimes_{\ms{U}_0} W_0.\]
    In \cite[Eq 2.6.1]{MNT} a series of sub-algebras (called Quasi-finite algebras) of the universal universal enveloping algebra is defined for all $d\in \NN$, and by \cite[Thm 3.3.4]{MNT} if $V$ is $C_2$-cofinite, then for $d\gg0$ there is an equivalence of categories of $A_d$-modules and (admissible) 
    $V$-modules. 

    \subsection{Mode transition algebras and their action on modules}\label{sec:ModeAlgebra} 

In this section we introduce the \textit{mode transition algebra} $\Ac(V)$ associated with a vertex operator algebra $V$. A general treatment of these algebras is developed in \cref{sec:Generalized}, while we will list here the principal consequences of the general theory. We begin by introducing the space underlying $\Ac(V)$, often denoted $\Ac$ when $V$ is understood.

\begin{defn} \label{def:phi}
Let $V$ be a VOA and $\Aa=\Aa_0$ be the Zhu algebra associated to $V$. We define $\Ac=\Ac(V)$ to be the vector space
\begin{align*}
\Ac &= \PhiR(\PhiL(\Aa)) = \PhiL(\PhiR(\Aa))= \left(\UV/\NL 1 \UV\right) \otimes_{\UV_{ 0}} \Aa \otimes_{\UV_{ 0}} \left(\UV/\NR 1 \UV\right).	
\end{align*}
Moreover, using the notation $\Ac_{d_1, d_2} = \left(\UV/\NL 1 \UV\right)_{d_1} \otimes_{\UV_{ 0}} \Aa \otimes_{\UV_{ 0}} \left(\UV/\NR 1 \UV\right)_{d_2}$  we write
\[ \Ac = \bigoplus_{d_1 \in \mathbb Z_{\geq 0}} \ \bigoplus_{d_2 \in \mathbb Z_{\leq 0}}\Ac_{d_1, d_2}.
\]  \end{defn}

The isomorphism described in the following Lemma is crucial to the definition of an algebra structure on $\Ac$. We refer to \cref{innerstar lemma} for its proof.

\begin{lem}\label{lem:innerstar}There is an isomorphism:
\begin{align*} \left(\UV/\NR 1 \UV\right) \otimes_{\UV}
\left(\UV/\NL 1 \UV\right) &\to
\Aa \\
\ov{\alpha} \otimes \ov{\beta} &\mapsto \alpha \innerstar \beta
\end{align*}
where, for $\alpha, \beta \in \UV$ homogeneous, we define $\alpha \innerstar \beta$ as :
\[\alpha \innerstar \beta = \begin{cases}
0 & \text{if } \deg(\alpha) + \deg(\beta) \neq 0 \\
[\alpha \beta]_0 & \text{if } \deg(\alpha) + \deg(\beta) = 0
 \end{cases}
\]
and we extend the definition to general products by linearity.
\end{lem}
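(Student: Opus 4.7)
The plan is to construct an explicit two-sided inverse to the given map. The key observation is that the tensor product $(\UV/\NR 1\UV) \otimes_\UV (\UV/\NL 1\UV)$ is graded by total degree (since $\UV$, the two quotients, and the $\UV$-balance are all graded) and in fact concentrated in degree zero, so that its only nontrivial piece consists of tensors of the form $\overline{1} \otimes \overline{\gamma}$ with $\gamma \in \UV_0$, which pairs naturally with $\Aa$.

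First I would check that $\innerstar$ descends to the tensor product. The $\UV$-balance is routine: for homogeneous $\alpha, r, \beta$, both $(\alpha r) \innerstar \beta$ and $\alpha \innerstar (r \beta)$ equal $[\alpha r \beta]_0$ when $\deg(\alpha) + \deg(r) + \deg(\beta) = 0$, and vanish otherwise. For vanishing on the two one-sided ideals, suppose $\alpha = xy$ with $x \in \UV_{\geq 1}$ homogeneous, and $\deg(\alpha\beta)=0$; then $xy\beta \in \UV_{\geq 1}\UV \cap \UV_0 = \NR 1 \UV_0 = \Nn 1 \UV_0$ (using the previously established identity $\NR 1 \UV_0 = \NL 1 \UV_0$), so $[\alpha\beta]_0 = 0$. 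The case $\beta \in \NL 1\UV$ is symmetric. This produces a well-defined map $\varphi$ from the tensor product to $\Aa$.

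Next I would define $\psi\colon \Aa \to (\UV/\NR 1\UV) \otimes_\UV (\UV/\NL 1\UV)$ by $[\gamma]_0 \mapsto \overline 1 \otimes \overline\gamma$; well-definedness follows by the same computation ($\gamma = xy$ with $x \in \UV_{\geq 1}$ yields $\overline 1 \otimes \overline{xy} = \overline x \otimes \overline y = 0$, since $\overline x = 0$ in $\UV/\NR 1\UV$). The composite $\varphi \circ \psi$ is clearly the identity of $\Aa$. For the other composite, I would use $\UV$-balance on homogeneous representatives to rewrite $\overline \alpha \otimes \overline \beta = \overline{\alpha\beta} \otimes \overline 1$, which vanishes whenever $\deg(\alpha\beta) > 0$ because $\alpha\beta \in \UV_{\geq 1} \subseteq \NR 1\UV$; symmetrically, using $\overline\alpha \otimes \overline\beta = \overline 1 \otimes \overline{\alpha\beta}$, it vanishes when $\deg(\alpha\beta) < 0$; and when $\deg(\alpha\beta) = 0$, one has $\overline\alpha \otimes \overline\beta = \overline 1 \otimes \overline{\alpha\beta} = \psi([\alpha\beta]_0) = \psi(\varphi(\overline\alpha \otimes \overline\beta))$. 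The only real care is in verifying the two-sided containment $\UV_{\geq 1}\UV \cap \UV_0 \subseteq \Nn 1 \UV_0$ and its mirror; once this good-triple bookkeeping is in hand, the remainder is formal manipulation with the grading and $\UV$-balance, and no hypothesis on $V$ beyond a graded vertex algebra structure is used.
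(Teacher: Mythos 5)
Your proof is correct and takes essentially the same approach as the paper's (given in the generalized form in \cref{innerstar lemma}): both use the grading to collapse the tensor product to degree zero and then identify the kernel with $\Nn{1}\UV_0$ via the two-sided equality $\NL{1}\UV_0 = \NR{1}\UV_0$ coming from the good-triple structure. The paper is slightly more compact, analyzing the map $\UV_0 \to \UV/(\NL{1}\UV + \NR{1}\UV)$ directly (the target being the standard presentation of $(\UV/\NR{1}\UV) \otimes_\UV (\UV/\NL{1}\UV)$) rather than constructing the explicit inverse $\psi$ as you do, but the mathematical content is the same.
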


\begin{ex} \label{rmk:explostar} 
We explicitly describe the element $\alpha \ostar \beta \in \Aa$ when $\alpha$ and $\beta$ are homogeneous elements of opposite degrees. Three cases can occur: 
\begin{itemize}
    \item $\deg(\alpha)<0$. It follows that $\deg(\beta) > 0$ and so $\alpha\beta \in \Nn{1}\UV_0$, which gives $\alpha \ostar \beta =0$.
    \item $\deg(\alpha)=0=\deg(\beta)$. We have that $\alpha \ostar \beta = [\alpha]_0 \cdot [\beta]_0 $ since the map $\UV_0 \to \Aa_0$ is a ring homomorphism.
    \item $\deg(\alpha) > 0$. We first rewrite $\alpha \beta = \beta\alpha + [\alpha,\beta]$. Since $\beta\alpha \in \Nn{1}\UV_0$, we have that $\alpha \ostar \beta$ coincide with $[\, [\alpha,\beta] \,]_0$.
\end{itemize}
Note that if $\alpha, \beta \in \LVf$, then $[\alpha,\beta]\in \LVf_0$, so the above description tells us that $\alpha\ostar \beta$ is computed via the standard map $\LVf_0 \to \Aa$ described in \cite{lithesis}.
\end{ex}

\begin{remark}\label{rmkWhyEqualA}
  We note that one has that  $\Ac_{0,0}=\Aa$.  Indeed, by the definitions
  \begin{equation*}
      \Ac_{0,0} = \left({\UV}/{\NL{1} \UV}\right)_0 \otimes_{{\UV}_0}
      \left({\UV_0}/{\Nn{1} \UV_0}\right) \otimes_{\UV_0}
      \left({\UV}/{\NR{1} \UV}\right)_0 \cong 
       \Aa \otimes_{\Aa} \Aa \otimes_{\Aa} \Aa \cong \Aa.
  \end{equation*}
\end{remark}

We will next simultaneously describe the algebra structure on $\Ac$, and the action of this algebra $\Ac$ on generalized Verma modules.

\begin{defn} \label{def:abstract module associativityPaper}
Let $W_0$ be an $\Aa$-module. Following \cref{star product} we  define the map $\Ac \times \PhiL(W_0) \to \PhiL(W_0)$ as follows. For $\xx = u \otimes a \otimes u' \in \Ac$ and $\beta \otimes w \in \PhiL(W_0)$ we set
\[\xx \star (\beta \otimes w) := u \otimes a (u' \innerstar \beta) w.\] \end{defn}

By \cref{def:abstract module associativityPaper} this map defines an algebra structure on $\Ac$ and $\PhiL(W_0)$ becomes a left $\Ac$-module. Moreover the subspace $\Ac_d:= \Ac_{d,-d}$ is closed under multiplication, hence it defines a subalgebra of $\Ac$. The following is a special case of \cref{def:GoodTripleMTA notriple}. 

\begin{defn} \label{mode transition def} We call $\Ac(V)=\Ac=(\Ac,+, \star)$ the \textit{mode transition algebra} of $V$, and $\Ac_d=(\Ac_d, +, \star)$ the \textit{$d$-th mode transition algebra} of $V$.  
\end{defn}

\begin{remark}
More generally, the $d$-th mode transition algebra $\Ac_d$ acts naturally on the degree $d$ part of any weak $\NN$-graded module in a way which is compatible with the action of the enveloping algebra (see \cref{general module action}).
\end{remark}

\begin{remark} \label{rmk:VAenough} We observe that the underlying vector space and the algebra structure of $\Ac(V)$ does not depend on the existence of a conformal structure on $V$. Therefore $\Ac(V)$ can be defined for every graded vertex algebra $V$.
\end{remark}

We refer to \cref{rmk:rational} for an explicit description of the algebra structure of $\Ac(V)$ when $V$ is a $C_2$-cofinite and rational vertex operator algebra. The following assertion is straightforward

\begin{remark}\label{lem: action lemma} Let $W_0$ be an $\Aa_0$-module. Then the action of $\Ac_d$ on $\PhiL(W_0)_d$ factors through the action of $\Aa_d$ described in \cref{def:abstract module associativityPaper} via the map $\mu_d$. 
\end{remark}

\subsubsection{Relation to the literature} In \cite{DongAlgebras} a series of unital associative algebras $\Aa_{e,d}$, defined as quotients of $V$, with $\Aa_{d,d}\cong \Aa_d$. By \cref{def:abstract module associativityPaper}, the $\Ac_d$ act on the degree $d$ part of an induced module $W=\PhiL(W_0)$, as is true for the $\Aa_{e,d}$, although they differ. 
In \cite{HuangAlg1}, a related series of associative algebras $A^d(V)$ is defined. These contain higher level Zhu algebras as sub-algebras, and  act on (the sum of) components of a module up to degree $d$.  In \cite{HuangAlg2},  relations are established between bimodules for these 
associative algebras and (logarithmic) intertwining operators, and using these, in \cite{HuangAlg3}, modular invariance of (logarithmic) 
intertwining operators is proved.

\subsection{Strong unital action of \texorpdfstring{$\Ac_d$}{Ad} on modules}\label{sec:AdModuleAction}
The $\Ac_0=\Aa$  has an identity element given by the image of ${\bm 1}_{[-1]}$, denoted $1$. 
On the other hand, as we show in \cref{eg:virasoro}, for $d \in \mathbb{Z}_{>0}$,  the $\Ac_d$ may not admit multiplicative identity elements. However, if there are unities in $\Ac_d$ for all $d \in \NN$, we have the following results about them.

\begin{deflem} \label{lem:StrongUnitConditions} \label{def:StrongUnit}
Let $M$ be an $\Aa$-module, and assume that for every $d \in \NN$ the ring $\Ac_d$ is unital, with unity $\Is_d \in \Ac_d$. We say that $\Is_d$ is a strong identity element for every $d$ if one of the following equivalent conditions is verified: \begin{enumerate}[label={(\textit{\arabic*})}]
    \item \label{it:H1} For every  $d$,$n$,$m \in \NN$, for all $u \in \LV_d$, and   $\xx \in \Ac_{n,-m}$ one has \[(u \cdot \Is_n) \star \xx = u \cdot \xx \qquad \text{ and } \qquad \xx \star (\Is_m \cdot u) = \xx \cdot u.\] 
    \item \label{it:H2} For every $n,m \in \NN$ and for every $\xx \in \Ac_{n,-m}$ one has $\Is_n \star \xx =\xx= \xx \star \Is_m$.
    \item \label{it:H4} For every $d \in \NN$ the homomorphism $\Ac_{d} \to \End(\PhiL(\Aa)_d)$ is unital;
    \item \label{it:H5} For every $d \in \NN$, the homomorphism $\Ac_{d} \to \End(\PhiL(\Aa)_d)$ is unital and injective.
    \item \label{it:H3} For every $d \in \NN$ and $M$ an $\Aa$-module, the homomorphism $\Ac_{d} \to \End(\PhiL(M)_d)$ is unital.
\end{enumerate}
\end{deflem}

\begin{proof} We prove these conditions are equivalent. Since \ref{it:H5} implies \ref{it:H4} and \ref{it:H3} implies \ref{it:H4}, it will be enough to show the following implications: 
\[ \text{\ref{it:H1} $\Leftrightarrow$ \ref{it:H2} $\Leftrightarrow$  \ref{it:H4} $\Rightarrow$ \ref{it:H3} 
 \quad and \quad \ref{it:H4} 
 $\Rightarrow$ \ref{it:H5}}  \]

\ref{it:H1} $\Rightarrow$ \ref{it:H2}:  this follows by taking $d=0$ and $u = 1$.

\ref{it:H2} $\Rightarrow$ \ref{it:H1}: This follows from \cref{lem:abstract-module-associativity}. 

\ref{it:H2} $\Rightarrow$ \ref{it:H4}: This follows from the identification of $\PhiL(\Aa)_d$ with $\Ac_{d,0}$.

\ref{it:H4} $\Rightarrow$ \ref{it:H2}: By linearity, we can assume that $\xx \in \Aa_{n,-m}$ is represented by an element of the form $u \otimes a \otimes v$ with $u \in \UVL_{n}$,  $v \in \UVR_{-m}$ and $a \in \Aa$. Then 
\[ \Is_n \star ( u \otimes a \otimes v) = \Is_n \star (u \otimes \xx \otimes 1) \cdot (v)  =  (\Is_n \star (u \otimes a)) \otimes v = (u \otimes a \otimes 1) \cdot v = u \otimes a\otimes v
\] where \ref{it:H4} ensures that the third equality holds. 

\ref{it:H4} $\Rightarrow$ \ref{it:H3}: By linearity we can assume that an element of $\PhiL(M)_d$ is given by $u \otimes m$ for $u \in \UL_d$ and $m \in M$. Hence we obtain $\Is_d \star(u\otimes m)=(\Is_d \star u) \otimes m= u \otimes m$.

\ref{it:H4} $\Rightarrow$ \ref{it:H5}: By \cref{def:abstract module associativityPaper} the action of an element $\xx \in \Ac_d$ via $\star$ on $\Ac_d \subseteq \Ac= \PhiL(\PhiR(\Aa)) = \PhiL(\Aa) \otimes_{\UV_{ 0}} \left(\UV/\NR 1 \UV\right)$ is determined by the action of $\xx$ on $\PhiL(\Aa)$. In particular, as the former is injective when we have an identity element, the latter must be injective in this case as well.
\end{proof}

\subsection{Relation to the functor \texorpdfstring{$\Phi$}{Phi}}  \label{sec:BimodFun}
To define the mode transition algebra $\Ac$, we used the map  $\PhiL\PhiR=\PhiL\PhiR$, which we can interpret as a functor from the category of $\Aa$-bimodules to the category of $\UV$-bimodules. Its properties in a more general framework are described in \cref{lem:abstract-module-associativity}. 

We now show that this functor agrees with the functor denoted $\Phi$ in \cite[Definition 2.2]{DGK} which assigns to an $\Aa$-bimodule $M$ the $(\UVL)^{\otimes 2}$-module
\[ \Phi(M):=(\UVL \otimes \UVL) \underset{\UVL_{\leq 0} \otimes \UVL_{\leq 0}}{\fctensor} M,
\] where $\UVL_{\leq 0} \otimes \UVL_{\leq 0}$ acts on $M$ as follows:
\[ (u \otimes v) (m) = \begin{cases} u \cdot m \cdot \theta(v) & \text{ if } u,v \in \UV_{0}\\
0 & \text{ otherwise}.
\end{cases} 
\]
Here $\theta$ is the natural involution of $\UV_0$, from e.g. \cite[Eq.(7)]{DGK}), and which we describe briefly in \cref{sec:theta}. In \cref{lem:PhiLRPhi} we describe the relation between functors $\PhiL$ and $\PhiR$  to $\Phi$.

\subsubsection{The involutions, left and right actions}\label{sec:theta}
As we explain here, there is an anti-Lie algebra isomorphism $\theta$ used to transport the universal enveloping algebra, considered as an object that acts on modules on the left (denoted here by $\UVL$), to an analogous completion $\UVR$ that acts on modules on the right.

The map
$\theta \colon  V \otimes_\CC \Cl{t} \to V \otimes_\CC \Cl{t^{-1}}$ is defined, for  $a \in V$ homogeneous, by
\begin{equation}\label{eq:LieTheta} a \otimes \sum_{i \geq N} c_i t^{i} \mapsto (-1)^{\deg(a)} \sum_{j \geq 0}^{\deg(a)} \left( \dfrac{1}{j!} \left( L_1^j(a)\right) \otimes \sum_{i \geq N} c_i t^{2\deg(a)-i-j-2}\right), 
\end{equation}  and extended linearly.

The map $\theta$ is related to the involution $\gamma = (-1)^{L_0}e^{L_1}  \colon V \rightarrow V$ defined, for $a \in V$ homogeneous by
    \begin{equation} \label{eq:Gamma} a \mapsto (-1)^{\deg a} \sum_{i \geq 0} \frac 1 {i!} L_1^i(a),
\end{equation} and extended by linearity. To state the relation succinctly, for every homogeneous $a\in V$ we set
\begin{equation}\label{eq:J}J_n(a) := a_{[\deg(a) - 1 + n]}.
\end{equation}
 This notation, used in \cite{NT}, has the property that $\deg(J_n(v))=-n$, so that the degree of such an element is easily read. 
 
\begin{lem}\label{lem:ThetaGammId}For $a\in V$, homogeneous,
$\theta(J_n(a)) = J_{-n}(\gamma(a))$. 
    \end{lem}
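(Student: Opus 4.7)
The statement is a direct unpacking of the definitions, so the plan is purely computational. First I would set $k = \deg(a)-1+n$, so that $J_n(a)$ is represented by $a \otimes t^{k}$ modulo $\operatorname{Im}(\nabla)$, and then apply formula \eqref{eq:LieTheta} to this single monomial. This yields
\[
\theta(J_n(a)) \;=\; (-1)^{\deg(a)} \sum_{j=0}^{\deg(a)} \tfrac{1}{j!}\, L_1^j(a) \otimes t^{\,2\deg(a)-k-j-2}.
\]
Since $L_1$ lowers conformal weight by one, $L_1^j(a)$ is either zero or homogeneous of degree $\deg(a)-j$ (and in fact vanishes for $j > \deg(a)$ because $V$ is of CFT-type with $V_0=\CC\mathbf{1}$), so the sum in $\gamma(a)$ from \eqref{eq:Gamma} is genuinely finite and well-defined.

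Next I would substitute $k = \deg(a)-1+n$ into the exponent of $t$ to get
\[
2\deg(a)-k-j-2 \;=\; \deg(a)-n-j-1 \;=\; \deg\!\bigl(L_1^j(a)\bigr) - 1 - n,
\]
which is exactly the exponent appearing in $J_{-n}(L_1^j(a))$ by definition \eqref{eq:J}. Thus every summand on the right is $\tfrac{1}{j!}\,J_{-n}(L_1^j(a))$ up to the common sign $(-1)^{\deg(a)}$.

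Finally, pulling the linear map $J_{-n}$ out of the sum (which is valid because $J_{-n}$ is $\CC$-linear on each graded piece $V_{\deg(a)-j}$), I would recognize
\[
\theta(J_n(a)) \;=\; J_{-n}\!\Bigl((-1)^{\deg(a)} \sum_{j\geq 0} \tfrac{1}{j!}\, L_1^j(a)\Bigr) \;=\; J_{-n}(\gamma(a)),
\]
using the definition of $\gamma$ from \eqref{eq:Gamma}. There is no real obstacle here beyond careful bookkeeping; the only subtle point is verifying that the exponent shift intrinsic to $\theta$ matches exactly the index shift that converts $J_n$ on an element of degree $\deg(a)$ into $J_{-n}$ on an element of degree $\deg(a)-j$, which is guaranteed precisely by the specific shift $2\deg(a)-j-2$ hardwired into the formula for $\theta$.
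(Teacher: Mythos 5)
Your computation is correct and is essentially the same argument the paper has in mind; the paper's proof of this lemma is simply the one-line remark that it "follows by combining \eqref{eq:LieTheta} and \eqref{eq:Gamma} and using linearity," and you have filled in exactly the bookkeeping that remark is alluding to. The key exponent match $2\deg(a)-k-j-2 = \deg(L_1^j(a))-1-n$ is verified correctly, and your observation that $L_1^j(a)=0$ for $j>\deg(a)$ (so the finite sum in $\theta$ agrees with the a priori infinite sum in $\gamma$) is the right point to check.
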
 
    \begin{proof}
        This follows by combining \eqref{eq:LieTheta} and \eqref{eq:Gamma} and using linearity.
    \end{proof}

\begin{lem} \label{lem:antiLietheta} The map $\theta$ induces a Lie algebra anti-isomorphism $\LVL \to \LVR$, which restricts to a Lie algebra involution on $\LVf$, such that $\theta(\LVL_{\leq d})=\LVR_{\geq -d}$ and $\theta(\LVf_{d}) = \LVf_{-d}$.
\end{lem}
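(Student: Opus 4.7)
The plan is to prove the lemma in four stages: well-definedness of $\theta$ as a $\CC$-linear map on the un-quotiented spaces, descent modulo $\mathrm{Im}(\nabla)$, the grading and filtration statements, and finally the involution and anti-bracket properties. For well-definedness of $\theta \colon V \otimes \CC(\!(t)\!) \to V \otimes \CC(\!(t^{-1})\!)$, note that for homogeneous $a \in V_d$, since $L_1$ lowers degree by one and $V = \bigoplus_{n \geq 0} V_n$, one has $L_1^j(a) = 0$ for $j > d$, so the inner sum in \eqref{eq:LieTheta} is finite. Moreover, if $f(t) \in t^N \CC[\![t]\!]$, the new $t$-exponents $2d - i - j - 2$ are bounded above as $i \to \infty$, placing the image in $V \otimes \CC(\!(t^{-1})\!)$; polynomial inputs give polynomial outputs, so $\theta$ restricts to $V \otimes \CC[t, t^{-1}]$. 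For descent, I would carry out a direct calculation on elements $J_n(a)$, using the standard commutator relations derived from $[L_1, L_{-1}] = 2L_0$ to move $L_{-1}$ past the $L_1^j$ factors, to show that $\theta$ carries $\mathrm{Im}(\nabla)$ into $\mathrm{Im}(\nabla)$. Hence $\theta$ descends to $\LVL \to \LVR$ and to $\LVf \to \LVf$.

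For the grading and filtration, I would invoke \cref{lem:ThetaGammId}: since $\deg J_n(a) = -n$ and each homogeneous component of $\gamma(a)$ lies in some $V_{d-j}$ with $0 \leq j \leq d$, every summand of $J_{-n}(\gamma(a))$ has degree $n$, so $\theta$ strictly reverses degree. This gives $\theta(\LVL_{\leq d}) \subseteq \LVR_{\geq -d}$ and $\theta(\LVf_d) \subseteq \LVf_{-d}$, with the reverse inclusions following once $\theta$ is known to be invertible. For the involution property, \cref{lem:ThetaGammId} reduces $\theta^2 = \id$ on $\LVf$ to $\gamma^2 = \id_V$. From $[L_0, L_1] = -L_1$ one obtains $(-1)^{L_0} L_1 (-1)^{-L_0} = -L_1$, hence $(-1)^{L_0} e^{L_1} = e^{-L_1} (-1)^{L_0}$, so
\[\gamma^2 = e^{-L_1}(-1)^{L_0}(-1)^{L_0}e^{L_1} = e^{-L_1} e^{L_1} = \id_V.\]
Equivalently, this reflects the combinatorial identity $\sum_{j=0}^{\ell} \frac{(-1)^j}{j!(\ell - j)!} = \delta_{\ell, 0}$ obtained from a double application of \eqref{eq:LieTheta}. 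Invertibility of $\theta$ on $\LVL$, with inverse given by the analogous formula $\LVR \to \LVL$, then promotes the above inclusions to equalities.

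The main obstacle is the anti-Lie bracket identity $\theta([x, y]) = [\theta(y), \theta(x)]$. The plan is to expand both sides on generators $x = a \otimes f(t)$ and $y = b \otimes g(t)$ using the bracket formula
\[[a \otimes f,\, b \otimes g] = \sum_{k \geq 0} \frac{1}{k!}\, a_{(k)}(b) \otimes g(t)\, \frac{d^k f}{dt^k},\]
together with \eqref{eq:LieTheta}, and then match the expansions via the vertex-algebra skew-symmetry relation $Y(a, z) b = e^{z L_{-1}} Y(b, -z) a$, whose mode form reads $a_{(n)} b = \sum_{i \geq 0} \frac{(-1)^{n+i+1}}{i!} L_{-1}^{i}(b_{(n+i)} a)$. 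Because the identity is being verified modulo $\mathrm{Im}(\nabla)$, the $L_{-1}^{i}$ tails produced by skew-symmetry can be absorbed into total derivatives in $t$, hence vanish. The combination of this cancellation with the sign $(-1)^{\deg}$ and the $L_1^j$ corrections built into $\theta$ yields exactly the swap of arguments required to turn the bracket into its opposite. This combinatorial matching is the heart of the argument, and is analogous to adjoint and opposite current-algebra constructions standard in the VOA literature.
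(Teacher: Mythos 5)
Your approach differs from the paper's in a structural way. The paper establishes the anti-bracket identity and $\theta^2 = \id$ on the \emph{graded} Lie algebra $\LVf$ by citing \cite[Proposition 4.1.1]{NT}, checks $\theta(\LVf_d)\subset \LVf_{-d}$, and then promotes the result to the ancillary algebras $\LVL,\LVR$ via a density-and-continuity argument: $\LVf$ is dense in both $\LVL$ and $\LVR$ for the seminorm topologies of \cref{filtered is seminormed}, $\theta$ and the brackets are continuous, and the identity on a dense subspace therefore extends. You instead propose to define $\theta$ directly on $V\otimes\CC(\!(t)\!)$, check descent through $\mathrm{Im}(\nabla)$, prove the anti-bracket by a hands-on expansion using skew-symmetry, and obtain $\theta^2=\id$ from $\gamma^2=\id_V$. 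This is a more self-contained route; your degree count and your $\gamma^2=\id$ computation (via $(-1)^{L_0}e^{L_1}=e^{-L_1}(-1)^{L_0}$ and $(-1)^{2L_0}=\id$) are both correct and give a clean proof of the involution property, which is a genuine improvement over quoting it.

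However, there is a real gap where you delegate the hardest step. The verification that $\theta([x,y]) = [\theta(y),\theta(x)]$ modulo $\mathrm{Im}(\nabla)$ is only sketched: you state that the $L_{-1}^i$ tails from skew-symmetry become total derivatives and that the $(-1)^{\deg}$ signs and $L_1^j$ corrections ``yield exactly the swap,'' but this is precisely where the nontrivial combinatorics live. After applying $\theta$ to the bracket, the $L_1^j$ factors from \eqref{eq:LieTheta} must be commuted past the operators $a_{(k)}$ using $[L_1,a_{(k)}]$-type relations, and one must check that the resulting sums rearrange into the bracket of $\theta(b\otimes g)$ and $\theta(a\otimes f)$ in the reversed order; you never perform this bookkeeping. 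This identity is exactly the content of \cite[Proposition 4.1.1]{NT}, so without either carrying out the expansion in full or citing that reference, your argument for the central claim of the lemma is incomplete. A secondary, smaller gap: you assert that the inclusions $\theta(\LVL_{\leq d})\subseteq\LVR_{\geq -d}$ become equalities ``once $\theta$ is known to be invertible,'' but you only establish $\theta^2=\id$ on $\LVf$ (via \cref{lem:ThetaGammId}); to conclude invertibility on $\LVL$ you would need either a direct Laurent-series computation of $\theta^2$ there or the density argument you chose to avoid.
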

\begin{proof}
One can check that this restricts to an endomorphism of $V \otimes \CC[t,t^{-1}]$, which by \cite[Proposition 4.1.1]{NT}, defines a Lie algebra involution of $\LVf$, that is, $\theta([\ell_1,\ell_2]) = -[\theta(\ell_1), \theta(\ell_2)]$, and $\theta^2 = \id$. Moreover, it is easy to verify that $\theta(\LVf_d) \subset \LVf_{-d}$. As the Lie algebras $\LVL, \LVR$ carry exhaustive and separated split filtrations by the graded subalgebra $\LVf$, they are naturally equipped with norms via \cref{filtered is seminormed} -- that is, by declaring that elements of large positive degree are large in $\LVL$ and small in $\LVR$. With this definition, it follows that $\theta$ is continuous, and as noted in \cref{filtered continuous mult}, that the multiplication on the Lie algebras is continuous. Finally, the fact that $\LVf$ induces a splitting of the filtrations, it follows that $\LVf$ is simultaneously dense in $\LVL$ and $\LVR$. Consequently, we see by continuity that $\theta$ induces an anti-homomorphism from $\LVL$ to $\LVR$, which is an anti-isomorphism as $\theta^2 = \id$.
\end{proof}

 If $R=(R,+, \cdot)$ is a ring, denote by $R^{\mathrm{op}}$ its opposite ring, that is $R^{\mathrm{op}}=(R, +, *)$ where $a*b:= b \cdot a$. Similarly, if $(L, [ \, ,\,])$ is a Lie algebra, we denote by $L^{\mathrm{op}}$ its opposite Lie algebra, where $[a,b]_{L^{\mathrm{op}}} := [b,a]_{L}$.

\begin{lem} \label{lem:U+U-op}For $\Uu(\LVR)$ the universal enveloping algebra of $\LVR$, \[\theta \colon \Uu(\LVL) \to \Uu(\LVR)^{\mathrm{op}}\]
is an isomorphism of rings. 
\end{lem}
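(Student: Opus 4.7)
The plan is to derive this from \cref{lem:antiLietheta} together with the universal property of universal enveloping algebras, via the standard identification $\Uu(L^{\mathrm{op}}) \cong \Uu(L)^{\mathrm{op}}$ for any Lie algebra $L$. First I would observe that a Lie algebra anti-homomorphism $\theta \colon L_1 \to L_2$ is the same data as a Lie algebra homomorphism $L_1 \to L_2^{\mathrm{op}}$, since by definition $[x,y]_{L_2^{\mathrm{op}}} = [y,x]_{L_2}$. Thus \cref{lem:antiLietheta} yields a genuine Lie algebra isomorphism $\theta \colon \LVL \to \LVR^{\mathrm{op}}$.

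Next I would establish the general identification $\Uu(L^{\mathrm{op}}) \cong \Uu(L)^{\mathrm{op}}$ of associative algebras. Regarding $L$ as a vector subspace of $\Uu(L)^{\mathrm{op}}$ via the canonical inclusion into $\Uu(L)$, the commutator bracket computed in the opposite algebra is
\[ [x,y]_{\Uu(L)^{\mathrm{op}}} = x \ast y - y \ast x = yx - xy = -[x,y]_L = [x,y]_{L^{\mathrm{op}}}, \]
so the identity on underlying vector spaces defines a Lie algebra homomorphism $L^{\mathrm{op}} \to \Uu(L)^{\mathrm{op}}$. By the universal property of $\Uu(L^{\mathrm{op}})$, this extends uniquely to an algebra homomorphism $\Uu(L^{\mathrm{op}}) \to \Uu(L)^{\mathrm{op}}$, and the symmetric construction starting from the inclusion $L \hookrightarrow \Uu(L^{\mathrm{op}})^{\mathrm{op}}$ supplies a two-sided inverse, so the map is an isomorphism.

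Combining these two ingredients, the Lie algebra isomorphism $\theta \colon \LVL \to \LVR^{\mathrm{op}}$ extends by the universal property of $\Uu(\LVL)$ to an algebra homomorphism $\Uu(\LVL) \to \Uu(\LVR^{\mathrm{op}})$, which, postcomposed with the identification $\Uu(\LVR^{\mathrm{op}}) \cong \Uu(\LVR)^{\mathrm{op}}$, produces the required ring map; it is a bijection because each factor is. The argument is purely formal, relying only on the universal property of $\Uu$ and the opposite-algebra identification, so the main obstacle is not conceptual but notational: the only real pitfall is consistent bookkeeping of signs when translating between Lie algebra anti-homomorphisms, opposite brackets, and opposite multiplications.
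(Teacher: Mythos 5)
Your proof is correct and takes essentially the same route as the paper: reinterpret the anti-isomorphism of \cref{lem:antiLietheta} as a Lie algebra isomorphism $\LVL \to (\LVR)^{\mathrm{op}}$, then combine the functoriality of $\Uu$ with the identification $\Uu(L^{\mathrm{op}}) \cong \Uu(L)^{\mathrm{op}}$. The only difference is that you spell out the universal-property argument for that last identification, which the paper simply asserts.
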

\proof We have established in \cref{lem:antiLietheta} that $\theta \colon \LVL \to \LVR$ is an anti-isomorphism of Lie algebras, so that $\theta \colon \LVL \to (\LVR)^{\mathrm{op}}$ is a Lie-algebra isomorphism. Moreover, as $\Uu((\LVR)^{\mathrm{op}}) = \Uu(\LVR)^{\mathrm{op}}$, it follows that $\theta$ induces an isomorphism between $\Uu(\LVL)$ and $\Uu(\LVR)^{\mathrm{op}}$, as wanted.
Here we note that $\theta(\alpha \cdot \beta)= \theta(\beta) \cdot \theta(\alpha)$ for every $\alpha, \beta \in \LVL$ (and where $\cdot$ is the usual product in the $\Uu(\LVL)$ and $\Uu(\LVR)$).
\endproof

In particular $\theta$ is an isomorphism between $\Uu(\LVf)$ and $\Uu(\LVf)^{\mathrm{op}}$.

\begin{lem} \label{lem:PhiLRPhi} Let $B$ be an associative ring,  $W^1$ an $(\Aa,B)$-bimodule and $W^2$ a $(B,\Aa)$-bimodule. Then we have a natural identification
\[\PhiL(W^1)\otimes_{B} \PhiR(W^2) \cong \Phi(W^1\otimes_B W^2)\] of $(\UVL,\UVR)$-bimodules. In particular we have  $\Ac=\Phi(\Aa)$. \end{lem}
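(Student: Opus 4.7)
The plan is to rewrite both sides of the claimed isomorphism as the same ``sandwich'' tensor product and then read off the matching $(\UVL,\UVR)$-bimodule structures. The workhorse is the identification
\[
\UVL/\NL 1 \UVL \;\cong\; \UVL \otimes_{\UVL_{\leq 0}} \UV_{0},
\qquad
\UVR/\NR 1 \UVR \;\cong\; \UV_{0} \otimes_{\UVR_{\geq 0}} \UVR,
\]
which follows from \cref{lem:Ann} (giving $\NL 1 \UVL = \UVL\,\UVL_{\leq -1}$ and its right analogue), together with the fact that the action of $\UV_{0}$ on any $\Aa$-module factors through the Zhu quotient $\UV_{0}\twoheadrightarrow \Aa$.

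Substituting these into \cref{def:PhiL} I would obtain the compact formulas
\[
\PhiL(W^{1}) \;\cong\; \UVL \otimes_{\UVL_{\leq 0}} W^{1},
\qquad
\PhiR(W^{2}) \;\cong\; W^{2} \otimes_{\UVR_{\geq 0}} \UVR,
\]
where $\UVL_{\leq 0}$ (resp.\ $\UVR_{\geq 0}$) acts on $W^{1}$ (resp.\ $W^{2}$) through the composite $\UVL_{\leq 0}\twoheadrightarrow \UV_{0}\twoheadrightarrow \Aa$. Associativity of the tensor product then yields
\[
\PhiL(W^{1})\otimes_{B}\PhiR(W^{2}) \;\cong\; \UVL \otimes_{\UVL_{\leq 0}} \bigl(W^{1}\otimes_{B}W^{2}\bigr) \otimes_{\UVR_{\geq 0}} \UVR.
\]

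Next I would unpack the right-hand side. By construction of $\Phi$, the algebra $\UVL_{\leq 0}\otimes \UVL_{\leq 0}$ acts on $M = W^{1}\otimes_{B}W^{2}$ only via the quotient $\UV_{0}\otimes \UV_{0}$, and on that quotient by the natural left $\Aa$-action in the first factor and by the $\theta$-twisted right $\Aa$-action in the second. Using \cref{lem:antiLietheta} and \cref{lem:U+U-op}, the map $\theta$ is an anti-isomorphism $\UVL \xrightarrow{\sim} (\UVR)^{\mathrm{op}}$ sending $\UVL_{\leq 0}$ onto $\UVR_{\geq 0}$ and restricting to an anti-involution of $\UV_{0}$. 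Transporting the right $\UVL$-factor in the definition of $\Phi$ to $\UVR$ along $\theta$ converts the $\theta$-twisted right $\UV_{0}$-action on $M$ into the ordinary right action coming from the $\Aa$-bimodule structure, giving
\[
\Phi(M) \;\cong\; \UVL \otimes_{\UVL_{\leq 0}} M \otimes_{\UVR_{\geq 0}} \UVR.
\]
This matches the sandwich expression for $\PhiL(W^{1})\otimes_{B}\PhiR(W^{2})$, and the $(\UVL,\UVR)$-bimodule structure on both sides is manifestly the one coming from left and right multiplication on the outer factors, so the isomorphism is canonical and natural in $W^{1}, W^{2}, B$. The ``in particular'' is then immediate by specialising $W^{1}=W^{2}=B=\Aa$ and using $\Aa\otimes_{\Aa}\Aa \cong \Aa$ in \cref{def:phi}.

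I expect the main obstacle to be bookkeeping with completions: the tensor product appearing in the definition of $\Phi$ is the filtered-completed tensor product $\fctensor$ from \cref{sec:UnivEnv}, while $\PhiL$ and $\PhiR$ are defined via ordinary tensors over $\UV_{0}$. One must therefore verify, using the good-triple formalism of \cref{sec:UnivEnv}, that the discreteness of the $\UV_{0}$-actions on the $\Aa$-modules $W^{1}, W^{2}$ causes $\fctensor$ to collapse to the ordinary tensor at the step where we transport along $\theta$, so that the associativity rearrangement above is legitimate. Once this compatibility is in place, the rest of the argument is purely formal.
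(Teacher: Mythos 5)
Your argument is correct, and the core idea is the same as the paper's---transport the second $\UVL$-factor in the definition of $\Phi$ along the anti-isomorphism $\theta\colon \UVL \to (\UVR)^{\mathrm{op}}$ of \cref{lem:U+U-op} to match the $(\UVL,\UVR)$-bimodule structure on $\PhiL(W^1)\otimes_B \PhiR(W^2)$. Where you differ is in presentation: the paper records the equivalence of categories between $(\UVL)^{\otimes 2}$-modules and $(\UVL,\UVR)$-bimodules, then simply exhibits the explicit map
\[
(u\otimes w_1)\otimes(w_2\otimes v) \;\longmapsto\; (u\otimes\theta(v))\otimes(w_1\otimes w_2),
\]
and asserts it is an isomorphism; you instead derive the intermediate ``sandwich'' formula
\[
\PhiL(W^1)\otimes_B\PhiR(W^2) \;\cong\; \UVL\otimes_{\UVL_{\leq 0}}(W^1\otimes_B W^2)\otimes_{\UVR_{\geq 0}}\UVR
\]
and then show $\Phi(W^1\otimes_B W^2)$ has the same description. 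Your version makes the naturality and bimodule structure more transparent, at the cost of having to justify the sandwich rewriting.

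Two small cautionary points. First, the reference you lean on for the sandwich rewriting should be \cref{discrete lemma} rather than \cref{lem:Ann}: the latter concerns the uncompleted enveloping algebras $\Uu,\UuL,\UuR$, while the algebras $\UV,\UVL,\UVR$ that actually appear in $\PhiL$, $\PhiR$, $\Phi$ are quotients of completions, so the relevant identification is the one proved in \cref{discrete lemma}. Second, the completion issue you flag at the end is real: since the good seminorm on $\UVL$ is only \emph{almost} canonical, one has $\cN^1\UVL\subseteq\N^1\UVL$ with a priori strict inclusion, so $\UVL\otimes_{\UVL_{\leq 0}}\UV_0 = \UVL/\cN^1\UVL$ need not literally equal $\UVL/\NL 1\UVL$ before tensoring with a discrete $\Aa$-module. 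The discreteness argument in \cref{discrete lemma} is indeed the right tool to collapse this, and the paper's own proof quietly relies on the same fact when it asserts the explicit map is an isomorphism, so you are in the same position as the paper. You should just make sure that in a fully written-up version this step is handled via \cref{discrete lemma} rather than treated as automatic.
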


\begin{proof} We first note that there is a natural equivalence of categories between left $(\UVL)^{\otimes 2}$-modules and $(\UVL,(\UVL)^\text{op})$-modules. Moreover, as described in \cref{lem:U+U-op}, the involution $\theta$ provides an identification $\UVR \cong (\UVL)^\text{op}$. It follows that the map $\PhiL(W^1)\otimes_{B} \PhiR(W^2) \to \Phi(W^1\otimes_B W^2)$ induced by
\[ (u \otimes w_1) \otimes (w_2 \otimes v) \mapsto (u \otimes \theta(v)) \otimes (w_1 \otimes w_2)
\] for all $u \in \UVL$, $v \in \UVR$ and $w_i \in W_i$ is indeed an isomorphism.\end{proof}

\begin{remark}\label{rmk:rational} Let $V$ be a rational VOA.  Then the mode transition algebras $\Ac_d$ have strong identity elements. To see this, we note that the rationality of $V$ implies that $\Aa$ is finite and semi-simple, and thus has a bimodule decomposition \[\Aa\cong \prod_{k=1}^{m} I^k = \prod_{k=1}^{m} W_0^k\otimes (W_0^k)^\vee,\] where $\{1, \dots , m\}$ is the set indexing the isomorphism classes of simple $V$-modules. Equivalently, $W_0^k$ runs over all isomorphism classes of simple left $\Aa$-modules (so that $W_0^\vee$ is the corresponding dual right $\Aa$-module). By \cref{lem:PhiLRPhi} we therefore have:
\begin{align*}
    \Ac = \Phi(\Aa)= \Phi \left(\prod_{k=1}^m W_0^k \otimes (W_0^k)^\vee \right) = \prod_{k=1}^m \PhiL(W_0^k) \otimes_{\CC} \PhiR(W_0^k)^\vee.
\end{align*} 
Note that since $V$ is rational, the functor $\PhiL$ takes simple $\Aa$-modules to simple $V$-modules. This follows from the fact that $\PhiL$ preserves indecomposable modules \cite[Lemma 1.2]{DGK}, and that for a rational VOA, indecomposable modules are simple. Hence for $W_0$ a simple $\Aa$-module, $W := \PhiL(W_0)$ must be simple as well. Similarly, we have that also $\PhiR(W_0^\vee)$ must be a simple $V$-module. 
We give some details about this:  the involution $\theta$ identifies left and right $\Aa$-modules allowing us to consider $(W^k_0)^\vee$ as a left $\Aa$-module which we denote as $\mbox{}^\theta(W^k_0)^\vee$. From this perspective, and the above discussions relating $\PhiL$, $\PhiR$ and $\theta$, we thus have a natural identification 
\[ \PhiL(\mbox{}^\theta(W^k_0)^\vee)= \PhiR((W^k_0)^\vee) \qquad \text{and} \qquad \PhiL(\mbox{}^\theta(W^k_0)^\vee)_d= \PhiR((W^k_0)^\vee)_{-d},\] for every $d \in \NN$. Since $W^k_0$ was simple, also $\mbox{}^\theta(W^k_0)^\vee$ is simple, and thus we conclude that $\PhiR((W^k_0)^\vee)$ is simple as well. 

We now show that $\PhiR((W_0^k)^\vee)$ is isomorphic to the contragredient module $(W^k)'$ of $W^k$. The contragredient module $(W^k)'$ has zero part $(W^k)'_0 = \mbox{}^\theta(W^k_0)^\vee$ inducing a natural map $\PhiR((W^k_0)^\vee)=\PhiL(\mbox{}^\theta(W^k_0)^\vee)\to (W^k)'$. 
As the contragredient of a simple $V$-module is simple
, it follows that this map is an isomorphism. In particular one has $\PhiR((W^k_0)^\vee)_{-d}=(W^k_{d})^\vee$ and thus
\begin{equation} \label{eq:Acrtl}
    \Ac =\bigoplus_{d\in \ZZ_{\ge 0}}\prod_{k=1}^m W^k_d \otimes_{\CC}(W^k_{d})^\vee.
\end{equation} 
Using this decomposition, the $\star$-product is induced, by linearity, from 
\[ (a_{W^k} \otimes \varphi_{W^k_d})  \star (b_{W^j_e} \otimes \psi_{W^j}) = \begin{cases} \varphi_{W^k_d}(b_{W^j_e})( a_{W^k} \otimes \psi_{W^j}) &\text{if } k=j \text{ and } e=d \\ 0 &\text{otherwise},
\end{cases}\]
where $\varphi_{W^k_d} \colon W^k_d \to \CC$ and $b_{W_e^j} \in W_e^j$.
Note that, by rationality, the spaces $W^k_d$ are necessarily finite dimensional \cite[Definition 1.2.4]{ZhuMod}. It then follows that,  for all $d\in \mathbb{Z}_{\ge 0}$, we have an isomorphism of rings \[\Ac_d\cong \prod_{i=1}^m \text{End}_\CC(W^k_d)\] and thus ${\textbf{1}}_d:=\prod_{i=1}^m \text{Id}_{W^k_d}$ is its strong identity element. We remark that \cite[Proposition 7.2.1]{NT} states, without a detailed proof and using a different notation, that \eqref{eq:Acrtl} holds, under the assumption that $V$ is not only rational, but also $C_2$-cofinite. From our argument it is apparent how $C_2$-cofinitess is not needed.
\end{remark}

\section{Smoothing, limits, and coinvariants}\label{sec:LimAndCoinv}

In \cref{sec:CoinvNotation} we describe the sheaf of coinvariants on schemes $S$ parametrizing families of pointed and coordinatized curves in general terms, while in \cref{sec:CompleteCoinvariants}, we explain what we mean by sheaves defined over a scheme $S = \Spec R$, where $R$ is a ring complete with respect to some ideal $I$. In \cref{sec:settingSmoothing}, we describe the setup for considering coinvariants on smoothings of nodal curves, establishing some results needed for our geometric applications.  In particular, 
in \cref{sec:Step2}, for the proof of \cref{prop:smoothing}, we explicitly describe the sheaf $\Lc_{\Cs\setminus P_\bullet}(V)$ of Chiral Lie algebras.

Throughout this section,   $V$ is a VOA with no additional finiteness assumptions. 

\subsection{Coinvariants}\label{sec:CoinvNotation}
Let $S$ be a scheme and let $\Ws$ be a quasi-coherent sheaf of $\Os_S$-modules. Let $\Ls$ be a quasi-coherent sheaf of Lie algebras on $S$ acting on $\Ws$. We define the \textit{sheaf of coinvariants} $[\Ws]_{\Ls}$ on S as the cokernel 
\[{\Ls} \otimes_{\Os_{S}} {\Ws} \to {\Ws} \to [{\Ws}]_{{\Ls}} \to 0.\]

For future use, it will be helpful to note that the formation of the sheaves of coinvariants commutes with base change.
\begin{lem} \label{coinvariant pullback}
Let $\Ls$ be a quasi-coherent sheaf of Lie algebras on a scheme $S$ acting on a quasi-coherent sheaf $\Ws$. For any morphism $S' \to S$, we have $\left([{\Ws}]_{{\Ls}}\right)_{S'} \cong [{\Ws}_{S'}]_{{\Ls}_{S'}}$.
\end{lem}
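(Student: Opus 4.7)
The plan is to reduce the statement to the standard fact that pullback of quasi-coherent sheaves is right exact, hence commutes with cokernels. Recall that, by definition, $[\Ws]_{\Ls}$ sits in an exact sequence
\[
\Ls \otimes_{\Os_S} \Ws \xrightarrow{\rho} \Ws \to [\Ws]_{\Ls} \to 0,
\]
where $\rho$ denotes the action map. I will apply the pullback functor $f^\ast$ along $f \colon S' \to S$ to this presentation and identify the resulting sequence with the one defining $[\Ws_{S'}]_{\Ls_{S'}}$.

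First I note that $f^\ast$ is right exact on quasi-coherent sheaves (being a left adjoint to $f_\ast$), so applying it to the displayed exact sequence yields an exact sequence
\[
f^\ast(\Ls \otimes_{\Os_S} \Ws) \xrightarrow{f^\ast \rho} f^\ast \Ws \to f^\ast [\Ws]_{\Ls} \to 0.
\]
Next, since pullback commutes with tensor products of quasi-coherent sheaves, there is a canonical identification $f^\ast(\Ls \otimes_{\Os_S} \Ws) \cong \Ls_{S'} \otimes_{\Os_{S'}} \Ws_{S'}$. Under this identification, $f^\ast \rho$ is precisely the action map of $\Ls_{S'}$ on $\Ws_{S'}$ (since the action on the pullback is defined by pulling back $\rho$, so this is essentially tautological).

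Combining these observations, the cokernel of $f^\ast \rho$ is, on the one hand, $f^\ast [\Ws]_{\Ls} = ([\Ws]_{\Ls})_{S'}$, and on the other hand, $[\Ws_{S'}]_{\Ls_{S'}}$ by the definition of coinvariants on $S'$. The uniqueness of cokernels then gives the desired isomorphism, which is moreover natural in $S' \to S$. There is essentially no obstacle here; the only thing to verify carefully is the compatibility of the pulled-back action with the action of $\Ls_{S'}$ on $\Ws_{S'}$, but this is immediate from the construction of induced actions on pullbacks.
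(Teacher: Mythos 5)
Your proof is correct and follows exactly the same approach as the paper, which simply cites right exactness of pullback of quasi-coherent sheaves; you have just spelled out the standard details (commutation of pullback with tensor products, identification of the pulled-back action map) that the paper leaves implicit.
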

\begin{proof}
This follows from right exactness of pullback of quasi-coherent sheaves (equivalently right exactness of tensor).
\end{proof}

\begin{remark}\label{family coinvariants}
Let $\pi \colon  \Cs \to S$ be a projective curve, with $n$ distinct smooth sections $P_\bullet \colon  S \to  \Cs$ and formal coordinates $t_\bullet$ at $P_\bullet$. Assume further that $\Cs \setminus \sqcup P_\bullet(S) \to S$ is affine. This assumption is possible by Propagation of Vacua \cite[Thm 3.6]{codogni} (see also \cite[Theorem 4.3.1]{DGT2}). Denote by $W^\bullet=W^1\otimes \dots \otimes W^n$ the tensor product of an $n$-tuple of $V$-modules and let ${\Ws} := W^\bullet \otimes \mathcal{O}_{S}$. The sheaf of Chiral Lie algebras $\Ls:=\Lc_{\Cs \setminus P_\bullet}(V)$, originally defined in this context for families of stable curves with singularities in \cite{ dgt1, DGT2},  is explicitly described with more details here in \cref{sec:Step2}.  The sheaf of coinvariants $[\Ws]_\Ls$ defined above will also be denoted $[W^\bullet]_{(\Cs,P_\bullet, t_\bullet)}$.  While quasi-coherent \cite{dgt1}, for $V$  $C_2$-cofinite, or if generated in degree $1$, this sheaf  is coherent \cite{DGK, DG}. 
\end{remark}

\subsection{Completions}\label{sec:CompleteCoinvariants} As in \cref{sec:CoinvNotation}, we consider coinvariants over $S = \Spec (R)$, where $R$ is a ring that is complete with respect to some ideal $I$. For
$k \in \mathbb{Z}_{\ge 0}$,  setting $S_k = \Spec(R_k)=\Spec( R/I^{k+1})$, pullbacks ${\Ls}_k$ and ${\Ws}_k$ of $\Ls$ and  $\Ws$ to $S_k$ respectively, we work with coinvariants $[{\Ws}_k]_{{\Ls}_k}$ for any $k \in \mathbb{Z}_{\ge 0}$. Due to quasicoherence, each of these can be thought of as a module over $R_k$,  with maps $[{\Ws}_{k+1}]_{{\Ls}_{k+1}} \to [{\Ws}_k]_{{\Ls}_k}$.

\begin{defn}
In the above situation, we define the \textit{formal coinvariants}, denoted $\wh{[{\Ws}]_{{\Ls}}}$ to be the $I$-adically complete $R$-module
\[\wh{[{\Ws}]_{{\Ls}}} = \varprojlim  [{\Ws}_k]_{{\Ls}_k}.\]
\end{defn}

\begin{prop} \label{prop:coker.lim}
We have an identification 
\[\wh{[{\Ws}]_{{\Ls}}} = 
\coker\left[ \varprojlim 
\pi_* {\Ls}_k 
\otimes_{\Os_{S_k}} {\Ws}_k \longrightarrow \varprojlim {\Ws}_k\right]\]
\end{prop}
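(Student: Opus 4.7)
The plan is to apply the inverse limit functor to the right-exact cokernel sequences defining $[\Ws_k]_{\Ls_k}$ at each finite level, and argue via Mittag-Leffler that exactness survives passage to the limit. For each $k$, denote the action map by $\phi_k \colon \pi_* \Ls_k \otimes_{\Os_{S_k}} \Ws_k \to \Ws_k$, set $N_k := \mathrm{image}(\phi_k)$, and record the compatible short exact sequences
\[
0 \to N_k \to \Ws_k \to [\Ws_k]_{\Ls_k} \to 0
\]
together with the surjection $\pi_* \Ls_k \otimes_{\Os_{S_k}} \Ws_k \twoheadrightarrow N_k$.

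The first step is to verify that each of the inverse systems $\{\Ws_k\}$, $\{\pi_* \Ls_k \otimes_{\Os_{S_k}} \Ws_k\}$, and $\{N_k\}$ has surjective transition maps, hence satisfies Mittag-Leffler. For $\{\Ws_k\}$ this is immediate from $\Ws = W^\bullet \otimes \Os_S$ and surjectivity of $R/I^{k+2} \twoheadrightarrow R/I^{k+1}$. For $\{\pi_* \Ls_k\}$, the essential input is the affineness of $\Cs \setminus P_\bullet \to S$ recalled in \cref{family coinvariants}: this ensures that pushforward is exact on the sheaves in question and commutes with the base change by $R_k$, so that $\pi_* \Ls_k \cong (\pi_* \Ls) \otimes_R R_k$ and hence $\pi_* \Ls_{k+1} \twoheadrightarrow \pi_* \Ls_k$. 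Surjectivity of the transition maps on the tensor products, and consequently on the images $N_k$, then follows formally.

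The second step is to apply the long exact sequence for derived inverse limits. Mittag-Leffler on $\{N_k\}$ gives $\varprojlim{}^{1} N_k = 0$, so the short exact sequence above yields
\[
0 \to \varprojlim N_k \to \varprojlim \Ws_k \to \varprojlim [\Ws_k]_{\Ls_k} \to 0.
\]
Similarly, applying the same reasoning to the surjection $\pi_* \Ls_k \otimes_{\Os_{S_k}} \Ws_k \twoheadrightarrow N_k$ (together with Mittag-Leffler for the kernel system) produces a surjection $\varprojlim (\pi_* \Ls_k \otimes_{\Os_{S_k}} \Ws_k) \twoheadrightarrow \varprojlim N_k$. Combining these two conclusions identifies $\wh{[\Ws]_\Ls} = \varprojlim [\Ws_k]_{\Ls_k}$ with the cokernel of $\varprojlim (\pi_* \Ls_k \otimes_{\Os_{S_k}} \Ws_k) \to \varprojlim \Ws_k$, as claimed.

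The main technical obstacle is the Mittag-Leffler verification in Step~1, and specifically the surjectivity $\pi_* \Ls_{k+1} \twoheadrightarrow \pi_* \Ls_k$; this is exactly where the affineness of $\Cs \setminus P_\bullet \to S$ is essential, since pushforward is not right exact in general. Once this surjectivity is secured, the remainder of the argument is a formal consequence of standard properties of inverse limits and the fact that cokernels commute with filtered colimits (here, the dual manifestation under Mittag-Leffler).
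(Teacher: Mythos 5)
Your strategy --- apply $\varprojlim$ to the defining right-exact sequences at each finite level and control the passage to the limit via Mittag--Leffler --- is natural, and since the paper offers no proof of this proposition there is no official argument to compare against. But the parenthetical appeal to ``Mittag--Leffler for the kernel system'' is a genuine gap. You need $\varprojlim{}^{1} K_k = 0$ for $K_k := \ker\bigl(\pi_*\Ls_k \otimes_{\Os_{S_k}} \Ws_k \to N_k\bigr)$ in order to upgrade the levelwise surjections onto $N_k$ to a surjection onto $\varprojlim N_k$. Unlike $\{N_k\}$, the system $\{K_k\}$ does not have visibly surjective transition maps: applying the snake lemma to the map of short exact sequences from $0 \to K_{k+1} \to \pi_*\Ls_{k+1}\otimes\Ws_{k+1} \to N_{k+1} \to 0$ down to $0 \to K_{k} \to \pi_*\Ls_{k}\otimes\Ws_{k} \to N_{k} \to 0$, the best one gets is that $\coker(K_{k+1}\to K_k)$ is a quotient of $\ker(N_{k+1}\to N_k) = (N \cap I^{k+1}\Ws)/(N \cap I^{k+2}\Ws)$, which need not vanish. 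The obstruction is of Artin--Rees type: the $I$-adic filtration on $N := \im(\phi)$ need not be cofinal with the filtration it inherits from $\Ws$, and for the non-finitely-generated modules in play there is no free Artin--Rees lemma. So the assertion that the ``same reasoning'' applies is exactly where the proof is incomplete.

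Where the paper invokes the proposition (\cref{cor:DiagQues}), the gap is harmless because of the special shape of the data: there $\Ws$ and $\pi_*\Ls$ are both of the form $V \otimes_\CC R$ for fixed $\CC$-vector spaces $V$ (the family $\widetilde{\Cs}$ is constant over $S$), so the action map has the form $\psi \otimes \mathrm{id}_R$, hence $K_k \cong (\ker\psi)\otimes_\CC R_k$, and $\{K_k\}$ has surjective transitions for the same formal reason as $\{\Ws_k\}$. You should make this hypothesis (or some Artin--Rees substitute) explicit rather than silently asserting the hardest step. A smaller remark on Step 1: surjectivity of $\pi_*\Ls_{k+1}\to\pi_*\Ls_k$ is just right-exactness of $-\otimes_R R_k$ once one knows that $\pi_*$ commutes with that base change; affineness of $\Cs\setminus P_\bullet \to S$ is what buys the base-change compatibility, not the surjectivity itself, and separating these two points would make the Mittag--Leffler bookkeeping in Step~1 clearer.
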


\begin{prop} \label{prop:completion-of-coinvariants}
Suppose $[{\Ws}]_{{\Ls}}$ is finitely generated over an $I$-adically complete Noetherian ring $R$. Then the natural map $[{\Ws}]_{{\Ls}} \to \wh{[{\Ws}]_{{\Ls}}}$ is an isomorphism.
\end{prop}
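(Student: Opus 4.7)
The plan is to reduce the statement to the standard fact that a finitely generated module over a Noetherian $I$-adically complete ring is itself $I$-adically complete. My first step would be to apply \cref{coinvariant pullback} to the closed immersion $S_k = \Spec(R/I^{k+1}) \hookrightarrow S = \Spec(R)$, which yields natural isomorphisms
\[ [\Ws_k]_{\Ls_k} \cong \bigl([\Ws]_\Ls\bigr)_{S_k} \cong [\Ws]_\Ls \otimes_R R/I^{k+1} \cong M/I^{k+1}M, \]
where I set $M := [\Ws]_\Ls$. These isomorphisms are compatible with the natural transition maps coming from $R/I^{k+2} \twoheadrightarrow R/I^{k+1}$, so passing to the inverse limit identifies the formal coinvariants $\wh{[\Ws]_\Ls}$ with the ordinary $I$-adic completion $\widehat{M}$ of the $R$-module $M$. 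Under this identification, the natural map of the proposition becomes the canonical completion map $M \to \widehat M$.

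Once this identification is made, the remaining claim is purely algebraic: if $R$ is Noetherian and $I$-adically complete and $M$ is a finitely generated $R$-module, then $M \to \widehat M$ is an isomorphism. I would prove this by choosing a finite presentation $R^m \to R^n \to M \to 0$, which exists because $R$ is Noetherian so that the kernel of any surjection from $R^n$ onto $M$ is itself finitely generated. The Artin--Rees lemma implies that $I$-adic completion is exact on the category of finitely generated $R$-modules, so completing the presentation yields an exact sequence
\[ \widehat{R^m} \longrightarrow \widehat{R^n} \longrightarrow \widehat M \longrightarrow 0. \]
Since $R$ is already $I$-adically complete one has $\widehat{R^n} \cong R^n$ and $\widehat{R^m} \cong R^m$, and comparing this with the original presentation using functoriality of completion (the middle vertical map is the identity on $R^n$) forces $M \to \widehat M$ to be an isomorphism by a diagram chase.

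The main obstacle is really only the bookkeeping in the first step: one needs to confirm that the quasi-coherent pullbacks supplied by \cref{coinvariant pullback} agree, on an affine base, with ordinary tensor product over $R$, and that the resulting system $\{M/I^{k+1}M\}$ has transition maps matching those defining $\wh{[\Ws]_\Ls}$. After this verification, the proposition follows from a textbook application of the Artin--Rees lemma, with no further VOA- or geometry-specific input beyond the hypothesis of finite generation.
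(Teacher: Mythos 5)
Your proposal is correct and follows essentially the same approach as the paper: both identify $[\Ws_k]_{\Ls_k}$ with $M/I^{k+1}M$ (where $M=[\Ws]_\Ls$) via right-exactness of base change, and then invoke the fact that a finitely generated module over an $I$-adically complete Noetherian ring is itself $I$-adically complete. The only cosmetic difference is that you re-derive this last algebraic fact from Artin--Rees and a finite presentation, whereas the paper simply cites it from the Stacks Project.
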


\begin{proof}
Consider the exact sequence of $R$-modules (omitting the $\pi_*$ from the notation, and identifying the quasicoherent sheaves with the corresponding $R$-modules):
\[{\Ls} \otimes_{R} {\Ws} \longrightarrow {\Ws} \longrightarrow [{\Ws}]_{{\Ls}} \longrightarrow 0.\]
Tensoring with $R/I^k$ (or geometrically base-changing along $S_k \to S$) is a right exact operation, hence it yields an exact sequence 
\[ {\Ls}_k \otimes_{R_n} {\Ws}_k \longrightarrow {\Ws}_k \longrightarrow \left([{\Ws}]_{{\Ls}}\right)_{R_k} \longrightarrow 0,\]
which shows that we can identify $[{\Ws}_k]_{{\Ls}_k} = \left([{\Ws}]_{{\Ls}}\right)_{R_k}$.
In particular, the composition
\[[{\Ws}]_{{\Ls}} \longrightarrow \wh{[{\Ws}]_{{\Ls}}} \longrightarrow [{\Ws}_k]_{{\Ls}_k}\]
coincides with the surjection $[{\Ws}]_{{\Ls}} \to [{\Ws}]_{{\Ls}} \otimes_R R/I^k$. Since $[{\Ws}]_{{\Ls}}$ is finitely generated over a complete Noetherian ring, it is $I$-adically complete by \cite[\href{https://stacks.math.columbia.edu/tag/00MA}{Tag 00MA(3)}]{stacks-project}. Therefore we can identify $[{\Ws}]_{{\Ls}} = \varprojlim  [{\Ws}]_{{\Ls}} \otimes_R R/I^k = \varprojlim  [{\Ws}_k]_{{\Ls}_k} = \wh{[{\Ws}]_{{\Ls}}}$, giving the desired isomorphism.
\end{proof}

\subsection{Smoothing setup}\label{sec:settingSmoothing} In order to introduce the smoothing property for $V$, we will recall the notion of a smoothing of a nodal curve, and set a small amount of notation used throughout. Let $R = \CC[\![q]\!]$ and write $S = \Spec(R)$. Let $\Cs_0$ be a projective curve over $\CC$ with at least one node $Q$, smooth and distinct points $P_\bullet=(P_1, \ldots, P_n)$ such that $\Cs_0 \setminus P_\bullet$ is affine, and formal coordinates $t_\bullet=(t_1,\ldots, t_n)$ at $P_\bullet$. 
Let $\eta \colon \widetilde{\Cs}_0\rightarrow \Cs_0$ be the partial normalization of $\Cs_0$ at $Q$, which is naturally pointed by $Q_\pm:=\eta^{-1}(Q)$. We also suppose we have chosen formal coordinates at $Q_\pm$ and we call them $s_\pm$.

The choice of our formal coordinates $s_\pm$ determine a smoothing family $(\Cs, P_\bullet, t_\bullet)$ over $S$, with the central fiber given by $(\Cs_0, P_\bullet, t_\bullet)$. Let $(\widetilde{\Cs}, P_\bullet \sqcup Q_\pm, t_\bullet \sqcup s_\pm)$ denote the trivial extension $\widetilde{\Cs}_0 \times S$ with its corresponding markings. We will now discuss the relationship between coinvariants for $(\Cs, P_\bullet, t_\bullet)$ and $(\widetilde{\Cs}, P_\bullet \sqcup Q_\pm, t_\bullet \sqcup s_\pm)$.

 Let $W^1,\dots, W^n$ be an $n$-tuple of $V$-modules, or equivalently, smooth $\UV$-modules for $\UV$ the universal enveloping algebra of $V$ (defined in \cref{sec:UnivEnv}), and $W^\bullet$ their tensor product. As is described above in \cref{family coinvariants}, we may also consider the sheaf of coinvariants $[W^\bullet]_{(\Cs, P_\bullet, t_\bullet)}$.

As mentioned in the introduction, there is a map $\alpha_0 \colon W^\bullet \to W^\bullet \otimes \Phi(\Aa)$ which induces a map between coinvariants
\begin{equation*}
[\alpha_0] \colon  [W^\bullet]_{\left(\Cs_0, P_\bullet, t_\bullet\right)}{\overset{\cong}{\longrightarrow}} [W^\bullet \otimes \Phi(\Aa)]_{\left(\widetilde{\Cs}_0, P_\bullet \sqcup Q_\pm, t_\bullet \sqcup s_\pm\right)}.
\end{equation*}
Moreover, if $V$ is $C_1$-cofinite, then we will show in \cref{nodal isomorphism} that $[\alpha_0]$ is an isomorphism. We recall that  $\Phi(\Aa)=\Ac$, so we will generally use the notation $\Ac$ below.
   
The following result, which is a consequence of \cref{prop:completion-of-coinvariants}, allows us to describe coinvariants over $\widetilde{\Cs}$ whenever they are finite dimensional. The assumptions of the following result are satisfied when $V$ is $C_2$-cofinite, for all $V$-modules $W^\bullet$, and also more generally (by \cite{DGK}).

\begin{corollary}\label{cor:DiagQues}
Assume that the sheaf $[(W^{\bullet}  \otimes \Ac)[\![q]\!]]_{	(\widetilde{\Cs}, P_\bullet \sqcup Q_\pm,t_\bullet \sqcup s_\pm) }$ is coherent over $S$. Then one has identifications  
\begin{multline*}
	[
(W^{\bullet} \otimes \Ac)[\![q]\!]]_{
	\left(\widetilde{\Cs}, 
		P_\bullet \sqcup Q_\pm,
		t_\bullet \sqcup s_\pm\right)
	}\\  \cong 
[W^{\bullet} \otimes\Ac ]_{
	\left(\widetilde{\Cs}_0, 
		P_\bullet \sqcup Q_\pm,
		t_\bullet \sqcup s_\pm\right)
	}[\![q]\!] 
\\	\cong 
[ W^{\bullet}  \otimes \Ac]_{
    \left(\widetilde{\Cs}_0, 
        P_\bullet \sqcup Q_\pm,
        t_\bullet \sqcup s_\pm\right) } \otimes_\CC \CC [\![q]\!].
\end{multline*}
\end{corollary}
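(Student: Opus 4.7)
The plan is to prove the two isomorphisms in order, using coherence to invoke \cref{prop:completion-of-coinvariants} for the first one and finite-dimensionality of the central fiber for the second one; throughout we exploit that $\widetilde\Cs = \widetilde\Cs_0\times S$ is a \emph{trivial} family.

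For the first isomorphism, I would start by using the coherence hypothesis to apply \cref{prop:completion-of-coinvariants}, so that
\[ [(W^\bullet \otimes \Ac)[\![q]\!]]_{(\widetilde\Cs,P_\bullet\sqcup Q_\pm, t_\bullet\sqcup s_\pm)} \;\cong\; \varprojlim_{k} [(W^\bullet\otimes \Ac)[\![q]\!]/q^{k+1}]_{(\widetilde\Cs_k,\ldots)}, \]
where $\widetilde\Cs_k := \widetilde\Cs\times_S S_k$. Since the family is trivial, $\widetilde\Cs_k = \widetilde\Cs_0 \times_{\Spec\CC} S_k$, the marked sections and coordinates are also constant in $q$, and the explicit description of $\pi_*\Lc_{\widetilde\Cs_k\setminus(P_\bullet\sqcup Q_\pm)}(V)$ recalled in \cref{sec:Step2} shows that it is the scalar extension of $\pi_*\Lc_{\widetilde\Cs_0\setminus(P_\bullet\sqcup Q_\pm)}(V)$ along $\Spec\CC\to S_k$. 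Moreover, modulo $q^{k+1}$ the module $(W^\bullet\otimes \Ac)[\![q]\!]$ is the same as $(W^\bullet\otimes\Ac)\otimes_\CC \CC[q]/(q^{k+1})$. Thus \cref{coinvariant pullback} gives
\[ [(W^\bullet\otimes \Ac)\otimes_\CC \CC[q]/(q^{k+1})]_{(\widetilde\Cs_k,\ldots)} \;\cong\; [W^\bullet\otimes\Ac]_{(\widetilde\Cs_0,\ldots)}\otimes_\CC \CC[q]/(q^{k+1}), \]
and taking the inverse limit over $k$ yields the first isomorphism, identifying the left-hand side with $[W^\bullet\otimes\Ac]_{(\widetilde\Cs_0,\ldots)}[\![q]\!]$ by definition of $[\![q]\!]$.

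For the second isomorphism, I would again use \cref{coinvariant pullback}: the pullback of the coherent sheaf $[(W^\bullet\otimes\Ac)[\![q]\!]]_{(\widetilde\Cs,\ldots)}$ along $\Spec\CC\hookrightarrow S$ is $[W^\bullet\otimes\Ac]_{(\widetilde\Cs_0,\ldots)}$, so this central-fiber vector space is finite-dimensional over $\CC$. For any finite-dimensional $\CC$-vector space $M$ the natural map $M\otimes_\CC \CC[\![q]\!]\to M[\![q]\!]$ is an isomorphism, which gives the identification
\[ [W^\bullet\otimes\Ac]_{(\widetilde\Cs_0,\ldots)}[\![q]\!] \;\cong\; [W^\bullet\otimes\Ac]_{(\widetilde\Cs_0,\ldots)}\otimes_\CC \CC[\![q]\!]. \]

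The main subtlety I anticipate is the first step: justifying that the pushforward of the Chiral Lie algebra on the trivial family $\widetilde\Cs=\widetilde\Cs_0\times S$ base-changes correctly to each $S_k$. This is not entirely formal, since $\Lc_{\widetilde\Cs\setminus(P_\bullet\sqcup Q_\pm)}(V)$ is built from an infinite-rank vector bundle involving completions along the markings and a cokernel of a differential operator, so one needs the explicit local description from \cref{sec:Step2} together with the fact that the formal coordinates and marked sections are pulled back from $\widetilde\Cs_0$. Once this compatibility is in hand, everything else is a formal manipulation with inverse limits, right exactness of tensor, and \cref{coinvariant pullback}.
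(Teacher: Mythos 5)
Your proof is correct and follows essentially the same route as the paper: reduce via \cref{prop:completion-of-coinvariants} to the inverse limit over $S_k$, identify each term via the triviality of $\widetilde{\Cs}=\widetilde{\Cs}_0\times S$ and right exactness of base change, and then handle the last isomorphism by finite-dimensionality of the central fiber. The only cosmetic difference is that you invoke \cref{coinvariant pullback} level-by-level where the paper cites \cref{prop:coker.lim}, but the underlying computation is the same, and you correctly flag the one non-formal point (compatibility of $\Lc_{\widetilde{\Cs}\setminus(P_\bullet\sqcup Q_\pm)}(V)$ with base change along $S_k\to S$) that both arguments rely on.
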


\begin{proof} The second isomorphism holds because the coherence assumption implies that  $[ W^{\bullet}  \otimes \Ac]_{	(\widetilde{\Cs}_0, P_\bullet \sqcup Q_\pm,t_\bullet \sqcup s_\pm) }$ is a finite dimensional vector space.  To prove the first isomorphism, we consider, for $R = \mathbb{C}[\![q]\!]$,  the $R$-module and  $R$-Lie algebra
\[\Ws=(W^{\bullet}  \otimes\Ac)[\![q]\!],  \ \mbox{ and } \  \Ls = \Lc_{\widetilde{\Cs}\setminus\{ P_\bullet \sqcup Q_\pm\}}(V).
\] Since $[\Ws]_\Ls$ is a finite dimensional $R$-module, for $R_k = \mathbb{C}[\![q]\!]/q^{k + 1}$,  and $S_k=\mathrm{Spec}(R_k)$, and one can show by \cref{prop:completion-of-coinvariants}, that
 \begin{equation} \label{eq:lim} 
 [\Ws
]_\Ls=\varprojlim \left( \left[\Ws \otimes_{R} R_k \right]_{
\Ls \otimes_R R_k}\right). \end{equation}
 Note further that 
$\Ws \otimes_{R} R_k=\left( W^{\bullet}
\otimes 
\Ac\right) \otimes_{\CC} R_k$, and similarly,  
\[\Ls \otimes_{R} R_k = \Lc_{\widetilde{\Cs_0}\setminus\{ P_\bullet \sqcup Q_\pm\}}(V) \otimes_{\CC} R_k.
\] Using this, together with \cref{prop:coker.lim}, we deduce that \eqref{eq:lim} is isomorphic to
\[ \varprojlim \left(  [W^{\bullet} \otimes \Ac]_{(\widetilde{\Cs_0}, P_\bullet \sqcup Q_\pm, t_\bullet \sqcup s_\pm)} \otimes_{\CC} R_k \right)\] which is indeed $[W^{\bullet} \otimes
\Ac]_{
	(\widetilde{\Cs}_0, 
		P_\bullet \sqcup Q_\pm,
		t_\bullet \sqcup s_\pm)
	}[\![q]\!]$, as was asserted. \end{proof}

\begin{remark}\label{rmk:Fin}
    \cref{cor:DiagQues} implies that, up to some assumptions of coherence, the sheaf of coinvariants associated with $W^\bullet \otimes \Ac$ over $\widetilde{\Cs}_0$ deforms trivially to the sheaf of coinvariants over the trivial deformation $\widetilde{\Cs}$ of $\widetilde{\Cs}_0$. Consequently, the target of the induced map $[\alpha]$, which extends the map $[\alpha_0]$ is therefore identified with the sheaf of coinvariants associated with $\widetilde{\Cs}$ (and not only with a completion thereof).
\end{remark}

We conclude this section with some criteria to show coherence of sheaves coinvariants over $S$. Throughout we will use the notation $R_k = \CC[\![q]\!]/q^{k + 1}$ and $S_k = \Spec(R_k)$ for every
$k\in \NN$. 

\begin{lem} \label{little generators} For $M$ any module over $R_k$, let $m_1, \ldots, m_r \in M$ be elements whose images generate $M \otimes_{R_k} R_0$. Then the elements $m_1, \ldots, m_r$ also generate $M$.
\end{lem}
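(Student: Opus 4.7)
The plan is to recognize this as a version of Nakayama's lemma for the local Artinian ring $R_k = \CC[\![q]\!]/q^{k+1}$, whose maximal ideal $(q)$ is nilpotent (with $q^{k+1}=0$). The key point is that nilpotence of the ideal removes any need for finite generation of $M$ in the usual Nakayama hypothesis.

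First I would introduce the submodule $N \subseteq M$ generated by $m_1, \ldots, m_r$. Since $R_0 = R_k/(q)$, tensoring any $R_k$-module with $R_0$ is the same as quotienting by $qM$, so the hypothesis that $m_1, \ldots, m_r$ generate $M \otimes_{R_k} R_0$ translates to the equality
\[ M = N + qM.\]

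Next, I would iterate this identity. Substituting $M = N + qM$ into itself repeatedly gives
\[ M = N + qM = N + q(N+qM) = N + qN + q^2 M = \cdots = N + q^{k+1}M.\]
Since $q^{k+1} = 0$ in $R_k$, we have $q^{k+1}M = 0$, hence $M = N$, which proves the claim.

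The main (and essentially only) subtlety is making sure the reader sees why no finite-generation assumption on $M$ is needed, in contrast to the classical form of Nakayama: here nilpotence of $(q)$ lets the inductive substitution terminate after finitely many steps, so one does not need to invoke any determinant or module-theoretic trick. The argument is short enough to write as one or two lines in the final version.
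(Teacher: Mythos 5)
Your proof is correct, and it is cleaner than the one in the paper. Where you set $N = \langle m_1,\ldots,m_r\rangle$, observe that the hypothesis is exactly $M = N + qM$, and then iterate this single containment until the nilpotence $q^{k+1} = 0$ kills the remainder, the paper instead runs an explicit induction on $k$: it passes to $\overline M = M \otimes_{R_k} R_{k-1}$, invokes the inductive hypothesis there to write a given $m$ as $\sum a_i m_i$ modulo $q^k M$, then expresses the error term $m' = q^k x$ by again writing $x$ in terms of the $m_i$ modulo $q$, and finally uses $q^{k+1}=0$ to absorb the leftover. Both arguments rest on the same fact (nilpotence of $(q)$ replaces the finite-generation hypothesis in Nakayama), but your version avoids the element-chasing and the layered quotients, packaging the whole thing as a one-line iteration of the submodule identity. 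The paper's proof buys nothing extra here; yours is the standard ``Nakayama for nilpotent ideals'' argument and would be a strict improvement in brevity.
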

\begin{proof}
We induct on $k$, the case $k = 0$ being automatic. 
For the induction step, suppose $m \in M$ and consider the $R_{k - 1}$ module $\ov M = M \otimes_{R_k} R_{k - 1}$. By the induction hypothesis, the elements $m_1, \ldots, m_r$ generate $\ov M$. Therefore we can find $a_1, \ldots a_r \in R_k$ so that 
\[m' = m - \sum a_i m_i \in M,\]
maps to $0$ in $\ov M$.

Now consider the submodule $M' = q^kM \subset M$. As $q^kM$ is exactly the kernel of the map $M \to \ov M = M \otimes_{R_k} R_{k-1}$ we find that $m' \in M'$, and therefore we can write $m'  = q^k x$ for some $x \in M$. Writing $x = \sum b_i m_i \pmod{q}$, we find $x - \sum b_i m_i = qy$ for some $y \in M$. But now we have
\[ m = \left(\sum a_i m_i\right) + m' = 
\left(\sum a_i m_i\right) + q^k\left( \left(\sum b_i m_i\right) + qy \right)
= \sum (a_i + q^kb_i) m_i,
\]
as desired.
\end{proof}

\begin{prop}\label{contingent smoothing}
If $[W^{\bullet}]_{(\Cs_0, P_\bullet, t_\bullet)}$ is a finite dimensional vector space, then both
\[[W^{\bullet}[\![q]\!]]_{(\Cs, P_\bullet, t_\bullet)} \quad \text{ and } \quad [(W^{\bullet} \otimes \Ac)[\![q]\!]]_{
\left(\widetilde{\Cs}, 
		P_\bullet \sqcup Q_\pm,
		t_\bullet \sqcup s_\pm \right)
	} \]
are coherent.  
\end{prop}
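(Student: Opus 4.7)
I would establish each of the two coherence statements in turn, with both arguments starting from base change and \cref{little generators} and terminating in a small additional step.

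Writing $M$ for the $R$-module underlying $[W^{\bullet}[\![q]\!]]_{(\Cs, P_\bullet, t_\bullet)}$ with $R = \CC[\![q]\!]$, \cref{coinvariant pullback} applied to the thickenings $S_k \hookrightarrow S$ identifies $M \otimes_R R_k$ with the coinvariants on $\Cs \times_S S_k$, and at $k=0$ this is $[W^{\bullet}]_{(\Cs_0, P_\bullet, t_\bullet)}$, finite dimensional by hypothesis. Choosing lifts $m_1, \dotsc, m_r \in M$ of a basis of $M/qM$ and applying \cref{little generators} inductively in $k$, the images of these lifts generate each $M \otimes_R R_k$ as an $R_k$-module. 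To pass to finite generation of $M$ itself, one iterates the relation $m \equiv \sum_i a_i^{(j)} m_i \pmod{q^{j+1}}$ to build series $b_i = \sum_j q^j a_i^{(j)} \in R$, which converge by $q$-adic completeness of $R$; the equality $m = \sum_i b_i m_i$ follows from $q$-adic separation of $M$, which I would extract from the description of $M$ as a cokernel from the $q$-adically separated free $R$-module $W^\bullet \otimes_\CC R$.

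For the sheaf $[(W^{\bullet} \otimes \Ac)[\![q]\!]]_{(\widetilde{\Cs}, P_\bullet \sqcup Q_\pm, t_\bullet \sqcup s_\pm)}$, I exploit triviality of the family $\widetilde{\Cs} \to S$: both the Chiral Lie algebra sheaf $\Lc_{\widetilde{\Cs}\setminus (P_\bullet \sqcup Q_\pm)}(V)$ and the sheaf $(W^\bullet \otimes \Ac) \otimes_\CC R$ are base-changed from their analogues on $\widetilde{\Cs}_0$, and the right exactness used in \cref{coinvariant pullback} identifies the coinvariant sheaf with $[W^{\bullet} \otimes \Ac]_{(\widetilde{\Cs}_0, P_\bullet \sqcup Q_\pm, t_\bullet \sqcup s_\pm)} \otimes_\CC R$. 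Coherence over $R$ is thus equivalent to finite dimensionality of this $\CC$-vector space. To derive this from the hypothesis I would show that the map $[\alpha_0] \colon [W^{\bullet}]_{(\Cs_0, P_\bullet, t_\bullet)} \to [W^{\bullet} \otimes \Ac]_{(\widetilde{\Cs}_0, P_\bullet \sqcup Q_\pm, t_\bullet \sqcup s_\pm)}$ of \cite{DGT2} is surjective, using the action of the Chiral Lie algebra at $Q_\pm$ through the modes $J_n$ together with the bimodule structure of $\Ac = \Phi(\Aa)$ from \cref{lem:PhiLRPhi} to reduce any $\Ac$-component to $1 \in \Aa \subset \Ac$ at the cost of modifying the $W^\bullet$-component.

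The hard parts will be the two final steps: $q$-adic separation of $M$ in the first argument, and surjectivity of $[\alpha_0]$ in the second. The first is subtle because the cokernel of a map of $q$-adically separated modules need not be separated, so a careful filtered analysis using the explicit structure of $\Lc_{\Cs \setminus P_\bullet}(V)$ and the universal enveloping algebra $\UV$ developed in \cref{sec:UnivEnv} may be needed. The second, for general $V$ without a cofiniteness hypothesis, requires a direct module-theoretic construction at the branches $Q_\pm$, rather than invoking the stronger isomorphism of \cite{DGK} which relies on $C_1$-cofiniteness.
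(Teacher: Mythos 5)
Your opening steps are aligned with the paper: both arguments reduce to finite generation of $M_k := M \otimes_R R_k$ and $\widetilde M_k$ over $R_k$ via \cref{coinvariant pullback} and \cref{little generators}, and the hypothesis enters through the $k=0$ fiber. The two places where your route diverges are precisely the two steps you flag as ``hard,'' and neither matches the paper's resolution.

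For the first sheaf, you propose to build the coefficients $b_i$ as convergent series and then invoke $q$-adic separation of $M$. You correctly observe that separation does not pass to cokernels, and that the ``careful filtered analysis'' you contemplate is nontrivial. The paper avoids this entirely: having established that $M_k = M \otimes_R R_k$ is finitely generated for every $k$ (so the transition maps $M_k \to M_{k-1}$ are surjective), it cites \cite[Lemma 087W]{stacks-project}, which packages exactly the passage from finite generation of all the truncations over a complete Noetherian base to finite generation of the limit. This is the intended substitute for your separation argument, and it applies uniformly to both $M$ and $\widetilde M$.

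For the second sheaf your argument has a genuine gap. You claim that triviality of $\widetilde\Cs \to S$ together with right exactness of base change yields $\widetilde M \cong [W^\bullet \otimes \Ac]_{(\widetilde\Cs_0, P_\bullet \sqcup Q_\pm, t_\bullet \sqcup s_\pm)} \otimes_\CC R$ directly. This identification is false at the level of $R = \CC[\![q]\!]$: the Chiral Lie algebra $\Lc_{\widetilde\Cs \setminus (P_\bullet \sqcup Q_\pm)}(V)$ is built from sections of $\mc V_{\widetilde\Cs} \otimes \omega$ over the $q$-adic completion $\wh B$ of the coordinate ring (see \eqref{open trivialization}), not from the plain tensor product $B_0 \otimes_\CC R$, so $\Ls$ is a completed object rather than $\Ls_0 \otimes_\CC R$. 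The paper's \cref{cor:DiagQues} does give the identification you want, but only under a coherence hypothesis, and obtaining that coherence is exactly the content of this proposition --- so you would be reasoning in a circle. The paper instead treats $\widetilde M$ by the same $087W$ mechanism as $M$: finite generation of $\widetilde M_0$ follows from finite generation of $M_0$ and surjectivity of $\alpha_0 \colon M_0 \to \widetilde M_0$ (the paper cites \cite{DGT2} here; the surjectivity argument, which as you rightly note does not need $C_1$-cofiniteness, is spelled out in \cref{nodal isomorphism}), and then \cref{little generators} and $087W$ finish as before.

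In short: your first argument would work if you replaced the hand-built separation step by the citation to $087W$; your second argument should be discarded in favor of running the same $087W$ argument on $\widetilde M$, feeding in surjectivity of $\alpha_0$ to handle the base case.
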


\begin{proof} For every $k \in \NN$ and for every scheme $X$ over $S$, denote the pullback of $S$ to $S_k$ by $X_k$. Define
\begin{gather*} 
M_k := [W^\bullet_{R_k}]_{\left(\Cs_k, P_\bullet, t_\bullet \right)}  \quad \mbox{ and } \quad 
\til M_k := [(W^\bullet \otimes \Ac)_{R_k}]_{\left(\widetilde{\Cs}_k, P_\bullet \sqcup Q_\pm, t_\bullet \sqcup s_\pm \right)}.
\end{gather*}
Let us first show that $M_k$ and $\til M_k$ are coherent. As we are considering modules over the Noetherian ring $R_k$, we only need to show that they are finitely generated. But by \cref{little generators}, for this it suffices to show that $\til M_0$ and $M_0$ are finitely generated. This holds because by assumption $M_0$ is finitely generated and $\alpha_0 \colon  M_0 \to \til M_0$ is an isomorphism by \cite{DGT2}.

For simplicity, denote 
\begin{gather*}
M = [W^\bullet[\![q]\!]]_{\left(\Cs, P_\bullet, t_\bullet \right)}  \quad \mbox{ and } \quad
\til M = [(W^\bullet \otimes \Ac)[\![q]\!]]_{\left(\widetilde{\Cs}, P_\bullet \sqcup Q_\pm, t_\bullet \sqcup s_\pm \right)}.
\end{gather*}
By \cref{coinvariant pullback}, it follows that $M_k = M \otimes_{R} R_k$ and $\widetilde{M}_k = \widetilde{M} \otimes_R R_k$. Consequently the natural maps $\widetilde{M}_k \to \widetilde{M}_{k - 1}$ and $M_k \to M_{k - 1}$ are surjective. It follows therefore from \cite[Lemma 087W]{stacks-project} that $M$ and $\widetilde{M}$ will be finitely generated over $R$ whenever $M_k$ and $\widetilde{M}_k$ are finitely generated over $R_k$ for every $k$. This is what we have just shown and so $M$ and $\widetilde{M}$ are coherent. \end{proof}

\subsection{The sheaf of Chiral Lie algebras}\label{sec:Step2} The sheaf of Chiral Lie algebras $\Lc_{\Cs\setminus P_\bullet}(V)$ can be identified with a quotient of the space of sections of the sheaf $\mc{V}_{\Cs}\otimes_{\mc{O}_{\Cs}} \omega_{\Cs/S}$ on the affine open set $\Cs \setminus P_\bullet \subset \Cs$. Here, for later use in the proof of \cref{prop:smoothing}, in order to describe the action of $\Lc_{\Cs\setminus P_\bullet}(V)$, we explicitly describe the sheaf $\mc{V}_{\Cs}\otimes_{\mc{O}_{\Cs}} \omega_{\Cs/S}$, where $\mc{V}_{\Cs}$ is the contracted product $\left(V\otimes_{\mathbb{C}}\mathcal{O}_C\right)\times_{\mathcal{A}ut\mathcal{O}}\mathcal{A}ut_{\Cs}$ (see \cref{rem:Correction}).   

For this, suppose we are given a relative curve $\Cs$, projective over $S = \Spec \CC[\![q]\!]$, with closed fiber $\Cs_0$ (cut out by the ideal generated by $q$), and an $(n+1)$-tuple of distinct closed points $P_0, \ldots, P_n \in \Cs_0$ with affine complement $\Cs_0\setminus P_\bullet=\Cs_0 \setminus \bigcup_i P_i$. Let $B = \mc O_{\Cs}(\Cs_0\setminus P_\bullet)$ denote those rational functions on $\Cs$ which are regular at every scheme-theoretic point of $\Cs_0\setminus P_\bullet$ and let $\wh B$ denote its $q$-adic completion.  By \cite[Theorem~3.4]{Pries}, coherent sheaves on $\Cs$ may be described by specifying coherent sheaves $M_U$ on $U = \Spec \wh B$, coherent sheaves $M_{i}$ on $D_i:=\Spec \wh{\mc O}_{\Cs, P_i}$ for each $i$,  together with ``gluing data on the overlaps.'' 

The overlaps in this case are described as the formal completions $D^\times_i$ of the fiber products $\Spec \widehat{B} \times_{\Cs} \Spec \wh{\mc O}_{\Cs, P_i}$, and the gluing data is a choice of an isomorphism $(M_i)_{D_i^\times} \cong (M_U)_{D_i^\times}$. More concretely, the $D^\times_i$ can be described as follows. In a given complete local ring $\wh{\mc O}_{\Cs, P_i}$, the ideal generated by $q$ which describes the closed fiber will factor into a product of of primes $\wp_{i,j}$. For each of these we can consider the localization and completion at the prime. We find that $D^\times_i$ is the disjoint union of the formal spectra of the rings $\left((\wh{\mc O}_{\Cs, P_i})_{\wp_{i,j}}\right)^{\wh{\ \ } \wp_{i,j}}$. In particular, a coherent sheaf over $D^\times_i$ is the data of a finitely generated module over the Noetherian ring $\left((\wh{\mc O}_{\Cs, P_i})_{\wp_{i,j}}\right)^{\wh{\ \ } \wp_{i,j}}$.

In our case, we consider a semistable family of curves $\Cs/S$, such that $\Cs$ is a regular scheme and the closed fiber is reduced. We focus our attention on an isolated node $Q$, and choose points $P_\bullet$ with $Q = P_0$ and with $\Cs_0 \setminus P_\bullet$ smooth. We then find that in $\wh{\mc O}_{\Cs, Q}$, the complete (regular) local ring at $Q$, we may factor $q = s_+ s_-$. Consequently, we may write $\wh{\mc O}_{\Cs, Q} \cong \CC[\![s_+, s_-]\!]$. That is, we have
\[\widehat{ \mc O}_{\Cs, Q} \cong \CC[\![s_+, s_-, q]\!]/(s_+s_- - q) \cong \CC[\![s_+, s_-]\!].\]
In this case, if we let $\wp_+$ be the prime generated by $s_-$ and $\wp_-$ be the prime generated by $s_+$ (in $\widehat{ \mc O}_{\Cs, Q}$), then we find 
\[\left((\wh{\mc O}_{\Cs, P_i})_{\wp_\pm}\right)^{\wh{\ \ } \wp_\pm} = \CC(\!(s_\pm)\!)[\![q]\!].\]
As $\mc{V}_{\Cs}$ (and similarly $\mc{V}_{\Cs} \otimes_{\mc O_\Cs} \omega_{\Cs/S}$) is a limit of coherent sheaves $(\mc{V}_\Cs)_{\leq k}$, we may use the above procedure to describe it.

We choose $U$ so that the torsor $\mathcal{A}ut_{\Cs/S}$ is trivial over $\Spec \widehat{B}$ via the choice of a function $s \in \widehat{B}$ such that $ds$ is a free generator of $\omega_{\Cs/S}(\Spec \widehat{B})$ as an $\widehat{B}$-module. In other words, $s$ is a coordinate on $U$. In particular, sections of $\mc{V}_{\Cs}\otimes_{\mc{O}_{\Cs}} \omega_{\Cs/S}$ on $\Spec \widehat{B}$ can be described as the $\widehat{B}$ module: 
\begin{equation} \label{open trivialization}
    \left(\mc{V}_{\Cs}\otimes_{\mc{O}_{\Cs}} \omega_{\Cs/S}\right)(\Spec \widehat{B}) = \bigoplus_{k \in \NN} V_k \otimes_\CC \widehat{B} \ (\dds )^{k-1}.
\end{equation}
\begin{remark}
    It is important to note that these expressions are not intrinsic to $\mc V_\Cs \otimes_{\mc O_{\Cs}} \omega_{\Cs/S}$ as a sheaf on $\Cs$, but rather depend on a choice of parameter $s$. Different choices give different identifications which correspond to inhomogeneous isomorphisms between the direct sums, but which do preserve the filtrations $(\mc{V}_{\Cs}\otimes_{\mc{O}_{\Cs}} \omega_{\Cs/S})_{\leq k}$.
\end{remark}

Similarly, on $D_{Q}={\mathrm{Spec}}(\widehat{ \mc O}_{\Cs, Q})$, either $s_+$ or $s_-$ can be used to define a trivialization of the torsor $\mathcal{A}ut_{\Cs}$, this time corresponding to the two possible choices of generators $ds_+/s_+$ or $ds_-/s_-$ of $\omega_{\Cs/S}$. These choices allow us to give the following expressions for the sections of our sheaf on $D_{Q}$ as a $\widehat{ \mc O}_{\Cs, Q}$-module:
\begin{equation}\label{eq:TheChiralOnDisk}
   \left(\mc{V}_{\Cs}\otimes_{\mc{O}_{\Cs}} \omega_{\Cs/S}\right)(D_{Q}) = \bigoplus_{k \in \NN} V_k \otimes_\CC \CC[\![s_+, s_-]\!] s_\pm ^{k-1}(\dds_\pm )^{k-1}. 
\end{equation}
In particular, we may express a section $\sigma$ on $D_{Q}$ with respect to either the trivialization given by $s_+$ or by $s_-$. Since $\gamma(s_+)=s_-$, the trivializations of $\mathcal{A}ut_{\Cs}$ associated to the coordinates $s_+$ and $s_-$ (regarded as sections of the torsor) are related by the order $2$ element $(-1)^{L_0}e^{L_1} \in \mathcal{A}ut\mathcal{O}$, which acts on $V$ via the involution $\gamma$ described in \cref{eq:Gamma}.  Hence, we can write sections of the contracted product $(V \otimes_{\CC} \mc O_{\Cs}) \times_{\mathcal{A}ut\mathcal{O}} \mathcal{A}ut_{\Cs}$ over $D_Q$ as
\[ \left(v \otimes f, s_+ \right) = \left(v \otimes f, \gamma s_-\right)  \sim \left( \gamma(v)\otimes f, s_-\right),\] for $f \in \Oc_{\Cs}$. Choosing $v \in V_\ell$, the element of \eqref{eq:TheChiralOnDisk} which in the $s_+$ trivialization is represented by
\begin{equation*} \label{Q pos trivial}
    \sum_{i,j \geq 0} v \otimes x_{i,j} s_+^i s_-^j s_+^{\ell-1} (\dds_+)^{\ell-1},
\end{equation*} is represented with respect to the $s_-$ trivialization as
\[\sum_{i,j \geq 0} \sum_{m = 0}^\ell \frac 1 {m!} L_1^m v \otimes x_{i,j} s_+^i s_-^j s_-^{\ell - m -1} (\dds_-)^{\ell - m -1}. \]
More generally, one should consider a sum of such terms for various values of $\ell$.

Finally we consider the sheaf $\mc V_{\Cs} \otimes_{\mc O_\Cs} \omega_{\Cs/S}$ on $D_{\pm}^\times=\mathrm{Spec}(\mathbb{C}(\!(s_\pm)\!)[\![q]\!])$.  In $D_\pm^\times$, as in $D_Q$, we may use the functions $s_\pm$ to trivialize our torsor. Consequently we have:

\begin{equation} \label{punctured disk expression}
   \left(\mc{V}_{\Cs}\otimes_{\mc{O}_{\Cs}} \omega_{\Cs/S}\right)(D_\pm) = \bigoplus_{k \in \NN} V_k \otimes_\CC \CC(\!(s_\pm)\!)[\![q]\!] \ s_\pm^{k-1} (\dds_\pm )^{k-1}. 
\end{equation}

Without loss of generality the trivializing coordinate $s$ on $U$ maps to our previously chosen trivializing coordinate $s_+$ in $D_+^\times$. That is, the map $i_+ \colon  D_+^\times \hookrightarrow U$ corresponds to maps of rings
\begin{equation} \widehat{B} \rightarrow \mathbb{C}(\!(s_+)\!)[\![q]\!],  \ s\mapsto s_+. \end{equation}
Although it is unnecessary here, to map $s$ to both $s_+$ and $s_-$ simultaneously, one could work \'etale locally.

For notational convenience, it is useful to consider the action of $\mathcal{A}ut\mathcal{O}$ as on  $\LVf_0$, the degree $0$ part of the ancillary algebra and to recall the notation \eqref{eq:J}. For $\rho \in \mathcal{A}ut\mathcal{O}$ and a homogeneous element $a \in V$, we have $\rho J_0(a) = J_0(\rho a)$.  Further, when we use a coordinate $s$ to trivialize our torsor $\mathcal{A}ut_{\Cs}$, we will identify the expression $a_{[\deg(a) - 1 + k]}$ with the element $J_k(a) \in \LVf_{-k}$. Finally, we simplify notation further by omitting the factors of the form $\dds$ from our presentations.

Given $\sigma_U \in \left(\mc{V}_{\Cs}\otimes_{\mc{O}_{\Cs}} \omega_{\Cs/S}\right)(\Spec \widehat{B})$ we write $(\sigma_U)_\pm$ for its restriction to $D_\pm^\times$. Using the notation above, following the explicit expressions of \eqref{open trivialization} and \eqref{punctured disk expression}, we find that if
\[ \sigma_U = \sum_{\ell = 0}^k v_\ell \otimes f_\ell, \] 
then writing $f_+$ for the expansion (restriction) of the regular function $f$ to $\CC(\!(s_+)\!)[\![q]\!]$, we have (as the coordinates are compatible) 
\[ (\sigma_U)_+ = \sum_{\ell = 0}^k v_\ell \otimes (f_\ell)_+ = \sum v_\ell \otimes (g_\ell)_+ s_+^{\ell-1}. \]
On the other hand, if $\sigma_Q \in \left(\mc{V}_{\Cs}\otimes_{\mc{O}_{\Cs}} \omega_{\Cs/S}\right)(D_Q)$, is written as $\sum_{\ell = 0}^k\sum_{i, j \geq 0} v_\ell \otimes x_{i,j}^\ell s_+^{i + \ell-1} s_-^j$. If the section $\sigma_Q$, so represented, is to be compatible and glue together with the section $\sigma_U$ above, we find that 
\begin{equation}\label{eq:SigmaPlus}
    \sum_{\ell = 0}^k \sum_{i, j \geq 0} J_0(v_\ell) x_{i,j}^\ell s_+^i s_-^j = \sum_{i,j,\ell} J_0(v_\ell) x_{i,j}^\ell s_+^{i - j} q^j = \sum_{i,j,\ell} J_{i - j}(v_\ell) x_{i,j}^\ell q^j
\end{equation} 
must represent the expression for $\sigma_U$ restricted to $D_+^\times$. 

To express $\sigma_U$ restricted to $D_-^\times$, following \eqref{eq:LieTheta}, we will make use of the anti-isomorphism $\theta \colon \LVL \to \LVR$ described in \eqref{eq:LieTheta} and related to $\gamma$ via \cref{lem:ThetaGammId}. We then conclude that $\sigma_U$ restricted to $D_-^\times$ is given by the expression 
\begin{align*}\label{eq:SigmaMinus}\sum_{\ell = 0}^k\sum_{i,j \geq 0} \gamma(J_0(v_\ell)) x_{i,j}^\ell s_+^i s_-^j &= \sum_{i,j,\ell} J_0(\gamma(v_k)) x_{i,j}^\ell s_-^{j - i} q^i 
\\ &= \sum_{i,j,\ell} J_{j - i}(\gamma(v_k)) x_{i,j}^\ell q^i 
= \sum_{i,j,\ell} \theta(J_{i - j}(v_\ell)) x_{i,j}^\ell q^i.
\end{align*}

\begin{remark}\label{rem:Correction} Through the above description, we have that the sheaf $\mc{V}_{\Cs}$ discussed at length in \cite{DGT2} agrees, even on the boundary of $\overline{\mathcal{M}}_{g,n}$, with the sheaf $\ms{V}_{\Cs}$ described in \cite{dgt1}.
    \end{remark}

We conclude with two lemmas which will be useful in our applications in the next section.
\begin{lem} \label{chiral density}
Let $\Cs$ be a family of curves over $S$, possibly with nodal singularities. Consider a collection of sections $P_1, \ldots, P_n$ such that $\Cs \setminus P_\bullet = U \subset \Cs$ is affine, and let $Q_1, \ldots Q_k \subset \Cs$ be a finite collection of distinct closed points in $U$ (possibly including nodes). Let $D_{Q_i} = \Spec \wh{\mc{O}}_{\Cs, Q_i}$ be the complete local ring at $Q_i$ with maximal ideal $\wh{\mf{m}}_{\Cs, Q_i}$. Then for any $\ell \geq 0$ and any invertible sheaf of $\mc O_{\Cs}$-modules $\mc L$, the natural map
\[ \left(\mc{V}_{\Cs}\otimes_{\mc{O}_{\Cs}} \mc L\right)(U) \to  
\bigoplus_i \left(\mc{V}_{\Cs}\otimes_{\mc{O}_{\Cs}} \mc L\right)(\Spec \wh{\mc{O}}_{\Cs, Q_i}/\left(\wh{\mf{m}}_{\Cs, Q_i}\right)^{\ell}) \]
is surjective.
\end{lem}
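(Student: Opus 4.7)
My plan is to exploit the affineness of $U$ together with the fact that $\mc V_\Cs$ is a filtered colimit of coherent (in fact locally free) subsheaves, reducing the statement to a standard surjectivity of restriction for coherent sheaves on an affine scheme.

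First, I would observe that $\mc V_\Cs$ carries the exhaustive filtration $(\mc V_\Cs)_{\leq N}$ induced by $V_{\leq N} = \bigoplus_{d\leq N} V_d$; since $\mathcal{A}ut\mathcal{O}$ preserves this filtration on $V$, each $(\mc V_\Cs)_{\leq N}$ is a locally free coherent subsheaf, and $\mc V_\Cs \otimes_{\mc O_\Cs} \mc L = \varinjlim_N (\mc V_\Cs)_{\leq N} \otimes_{\mc O_\Cs} \mc L$. Next, I would note that each $\wh{\mc O}_{\Cs, Q_i}/\wh{\mf m}_{\Cs, Q_i}^\ell$ equals $\mc O_{\Cs, Q_i}/\mf m_{\Cs, Q_i}^\ell$, since completion is trivial on a module of finite length, and therefore
\[ Z := \bigsqcup_{i=1}^k \Spec\bigl(\mc O_{\Cs, Q_i}/\mf m_{\Cs, Q_i}^\ell\bigr) \]
is a genuine closed subscheme of $U$ (its ideal sheaf $\mc I_Z \subset \mc O_U$ equals $\mf m_{Q_i}^\ell$ locally at $Q_i$ and $\mc O_U$ elsewhere). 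Evaluating at $Z$ gives the target of our map as $(\mc V_\Cs \otimes \mc L)(Z)$, and any given element $\sigma = (\sigma_1,\dotsc,\sigma_k)$ of this target, being a finite tuple, involves only finitely many components of $V$, so $\sigma$ already lies in $\bigl((\mc V_\Cs)_{\leq N}\otimes\mc L\bigr)(Z)$ for some sufficiently large $N$.

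With this reduction, it suffices to prove surjectivity of the restriction map
\[ \bigl((\mc V_\Cs)_{\leq N}\otimes_{\mc O_\Cs}\mc L\bigr)(U) \longrightarrow \bigl((\mc V_\Cs)_{\leq N}\otimes_{\mc O_\Cs}\mc L\bigr)(Z) \]
for each fixed $N$. Writing $\mc F_N := (\mc V_\Cs)_{\leq N}\otimes_{\mc O_\Cs} \mc L$, which is a coherent sheaf on $U$, I would consider the short exact sequence
\[ 0 \longrightarrow \mc I_Z \cdot \mc F_N \longrightarrow \mc F_N \longrightarrow \mc F_N|_Z \longrightarrow 0 \]
of quasi-coherent sheaves on $U$. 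Since $U$ is affine, Serre vanishing gives $H^1(U, \mc I_Z \cdot \mc F_N) = 0$, so passing to global sections preserves surjectivity. This produces the required lift of $\sigma$ to a section on $U$, completing the argument.

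There is no real obstacle here beyond these two standard inputs (the filtered-colimit structure of $\mc V_\Cs$ and vanishing of higher cohomology on an affine scheme); the only subtle point to verify cleanly is that the formal completions $\wh{\mc O}_{\Cs, Q_i}/\wh{\mf m}_{\Cs, Q_i}^\ell$ really do describe a closed subscheme of $U$, and that one can indeed absorb finitely many sections of the colimit $\mc V_\Cs$ into a single filtered piece $(\mc V_\Cs)_{\leq N}$, which is immediate because the target at $Z$ is a sum of finitely many finite-length modules.
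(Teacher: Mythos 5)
Your proof is correct and follows the same overall structure as the paper's: reduce to the filtered pieces $(\mc V_\Cs \otimes \mc L)_{\leq N}$, which are coherent, and then prove surjectivity of the restriction map for a coherent sheaf. The only point of divergence is how you finish that last step. You form the closed subscheme $Z \subset U$ cut out by the ideal sheaf $\mc I_Z$, take the short exact sequence $0 \to \mc I_Z \mc F_N \to \mc F_N \to \mc F_N|_Z \to 0$, and invoke vanishing of $H^1$ on the affine scheme $U$. The paper instead observes directly that $\mc O_\Cs(U) \to \bigoplus_i \mc O_\Cs(U)/\mf m_{\Cs,Q_i}(U)^\ell$ is surjective by the Chinese Remainder Theorem, and then tensors with the module $M = \mc F_N(U)$, using right exactness of tensor. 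These two finishes are equivalent (the cohomological vanishing $H^1(U, \mc I_Z \mc F_N) = 0$ is the sheaf-theoretic restatement of the same surjectivity), so the arguments are essentially identical; the paper's version is marginally more elementary since it avoids invoking cohomology, while yours packages the ideal sheaf of $Z$ cleanly. Your side remark that $\wh{\mc O}_{\Cs, Q_i}/\wh{\mf m}_{\Cs, Q_i}^\ell$ is a genuine (non-completed) Artinian quotient is correct and worth making explicit, as the paper glosses over it.
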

\begin{proof}
As $\mc{V}_{\Cs}\otimes_{\mc{O}_{\Cs}} \mc L = \bigcup\limits_{k} \left(\mc{V}_{\Cs}\otimes_{\mc{O}_{\Cs}} \mc L\right)_{\leq k}$, it suffices to show that the map 
\[ \left(\mc{V}_{\Cs}\otimes_{\mc{O}_{\Cs}} \mc L\right)_{\leq k}(U) \to  
\bigoplus_i \left(\mc{V}_{\Cs}\otimes_{\mc{O}_{\Cs}} \mc L\right)_{\leq k}(\Spec \wh{\mc{O}}_{\Cs, Q_i}/\left(\wh{\mf{m}}_{\Cs, Q_i}\right)^{\ell}) \]
is surjective for all $k$. Since the sheaf $\left(\mc{V}_{\Cs}\otimes_{\mc{O}_{\Cs}} \mc L\right)_{\leq k}$ is free of finite rank  over $\mc{O}_{\Cs}$, then this holds true. Indeed, for any coherent sheaf of modules $M$ on $\Cs$, the natural map $M(U) \to \bigoplus_i M(\Spec \mc O_{\Cs}(U)/\mf m_{\Cs, Q_i}(U)^\ell) = \bigoplus_i M(U) \otimes_{\mc O_{\Cs}(U)} \mc O_{\Cs}(U)/\mf m_{\Cs, Q_i}(U)^\ell$ is seen to be surjective,
using the fact that $\mc O_{\Cs}(U) \to \bigoplus_i \mc O_{\Cs}(U)/\mf m_{\Cs, Q_i}(U)^\ell$ is surjective by the Chinese Remainder Theorem, and that tensoring with $M$ is right exact.
\end{proof}

\begin{lem} \label{nodal isomorphism}
As in \cref{sec:settingSmoothing}, let $\Cs_0$ be a projective curve over $\CC$ with at least one node $Q$, smooth and distinct points $P_\bullet=(P_1, \ldots, P_n)$ such that $\Cs_0 \setminus P_\bullet$ is affine, and formal coordinates $t_\bullet=(t_1,\ldots, t_n)$ at $P_\bullet$. 
Let $\eta \colon \widetilde{\Cs}_0\rightarrow \Cs_0$ be the partial normalization of $\Cs_0$ at $Q$, pointed by $Q_\pm:=\eta^{-1}(Q)$, and choose formal coordinates $s_\pm$ at $Q_\pm$. Let $W^1,\dots, W^n$ be an $n$-tuple of $V$-modules.
Then the map $\alpha_0 \colon W^\bullet \to W^\bullet \otimes \Ac$ defined by $\alpha_0(w) = w \otimes 1$ induces a map between the vector spaces of coinvariants:
\begin{equation*}
[\alpha_0] \colon  [W^\bullet]_{\left(\Cs_0, P_\bullet, t_\bullet\right)}{\overset{\cong}{\longrightarrow}} [W^\bullet \otimes \Ac]_{\left(\widetilde{\Cs}_0, P_\bullet \sqcup Q_\pm, t_\bullet \sqcup s_\pm\right)},
\end{equation*}
which is an isomorphism in case $V$ is $C_1$-cofinite.
\end{lem}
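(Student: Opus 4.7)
The plan is to prove this in two stages: first that $\alpha_0$ descends to a well-defined map $[\alpha_0]$ between coinvariants, and second that this induced map is bijective when $V$ is $C_1$-cofinite. Both stages follow the template of \cite{DGT2,DGK}, and what is needed here is mainly to verify that the formalism developed in \cref{sec:UnivEnv}--\cref{sec:Step2} is compatible with the identification $\Ac = \Phi(\Aa)$ of \cref{lem:PhiLRPhi}.

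For well-definedness, I would use the explicit description of $\mc V_\Cs \otimes_{\mc O_\Cs} \omega_{\Cs/S}$ on $\widetilde{\Cs}_0$ given in \cref{sec:Step2}. Given a section $\sigma$ of $\Lc_{\Cs_0 \setminus P_\bullet}(V)$, the pullback $\eta^*\sigma$ on $\widetilde{\Cs}_0 \setminus (P_\bullet \sqcup Q_\pm)$ has Laurent expansions $(\eta^*\sigma)_+$ and $(\eta^*\sigma)_-$ at $Q_+$ and $Q_-$, which by the computation culminating in \eqref{eq:SigmaPlus} are related by the anti-isomorphism $\theta \colon \LVL \to \LVR$ of \cref{lem:antiLietheta}. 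The action of $\sigma$ on the tensor factor $\Ac$ at the two new points $Q_\pm$ is therefore given by $(\eta^*\sigma)_+ \cdot x - x \cdot \theta^{-1}((\eta^*\sigma)_-)$ for $x \in \Ac$, and by \cref{lem:innerstar} the element $1 \in \Ac$ is annihilated by exactly this combined action. Together with the compatibility of expansions at each $P_i$ (which is trivial since $\eta$ is an isomorphism near those points), this shows that $\alpha_0(\sigma \cdot w) \equiv \sigma \cdot \alpha_0(w)$ modulo $\Lc_{\widetilde{\Cs}_0 \setminus (P_\bullet \sqcup Q_\pm)}(V)$, so $[\alpha_0]$ is well-defined.

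For the isomorphism under $C_1$-cofiniteness, I would construct an inverse by propagating the $\Ac$-factor onto $W^\bullet$. By \cref{sec:GeneratingSets}, $C_1$-cofiniteness of $V$ ensures that $\PhiL(\Aa)$, hence $\Ac = \PhiR\PhiL(\Aa)$, is generated over the enveloping algebra action at $Q_\pm$ by finitely many elements of the form $x^1_{(-m_1)} \cdots x^r_{(-m_r)} \otimes 1 \otimes y^{1}_{(n_1)} \cdots y^{s}_{(n_s)}$ with $m_i, n_j \geq 1$. By \cref{chiral density}, for each such raising operator there exists a global section of the chiral Lie algebra on $\widetilde{\Cs}_0 \setminus (P_\bullet \sqcup Q_\pm)$ with the prescribed principal part at $Q_\pm$ (and vanishing of high enough order at every $P_i$). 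Using such sections one moves each creation operator from $\Ac$ to one of the $W^i$ at the cost of Lie-algebra boundaries, reducing any class in the target coinvariants to one of the form $[w \otimes 1]$. This gives a well-defined inverse to $[\alpha_0]$, proving both surjectivity and injectivity simultaneously.

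The main obstacle is controlling the propagation argument: one must verify that when a section with prescribed principal part at $Q_\pm$ acts on $W^\bullet$ via its expansions at $P_i$, the resulting identity in the ambient space $W^\bullet \otimes \Ac$ unambiguously reduces the $\Ac$-factor (rather than re-introducing higher-order contributions at $Q_\pm$). The filtration estimates from \cref{sec:UnivEnv}, combined with the finiteness of the $C_1$-generating set, handle this: the propagation terminates after finitely many steps, and the resulting reduction does not depend on the choice of lifts modulo the image of the Lie algebra. This is the argument of \cite[Thm.~3.2]{DGK}, which we invoke in its stated form for the last step.
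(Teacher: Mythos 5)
Your surjectivity argument via propagation matches the paper's in spirit: both use \cref{chiral density} to find sections on $\widetilde{\Cs}_0 \setminus (P_\bullet \sqcup Q_\pm)$ with prescribed poles at $Q_\pm$, and both induct on the number of creation operators to reduce any class $[w \otimes x]$ to the form $[w' \otimes 1] = [\alpha_0(w')]$. Your well-definedness discussion (which the paper omits, deferring to \cite{DGK}) is also essentially correct, modulo sign/involution bookkeeping.

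Where your proposal has a genuine gap is injectivity. You assert that the propagation argument ``gives a well-defined inverse to $[\alpha_0]$, proving both surjectivity and injectivity simultaneously,'' and that the independence-of-choices needed for this is handled by ``the filtration estimates... combined with the finiteness of the $C_1$-generating set.'' This is not a proof: the propagation reduces an arbitrary class to some $[w' \otimes 1]$, but different reduction paths could a priori land on different classes $[w'_1]$, $[w'_2]$ in the source coinvariants, and nothing you have written forces $[w'_1] = [w'_2]$. Establishing that would require an argument of comparable difficulty to the surjectivity claim itself, and the vague appeal to \cite[Thm.~3.2]{DGK} does not supply it in the form you need. The paper sidesteps this entirely with a much cheaper argument: it factors $\alpha_0$ through the \emph{full} normalization $\widetilde{\Cs}_0'$ at all $m$ nodes, obtaining $\alpha_0' = \alpha_0'' \circ \alpha_0 \colon W^\bullet \to W^\bullet \otimes \Ac^{\otimes m}$, and invokes \cite[Remark~3.4]{DGK} to conclude that $[\alpha_0']$ is an isomorphism (this is where $C_1$-cofiniteness actually enters). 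Injectivity of $[\alpha_0]$ then follows immediately since it is the first factor of an injective composite, leaving only surjectivity to be shown directly. You should adopt this factorization argument for injectivity rather than attempting to upgrade the propagation to an inverse.
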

\begin{proof}
Suppose $\Cs_0$ has $m$ nodes in total (including $Q$) and let $\til{\Cs_0}'$ be the (full) normalization of $\Cs_0$. Following \cite[Remark~3.4]{DGK} we find we have maps
\[\xymatrix@C=1.5cm{
W^\bullet \ar@/^1.2pc/[rr]^{\alpha_0'} \ar[r]_-{\alpha_0} & W^\bullet \otimes \Ac \ar[r]_-{\alpha_0''} & W^\bullet \otimes \Ac^{\otimes m}
}\]
inducing corresponding maps $[\alpha_0], [\alpha_0'], [\alpha_0'']$ on the respective coinvariants such that $[\alpha_0']$ an isomorphism. It follows that $[\alpha_0]$ is injective and therefore remains only to show that it is also surjective.

For surjectivity, we follow the spirit of the proof of \cite[Prop.~6.2.1]{DGT2}. We may represent an element of $\Ac$ as given by an expression $a^1_{[n_1]} \cdots a^k_{[n_k]} \otimes 1 \otimes b^1_{[m_1]} \cdots b^r_{[m_r]}$. For simplicity of notation, let us write $a = a^1_{[n_1]} \cdots a^k_{[n_k]}$, $a' = a^2_{[n_2]} \cdots a^k_{[n_k]}$ and $b = b^1_{[m_1]} \cdots b^r_{[m_r]}$. We will show that all elements of the form
$[w \otimes (a \otimes 1 \otimes b)]$ are in the image of $\alpha_0$ by induction on $k - m$, the base case $k - m = 0$ being true by construction (note $b$ has nonpositive degree by definition). For the induction step, let us suppose that $k > 0$ (the case $m < 0$ being similar), and let $d_+'$ be the degree of $a'$ and $d_-$ the degree of $b$. Without loss of generality, we may assume $\deg(a^1_{[n_1]}) \geq \cdots \geq \deg(a^k_{[n_k]}) \geq 0$. 
By \cref{chiral density}, setting $\mc L = \omega_{\Cs_0}(n_1 Q_+ + N Q_-)$ for $N > d_- - \deg(a^1)$, we may find a section $\sigma = a^1 \otimes f$,
\[ \sigma \in 
\left(\mc{V}_{\til \Cs_0}\otimes_{\mc{O}_{\Cs_0}} \omega_{\Cs_0}(n_1 Q_+ + (d_- - 1) Q_-)\right)(\til \Cs_0 \setminus P_\bullet) 
\subset 
\left(\mc{V}_{\til \Cs_0}\otimes_{\mc{O}_{\Cs_0}} \omega_{\Cs_0}\right)(\til \Cs_0 \setminus P_\bullet \sqcup
Q_\pm) 
\] 
such that the image $\sigma^{\mathsf{L}}_{Q_+}$ of $\sigma$ in $\left(\mc{V}_{\til \Cs_0}\otimes_{\mc{O}_{\Cs_0}} \omega_{\Cs_0}\right)(\wh{\mc O}_{\til \Cs_0, Q_+}) \cong \LVL$ has the form $a^1_{[n_1]} + \til a$ where $\deg(\til a) < -d_+'$. 
By construction, $\sigma^{\mathsf{L}}_{Q_-}$ has degree $< d_-$ and consequently 
$\sigma^{\mathsf{R}}_{Q_+}$ has degree $> -d_-$. So we find $\sigma^{\mathsf{L}}_{Q_+} (a' \otimes 1 \otimes b) = a \otimes 1 \otimes b$ and $(a \otimes 1 \otimes b) \sigma^{\mathsf{R}}_{Q_-} = 0$. This tells us
\[\sigma \cdot \big( w \otimes (a' \otimes 1 \otimes b)\big) = (\sigma w) \otimes (a' \otimes 1 \otimes b) +
w \otimes (a \otimes 1 \otimes b)\]
yielding $[w \otimes (a \otimes 1 \otimes b)]
= - [(\sigma w) \otimes (a' \otimes 1 \otimes b)]$,
completing the induction step.
\end{proof}

\section{Smoothing via strong identity elements}\label{sec:SewingThmProof}

Here we prove \cref{thm:Sewing}, which relates the smoothing property of $V$, described here in \cref{def:Sewing2}, to the existence of strong identity elements in $\Ac$. \cref{thm:Sewing} relies crucially on \cref{prop:smoothing}. These results are proved in \cref{sec:SmoothingProofs}. Geometric consequences regarding coinvariants are given in \cref{sec:VBCorProof}.

Throughout this section we will use the notation introduced in \cref{sec:settingSmoothing}, considering two families of marked, parametrized curves $(\Cs, P_\bullet, t_\bullet)$ and $(\widetilde{\Cs}, P_\bullet \sqcup Q_\pm, t_\bullet \sqcup s_\pm)$ over the base scheme $S = \Spec(\CC[\![q]\!])$. As usual, $\Ac_0=\Aa$.

\begin{defn}\label{def:Sewing2} Given a family $(\Cs, P_\bullet, t_\bullet)$, and collection of $V$-modules $W^1,\ldots,W^n$, an element $\Is=\sum_{d \geq 0} \Is_d q^d \in \Ac[\![q]\!]$  defines a \textit{smoothing map} for $W^\bullet$ over $(\Cs, P_\bullet, t_\bullet)$, if  $\Is_0 = 1 \in \Ac_0$, and the map $W^{\bullet} \to   W^{\bullet} \otimes \Ac[\![q]\!]$,  $w \mapsto w \otimes \Is$ extends by linearity and $q$-adic continuity to an $\Lc_{\Cs\setminus P_{\bullet}}(V)$-module homomorphism
$\alpha \colon W^{\bullet}[\![q]\!] \longrightarrow  (W^{\bullet} \otimes \Ac)[\![q]\!]$.
We say that $\Is=\sum_{d \geq 0} \Is_d q^d \in \Ac[\![q]\!]$  defines a \textit{smoothing map} for $V$, if it defines a smoothing map for all $V$-modules $W^\bullet$, over all families $(\Cs, P_\bullet, t_\bullet)$.\end{defn}

\begin{defn}
Smoothing holds for $W^\bullet$ over the family $(\Cs, P_\bullet, t_\bullet)$, if there is an element  $\Is=\sum_{d \geq 0} \Is_d q^d \in \Ac[\![q]\!]$ giving a smoothing map for $W^\bullet$ over  $(\Cs, P_\bullet, t_\bullet)$. \\ $V$ \textit{satisfies smoothing} if smoothing holds for all $W^\bullet$, over all families $(\Cs, P_\bullet, t_\bullet)$.
\end{defn}

\begin{thm}\label{thm:Sewing} Let $V$ be a VOA. Then the algebras $\Ac_d$ admit strong identity elements for all $d\in \NN$ if and only if $V$ satisfies smoothing.
\end{thm}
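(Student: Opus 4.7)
The key is to reformulate the smoothing condition as a purely algebraic condition on $\Is \in \Ac[\![q]\!]$, and then relate this to strong identity elements. The first step, which I would carry out by invoking the explicit trivializations of $\mc V_{\Cs}\otimes_{\mc O_{\Cs}}\omega_{\Cs/S}$ from \cref{sec:Step2}, is to show: $\alpha(u) = u \otimes \Is$ extends by $q$-adic continuity to a $\Lc_{\Cs\setminus P_\bullet}(V)$-module homomorphism if and only if the identity
\[
\sigma^{\mathsf{L}}_{Q_+}\cdot \Is \;+\; \Is\cdot \sigma^{\mathsf{R}}_{Q_-} \;=\; 0 \quad \text{in } \Ac[\![q]\!]
\]
holds for every section $\sigma \in \Lc_{\Cs\setminus P_\bullet}(V)$. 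Indeed, the action of $\sigma$ on $W^\bullet$ via the Laurent expansions at $P_\bullet$ is unchanged by $\alpha$, so the equivariance reduces to a comparison of the additional action on $\Ac$ coming from the two branches at the node, which through the formulas of Section~\ref{sec:Step2} (together with the relation $\theta(J_n(v)) = J_{-n}(\gamma(v))$ from \cref{lem:ThetaGammId}) gives exactly this displayed equality.

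For the implication \emph{(strong identities $\Rightarrow$ smoothing)}, I would set $\Is = \sum_d \Is_d q^d$ and verify the displayed identity coefficient-by-coefficient in $q$. Writing $\sigma^{\mathsf{L}}_{Q_+} = \sum_\ell \sum_{i,j\geq 0} J_{i-j}(v_\ell)\,x^\ell_{i,j}\,q^j$ and $\sigma^{\mathsf{R}}_{Q_-} = \sum_\ell \sum_{i,j\geq 0} \theta(J_{i-j}(v_\ell))\,x^\ell_{i,j}\,q^i$, and pairing this with $\Is_d$ placed in $\Ac_{d,-d}$, the coefficient of $q^n$ becomes a sum of terms of the form $J_{m}(v)\cdot \Is_{n-j}$ and $\Is_{n-i}\cdot \theta(J_{m}(v))$ with $m = i-j$. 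Here the strong identity condition (\cref{lem:StrongUnitConditions}\ref{it:H1}) allows one to replace each left action $J_m(v)\cdot \Is_{n-j}$ by the corresponding action on $\Ac_{n-j+m,\,-(n-j)}$ directly, and similarly on the right; the resulting sum telescopes to zero because the left and right descriptions of $\sigma$ at the node are pulled back from the same global section on $\Cs\setminus P_\bullet$.

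For the converse \emph{(smoothing $\Rightarrow$ strong identities)}, I would pick a small, rigid family designed to isolate one degree at a time. Take $\Cs_0$ to be, say, two copies of $\PP^1$ meeting at a single node $Q$ with a single marked point $P$ on one component, and take $W^1 = \PhiL(M)$ for $M$ an arbitrary $\Aa$-module. Applying \cref{nodal isomorphism} and \cref{cor:DiagQues}, smoothing yields an $\Is \in \Ac[\![q]\!]$ satisfying the boxed identity. By choosing sections $\sigma$ supported around the node with prescribed principal parts at $Q_\pm$, and using the density statement \cref{chiral density} to realize arbitrary Laurent tails in $\LVf$ as such restrictions, one produces all the relations of condition \ref{it:H1} of \cref{lem:StrongUnitConditions}, inductively in $d$. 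Crucially, the base case $\Is_0 = 1$ is enforced by the requirement that $\alpha$ extend $\alpha_0$, and each successive $\Is_d$ is pinned down by the order-$q^d$ compatibility, giving a strong identity in $\Ac_d$ for every $d$.

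The main obstacle is the careful bookkeeping in the first step: justifying that the abstract identity $\sigma^{\mathsf{L}}_{Q_+}\!\cdot\Is + \Is\cdot\sigma^{\mathsf{R}}_{Q_-} = 0$ in $\Ac[\![q]\!]$ is not merely necessary but \emph{sufficient} for $\alpha$ to be $\Lc_{\Cs\setminus P_\bullet}(V)$-equivariant --- this requires that the $\LV$-action involved descends to $(W^\bullet\otimes\Ac)[\![q]\!]$ tensor-factor-by-tensor-factor, and that sections $\sigma$ vanishing at $P_\bullet$ produce all needed relations at $Q_\pm$, which is where \cref{chiral density} enters. Once this translation is in hand, both implications reduce to matching Cauchy-type pairings between the $s_+$ and $s_-$ expansions on $D_\pm^\times$ against the left/right $\Ac$-structure, for which the strong-identity axioms are exactly tailored.
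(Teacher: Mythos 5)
Your overall architecture tracks the paper's: translate the smoothing condition into the purely algebraic constraint that $\bigl(\sigma^{\mathsf{L}}_{+}\otimes 1+1\otimes\sigma^{\mathsf{R}}_{-}\bigr)\cdot\Is=0$ for all $\sigma$, read this coefficient-by-coefficient in $q$ to obtain what the paper calls the \emph{strong identity element equations} $J_n(a)\,\Is_d=\Is_{d-n}\,J_n(a)$, and run chiral density (\cref{chiral density}) to show that one may realize arbitrary local jets at the node and hence that these equations are also \emph{necessary}. This is exactly the content of \cref{prop:smoothing}. A couple of remarks: the ``telescoping'' phrasing in your forward direction is misleading --- the strong identity element equations imply the coefficient identity directly by substitution, there is no telescoping sum; and your degeneration (two $\PP^1$s, a single marked point) is unstable, though since \cref{cor:smoothing one for all} shows smoothing is family-independent, choosing a minimal family does not actually save any work.

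The substantive gap is in the converse direction. Smoothing only hands you a sequence $\Is_d\in\Ac[\![q]\!]$, with $\Is_d\in\Ac$ of arbitrary bidegree, satisfying the equations. You assert this ``gives a strong identity in $\Ac_d$ for every $d$,'' but two separate steps are missing and neither is automatic. First, you must show one may replace $\Is_d$ by its homogeneous component $(\Is_d)_{d,-d}\in\Ac_d$ without breaking the equations; this is \cref{homogenize units}, which follows by comparing bidegrees in $J_n(a)\Is_d=\Is_{d-n}J_n(a)$. Second, and more seriously, the equations together with $\Is_0=1$ do not on their face say that $\Is_d$ is even a multiplicative unit of $\Ac_d$. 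The paper's \cref{lem:MotherLemma} supplies the needed induction: writing a general element of $\Ac_{d,e}$ as $J_{n_1}(v_1)\cdots J_{n_r}(v_r)\cdot\xx$ and repeatedly applying the equations to push $\Is$ past the $J_{n_i}$'s, one reduces to the base case $\Is_0\star\xx=\xx$. Your proposal never establishes this, so ``pinned down by the order-$q^d$ compatibility'' is a claim, not an argument. With \cref{homogenize units} and \cref{lem:MotherLemma} inserted between your translation lemma and your conclusion, the proof would be complete and essentially identical to the paper's.
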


\subsection{Proof of \texorpdfstring{\cref{thm:Sewing}}{Theorem 5.0.3}}\label{sec:SmoothingProofs}

Following the idea of \cref{lem:StrongUnitConditions}\ref{it:H2}, we make the following definition:
\begin{defn}\label{strong unit equations} We say that a sequence $(\Is_d)_{d \in \NN}$, with $\Is_d \in \Ac$ satisfies the \textit{strong identity element equations} if for every homogeneous $a \in V$, and $n \in \ZZ$ such that  $n \le d$, we have
\begin{equation}\label{eq:StrongUnitEquation}
    J_n(a) \Is_d = \Is_{d - n} J_n(a).
\end{equation} 
\end{defn}
In \cref{strong unit equations} there is no assumption on the (bi-)degrees of the elements $\Is_d \in \Ac$. However, if $\Is_d \in \Ac_d$ is an identity element for each $d$, then by \cref{lem:StrongUnitConditions} they satisfy the strong identity element equations if and only if they are strong identity elements.

\begin{prop} \label{prop:smoothing} Let $V$ be a VOA and let $\Is_d \in \Ac$ for $d \in \NN$. Then $\Is = \sum \Is_d q^d$ defines a smoothing map for $W^\bullet$ over $(\Cs, P_\bullet, t_\bullet)$ if and only if the sequence $(\Is_d)$ satisfies the strong identity element equations \eqref{eq:StrongUnitEquation}. 
\end{prop}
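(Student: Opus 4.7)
The plan is to translate the module homomorphism condition $\alpha(\sigma \cdot w) = \sigma \cdot \alpha(w)$, required for every section $\sigma$ of $\Lc_{\Cs\setminus P_\bullet}(V)$ and every $w \in W^\bullet[\![q]\!]$, into the algebraic equations \eqref{eq:StrongUnitEquation}. First I would observe that since the only marked points of $\Cs$ are $P_\bullet$, the action of $\sigma$ on $W^\bullet[\![q]\!]$ uses only the expansions of $\sigma$ at $P_\bullet$, giving $\alpha(\sigma \cdot w) = (\sigma|_{P_\bullet} \cdot w) \otimes \Is$. Viewing the same $\sigma$ as a section of $\Lc_{\widetilde{\Cs}\setminus(P_\bullet \sqcup Q_\pm)}(V)[\![q]\!]$ via the restriction associated to the normalization, the action on $(W^\bullet \otimes \Ac)[\![q]\!]$ also picks up contributions from the expansions at $Q_\pm$ acting on the $\Ac$-factor through the bimodule structure. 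The contributions at $P_\bullet$ being identical on the two sides, the compatibility condition reduces to requiring that, for every $\sigma$, the combined action of $\sigma$ at $Q_\pm$ annihilate $\Is$ in $\Ac[\![q]\!]$.

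The second step is to make this vanishing explicit using the description from \cref{sec:Step2}. A local section of the form $\sigma_Q = \sum v_\ell \otimes x^\ell_{i,j}\, s_+^{i+\ell-1} s_-^j$ restricts to $(\sigma_Q)_+ = \sum J_{i-j}(v_\ell)\, x^\ell_{i,j}\, q^j$ at $D_+^\times$ and to $(\sigma_Q)_- = \sum \theta(J_{i-j}(v_\ell))\, x^\ell_{i,j}\, q^i$ at $D_-^\times$. Using the identification of the $(\UVL, \UVR)$-bimodule structure on $\Ac = \Phi(\Aa)$ from \cref{lem:PhiLRPhi} together with \cref{lem:ThetaGammId}, the right $\UVR$-action coming from $(\sigma_Q)_-$ is matched with the natural right $\UV$-action of $J_{i-j}(v_\ell) \in \LV$ on the third factor of the presentation $\Ac = (\UV/\NL 1\UV) \otimes_{\UV_0} \Aa \otimes_{\UV_0} (\UV/\NR 1\UV)$. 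Setting $n = i - j$ and extracting the coefficient of $q^{d+j}$ from the vanishing condition yields precisely $J_n(a)\,\Is_d = \Is_{d-n}\,J_n(a)$, with the range $n \le d$ produced by the constraints $i, j \geq 0$.

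For the full equivalence, I would invoke \cref{chiral density}: any prescribed basic jet at the node parametrised by $(n, a, j)$ can be realised, modulo an arbitrarily high power of $q$, as the expansion of some global section of $\Lc_{\Cs\setminus P_\bullet}(V)$. In the forward direction, varying over such sections extracts every equation in \eqref{eq:StrongUnitEquation}. In the converse direction, assuming the strong identity element equations, any $\sigma$ decomposes into basic contributions (which annihilate $\Is$ by hypothesis) plus a residual term of arbitrarily high order in $q$, which vanishes in the $q$-adically complete algebra $\Ac[\![q]\!]$; hence $\alpha$ extends to an $\Lc_{\Cs\setminus P_\bullet}(V)$-module homomorphism.

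The main technical obstacle is the careful reconciliation of the two descriptions of the bimodule action on $\Ac$: the $(\UVL, \UVR)$-bimodule action arising at the branches $Q_\pm$ of the node (which naturally involves the anti-involution $\theta$) on the one hand, and the natural right $\UV$-action on the third factor of the triple tensor presentation of $\Ac$ used on the right-hand side of \eqref{eq:StrongUnitEquation} on the other. Only after performing this identification does the $\theta(J_n(a))$ appearing from $(\sigma_Q)_-$ become the clean $J_n(a)$ on the right-hand side of \eqref{eq:StrongUnitEquation}, and getting this bookkeeping right is where the bulk of the proof will live.
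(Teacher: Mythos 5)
Your proposal takes essentially the same approach as the paper: translate the $\Lc_{\Cs\setminus P_\bullet}(V)$-module-homomorphism condition into the vanishing of $(\sigma_+^{\mathsf{L}}\otimes 1 + 1\otimes\sigma_-^{\mathsf{R}})\cdot\Is$, write out the local expansions of $\sigma$ at the node, equate coefficients of $q^m$, and invoke \cref{chiral density} to pass from the summed coefficient equations to the individual equations \eqref{eq:StrongUnitEquation}, with the anti-isomorphism $\theta$ reconciling the two bimodule descriptions exactly as you identify. Both directions of the equivalence proceed just as in the paper, so the proposal is correct in outline.
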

\begin{proof}
The map $\alpha \colon W^\bullet[\![q]\!] \to (W^\bullet \otimes \Ac)[\![q]\!]$ is a map of $\Lc_{\Cs\setminus P_\bullet}(V)$-modules if and only if, for every $\sigma \in  \Lc_{\Cs\setminus P_\bullet}(V)$ and $u\in W^{\bullet}$, one has $\alpha(\sigma(u)) = \sigma(\alpha(u))$. Here, the left hand side equals $(\sigma \cdot u) \otimes \Is$. To describe the right hand side, as is explained in the beginning of \cref{sec:Step2}, we recall that elements of the Lie algebra $\Lc_{\Cs\setminus P_{\bullet}}(V)$ are represented by sections of the sheaf $\mc{V}_{\Cs}\otimes_{\mc{O}_{\Cs}} \omega_{\Cs/S}$ over the affine open set $\Cs\setminus P_{\bullet}$. Consequently we can understand the right hand side in terms of the maps
\begin{align*} 
\left(\mc{V}_{\Cs}\otimes_{\mc{O}_{\Cs}} \omega_{\Cs/S}\right)(\Cs \setminus P_\bullet) &\to \left(\mc{V}_{\Cs}\otimes_{\mc{O}_{\Cs}} \omega_{\Cs/S}\right)(D_\pm^\times) \cong V \otimes_\CC \CC(\!(t)\!)[\![q]\!] \\
\sigma &\mapsto \sigma_\pm^{\mathsf{L}}.
\end{align*}
We let $\sigma_-^{\mathsf{R}} = \theta(\sigma_-^{\mathsf{L}}) \in \CC(\!(t^{-1})\!)[\![q]\!]$.
We then have
\[ \sigma(u \otimes \Is)=\sigma(u) \otimes \Is + u \otimes (\sigma_{+}^{\mathsf{L}} \otimes 1 + 1 \otimes \sigma_{_-}^{\mathsf{R}})(\Is).
\] It follows that $\alpha$ is a map of $\Lc_{\Cs\setminus P_\bullet}(V)$-modules if and only if 
\begin{equation}\label{eq:LModMap}
    \sigma  \cdot \Is =\left(\sigma_{+}^{\mathsf{L}}\otimes 1 +1\otimes  \sigma_{-}^{\mathsf{R}}\right)  \cdot \Is =0.
\end{equation}

We now reframe this in the language developed towards the end of \cref{sec:Step2}. For a section $\sigma \in \left(\mc{V}_{\Cs}\otimes_{\mc{O}_{\Cs}} \omega_{\Cs/S}\right)_{\leq k}(\Cs \setminus P_\bullet)$, writing $s_+ s_-  = q$ on $\wh {\mc O}_{\Cs, Q}$, we may write (in terms of the local trivializations of \cref{sec:Step2}) 
\[\sigma|_{D_Q} = \sum_{\ell = 0}^k\sum_{i,j \geq 0} J_0(v_\ell) \ x_{i,j}^\ell \ s_+^i s_i^j\]
and for this section $\sigma$ we have
\[\sigma_+^{\mathsf{L}} = \sum_{\ell = 0}^k\sum_{i,j \geq 0} J_{i - j}(v_\ell)x^\ell_{i,j}q^j \text{ \quad and \quad  } 
\sigma_-^{\mathsf{R}} = \sum_{\ell = 0}^k\sum_{i,j \geq 0} J_{i - j}(v_\ell)x^\ell_{i,j}q^i.
\]

Putting this together with \eqref{eq:LModMap}, we find that smoothing holds if and only if for all $\sigma$ as above (and for all $k$), we have
\[
\sum_{\ell = 0}^k\sum_{i,j, d \geq 0} x_{ij} \left( J_{i-j}(v_\ell) \cdot   \Is_d \ q^{d+j} -  \Is_d \cdot J_{i-j}(v_\ell) \  q^{d+i}\right) =0.
\]

This in turn holds if and only if each coefficient of $q^m$ is zero, translating to the statement
\begin{equation} \label{eq:coeffq} 
\sum_{\ell = 0}^k\ \sum_{0 \leq i, j \leq m}  x_{ij} \left( J_{i-j}(v_\ell)  \cdot \Is_{m-j} -  \Is_{m-i}  \cdot J_{i-j}(v_\ell) \right) =0,
\end{equation} 
for every $m \geq 0$. 

We note that the systems of equations 
\[J_n(v_\ell) \Is_d = \Is_{d - n} J_n(v_\ell),   \text{ with } n \leq d, \text{ and } d \in \NN, v_\ell \in V_\ell, \ell \in \NN,
\] 
and \[J_{i-j}(v_\ell)  \cdot \Is_{m-j} -  \Is_{m-i}  \cdot J_{i-j}(v_\ell) =0,    \text{ with } 0 \leq i,j \leq m, \text{ and } m \in \NN, v_\ell \in V_\ell, \ell \in \NN, \]
are equivalent after a change of variables. We then have showed that, if $(\Is_d)_{d \in \NN}$ satisfies the strong identity element equations, it follows that $\Is$ defines a smoothing map. It remains to show the converse, namely that if \eqref{eq:coeffq} holds for every $\sigma$, then the strong identity element equations hold. 

We do this by the following strategy: we will show that for every $0 \leq i_0, j_0 \leq m$, 
$m \in \NN$ and $v_{\ell_0}' \in V_{\ell_0}$, we may find a section $\sigma \in \left(\mc{V}_{\Cs}\otimes_{\mc{O}_{\Cs}} \omega_{\Cs/S}\right)(\Cs \setminus P_\bullet)$ so that the expansion of $\sigma$ at $Q$ has the form
\begin{equation}\label{scalpel formula}
\sigma|_{D_Q} = \sum_{\ell = 0}^k\sum_{i,j \geq 0} J_0(v_\ell) \ x_{i,j}^\ell \ s_+^i s_i^j = 
J_{0}(v'_{\ell_0})s_+^{i_0} s_-^{j_0} + \sum_{\ell = 0}^k\sum_{i,j \geq m} J_0(v_\ell) \ x_{i,j}^\ell \ s_+^i s_i^j.  \end{equation}
That is, we argue that the coefficients $x_{i,j}$ in of the terms in \eqref{scalpel formula} of degree less than $m$ are only nonzero in the case $i = i_0, j = j_0$, and in this case $x_{i_0, j_0} = 1$. For such a section $\sigma$, \eqref{eq:coeffq} simply becomes $J_{i_0-j_0}(v_\ell)  \cdot \Is_{m-j_0} -  \Is_{m-i_0}  \cdot J_{i_0-j_0}(v_\ell) =0$, which, as has been noted, is equivalent to the strong identity element equations once we run this argument for all $i_0,j_0$ and $m$.
For this final step, we note that by \cref{chiral density} we have a surjective map
\[ \xymatrix@R-1.5pc@C=.4cm{
\left(\mc{V}_{\Cs}\otimes_{\mc{O}_{\Cs}} \omega_{\Cs/S}\right)(\Cs \setminus P_\bullet) 
\ar[r] & \left(\mc{V}_{\Cs}\otimes_{\mc{O}_{\Cs}} \omega_{\Cs/S}\right)_{\leq k}(\Spec \wh{\mc{O}}_{\Cs, Q}/\left(\wh{\mf{m}}_{\Cs, Q}\right)^{2m}) \ar@{=}[d] \\ 
&  \left(\mc{V}_{\Cs}\otimes_{\mc{O}_{\Cs}} \omega_{\Cs/S}\right)_{\leq k}(\Spec \wh{\mc{O}}_{\Cs, Q})
\otimes_{\wh{\mc{O}}_{\Cs, Q}} 
\wh{\mc{O}}_{\Cs, Q}/(\wh{\mf{m}}_{\Cs, Q})^{2m}
}\]
for every $m \geq 0$. Hence there exists $\sigma \in \left(\mc{V}_{\Cs}\otimes_{\mc{O}_{\Cs}} \omega_{\Cs/S}\right)(\Cs \setminus P_\bullet)$ 
whose image in $\left(\mc{V}_{\Cs}\otimes_{\mc{O}_{\Cs}} \omega_{\Cs/S}\right)(\Spec \wh{\mc{O}}_{\Cs, Q})$ is congruent modulo $(\wh{\mf{m}}_{\Cs, Q})^{2m} = (s_+, s_-)^{2m}$ to 
$J_{0}(v'_{\ell_0})s_+^{i_0} s_-^{j_0}$. It follows \eqref{scalpel formula} holds for this $\sigma$ as desired and the proof is complete. \end{proof}

  We note that  already \cref{prop:smoothing} shows that the smoothing property never depends on modules or specific families of curves:

\begin{cor} \label{cor:smoothing one for all}
Smoothing holds for $W^\bullet$ over a family $(\Cs, P_\bullet, t_\bullet)$ if and only if $V$ satisfies smoothing.
\end{cor}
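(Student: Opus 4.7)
The plan is to derive this corollary as an immediate consequence of \cref{prop:smoothing}. The central observation is that the strong identity element equations \eqref{eq:StrongUnitEquation}, namely $J_n(a)\Is_d = \Is_{d-n} J_n(a)$ for homogeneous $a \in V$ and $n \leq d$, involve only the intrinsic data of $V$ and the candidate elements $\Is_d \in \Ac$. Crucially, they make no reference to the modules $W^\bullet$ or to the geometric data $(\Cs, P_\bullet, t_\bullet)$.

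The ``if'' direction is tautological: if $V$ satisfies smoothing, then by definition smoothing holds for every pair, including the one under consideration. For the converse, suppose smoothing holds for $W^\bullet$ over $(\Cs, P_\bullet, t_\bullet)$, with the smoothing map determined by some $\Is = \sum_{d\geq 0} \Is_d q^d \in \Ac[\![q]\!]$. Invoking \cref{prop:smoothing} once, we extract that the sequence $(\Is_d)_{d\in\NN}$ satisfies the strong identity element equations \eqref{eq:StrongUnitEquation}. Invoking \cref{prop:smoothing} a second time, now applied to an arbitrary collection of $V$-modules $W'^{\bullet}$ and an arbitrary family $(\Cs', P'_\bullet, t'_\bullet)$, we conclude that the very same element $\Is$ defines a smoothing map for this new pair as well. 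Hence $V$ satisfies smoothing.

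There is essentially no obstacle in this argument beyond a clean invocation of \cref{prop:smoothing}: the hard work of translating the $\Lc_{\Cs\setminus P_\bullet}(V)$-equivariance condition into explicit commutation relations intrinsic to $V$ and $\Ac$ has already been carried out there. The substantive content of the corollary is the observation that \cref{prop:smoothing} furnishes a characterization of smoothing maps which is purely internal to $V$, so that the existence of a smoothing map over any single test pair $(W^\bullet, (\Cs, P_\bullet, t_\bullet))$ is automatically upgraded to a universal smoothing map over all pairs.
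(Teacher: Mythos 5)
Your proof is correct and matches the paper's argument essentially verbatim: both directions are handled by noting that the nontrivial implication follows from a double application of \cref{prop:smoothing}, first extracting the strong identity element equations from the given smoothing map, and then observing that these equations — being intrinsic to $V$ and independent of the test modules and curve family — produce a smoothing map for every choice of $W^\bullet$ and $(\Cs, P_\bullet, t_\bullet)$. The only cosmetic difference is that you explicitly note the tautological ``if'' direction, which the paper leaves unstated.
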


\begin{proof} If smoothing holds for $W^\bullet$ over a family $(\Cs, P_\bullet, t_\bullet)$, then by \cref{prop:smoothing}, the sequence $(\Is_d)_{d\in\ZZ}$ satisfies the strong identity element equations. But then, invoking again \cref{prop:smoothing}, we deduce that this sequence defines a smoothing map for any choice of modules and family of curves. 
\end{proof}

In what follows, for an element $b \in \Ac = \bigoplus_{i,j} \Ac_{i, j}$, we write $b_{i,j} \in \Ac_{i,j}$ for the corresponding homogeneous component of $b$.

\begin{lem} \label{homogenize units}
For $V$ a VOA, if the sequence  $(\Is_d)_{d \in \NN}$ with $\Is_d \in \Ac$ satisfies the strong identity element equations \eqref{eq:StrongUnitEquation}, then so does the sequence $(\Is_d')_{d \in \NN}$ where $\Is_d' := (\Is_d)_{d, -d}$. \end{lem}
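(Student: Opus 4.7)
The plan is to exploit the $\NN \times (-\NN)$-bigrading of $\Ac$ coming from the factorization $\Ac = (\UV/\NL 1 \UV) \otimes_{\UV_{0}} \Aa \otimes_{\UV_{0}} (\UV/\NR 1 \UV)$ of \cref{def:phi}. By construction, $\Ac$ is a $(\UV,\UV)$-bimodule in which left multiplication by any element of $\UV_{-n}$ shifts the first (i.e.\ $\PhiL$-)grading by $-n$, while right multiplication by an element of $\UV_{-n}$ shifts the second (i.e.\ $\PhiR$-)grading by $-n$. Since $J_n(a) \in \LVf_{-n} \subset \UV_{-n}$, writing $\Is_d = \sum_{i,j} (\Is_d)_{i,j}$ with $(\Is_d)_{i,j} \in \Ac_{i,j}$, we have
\[ J_n(a)\cdot (\Is_d)_{i,j} \in \Ac_{i-n,\,j}, \qquad (\Is_{d-n})_{i',j'}\cdot J_n(a) \in \Ac_{i',\,j'-n}. \]

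Next, since the strong identity element equation \eqref{eq:StrongUnitEquation} is an equality in the bigraded space $\Ac$, I would simply project onto each bihomogeneous piece. Comparing the $\Ac_{r,s}$-components of $J_n(a)\,\Is_d = \Is_{d-n}\,J_n(a)$ yields, for every $(r,s)$,
\[ J_n(a)\cdot(\Is_d)_{r+n,\,s} \;=\; (\Is_{d-n})_{r,\,s+n}\cdot J_n(a). \]

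Finally, I would specialize to $(r,s) = (d-n,\,-d)$: this choice is tailored so that $(r+n,s) = (d,-d)$ and $(r,s+n) = (d-n,\,-(d-n))$, i.e.\ the two components picked out are precisely $\Is_d'$ and $\Is_{d-n}'$. The displayed identity then becomes
\[ J_n(a)\cdot \Is_d' \;=\; \Is_{d-n}'\cdot J_n(a), \]
which is exactly \eqref{eq:StrongUnitEquation} for the sequence $(\Is_d')$. There is no serious obstacle: the whole argument is a one-line projection onto a single bihomogeneous component, and the only verification needed is the (immediate) fact that $J_n(a)$ shifts the two gradings of $\Ac$ by $(-n,0)$ and $(0,-n)$ respectively.
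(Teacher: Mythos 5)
Your proof is correct and follows essentially the same route as the paper's: both arguments simply project the identity $J_n(a)\,\Is_d = \Is_{d-n}\,J_n(a)$ onto a single bihomogeneous component of $\Ac$, using the fact that left/right multiplication by $J_n(a)$ shifts the first/second index by $-n$. (If anything, your bookkeeping is more careful than the paper's, which has an apparent indexing/subscript typo in the displayed bihomogeneous-component equation.)
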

\begin{proof}
Suppose we have a sequence $(\Is_d)$ satisfying the strong identity element equations. When we equate terms of like degree in the expression
\[ J_n(a)  \cdot \Is_d = \Is_{d - n} \cdot J_n(a),\]
we obtain
\[ J_n(a) \cdot (\Is_d)_{i + n,j} = (\Is_d)_{i,j - n} \cdot J_n(a) \]
for every $i, j$. In particular, for $\Is_d' = (\Is_d)_{d, -d}$, we find that the strong identity element equations \eqref{eq:StrongUnitEquation} hold for the sequence $(\Is_d')_{d \in \NN}$, as was claimed.
\end{proof}

In what follows we will use the following equalities, which are a direct consequence of  \cref{lem:abstract-module-associativity}. Let $\xx, \yy \in \Ac$.  Then for every $u, w \in \UV$ we have
\begin{equation} \label{eq:lemStarUV1} u \cdot (\xx \star \yy) = (u \cdot \xx) \star \yy \qquad \text{ and }\qquad (\xx \star \yy) \cdot w = \xx \star (\yy \cdot w).\end{equation}

\begin{lem}\label{lem:MotherLemma}
 Suppose we have a collection of elements $\Is_d \in \Ac_d$ for each $d \geq 0$, with $\Is_0 = 1 \in \Ac_0$. Then,  $\Is_d$ is a strong identity element in $\Ac_d \subset \Ac$, for all $d\in \NN$, if and only if  the sequence $(\Is_d)_{d \in \NN}$ satisfies the strong identity element equations \eqref{eq:StrongUnitEquation}. \end{lem}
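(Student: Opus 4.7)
The plan is to use condition \ref{it:H2} of \cref{lem:StrongUnitConditions} as the working characterization of strong identity, that is, $\Is_n \star \xx = \xx = \xx \star \Is_m$ for every $\xx \in \Ac_{n,-m}$. The central technical tool, supplementing \eqref{eq:lemStarUV1}, is the ``inner associativity'' identity
\[ \xx \star (u \cdot \yy) = (\xx \cdot u) \star \yy \qquad \text{for } u \in \UV,\ \xx,\yy \in \Ac,\]
which follows from \cref{lem:abstract-module-associativity} and is directly verifiable from the formula for $\innerstar$ in \cref{lem:innerstar}, since $x \innerstar (u\beta) = (xu) \innerstar \beta$ whenever the degree condition is satisfied (and both vanish otherwise).

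For the forward implication, assume each $\Is_d$ is a strong identity. Given $a \in V$ homogeneous and $n \le d$, set $\xx := J_n(a) \cdot \Is_d$, which lies in $\Ac_{d-n,-d}$ (the condition $n \le d$ ensures $d-n\ge 0$). Applying \ref{it:H2} with the left identity $\Is_{d-n}$, pushing $J_n(a)$ outside via inner associativity, and finally using that $\Is_d$ is a right identity on $\Ac_{d-n,-d}$, one obtains
\[\xx \;=\; \Is_{d-n} \star \xx \;=\; (\Is_{d-n}\cdot J_n(a)) \star \Is_d \;=\; \Is_{d-n}\cdot J_n(a),\]
which is exactly the strong identity equation \eqref{eq:StrongUnitEquation}.

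For the converse, assume the strong identity equations hold and let $\xx \in \Ac_{n,-m}$. Using the tensor description of $\Ac=\PhiL(\PhiR(\Aa))$ from \cref{def:phi}, every such $\xx$ can be written as $u \cdot \Is_0 \cdot v$ for some $u \in \UV_n$ and $v \in \UV_{-m}$. A short computation using \eqref{eq:lemStarUV1}, inner associativity, and the fact that $\Is_0 = 1$ is the identity of $\Aa = \Ac_{0,0}$ reduces $\Is_n \star \xx = \xx$ to the single identity
\[\Is_n \cdot u \;=\; u \cdot \Is_0 \qquad \text{in } \Ac_{n,0}, \quad \text{for every } u \in \UV_n,\]
and symmetrically reduces $\xx \star \Is_m = \xx$ to $v \cdot \Is_m = \Is_0 \cdot v$ for every $v \in \UV_{-m}$.

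The heart of the argument is the slightly more general claim
\[\Is_n \cdot u \;=\; u \cdot \Is_{n-k}\qquad \text{for every } u \in \UV_k \text{ with } k\le n,\]
proved by induction on the length of $u$ as a PBW monomial in $\LVf$. The base case $u = J_{-k}(a) \in \LVf_k$ is precisely the strong identity equation applied with $d = n-k$ and $n_0 = -k$. For a product $u = \xi u'$ with $\xi \in \LVf_{k_1}$, the clean case is $k_1 \le n$: the strong identity equation moves $\Is_n$ past $\xi$ to produce $\xi \cdot \Is_{n-k_1}$, and the inductive hypothesis moves $\Is_{n-k_1}$ across $u'$. The main subtle point expected to arise is the case $k_1 > n$, where the strong identity equation is not directly applicable; this is resolved by a grading argument, since $\Is_n \cdot \xi$ lies in $\Ac_{n,-n+k_1} = 0$ (because $-n+k_1 > 0$), while symmetrically $u' \cdot \Is_{n-k}$ lies in $\Ac_{n-k_1,\ast} = 0$ (because $\deg u' = k - k_1 < 0$), so both sides vanish. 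Setting $k = n$ yields the required identity, and the symmetric right-hand version is established identically, completing the proof.
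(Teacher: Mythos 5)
Your proof is correct. Both directions follow the same basic mechanism as the paper's: use the associativity identities to move a unit $\Is_d$ through a single factor $J_n(a)$ via the strong identity equation, and iterate. Your forward direction is a spelled-out version of the paper's one-liner (applying \ref{it:H2} to $\xx = J_n(a) \cdot \Is_d$); your converse is organized differently but rests on the same induction.

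The one place where you genuinely restructure the argument is the converse. The paper inducts directly on the length $r$ of a monomial $J_{n_1}(v_1)\cdots J_{n_r}(v_r)\cdot\xx$ with all $n_i < 0$, so that every factor has strictly positive degree; consequently the running index $d + n_1$ never escapes $\NN$ and the degenerate case never arises. You instead extract the intermediate claim
\[ \Is_n \cdot u \;=\; u \cdot \Is_{n-k}, \qquad u \in \UV_k,\ k \le n,\]
which is a clean extension of \eqref{eq:StrongUnitEquation} from $\LVf$-elements to arbitrary monomials in $\UV$, and prove it by induction on PBW length. Because you allow arbitrary monomials, a factor $\xi$ of degree $k_1 > n$ can appear, which you correctly dispatch by showing both $\Is_n \cdot \xi \in \Ac_{n,\,k_1 - n}$ and $u' \cdot \Is_{n-k} \in \Ac_{n - k_1,\,\ast}$ live in bidegrees outside the allowed range $\Ac_{\geq 0,\,\leq 0}$ and hence vanish. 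The payoff of your formulation is symmetry: the same statement, with $k$ and $n - k$ exchanged, handles right-identity verification, whereas the paper only spells out the left case and leaves the right to the reader. You also correctly identify that the step $\xx \star (u \cdot \yy) = (\xx \cdot u) \star \yy$ is an ``inner'' associativity not literally contained in \eqref{eq:lemStarUV1}, and you give the one-line verification from the definition of $\innerstar$; the paper's citation of \eqref{eq:lemStarUV1} at that step is slightly imprecise, so your care here is warranted.

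Two tiny expository points. First, an element of $\Ac_{n,-m}$ is a finite sum of terms $u \otimes 1 \otimes v$, not a single one; your argument carries through by linearity, but say so. Second, when stating the right-hand analogue, the precise claim is $u \cdot \Is_m = \Is_{m-k} \cdot u$ for $u \in \UV_{-k}$ with $0 \le k \le m$; your special case $v \cdot \Is_m = \Is_0 \cdot v$ is the instance $k = m$, and the base case follows from \eqref{eq:StrongUnitEquation} with $n_0 = k$, $d = m$.
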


\begin{proof}
\Cref{lem:StrongUnitConditions}\ref{it:H2} with $\xx=J_n(v)$ implies that strong identity elements satisfy the strong identity element equations \eqref{eq:StrongUnitEquation}, so we are left to prove the converse statement. 
To show that $\Is_d$ is a strong identity element for each $d$, it suffices to show that $\Is_d$ acts as the identity element on $\Ac_{d,e}$ for every $e \in \ZZ$. That is, for every $\xx\in \Ac_{0,e}$, and $n_1 \le \cdots \le n_r <0$ with $\sum n_i = -d$, we need to show
\[\Is_d \star \left(J_{n_1}(v_1)\cdots J_{n_r}(v_r) \cdot \xx\right)=
J_{n_1}(v_1)\cdots J_{n_r}(v_r)  \cdot \xx.\]
We argue by induction on $r$. 
The base case  $r=0$ holds since by assumptions $\Is_0 = 1 \in \Ac_0=\Aa$, hence $\Is_0 \star \xx = 1 \cdot \xx = \xx$. For the inductive step, we write: 
\begin{align*}
\Is_d \star \left(J_{n_1}(v_1) \cdot J_{n_2}(v_2) \cdots J_{n_r}(v_r) \cdot \xx \right)
&= \Is_d \star \left(\left(J_{n_1}(v_1)\right)\left( J_{n_2}(v_2) \cdots J_{n_r}(v_r) \cdot \xx \right)\right) \\
\scriptsize{\eqref{eq:lemStarUV1}}\quad &= \left(\Is_d  \cdot J_{n_1}(v_1)\right)\star\left( J_{n_2}(v_2) \cdots J_{n_r}(v_r) \cdot \xx \right) \\
\scriptsize{\eqref{eq:StrongUnitEquation}}\quad &= \left(J_{n_1}(v_1) \cdot \Is_{d+n_1}\right) \star \left(J_{n_2}(v_2) \cdots J_{n_r}(v_r) \cdot \xx \right)\\
\scriptsize{\eqref{eq:lemStarUV1}}\quad  &= J_{n_1}(v_1) \cdot \left(\Is_{d+n_1} \star \left(J_{n_2}(v_2) \cdots J_{n_r}(v_r) \cdot \xx \right)\right)\\
\text{\scriptsize{(\textit{by induction})}}\quad &=J_{n_1}(v_1)J_{n_2}(v_2) \cdots J_{n_r}(v_r) \cdot \xx,
\end{align*}
where the last identity holds by induction.
\end{proof}

We may now complete the proof of \cref{thm:Sewing}.

\begin{proof}[Proof of \cref{thm:Sewing}]
Suppose the algebras $\Ac_d$ admit strong identity elements. Writing $\Is_d$ for these unities, we can apply \cref{lem:MotherLemma} to deduce that the sequence $(\Is_d)_{d \in \NN}$ satisfies the strong identity element equations and therefore, by \cref{prop:smoothing}, the element $\Is = \sum \Is_d q^d$ defines a smoothing map for any family of marked curves and choice of modules $W^\bullet$. Hence $V$ satisfies smoothing.

Conversely, if $V$ satisfies smoothing, there exists  $\Is = \sum \Is_d q^d$ which defines a smoothing map for any family of marked curves and choice of modules $W^\bullet$, then by \cref{prop:smoothing}, the sequence $(\Is_d)_{d \in \NN}$ satisfies the strong identity element equations. Using \cref{homogenize units} we may find a new sequence $(\Is_d')_{d \in \NN}$ with $\Is_d' \in \Ac_d$ which also satisfy the strong identity element equations. It follows from \cref{lem:MotherLemma} that the elements $\Is_d'$ are strong identity elements.
\end{proof}

\subsection{Geometric results}
\label{sec:VBCorProof}  We describe in this section some statements about coinvariants, most of which are implications of \cref{thm:Sewing}.

\begin{cor} \label{cor:sewing}
For any VOA $V$,  let $W^\bullet$ be $V$-modules such that the sheaf $[(W^\bullet \otimes \Ac)[\![q]\!]]_{(\widetilde{\Cs}, P_\bullet \sqcup Q_\pm, t_\bullet \sqcup s_\pm)}$  is coherent over $S$. Assume that $\Ac_d$ admits a strong identity element $\Is_d$ for every $d \in \NN$. Set $\Is=\sum_{d \geq 0} \Is_d q^d$, and let $\alpha \colon W^{\bullet}[\![q]\!] \to (W^\bullet \otimes \Ac){[\![q]\!]}$ be the map induced by $w \mapsto w \otimes \Is$ (see \cref{def:Sewing2}). Then the diagram 
\[ \begin{tikzcd}[column sep=3cm]
{[W^{\bullet}[\![q]\!]]_{(\Cs, P_\bullet, t_\bullet)}} {\arrow[r, "{[\alpha]}"]} {\arrow[d , two heads ]}
    & {[W^{\bullet}\otimes \Ac]_{(\widetilde{\Cs}_0 , P_\bullet \sqcup Q_\pm, t_\bullet \sqcup s_\pm)}[\![q]\!]} {\arrow[d, two heads]} \\
    {{[W^{\bullet}]}_{(\Cs_0, P_\bullet, t_\bullet)}} \arrow[r,  "{[\alpha_0]}" ]
& {[W^\bullet \otimes \Ac ]}_{(\widetilde{\Cs}_0 , P_\bullet\sqcup Q_\pm, t_\bullet \sqcup s_\pm)}
\end{tikzcd}\]
commutes, where $\alpha_0 \colon W^\bullet \to W \otimes \Ac$ is given by $w \mapsto w \otimes \Is_0$.
\end{cor}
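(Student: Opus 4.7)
The plan is to assemble the diagram directly from the tools already developed. Since each $\Ac_d$ admits a strong identity element, \cref{thm:Sewing} guarantees that $V$ satisfies smoothing, so $\Is = \sum_d \Is_d q^d$ defines a smoothing map; the assignment $w \mapsto w \otimes \Is$ extends $q$-adically to an $\Lc_{\Cs \setminus P_\bullet}(V)$-module homomorphism $\alpha \colon W^\bullet[\![q]\!] \to (W^\bullet \otimes \Ac)[\![q]\!]$. Passing to coinvariants gives a map $[\alpha] \colon [W^\bullet[\![q]\!]]_{(\Cs, P_\bullet, t_\bullet)} \to [(W^\bullet \otimes \Ac)[\![q]\!]]_{(\widetilde{\Cs}, P_\bullet \sqcup Q_\pm, t_\bullet \sqcup s_\pm)}$, whose target is rewritten, using the coherence hypothesis and \cref{cor:DiagQues}, as $[W^\bullet \otimes \Ac]_{(\widetilde{\Cs}_0, P_\bullet \sqcup Q_\pm, t_\bullet \sqcup s_\pm)}[\![q]\!]$. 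This produces the top horizontal arrow of the diagram.

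Next I would identify both vertical arrows with the reduction-mod-$q$ base change along the inclusion $\Spec\CC \hookrightarrow S$ of the closed point. For the left vertical this is immediate from \cref{coinvariant pullback}. For the right vertical, one must trace the isomorphism of \cref{cor:DiagQues} (built via \cref{prop:completion-of-coinvariants} from the inverse limit of base changes along $S_k \to S$) and verify that, under this identification, the geometric reduction-mod-$q$ map corresponds to the evident map sending a power series of elements of $[W^\bullet \otimes \Ac]_{(\widetilde{\Cs}_0, P_\bullet \sqcup Q_\pm, t_\bullet \sqcup s_\pm)}$ to its constant term. This bookkeeping is the main, and essentially only, nontrivial step in the argument.

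Once the vertical arrows are identified in this way, the square in question is precisely the image, under the coinvariants functor, of the module-level square
\[
\begin{tikzcd}[column sep=1.8cm]
W^\bullet[\![q]\!] \arrow[r, "\alpha"] \arrow[d, two heads] & (W^\bullet \otimes \Ac)[\![q]\!] \arrow[d, two heads] \\
W^\bullet \arrow[r, "\alpha_0"] & W^\bullet \otimes \Ac,
\end{tikzcd}
\]
whose vertical arrows set $q = 0$. This last square commutes tautologically: for $w \in W^\bullet$ one has $\alpha(w) = w \otimes \Is \equiv w \otimes \Is_0 = w \otimes 1 = \alpha_0(w) \pmod{q}$, and $\alpha$ is $\CC[\![q]\!]$-linear, so the equality extends to all of $W^\bullet[\![q]\!]$. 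Functoriality of coinvariants then yields the desired commutative square in the statement.
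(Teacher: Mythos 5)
Your overall strategy matches the paper's proof closely — invoke \cref{thm:Sewing} (via \cref{prop:smoothing}) to get that $\alpha$ is an $\Lc$-module homomorphism, descend to coinvariants, rewrite the target via \cref{cor:DiagQues}, and observe commutativity from the module-level square. The paper simply asserts the module-level reduction-mod-$q$ identification where you spell it out, which is fine.

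There is, however, a point you pass over too quickly in the step ``passing to coinvariants gives a map $[\alpha]$.'' The source coinvariants $[W^\bullet[\![q]\!]]_{(\Cs, P_\bullet, t_\bullet)}$ are formed with respect to $\Lc_{\Cs\setminus P_\bullet}(V)$, while the target $[(W^\bullet \otimes \Ac)[\![q]\!]]_{(\widetilde{\Cs}, P_\bullet \sqcup Q_\pm, t_\bullet \sqcup s_\pm)}$ is formed with respect to $\Lc_{\widetilde{\Cs}\setminus (P_\bullet \sqcup Q_\pm)}(V)$. The map $\alpha$ is equivariant only for the former, so one does not get the desired map by ``functoriality of coinvariants'' alone. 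The missing observation — which the paper states explicitly — is the inclusion of Lie algebras $\Lc_{\Cs\setminus P_\bullet}(V) \subset \Lc_{\widetilde{\Cs}\setminus (P_\bullet \sqcup Q_\pm)}(V)$: applying $\Lc_{\Cs\setminus P_\bullet}(V)$-coinvariants to $\alpha$ and then further quotienting the target by the action of the larger Lie algebra gives the map. The same omission recurs when you describe the square as the image of a module-level square ``under the coinvariants functor'': the two columns use different Lie algebras, so this needs the same two-step factorization. None of this is hard to repair, and the rest of your argument is sound, but as written the proof would not compile correctly without it.
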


\begin{proof} The vertical maps are given  by imposing the condition $q=0$, and are surjective. After the identification of $\Ac$ with $\Phi(\Aa)$ provided in \cref{lem:PhiLRPhi}, we see
that the map $[\alpha_0]$ is well defined as in \cite[Proposition 3.3]{DGK}.

By the proof of \cref{thm:Sewing} we deduce that the map $\alpha$ is a map of $\Lc_{\Cs\setminus P_\bullet}(V)$-modules and since $\Lc_{\Cs\setminus P_\bullet}(V) \subset \Lc_{\widetilde{\Cs}\setminus (P_\bullet \sqcup Q_\pm)}(V)$, this induces a map of coinvariants 
\[ [W^{\bullet}[\![q]\!]]_{(\Cs, P_\bullet, t_\bullet)} \longrightarrow [(W^{\bullet}\otimes \Ac)[\![q]\!]]_{(\widetilde{\Cs}, P_\bullet \sqcup Q_\pm, t_\bullet \sqcup s_\pm)}\] 
whose reduction modulo $q$ is indeed $[\alpha_0]$. Finally, we use \cref{cor:DiagQues} to identify 
\[[(W^{\bullet}\otimes \Ac)[\![q]\!]]_{(\widetilde{\Cs}, P_\bullet \sqcup Q_\pm, t_\bullet \sqcup s_\pm)} \cong [W^{\bullet}\otimes \Ac]_{(\widetilde{\Cs}_0, P_\bullet \sqcup Q_\pm, t_\bullet \sqcup s_\pm)}[\![q]\!] 
\] which concludes the proof.
\end{proof}

To state the following consequence, we recall that sheaves of coinvariants $\VV(V;W^\bullet)$ are attached to coordinatized curves $(C,P_\bullet, t_\bullet)$ such that $C \setminus P_\bullet$ is affine. By Propagation of vacua \cite{codogni,DGT2}, we may drop the latter condition, so that $\VV(V;W^\bullet)$ can be considered a sheaf on $\tMgn[n]$, the stack of stable coordinatized curves. Depending on $V$ and on $W^\bullet$, this further descends to a sheaf over $\bMgn[n]$. To formulate our next result it is convenient to introduce the following notation.

\begin{defn}
Let $V$ be a VOA. We say that $V$ has \textit{coherent coinvariants} if for every family of stable and pointed coordinatized curves $(C, P_\bullet, t_\bullet)$, and modules $W^\bullet$, the sheaf of coinvariants $[W^{\bullet}]_{(C, P_\bullet, t_\bullet)}$ is coherent.
\end{defn}
\begin{defn}
Let $V$ be a VOA. We say that $V$ has \textit{finite gluing} if for every stable and pointed coordinatized curve $(\Cs, P_\bullet, t_\bullet)$ with a node $Q$, and modules $W^\bullet$, the space of coinvariants 
$[(W^{\bullet}\otimes \Ac)[\![q]\!]]_{(\widetilde{\Cs}, P_\bullet \sqcup Q_\pm, t_\bullet \sqcup s_\pm)}$ is coherent over $\Spec(\CC[\![t]\!])$. 
\end{defn}

\begin{remark} \label{rmk:fgenlocfree}We make two observations:
\begin{enumerate}[label={(\roman*)}]
    \item We note that if $V$ is $C_2$-cofinite, then by \cite[Corollary 4.2]{DGK}, $V$ has coherent coinvariants and finite gluing. As we shall see, this is also true for VOAs like the Heisenberg which are generated in degree 1, but are not $C_2$-cofinite. 
    \item \label{it:fgenlocfree} By \cref{cor:DiagQues}, if $V$ has finite gluing it follows that  $[(W^{\bullet}\otimes \Ac)[\![q]\!]]_{(\widetilde{\Cs}, P_\bullet \sqcup Q_\pm, t_\bullet \sqcup s_\pm)}$ is actually free over $\CC[\![q]\!]$. \end{enumerate}
\end{remark}

We begin with an auxiliary result. 

\begin{lem}  \label{lem:alphaiso}
Let $V$ be a $C_1$-cofinite VOA that satisfies smoothing and such that ${[W^{\bullet}[\![q]\!]]_{(\Cs, P_\bullet, t_\bullet)}}$ and ${[(W^{\bullet}\otimes \Ac)[\![q]\!]]_{(\widetilde{\Cs}, P_\bullet \sqcup Q_\pm, t_\bullet \sqcup s_\pm)}}$ are coherent over $S$. Then the map $[\alpha]$ defined in \cref{cor:sewing} is an isomorphism.
\end{lem}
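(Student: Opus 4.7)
The plan is to reduce modulo $q$, invoke \cref{nodal isomorphism} at the special fibre, and then promote the resulting mod-$q$ isomorphism to a full isomorphism via a Nakayama-style argument for finitely generated modules over $R := \CC[\![q]\!]$; the essential input will be that the target is free over $R$.

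Write $M = [W^{\bullet}[\![q]\!]]_{(\Cs, P_\bullet, t_\bullet)}$ and $N = [(W^{\bullet}\otimes \Ac)[\![q]\!]]_{(\widetilde{\Cs}, P_\bullet \sqcup Q_\pm, t_\bullet \sqcup s_\pm)}$. By the smoothing hypothesis, through \cref{thm:Sewing} and \cref{cor:sewing}, the map $[\alpha] \colon M \to N$ fits into a commutative square whose bottom row is $[\alpha_0]$. Since $V$ is $C_1$-cofinite, \cref{nodal isomorphism} guarantees that $[\alpha_0]$ is an isomorphism, and by \cref{coinvariant pullback} this map is precisely the reduction $[\alpha] \otimes_R R/(q)$. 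The coherence assumption says that $M$ and $N$ are finitely generated over the Noetherian local ring $R$; moreover, \cref{cor:DiagQues} identifies $N$ with $[W^{\bullet}\otimes \Ac]_{(\widetilde{\Cs}_0,\,\ldots)} \otimes_{\CC} R$, so in particular $N$ is free, and hence $q$-torsion free, over $R$.

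Surjectivity of $[\alpha]$ is then immediate: the cokernel is a finitely generated $R$-module that vanishes modulo $q$, and thus vanishes by Nakayama's lemma (equivalently by \cref{little generators}). For injectivity, let $K = \ker[\alpha]$, which is finitely generated by Noetherianity of $R$. The injectivity of $[\alpha_0]$ forces $K \subseteq qM$, so any $k \in K$ can be written as $k = qm$; applying $[\alpha]$ yields $q[\alpha](m) = 0$, and because $N$ is $q$-torsion free this forces $[\alpha](m) = 0$, i.e.\ $m \in K$. Hence $K = qK$, and Nakayama applied to $K$ over the local ring $R$ gives $K = 0$. The only non-routine ingredient is the freeness of $N$ over $R$ furnished by \cref{cor:DiagQues}, which is exactly what lifts the mod-$q$ isomorphism to a genuine isomorphism; all remaining steps are standard commutative-algebraic bookkeeping.
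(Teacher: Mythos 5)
Your proposal is correct and follows the same route as the paper's proof: reduce modulo $q$, invoke \cref{nodal isomorphism} (using $C_1$-cofiniteness) to identify $[\alpha_0]$ as an isomorphism, and then lift this to an isomorphism over $\CC[\![q]\!]$ via Nakayama together with the freeness of the target guaranteed by \cref{cor:DiagQues} (as recorded in \cref{rmk:fgenlocfree}). The only difference is presentational: you spell out the surjectivity and injectivity steps (the latter via $q$-torsion-freeness and $K=qK$), whereas the paper compresses this into a one-line appeal to Nakayama.
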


\begin{proof} Since $V$ is $C_1$-cofinite, \cref{nodal isomorphism} ensures that $[\alpha_0]$ is an isomorphism. Since the source  and target of $[\alpha]$ is finitely generated and the target is locally free (see \cref{rmk:fgenlocfree} \ref{it:fgenlocfree}), Nakayama's lemma ensures that $[\alpha]$ is an isomorphism as well.    
\end{proof}

To state the next results, we shall refer to the moduli stacks  $\tMgn[n]$, parametrizing families of stable pointed curves of genus $g$ with coordinates,  and $\overline{\mathcal{J}}_{g,n}$, of stable pointed curves of genus $g$ with first order tangent data, and projection maps 
\[\tMgn[n] \rightarrow \overline{\mathcal{J}}_{g,n} \rightarrow \overline{\mathcal{M}}_{g,n},\]
discussed in detail in \cite[\S 2]{DGT2}.  Recall the notation from \cref{family coinvariants}.

\begin{cor}  \label{cor:SewingAndFactorizationCorollary}
Let $W^1,\dots, W^n$ be simple modules over a $C_1$-cofinite vertex operator algebra $V$, such that coinvariants are coherent for curves of genus $g$, and such that $\mathfrak{A}_d(V)$ admit strong identity elements for all $d\in \mathbb{Z}_{\ge 0}$. Then sheaves of coinvariants are locally free, giving rise to a vector bundle $\VV_g(V;W^\bullet)^{\overline{\mathcal{J}}_{g,n}}$ on $\overline{\mathcal{J}}_{g,n}$. If the conformal dimensions of $W^1,\dots, W^n$  are rational, these sheaves define vector bundles $\VV_g(V;W^\bullet)$ on $\bMgn[n]$.
\end{cor}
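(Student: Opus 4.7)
The target is to establish local freeness of the sheaves $\VV_g(V;W^\bullet)$ on $\tMgn[n]$ (and then descent to $\bMgn[n]$), and by the assumption of coherence plus Grothendieck's valuative criterion of flatness, it is enough to verify that these coherent sheaves have locally constant rank on the completed local ring of every closed point. By standard reductions (as in \cite[Theorem 6.2.1]{tuy}, \cite[Theorem 8.4.5]{NT}, \cite[VB Corollary]{DGT2}), it suffices to treat the universal one-parameter smoothing $(\Cs,P_\bullet,t_\bullet)/S$ with $S=\Spec\CC[\![q]\!]$ of a stable $n$-pointed coordinatized nodal curve $(\Cs_0,P_\bullet,t_\bullet)$ with a single node $Q$, and show that the fiber dimension of $[W^\bullet[\![q]\!]]_{(\Cs,P_\bullet,t_\bullet)}$ does not drop between the generic (smooth) fiber and the central (nodal) fiber.

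The first input is the base case of smooth curves: by \cite{bzf, dgt1}, sheaves of coinvariants over the open locus of smooth curves carry a projectively flat connection, and (combined with the coherence hypothesis) this forces them to be vector bundles there, with rank equal to the rank on any smooth fiber. The second input is the normalization comparison: writing $\eta\colon\widetilde{\Cs}_0\to \Cs_0$ for the normalization at $Q$ and extending trivially to $\widetilde{\Cs}=\widetilde{\Cs}_0\times S$, the $C_1$-cofiniteness of $V$ makes the map $[\alpha_0]$ of \cref{nodal isomorphism} an isomorphism; combined with \cref{contingent smoothing} (applied to the modules $W^\bullet$, which yields coherence of the nearby coinvariant sheaves on $\widetilde{\Cs}$), and with \cref{cor:DiagQues}, we have that $[(W^\bullet\otimes \Ac)[\![q]\!]]_{(\widetilde{\Cs},P_\bullet\sqcup Q_\pm,t_\bullet\sqcup s_\pm)}$ is free over $\CC[\![q]\!]$, hence has constant rank.

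The decisive step now uses the hypothesis that the $\Ac_d$ admit strong identity elements for all $d$. By \cref{thm:Sewing}, $V$ satisfies smoothing, so there is an element $\Is=\sum_d \Is_d q^d\in\Ac[\![q]\!]$ producing a smoothing map $\alpha$. Combining the commutative diagram of \cref{cor:sewing} with \cref{lem:alphaiso}, the induced map
\[
[\alpha]\colon [W^\bullet[\![q]\!]]_{(\Cs,P_\bullet,t_\bullet)}\;\longrightarrow\;[W^\bullet\otimes\Ac]_{(\widetilde{\Cs}_0,P_\bullet\sqcup Q_\pm,t_\bullet\sqcup s_\pm)}[\![q]\!]
\]
is an isomorphism of $\CC[\![q]\!]$-modules. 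Since the right-hand side is free over $\CC[\![q]\!]$ by the previous paragraph, the left-hand side is free as well, so $[W^\bullet[\![q]\!]]_{(\Cs,P_\bullet,t_\bullet)}$ is flat over $S$. This is exactly the constancy of rank across the smoothing, and combined with flatness on the smooth locus it yields local freeness of $\VV_g(V;W^\bullet)$ on $\tMgn[n]$. The main obstacle handled here is precisely this identification of ranks on and off the nodal boundary, which is supplied by the existence of strong identities; without them, the factorization-based argument of \cite{tuy, NT, DGT2} is unavailable since $\Aa$ need not be finite and semisimple.

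Finally, we descend from $\tMgn[n]$ to $\bMgn[n]$ through the intermediate stack $\overline{\mathcal{J}}_{g,n}$ of stable pointed curves with first-order tangent data. The coordinate torsor $\tMgn[n]\to \overline{\mathcal{J}}_{g,n}$ is a pro-unipotent torsor for the kernel of $\mathrm{Aut}\,\Oc\twoheadrightarrow \Gm$, and the local freeness descends along it provided the $L_0$-grading action integrates, which follows from the finite-dimensionality of the weight-graded pieces of the simple modules $W^i$; this produces the vector bundle $\VV_g(V;W^\bullet)^{\overline{\mathcal{J}}_{g,n}}$ on $\overline{\mathcal{J}}_{g,n}$. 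Descent from $\overline{\mathcal{J}}_{g,n}$ to $\bMgn[n]$ along the $\Gm$-gerbe requires trivializing the $\Gm$-action coming from rescalings of the tangent vectors; this action is given by $L_0$ acting with eigenvalues in the set of conformal dimensions of the $W^i$, so if those dimensions are rational the $\Gm$-action factors through a finite quotient and the sheaf descends (as in \cite[\S 2]{DGT2}), producing the vector bundle $\VV_g(V;W^\bullet)$ on $\bMgn[n]$.
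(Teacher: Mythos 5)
Your proof is correct and follows essentially the same route as the paper's: reduce local freeness to constancy of rank across one-parameter smoothings over $\Spec\CC[\![q]\!]$, use \cref{thm:Sewing} + \cref{cor:sewing} + \cref{lem:alphaiso} to make $[\alpha]$ an isomorphism onto the normalized side, which \cref{contingent smoothing}, \cref{cor:DiagQues}, and \cref{nodal isomorphism} identify as a free $\CC[\![q]\!]$-module, and use the projectively flat connection on the smooth locus as the base case before descending to $\overline{\mathcal{J}}_{g,n}$ and (given rational conformal weights) to $\bMgn[n]$. The one place where your write-up is slightly looser than the paper's is the opening reduction: you invoke ``standard reductions'' from \cite{tuy,NT,DGT2} to arrive at a family whose special fiber has a single node and whose generic fiber is smooth, but those references carry out the induction on the number of nodes via factorization, which is precisely unavailable here. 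The paper instead makes the induction explicit: for a central fiber with $k+1$ nodes, the $[\alpha]$ isomorphism and the freeness of the target show the rank at the special fiber equals the rank at the generic fiber (a curve with $k$ nodes, hence not smooth in general), and the induction closes the loop. Your argument supplies exactly the ingredient needed for each inductive step, so the mathematics is the same; you should just make the induction explicit rather than citing the factorization-based reductions, and replace ``the generic (smooth) fiber'' by ``the generic fiber, a curve with one fewer node.'' On descent, you expand what the paper leaves to \cite[\S 2, \S 8.7.1]{DGT2} into the pro-unipotent torsor and $\Gm^n$-equivariance picture, which is a reasonable elaboration of the cited material.
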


\begin{proof} 
Since a sheaf of $\mathcal{O}_S$-modules is locally free if and only if is coherent and flat, in order to show  a coherent sheaf  $[W^{\bullet}]_{\mathcal{L}}$ is locally free, it suffices to show that it is flat. For this, we can use the valuative criteria of \cite[Thm~11.8.1, \S 3]{EGA4}  to reduce to the case that our base scheme is $S=\Spec(\CC[\![q]\!])$. By \cite[Ex. II.5.8]{hartshorne:77}, since $S$ is Noetherian and reduced, and since formation of coinvariants commutes with base change, by \cref{coinvariant pullback},  it 
suffices to check that vector spaces of coinvariants have the same dimension over all pointed and coordinatized curves.

Our strategy for checking this condition holds is to argue by induction on the number of nodes, reducing to the base case where the curve has no nodes. 

To take the inductive step, following the notation of \cref{cor:sewing},  let $\Cs_0\rightarrow {\mathrm{Spec}}(k)$ be a nodal curve with $k+1$ nodes, and let $\Cs \rightarrow {\mathrm{Spec}}(\CC[\![q]\!])$ be a smoothing family with $\Cs_0$ the special fiber. By \cref{contingent smoothing} and by \cref{lem:alphaiso}, we deduce that $[\alpha]$ is an isomorphism, so that the dimension of the space of coinvariants associated with $\Cs_0$  agrees with the dimension of the space of coinvariants for the partial normalization $\widetilde{\Cs}_0$, a curve with $k$ nodes. Therefore, by induction, the vector space $[W^{\bullet}]_{({\Cs}_0, P_\bullet, t_\bullet)}$ has the same dimension as the vector space of coinvariants associated with a smooth curve.

We are then left to show that spaces of coinvariants associated with smooth curves of the same genus have the same dimensions.  This holds since coinvariants $[W^{\bullet}]_{\mathcal{L}}$ are by assumption coherent, and moreover, when restricted to families of smooth coordinatized curves, they define a sheaf which admits a projectively flat connection \cite{bzf, dgt1}. We have shown that $[W^{\bullet}]_{\mathcal{L}}$ is flat, giving rise to a coherent and locally free sheaf  on $\tMgn[n]$. As shown in \cite{DGT2}, this sheaf of coinvariants descends to a sheaf of coinvariants  $\VV_g(V;W^\bullet)^{\overline{\mathcal{J}}_{g,n}}$ on $\overline{\mathcal{J}}_{g,n}$. Moreover, for any collection of simple $V$-modules $W^\bullet$ with rational conformal weights,  as is explained in \cite[\S 8.7.1]{DGT2}, the sheaves are independent of coordinates and  will further descend to vector bundles on $\bMgn[n]$, denoted $\VV_g(V;W^\bullet)$.
 \end{proof}

\begin{remark} \label{rmk:morerational}
 We note the following consequences of \cref{cor:SewingAndFactorizationCorollary}:
\begin{enumerate}\item For a collection of simple modules over a $C_2$-cofinite VOA, the sheaf of coinvariants will give vector bundles $\VV_g(V;W^\bullet)$ on $\bMgn[n]$ whenever the algebras $\mathfrak{A}_d(V)$ admit strong identity elements. To see this, we note that the coinvariants will be coherent by \cite{DGK}, and by \cite[Corollary 5.10]{MiyamotoC2} any simple module over a $C_2$-cofinite $V$ has rational conformal weight. 
\item  \label{it:rat} Combining \cref{rmk:rational} with \cref{cor:SewingAndFactorizationCorollary} one may show that sheaves of coinvariants from $C_2$-cofinite and rational VOAs define vector bundles on $\bMgn[n]$, recovering \cite[VB Corollary]{DGT2}.

\item \label{item:gg}By \cite{DG}, sheaves defined by simple modules over VOAs that are generated in degree 1 are coherent over rational curves. If $V$ satisfies smoothing, such sheaves of coinvariants descend to vector bundles $\VV_0(V;W^\bullet)^{\overline{\mathcal{J}}_{0,n}}$ on $\overline{\mathcal{J}}_{0,n}$. If the conformal dimensions of the modules are in $\mathbb{Q}$, they further descend to vector bundles $\VV_0(V;W^\bullet)$ on $\overline{\mathcal{M}}_{0,n}$. Moreover, by \cite{DG}, these bundles are globally generated. We refer to \cref{sec:Heisenberg} and \cref{ex:HeisVB} for an application of this using the Heisenberg VOA.
\end{enumerate}
\end{remark}

\section{Higher Zhu algebras and mode transition algebras} \label{sec:BigThmProof}
Recall that if any of the equivalent properties of \cref{lem:StrongUnitConditions} hold, we say that $\Is_d \in \Ac_d$ is a strong identity element. Here we prove \cref{thm:Big}, one of our two main results. In order to formulate it, we introduce the map 
\[\mu_d \colon \Ac_d \to \Aa_d, \qquad \mu_d(\alpha \otimes u \otimes \beta)= [\alpha u \beta]_d.\] This map is well defined and  fits into an exact sequence (see \cref{lem:right exact seq})
\begin{equation}\label{eq:RES}
\Ac_d \overset{\mu_d}{\longrightarrow} \Aa_d \overset{\pi_d}{\longrightarrow} \Aa_{d-1} \longrightarrow 0.\end{equation}

\begin{thm}\label{thm:Big}
\begin{enumerate}
\item \label{thma} If the mode transition algebra $\Ac_d$ admits an identity element
    element, then the map $\mu_d$ in \eqref{eq:RES} is injective, and the sequence splits, yeilding an isomorphism
        $\Aa_{d} \cong \Ac_d \times \Aa_{d-1}$ as rings. In particular, if
        $\Ac_j$ admits an identity element for every $j \leq d$, then $\Aa_d \cong \Ac_d
        \oplus \Ac_{d-1} \oplus \cdots \oplus \Ac_0.$
\item \label{thmb} If  $\Ac_d$ admits a strong identity element for all $d\in \NN$, so that smoothing holds for $V$, then given any generalized Verma module $W=\PhiL(W_0)=\bigoplus_{d\in \NN} W_d$ where $L_0$ acts on $W_0$ as a scalar with eigenvalue $c_W \in \CC$, there is no proper submodule $Z \subset W$ with $c_Z - c_W >0$ for every eigenvalue $c_Z$ of $L_0$ on $Z$ (see \cref{rmk:cw}). 
\end{enumerate}
\end{thm}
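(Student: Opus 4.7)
The plan for Part (b) is to argue by contradiction, reducing the existence of a submodule with strictly higher $L_0$-eigenvalues to the absurdity that the strong identity of $\Ac_{d_0}$ acts as zero on a nonzero element of $W_{d_0}$. A preliminary observation is that $L_0 = \omega_{(1)}$ lies in the degree-$0$ piece of $\UV$ and a standard computation from the VOA mode commutator formula gives $[L_0, J_m(b)] = -m\, J_m(b)$ for every $b \in V$ and $m \in \ZZ$. Hence $L_0$ acts on $W_d = \PhiL(W_0)_d$ as the scalar $c_W + d$, so $W = \bigoplus_d W_d$ is precisely the $L_0$-eigenspace decomposition. Every submodule $Z \subset W$ is then $L_0$-stable and decomposes as $Z = \bigoplus_d (Z \cap W_d)$, so the hypothesis that every eigenvalue $c_Z$ on $Z$ satisfies $c_Z - c_W \in \ZZ_{>0}$ becomes $Z \cap W_d = 0$ for all $d \leq 0$; in particular $Z \cap W_0 = 0$.

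Assuming $Z \neq 0$ for contradiction, I would take the smallest $d_0 > 0$ with $Z \cap W_{d_0} \neq 0$ and pick $0 \neq z \in Z \cap W_{d_0}$. For any $v \in \UV_{-d_0}$, the action shifts grading by $-d_0$ so $v \cdot z \in W_0$, and since $Z$ is a submodule, $v \cdot z \in Z \cap W_0 = 0$. Thus every element of $\UV_{-d_0}$ annihilates $z$. Now by \cref{lem:StrongUnitConditions}\ref{it:H3} the strong identity $\Is_{d_0} \in \Ac_{d_0}$ acts as the identity on $W_{d_0}$, giving $\Is_{d_0} \star z = z$. Writing $\Is_{d_0} = \sum_i u_i \otimes a_i \otimes v_i$ as a finite sum with $u_i \in (\UV/\NL 1 \UV)_{d_0}$, $a_i \in \Aa$, $v_i \in (\UV/\NR 1 \UV)_{-d_0}$, and $z = \sum_j \beta_j \otimes w_{0,j}$ with $\beta_j \in \UV_{d_0}$ and $w_{0,j} \in W_0$, unwinding the $\UV_0$-balanced tensor product $\PhiL(W_0) = (\UV/\NL 1 \UV) \otimes_{\UV_0} W_0$ yields
\[
v_i \cdot z \;=\; \sum_j (v_i \beta_j) \otimes w_{0,j} \;=\; 1 \otimes \sum_j (v_i \innerstar \beta_j)\, w_{0,j} \;\in\; W_0,
\]
which is well-defined on $z$ because elements of $\NR 1 \UV$ of degree $-d_0$ factor through $\UV_{\leq -d_0-1}$, hence annihilate $W_{d_0}$. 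The vanishing $v_i \cdot z = 0$ therefore forces $\sum_j (v_i \innerstar \beta_j) w_{0,j} = 0$ in $W_0$ for every $i$. Substituting into the $\star$-action formula of \cref{def:abstract module associativityPaper},
\[
\Is_{d_0} \star z \;=\; \sum_i u_i \otimes a_i \Big( \sum_j (v_i \innerstar \beta_j)\, w_{0,j} \Big) \;=\; 0,
\]
contradicting $\Is_{d_0} \star z = z \neq 0$. Hence $Z = 0$, as desired.

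The principal obstacle is bookkeeping rather than conceptual: one must reconcile the description $\Ac = \PhiR(\PhiL(\Aa))$ with the left $\UVL$-action on $\PhiL(W_0)$ through the identifications $\UV/\NL 1 \UV \cong \UVL/\NL 1 \UVL$ and $\UV/\NR 1 \UV \cong \UVR/\NR 1 \UVR$, so that the $\star$-action formula can be directly matched with the left action of a lift of $v_i \in (\UV/\NR 1 \UV)_{-d_0}$ on $z$. Once that matching is secured, the strong identity hypothesis forces the contradiction in a single line. Part (a), being purely algebraic, is deferred to the general framework of \cref{sec:Generalized}: an identity element $\Is_d \in \Ac_d$ produces via $\mu_d$ a central idempotent $e_d \in \Aa_d$ whose complement $(1-e_d)\Aa_d$ is identified with $\Aa_{d-1}$ under $\pi_d$, yielding both the injectivity of $\mu_d$ and the splitting of \eqref{eq:SES}.
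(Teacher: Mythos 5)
Your proof of Part (b) is correct and follows essentially the same route as the paper's: pick a nonzero homogeneous $z$ in $Z$ of smallest positive degree $d_0$, use that the strong identity $\Is_{d_0}$ acts as the identity on $W_{d_0}$, and observe that the $\star$-action of $\Is_{d_0}$ on $z$ factors through degree zero, where $Z\cap W_0=0$ forces $z=0$. Your write-up is a bit more explicit than the paper's in justifying that $Z=\bigoplus_d (Z\cap W_d)$ via the $L_0$-eigenspace decomposition and in citing \cref{lem:StrongUnitConditions}\ref{it:H3} to get the unital action, but the underlying argument is the same.
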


We note that \cref{abstract splitting} specializes to Part~\eqref{thma} of \cref{thm:Big}. It therefore remains to prove Part~\eqref{thmb} of \cref{thm:Big}.

\begin{proof}
We say that an induced admissible module $W=\PhiL(W_0)$ has the LCW property if  $L_0$ acts on $W_0$ as a scalar with eigenvalue $c_W \in \CC$, and there is no proper submodule $Z \subset W$ with $c_Z - c_W >0$ for every eigenvalue $c_Z$ of $L_0$ on $Z$.  Suppose for contradiction that $V$ admits a module $W=\PhiL(W_0)$, and $W$ does not have the  LCW property.  We will show that there must be a $d\in \NN$ such that $\Ac_d$ is not unital, contradicting our assumptions. 

By hypothesis, $W$ has a proper submodule $Z$ with $c_Z - c_W >0$ for every eigenvalue $c_Z$ of $L_0$ on $Z$. In particular, $Z$ is not induced in degree zero over $\Aa$.  Let $z_d$ be any homogeneous element in  $Z$ of smallest degree $d >0$, so that $z_d \in W_d$.  
By assumption $\Ac_d$ is unital, with unity $\uu_d=\sum_{i}\alpha_i \otimes     1 \otimes \beta_i$, where each $\alpha_i$ has degree $d$ and each $\beta_i$ has degree $-d$.
The action of $\Ac$ on $W$ restricts to an action of $\Ac_d$ on $W_d$, and since $\uu_d$ is the unity of $\Ac_d$ we have
\[\Ac_d \times W_d \longrightarrow W_d, \qquad(\uu_d,z_d)\mapsto \uu_d\star z_d = z_d.\]
Unraveling the definition of $\star$ and its associativity properties we have $\uu_d \star z_d = \sum_{i}\alpha_i \cdot ( \beta_i \cdot z_d)$. But now since the degree of $\beta_i \cdot z_d$ is zero and $Z$ is a submodule, we have that $\beta_i \cdot z_d \in Z \cap W_0=0$, since $Z$ does not have a degree zero component. It then follows that $z_d=\uu_d \star z_d = 0$, giving a contradiction since we assumed $z_d \neq 0$.  
\end{proof}

\begin{remark} \label{rmk:cw} Although the eigenvalues $c_Z$ and $c_W$ are in general complex numbers, the difference $c_Z- c_W$ is always an integer, hence it makes sense to require that this number be positive.  In fact, every eigenvalue of the action of $L_0$ on $W$ will be obtained by shifting $c_W$ by a non-negative integer. The condition $c_Z -c_W > 0 $ coincides then with $c_Z \neq c_W$. We remark that when $V$ is $C_2$-cofinite, then the eigenvalues of $L_0$ are necessarily rational numbers \cite{MiyamotoC2}.
\end{remark}

\section{Mode transition algebra for the Heisenberg vertex algebra}\label{sec:Heisenberg}
In this section we describe the mode transition algebras for the Heisenberg vertex algebra.  This result is stated in \cref{claim:HigherHeis} and, as a consequence, in \cref{sec:ProofCon} we obtain that \cite[Conjecture 8.1]{addabbo.barrow:level2Zhu} holds.  We refer \cite{bzf,lepli, mint2,BVY, addabbo.barrow:level2Zhu} for more details about the vertex algebra, denoted $\pi$, $V_{\hat{\mathfrak{h}}}(1,\alpha)$, $M_a(1)$ and $M(1)_a$ in the literature, and which we next briefly describe.

\subsection{Background on the Heisenberg vertex algebra}\label{sec:HBackground}

Let $\mathfrak{h}=H \CC(\!(t)\!) \oplus k\CC$ be the extended Heisenberg algebra and consider the Heisenberg vertex algebra $V=\pi$. Let $\Uu_1(\mathfrak{h})$ denote the quotient of the universal enveloping algebra $\Uu(\mathfrak{h})$ by the two sided ideal generated by $k-1$. Following \cite[Section 4.3]{bzf} the Lie algebra $\LVL$ is naturally embedded inside
\[ \overline{\Uu(\mathfrak{h})}^{\mathsf{L}}:= \varprojlim \dfrac{\Uu_1(\mathfrak{h})}{\Uu_1(\mathfrak{h})\circ 
 H t^N\CC[t]}.
\] The map is induced by $(b_{-1})_{[n]} \mapsto Ht^n$. This embedding induces a natural isomorphism between $\UVL$ and $\overline{\Uu(\mathfrak{h})}^{\mathsf{L}}$ which translates the filtration on $\UVL$ into the canonical filtration on $\overline{\Uu(\mathfrak{h})}^{\mathsf{L}}$ induced by the filtration on $\CC(\!(t)\!)$ given by $F^p\CC(\!(t)\!) = t^{-p}\CC[t^{-1}]$.

A similar construction holds for $\LVR$ and $\UVR$, where the extended Heisenberg algebra $\mathfrak{h}=H \CC(\!(t)\!) \oplus \CC$ is replaced by $\mathfrak{h}=H \CC(\!(t^{-1})\!) \oplus \CC$.

The sub ring $\UV$ of $\UVL$ and $\UVR$ has a natural gradation induced by $\deg( Ht^n) = -n$. We can then deduce that the associated zero mode algebra $\Ac_0$ is isomorphic to the commutative ring $\CC[x]$, where the element $(b_{-1})_{[0]} = H \in \UV_0$ is identified with the variable $x$. Combining these results we can explicitly compute all the mode transition algebras.

\subsection{Mode transition algebras for the Heisenberg vertex algebra}

\label{sec:ProofHigherHeis}
We can now state and prove the main result of this section. 

\begin{prop}\label{claim:HigherHeis}  There is a natural identification $\Ac_d(\pi)\cong \mathrm{Mat}_{p(d)}(\CC[x])$, where $p(d)$ is the number of ways to decompose $d$ into a sum of positive integers. In particular $\Ac_d$ is unital for every $d \in \NN$. 
\end{prop}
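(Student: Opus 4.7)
The plan is to make $\Ac_d(\pi)$ completely explicit: exhibit a free $\CC[x]$-basis of $\Ac_d$ indexed by pairs of partitions of $d$, compute the $\star$-product on this basis directly by commutator manipulations in the Heisenberg algebra, and then recognize matrix units after a combinatorial rescaling. By \cref{sec:HBackground}, $\UV$ is obtained from a completion of the universal enveloping algebra of the rank-one Heisenberg Lie algebra by setting the central element to $1$, with generators $H_n := b_{[n]}$ of degree $-n$ satisfying $[H_m, H_n] = m\,\delta_{m+n,0}$, and with $\Aa = \CC[x]$ where $x := [H_0]_0$. Applying PBW together with \cref{lem:Ann}, the quotient $(\UV/\NL 1 \UV)_d$ is a free right $\CC[x]$-module on the monomials $H_{-\lambda} := H_{-\lambda_1}\cdots H_{-\lambda_k}$ indexed by partitions $\lambda = (\lambda_1 \ge \cdots \ge \lambda_k \ge 1)$ of $d$, and symmetrically $(\UV/\NR 1 \UV)_{-d}$ is free over $\CC[x]$ on the monomials $H_\mu := H_{\mu_1}\cdots H_{\mu_l}$ indexed by $\mu \vdash d$. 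Since $H_0$ is central in $\UV_0$, the middle $\Aa$-factor in $\Ac_d$ may be freely absorbed, producing a free $\CC[x]$-basis
\[ e_{\lambda,\mu} := H_{-\lambda}\otimes 1 \otimes H_\mu, \qquad \lambda,\mu \vdash d,\]
so that $\Ac_d$ has rank $p(d)^2$ over $\CC[x]$.

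By \cref{def:abstract module associativityPaper} the product reduces to
\[ e_{\lambda,\mu} \star e_{\lambda',\mu'} \;=\; H_{-\lambda} \otimes (H_\mu \innerstar H_{-\lambda'}) \otimes H_{\mu'},\]
so the whole argument hinges on computing the scalar $H_\mu \innerstar H_{-\lambda'} = [H_\mu H_{-\lambda'}]_0 \in \Aa = \CC[x]$ provided by \cref{lem:innerstar}. I would expand $H_\mu H_{-\lambda'}$ via iterated use of $[A, BC] = [A,B]C + B[A,C]$ together with $[H_i, H_{-j}] = i\,\delta_{i,j}$ for $i,j \ge 1$, producing a sum over partial matchings of the positive modes $H_{\mu_i}$ with negative modes $H_{-\lambda'_j}$ of equal absolute value: each matching contributes the product of its matched values, while the unmatched operators assemble into a normal-ordered product with negative modes on the left and positive modes on the right. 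In $\Aa$ any such unmatched normal-ordered term vanishes, since negative modes on the left place it in $\NR 1 \UV_0$ and positive modes on the right place it in $\NL 1 \UV_0$, both of which project to zero. Hence only \emph{total} matchings survive, and these exist if and only if $\mu = \lambda'$ as partitions; summing over them yields
\[ H_\mu \innerstar H_{-\lambda'} \;=\; \delta_{\mu,\lambda'}\,z_\mu, \qquad z_\mu := \prod_{i \ge 1} m_i(\mu)!\cdot i^{m_i(\mu)},\]
where $m_i(\mu)$ denotes the number of parts of $\mu$ equal to $i$.

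Combining the previous step, $e_{\lambda,\mu}\star e_{\lambda',\mu'} = \delta_{\mu,\lambda'}\,z_\mu\,e_{\lambda,\mu'}$, and rescaling $\tilde e_{\lambda,\mu} := z_\mu^{-1} e_{\lambda,\mu}$ (valid because each $z_\mu$ is a positive integer, hence a unit in $\CC[x]$) yields matrix units $\tilde e_{\lambda,\mu}\star \tilde e_{\lambda',\mu'} = \delta_{\mu,\lambda'}\,\tilde e_{\lambda,\mu'}$. Because $x$ acts centrally on $\Ac_d$, this upgrades to an isomorphism of $\CC[x]$-algebras $\Ac_d(\pi) \cong \mathrm{Mat}_{p(d)}(\CC[x])$, with identity $\sum_{\lambda \vdash d} \tilde e_{\lambda,\lambda}$, so in particular $\Ac_d$ is unital for every $d$. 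The main obstacle I anticipate is the key calculation in the previous paragraph: rigorously verifying that every unmatched normal-ordered term lands in $\Nn 1 \UV_0$ and keeping the combinatorial bookkeeping straight for the symmetry factor $z_\mu$ --- essentially a rank-one bosonic Wick computation --- although this becomes routine once the descriptions $\NL 1 \UV_0 = (\UV\,\LVf_{\le -1})_0$ and $\NR 1 \UV_0 = (\LVf_{\ge 1}\,\UV)_0$ from \cref{lem:Ann} are in hand.
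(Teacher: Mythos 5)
Your proposal is correct and follows essentially the same route as the paper: you exhibit the $\CC[x]$-basis $e_{\lambda,\mu}$ of $\Ac_d$ indexed by pairs of partitions, compute the pairing $H_\mu \innerstar H_{-\lambda'}$ via the rank-one Wick/commutator calculation (your $z_\mu$ agrees with the paper's $\|\bm r\|$), and reassemble matrix units. The only cosmetic difference is the final normalization — the paper sends $\varepsilon_{\bm r, \bm s}$ to $\sqrt{\|\bm r\|}\sqrt{\|\bm s\|}\,E_{\bm r, \bm s}$, whereas your one-sided rescaling $\tilde e_{\lambda,\mu} = z_\mu^{-1}e_{\lambda,\mu}$ produces honest matrix units directly without introducing square roots.
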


\begin{proof} Denote by $P(d)$ the set of partitions of $d$ into positive integers, so that $|P(d)|=p(d)$. We represent every element $[r_1|\cdots|r_n]={\bm{r}} \in P(d)$ by a decreasing sequence  of positive integers $r_1 \geq \dots \geq r_n \geq 1$ such that $\sum_i r_i=d$ and for some $n \in \NN$. For every pair $({\bm{r}}, \bm{s}) \in P(d)^2$, we denote by $\varepsilon_{\bm{r},{\bm{s}}}$ the element in $\Ac_d$ given by 
\[ Ht^{-r_1} \circ \cdots \circ Ht^{-r_n} \otimes 1 \otimes Ht^{s_m} \circ \cdots \circ Ht^{s_1}.
\] From the explicit description  of $\UV$ given above, and the fact that the Zhu algebra $\Aa=\CC[x]$  at level zero is Abelian, we have that the set whose elements are $\varepsilon_{\bm{r},{\bm{s}}}$ freely generates $\Ac_d$ as an $\Aa$-module. Moreover, using a computation similar to \cref{rmk:explostar}, one may show that 
\[Ht^{s_m} \circ \cdots \circ Ht^{s_1}\ostar Ht^{-r_1} \circ \cdots \circ Ht^{-r_m} =\begin{cases}
||\bm{r}||
&\text{if } {\bm{s}} = {\bm{r}}\\
0 & \text{otherwise},
\end{cases}\]
where $||\bm{r}||$ is a non-zero, positive integer entirely depending on $\bm{r}$. It then follows that 
\[ \varepsilon_{\bm{r'},{\bm{s}}} \star \varepsilon_{{\bm{r}},{\bm{s}'}} = \begin{cases}
     ||\bm{r}|| \varepsilon_{\bm{r'},{\bm{s}'}} &\text{if } {\bm{s}} = {\bm{r} }\\
     0& \text{otherwise}.
\end{cases}
\]
By identifying $\varepsilon_{\bm{r},{\bm{s}}}$ with the element of $\mathrm{Mat}_{p(d)}(\CC)$ having $\sqrt{||\bm{r}||}\sqrt{||\bm{s}||}$ in the $(\bm{r},{\bm{s}})$-entry, and zero otherwise, the above description gives an isomorphism of rings between $\Ac_d$ and $\CC[x] \otimes \mathrm{Mat}_{p(d)}(\CC) = \mathrm{Mat}_{p(d)}(\CC[x])$, as is claimed.\end{proof}

\begin{ex} \label{ex:Heis=2} We can explicitly compute the coefficient $||\bm{r}||$ appearing in the proof of \cref{claim:HigherHeis}. Let $\bm{r}=[r_1|\dots|r_n]$ be a partition of $d$ consisting of $s$ many distinct elements $r_{i_1}, \dots, r_{i_s}$ (at most $s=d$). For every $j \in \{1, \dots, s\}$, let $m_j$ be the multiplicity of $r_{i_j}$ in $\bm{r}$. Then we have
\[ ||\bm{r}||= \prod_{i=1}^n r_i \cdot \prod_{j=1}^s m_j!.
\]
 For instance $||[1| \cdots|1]||=d!$ and $||[d]||=d$. Moreover $||[r_1|\cdots|r_d]||=r_1 \cdots r_d$ if the $r_i$'s are all distinct. 
\end{ex}

\subsection{The conjecture of Barron and Addabbo}\label{sec:ProofCon} We now prove \cite[Conj. 8.1]{addabbo.barrow:level2Zhu}. 

\begin{cor}\label{cor:ConjABCor}For all $d\in \NN$, one has that
    $\Aa_d(\pi)\cong \mathrm{Mat}_{p(d)}(\CC[x]) \oplus \Aa_{d-1}(\pi)$.
\end{cor}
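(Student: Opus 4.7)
The proof plan is essentially an immediate assembly of the two preceding results. First, I would appeal to \cref{claim:HigherHeis}, which identifies $\Ac_d(\pi) \cong \mathrm{Mat}_{p(d)}(\CC[x])$; crucially, this matrix algebra is unital, with identity element given (in the generating set of the proof of \cref{claim:HigherHeis}) by $\sum_{\bm{r} \in P(d)} \tfrac{1}{\|\bm r\|}\varepsilon_{\bm r,\bm r}$. Thus the hypothesis of Part~\eqref{thma} of \cref{thm:Big} is satisfied at level $d$.

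Next, I would apply Part~\eqref{thma} of \cref{thm:Big} directly: since $\Ac_d(\pi)$ is unital, the map $\mu_d$ in the exact sequence \eqref{eq:RES} is injective and the sequence splits, yielding a ring isomorphism
\[\Aa_d(\pi) \;\cong\; \Ac_d(\pi) \times \Aa_{d-1}(\pi) \;\cong\; \mathrm{Mat}_{p(d)}(\CC[x]) \oplus \Aa_{d-1}(\pi),\]
where we have used that a finite direct product of rings coincides with the direct sum of the underlying modules.

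I do not anticipate any real obstacle here; the work has all been done in \cref{claim:HigherHeis} (computation of $\Ac_d(\pi)$ and verification of unitality) and in \cref{thm:Big}\eqref{thma} (the abstract splitting result). One could additionally observe that iterating the isomorphism and using $\Aa_0(\pi) = \CC[x] = \mathrm{Mat}_{p(0)}(\CC[x])$ as the base case gives the full decomposition
\[\Aa_d(\pi) \;\cong\; \bigoplus_{j=0}^d \mathrm{Mat}_{p(j)}(\CC[x]),\]
recovering the formula \eqref{eq:ABCon} and thereby confirming the Addabbo--Barron conjecture in its strongest form.
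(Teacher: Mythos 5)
Your proof is correct and follows exactly the paper's route: invoke \cref{claim:HigherHeis} to identify $\Ac_d(\pi)\cong\mathrm{Mat}_{p(d)}(\CC[x])$ (hence unital) and then apply Part~\eqref{thma} of \cref{thm:Big} to split the exact sequence \eqref{eq:RES}. The extra details you supply — the explicit form of the identity element and the iterated decomposition recovering \eqref{eq:ABCon} — are harmless elaborations of the same argument.
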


\begin{proof} This follows from \cref{claim:HigherHeis} and Part \textit{(a)} of \cref{thm:Big}.     \end{proof}

\begin{remark}\label{rmk:ab} By \cite[Remark 4.2]{BVY},  $\Aa_0(\pi)\cong \mathbb{C}[x]$,  $\Aa_1(\pi)\cong \mathbb{C}[x]\oplus \Aa_0(\pi)$, and by \cite[Theorem 7.1]{addabbo.barrow:level2Zhu}, $\Aa_2(\pi)\cong \mathrm{Mat}_{p(2)}(\CC[x]) \oplus \Aa_{1}(\pi)$. 
\end{remark}

\subsection{Vector bundles from the Heisenberg VOAs}
We now equip $\pi$ with a conformal vector $\omega$, so that it becomes a VOA. The following result shows that the application of  \cref{thm:Big}  produces new examples, beyond the well-studied case of sheaves of coinvariants defined by rational and $C_2$-cofinite VOAs.

Let $\overline{\mathcal{J}}_{0,n}$ be the stack parametrizing families of stable pointed curves of genus zero with first order tangent data, and recall that the forgetful map  $\pi: \overline{\mathcal{J}}_{0,n} \rightarrow \overline{\mathcal{M}}_{0,n}$ makes $\overline{\mathcal{J}}_{0,n}$ a $\mathbb{G}_m^{\oplus n}$-torsor over $\overline{\mathcal{M}}_{0,n}$.
\begin{cor}\label{ex:HeisVB} Sheaves of coinvariants defined by simple modules over the Heisenberg VOA form globally generated vector bundles on $\overline{\mathcal{J}}_{0,n}$.  If conformal dimensions of modules are in $\mathbb{Q}$, these descend to form globally generated vector bundles on $\overline{\mathcal{M}}_{0,n}$.
    \end{cor}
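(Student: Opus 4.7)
The plan is to verify that the Heisenberg VOA $\pi$ falls within the scope of \cref{rmk:morerational}\ref{item:gg}. First I would observe that $\pi$ is generated in degree one (by $H$), and hence $C_1$-cofinite, so the coherence theorem of \cite{DG} applies to sheaves of coinvariants from simple $\pi$-modules on stable rational curves. With coherence in hand, the entire content of \cref{rmk:morerational}\ref{item:gg} reduces to the single task of showing that $\pi$ satisfies smoothing.

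To verify smoothing, by \cref{thm:Sewing} it suffices to produce strong identity elements in $\Ac_d(\pi)$ for every $d \in \NN$. By \cref{claim:HigherHeis}, one has $\Ac_d(\pi) \cong \mathrm{Mat}_{p(d)}(\CC[x])$, whose unit is
\[ \Is_d = \sum_{\bm{r} \in P(d)} \frac{1}{||\bm{r}||}\, \varepsilon_{\bm{r},\bm{r}}. \]
Promoting this unit to a strong identity is the main (and only) obstacle. I would verify condition \ref{it:H3} of \cref{lem:StrongUnitConditions}: for any $\Aa$-module $M$, the space $\PhiL(M)_d$ is $\CC$-spanned by elements $H_{-r'_1}\cdots H_{-r'_n} \otimes m$ with $\bm{r'} \in P(d)$ and $m \in M$. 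Using the $\ostar$ formula established in the proof of \cref{claim:HigherHeis}, a direct computation gives
\[ \varepsilon_{\bm{r},\bm{r}} \star (H_{-r'_1}\cdots H_{-r'_n} \otimes m) = ||\bm{r}||\, \delta_{\bm{r},\bm{r'}}\, (H_{-r_1}\cdots H_{-r_n} \otimes m), \]
so that $\Is_d \star (H_{-r'_1}\cdots H_{-r'_n} \otimes m) = H_{-r'_1}\cdots H_{-r'_n} \otimes m$. Hence $\Is_d$ acts as the identity on $\PhiL(M)_d$ for every $\Aa$-module $M$, and is therefore a strong identity by \cref{lem:StrongUnitConditions}.

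With smoothing established for $\pi$, \cref{rmk:morerational}\ref{item:gg} applies directly: it provides the vector bundle $\VV_0(\pi;W^\bullet)^{\overline{\mathcal{J}}_{0,n}}$ on $\overline{\mathcal{J}}_{0,n}$ together with its global generation (from \cite{DG}); under the rationality assumption on the conformal dimensions of $W^1,\ldots,W^n$, the same remark yields descent to a globally generated vector bundle $\VV_0(\pi;W^\bullet)$ on $\overline{\mathcal{M}}_{0,n}$, as desired.
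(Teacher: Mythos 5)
Your proposal is correct and follows essentially the same route as the paper: coherence via generation in degree $1$, unitality of $\Ac_d(\pi)$ from \cref{claim:HigherHeis}, strong identity from the explicit $\ostar$ formula, smoothing via \cref{thm:Sewing}, and then \cref{rmk:morerational}\ref{item:gg}. Your verification of the strong-identity condition \ref{it:H3} is more explicit than the paper's one-line appeal to ``the formula of the star product,'' and you also correctly invoke \cref{thm:Sewing} where the paper's published proof misattributes the smoothing conclusion to \cref{thm:Big}.
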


\begin{proof}
    By \cref{claim:HigherHeis}, the mode transition algebras for the Heisenberg VOAs are unital.  Moreover, the formula of the star product implies that these are strong identity elements. Hence by \cref{thm:Big}, the Heisenberg VOA satisfies smoothing. Since the Heisenberg VOA is by definition generated in degree $1$, the assertion follows from \cref{cor:SewingAndFactorizationCorollary}, as described in \cref{rmk:morerational}~(\ref{item:gg}),. 
    \end{proof}  

\begin{remark}
    Unlike bundles of coinvariants given by representations of rational and $C_2$-cofinite VOAs, higher Chern classes of  bundles on $\overline{\mathcal{M}}_{g,n}$ from  \cref{cor:SewingAndFactorizationCorollary} (like those on $\overline{\mathcal{M}}_{0,n}$ from \cref{ex:HeisVB}) are elements of the
    tautological ring since we do not know if they satisfy factorization, and hence we do not that the Chern characters form
    a semisimple cohomological field theory as in \cite{moppz, dgt3}.
\end{remark}


\medskip

\section{Mode transition algebras for Virasoro VOAs}\label{eg:virasoro} For $c \in \CC$, by $\Vir_c = {M_{c,0}}/{<L_{-1}1>}$ we mean the (not necessarily simple) Virasoro VOA of central charge $c\in \CC$. 

\subsection{\texorpdfstring{$\Vir_c$}{Virc}}
By \cite{WWang}, when $c \neq c_{p,q}=1-\frac{6(p-q)^2}{pq}$, then $\Vir_c$ is a simple VOA, but it is not rational or $C_2$-cofinite. When $c=c_{p,q}$, the VOA $\Vir_c$ is not simple, but its simple quotient $L_c$ will be rational and $C_2$-cofinite, and therefore satisfy smoothing. We therefore only consider $\Vir_c$, for any values of $c$, and not $L_c$.

\begin{prop} Let  $\Vir_c$ be the Virasoro VOA.
\begin{enumerate}
    \item[(a)] The first mode transition algebra $\Ac_1(\Vir_c)$ is not unital, and so $\Vir_c$ does not satisfy smoothing.
    \item[(b)] The kernel of the canonical projection $\Aa_1(\Vir_c) \to \Aa_0(\Vir_c)$ is isomorphic to  $\Ac_1(\Vir_c)$.\end{enumerate}
\end{prop}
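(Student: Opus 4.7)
The plan is to determine $\Ac_1(\Vir_c)$ explicitly as a ring and then read off both (a) and (b) from the exact sequence \eqref{eq:SES}. The main obstacle will be the identification of the underlying vector space and multiplication on $\Ac_1$; once that is in hand, both conclusions fall out in one line each.

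First I would establish $\Ac_1(\Vir_c) \cong \CC[x]$ as a vector space. Recall that $\Aa \cong \CC[x]$ with $x = [L_0]$, so $\Aa$ lifts to the commutative subalgebra $\CC[L_0] \subset \UV_0$. Since $\Vir_c$ is strongly generated by $\omega \in V_2$ and $V_1 = 0$, a weight/PBW argument inside the induced module $\PhiL(\Aa)$ shows that in degree one only the mode $L_{-1}$ contributes, so $\PhiL(\Aa)_1$ is a free $\Aa$-module on $L_{-1} \otimes 1$; symmetrically $\PhiR(\Aa)_{-1}$ is free on $1 \otimes L_1$. Combining these via \cref{def:phi} gives
\[
\Ac_1 \;=\; \Ac_{1,-1} \;\cong\; \CC[x], \qquad L_{-1} \otimes p(x) \otimes L_1 \;\leftrightarrow\; p(x).
\]
This is the step I expect to be most delicate, since freeness (rather than merely cyclic generation) must be verified.

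Next I would compute the multiplication: using \cref{rmk:explostar} together with the Virasoro bracket $[L_1, L_{-1}] = 2 L_0$, one finds $L_1 \ostar L_{-1} = [2 L_0]_0 = 2x \in \Aa$, and plugging this into \cref{def:abstract module associativityPaper} yields
\[
p(x) \star q(x) \;=\; 2x\, p(x)\, q(x).
\]
Part (a) then follows immediately: a multiplicative identity $u(x) \in \CC[x]$ would have to satisfy $2x\, u(x) = 1$, which has no polynomial solution. Hence $\Ac_1$ is not unital, \emph{a fortiori} has no strong identity element, and by \cref{thm:Sewing} the VOA $\Vir_c$ does not satisfy smoothing.

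For part (b), the exact sequence \eqref{eq:SES} reduces the claim $\ker \pi_1 \cong \Ac_1$ to injectivity of $\mu_1 \colon \Ac_1 \to \Aa_1$, which I would prove by detecting nonzero elements of $\Ac_1$ through their action on Verma modules. For each $h \in \CC$ let $\CC_h$ be the one-dimensional $\Aa$-module on which $x$ acts as $h$, so that $\PhiL(\CC_h)$ is the Virasoro Verma module of highest weight $(c, h)$ and $\PhiL(\CC_h)_1 = \CC \cdot (L_{-1} \otimes v_h)$ is one-dimensional. By \cref{lem: action lemma} the action of $p(x) \in \Ac_1$ on this line factors through $\mu_1(p)$, and the $\star$-formula gives $p(x) \star (L_{-1} \otimes v_h) = 2h\, p(h)\,(L_{-1} \otimes v_h)$. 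If $\mu_1(p) = 0$ then $2h\, p(h) = 0$ for every $h \in \CC$, forcing the polynomial $p$ to vanish. Hence $\mu_1$ is injective and $\ker \pi_1 \cong \Ac_1$, completing the argument.
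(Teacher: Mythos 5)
Your part (a) is essentially the paper's argument: identify $\Ac_1$ with $\CC[x]$ via the one-dimensional space $\LVf_{\pm 1}$, compute $[L_1,L_{-1}]=2L_0$ to obtain $p\star q = 2x\,p\,q$, and observe this product has no unit. One small caution: what the paper leaves implicit (and you flag but do not fully justify) is that $(\UV/\NL 1\UV)_1$ is free of rank one over $\Aa$, not merely cyclic; this does hold for $\Vir_c$ by a PBW argument, but deserves a sentence.

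For part (b) you take a genuinely different and cleaner route. The paper invokes the Barron--Vander Werf--Yang presentation $\Aa_1(\Vir_c)\cong\CC[x,y]/\bigl((y-x^2-2x)(y-x^2-6x+4)\bigr)$, changes variables to $\CC[X,Y]/(XY)$, recognizes $\ker\pi_1$ as the ideal $(Y)$, observes this is abstractly $(\CC[t],\star)$, and then verifies by an explicit commutator computation that $\mu_1$ realizes this identification (showing $\tilde Y - 2(L_{-2}1)_{[0]}(L_{-2}1)_{[2]}\in\Nn{2}\UV_0$). You instead prove injectivity of $\mu_1$ directly by a separation-of-points argument: for the Verma module $\PhiL(\CC_h)$ one has $\PhiL(\CC_h)_1$ one-dimensional, the action of $p(x)\in\Ac_1$ on it factors through $\mu_1$ by \cref{lem: action lemma}, and it acts by the scalar $2h\,p(h)$; since $2h\,p(h)=0$ for all $h$ forces $p=0$, the map $\mu_1$ is injective, and \eqref{eq:SES} immediately gives $\ker\pi_1\cong\Ac_1$. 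Your proof is self-contained and bypasses the level-one Zhu algebra computation entirely; what it does not deliver, but the paper's does, is the cross-check against the explicit BVY presentation and the explicit form of the splitting. Both arguments are correct.
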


\begin{proof} We first prove \textit{(a)}. 
By \cite[Lemma 4.1]{WWang}, one has $\Aa_0(\Vir_c) \cong \CC[t]$, where  the class of $(L_{-2}1)_{[1]}$ is mapped to the generator $t$.

Here, as in Heisenberg case, $\LVf_{\pm 1}$ is a one dimensional vector space, with generators denoted $u_{\pm 1}$, so that $\Ac_1(\Vir_c) = u_1 \Aa_0(\Vir_c) u_{-1}$ . We can choose $u_1 = (L_{-2} 1)_{[0]}$ and $u_{-1} =  (L_{-2}1)_{[2]}$, and to understand the multiplicative structure of $\Ac_1(\Vir_c)$ we are only left to compute $[u_{-1}, u_1]$. Since $L_{-2}1$ is the conformal vector of $\Vir_c$, we can identify $(L_{-2}1)_{[n]}$ with the element $\Lc_{n-1}$ of the Virasoro algebra, and the bracket of $L(\Vir_c)$ coincides with the bracket in the Virasoro algebra. Hence we obtain 
\[ [u_{-1},u_1] = [(L_{-2}1)_{[2]},(L_{-2} 1)_{[0]}] =  [\Lc_{1}, \Lc_{-1}] = 2\Lc_{0} = 2(L_{-2} 1)_{[1]}. 
\]
We then have an identification of $\Ac_1(\Vir_c)$ with $(\CC[t],+,\star)$, where $+$ denotes the usual sum of polynomials, while $f(t) \star g(t) = 2 t f(t) g(t)$. In particular, this implies that $\Ac_1(\Vir_c)$ is not unital.

We now show \textit{(b)}. By \cite{WWang}, ${\Aa}_0(\Vir_c)$ is generated by $L_{-2}\textbf{1}+O_0(V)$ and $L_{-2}^2\textbf{1}+O_0(V)$ so that \[{\Aa}_0(\Vir_c)\cong \mathbb{C}[x,y]/(y-x^2-2x)\cong \mathbb{C}[x],\]
\[L_{-2}\textbf{1}+O_0(V) \mapsto x+(q_0(x,y)), \qquad  L_{-2}^2\textbf{1}+O_0(V) \mapsto y+(q_0(x,y)),\]
where $q_0(x,y)=y-x^2-2x$. By \cite[Theorem 4.7]{BarronVirL1}, 
$\Aa_1(\Vir_c)$ 
is generated by $L_{-2}\textbf{1}+O_1(V)$ and  $L_{-2}^2\textbf{1}+O_1(V)$, and by \cite[Theorem 4.11]{BarronVirL1} on has that 
\[{\Aa}_1(\Vir_c)\cong \mathbb{C}[x,y]/((y-x^2-2x)(y-x^2-6x+4)),\] 
\[L_{-2}\textbf{1}+O_1(V)\mapsto x+(q_0(x,y)q_1(x,y)), 
\qquad L_{-2}^2\textbf{1}+O_1(V)\mapsto y + (q_0(x,y)q_1(x,y)),\]
where $q_0(x,y)=y-x^2-2x$ and $q_1(x,y)=y-x^2-6x+4$ (see also \cite[\S 5]{BarronVirL1}). With the change of variables $X=y-x^2-6x+4$ and $Y=y-x^2-2x$, one has
\[\Aa_1(\Vir_c) = \dfrac{\CC[X,Y]}{XY} \qquad \text{ and} \qquad \Aa_0(\Vir_c) = \CC[X],
\] so that the kernel of the projection $\Aa_1(\Vir_c) \to \Aa_0(\Vir_c)$ is identified with the ideal $K_1$ generated by $Y$ inside $\Aa_1(\Vir_c)$. Since $XY=0$, the ideal $K_1$ is isomorphic to $(Y\CC[Y],+,\cdot)$. Furthermore, this algebra is isomorphic to the algebra $(\CC[t], +, \star)$ through the assignment $Yf(Y)\mapsto f(2t)$. This shows that, abstractly, $\Ac_1(\Vir_c)$ is identified with the kernel of $\Aa_1(\Vir_c) \to \Aa_0(\Vir_c)$.

We now see directly that this identification is provided by the natural map $\mu_1 \colon \Ac_1 \to \Aa_1(V)$, which is induced by $(L_{-2} 1)_{[0]} \otimes 1 \otimes   (L_{-2}1)_{[2]} \mapsto [(L_{-2} 1)_{[0]}(L_{-2} 1)_{[2]}]$ as in \cref{lem:right exact seq}. To check that indeed $\Ac_1(\Vir_c)$ naturally identifies with the kernel of  $\Aa_1(\Vir_c) \to \Aa_0(\Vir_c)$, it is enough to show that 
\[ \tilde{Y} -2(L_{-2} 1)_{[0]}(L_{-2} 1)_{[2]} \in N^2\UV_0, \]
where $\tilde{Y}$ is any lift of $Y$ to $\UV_0$. We choose
\[\tilde{Y} = (L_{-2}L_{-2}1)_{[3]}-(L_{-2} 1)_{[1]}(L_{-2} 1)_{[1]} - 2(L_{-2} 1)_{[1]}.
\]
To simplify the notation, we will now write  $\Lc_{n}$ to denote $(L_{-2}1)_{[n+1]}$. Using the Virasoro relations we obtain that this is the same as 
\begin{align*} \tilde{Y} &= 2 \sum_{n \geq 2} \Lc_{-n}\Lc_{n} + \Lc_{-1}\Lc_{1} + \Lc_{1}\Lc_{-1} + \Lc_0\Lc_0 - \Lc_0\Lc_0- 2\Lc_0\\
&=  2 \sum_{n \geq 2} \Lc_{-n}\Lc_{n} + \Lc_{-1}\Lc_{1} + \Lc_{1}\Lc_{-1}- 2\Lc_0\\
&= 2 \sum_{n \geq 2} \Lc_{-n}\Lc_{n} +2\Lc_{-1}\Lc_{1} + 2\Lc_0- 2 \Lc_0 \\
&= 2 \sum_{n \geq 2} \Lc_{-n}\Lc_{n} +2\Lc_{-1}\Lc_{1} \\
&= 2 \sum_{n \geq 2} \Lc_{-n}\Lc_{n} +2(L_{-2}1)_{[0]}(L_{-2}1)_{[2]}, 
\end{align*} and since $\sum_{n \geq 2} \Lc_{-n}\Lc_{n}  \in N^2\UV_0$, the proof is complete.
\end{proof}

\section{Questions}\label{sec:OtherQuestions}
Here we ask a few other questions that arise from this work.

\subsection{Not rational and strongly generated in higher degree}\label{sec:SecondQuestion}
Keeping in mind the example of the Virasoro VOA from \cref{eg:virasoro} and   \cref{thm:Big}, we ask the following:

\begin{question}\label{ques:CQ}
    For $V$ a $C_2$-cofinite and non-rational VOA, not generated in degree $1$, can one always find a pair $Z\subset W$ where $W=\PhiL(W_0)$ is  induced by an indecomposable $\Aa_0(V)$-module $W_0$, such that $L_0$ acts on $W_0$ as a scalar with eigenvalue $c_W \in \CC$,  and a proper submodule $Z \subset W$, with $c_Z - c_W >0$ for every eigenvalue $c_Z$ of $L_0$ on $Z$. 
\end{question} 

\medskip

In \cref{sec:EvidenceTriplet} we provide an example of such a pair of modules $Z \subset W$ for the triplet vertex operator algebra $\mathcal{W}(p)$.  This particular example was suggested to us in a communication with Thomas Creutzig. Simon Wood gave us a proof of \cref{claim:Wood}, a crucial detail for this example.   The features of such an example (and that it should exist for the triplet)  were described to us by Dra\v{z}en Adamovi\'{c}.

\subsubsection{Triplet VOAs}\label{sec:EvidenceTriplet} Let $\mathcal{W}(p)$ denote the triplet vertex operator algebra.  There are $2p$ simple $\mathcal{W}(p)$-modules 
$X_s^+$, and $X_s^-$, for $1\le s \le p$. Following \cite[Eq (2.39)]{TW}, we write $\ov{X}_s^{\pm}$ for the quotient $\Aa_0(X_s^\pm)= {X_s^{\pm}}/{I_0(X_s^{\pm})}$ which are simple modules over Zhu algebra $\Aa=\Aa_0(\mathcal W(p))$. And in this case, one also has in the notation of \cite{BVY}, that $\Omega(X_s^\pm)=(X_s^\pm)_0=\overline{X}_s^\pm$.
In particular, since $\ov X_s^{\pm}$ is an $\Aa$-module,  we may consider $\PhiL(X_s^{\pm})$. Moreover, using for instance \cite[Eq (3.8)]{TW},  the eigenvalues of the action of $L_0$ on the indecomposable modules $\overline{X}_s^\pm$, i.e. the conformal weights, satisfy $cw(X_{p - s}^-) > cw(X_s^+)$. The induced module $\PhiL(\ov X_s^+)$ can be identified with a quotient of the projective cover of $X_s^+$, as follows. By \cite[Proposition 4.5]{NTTrip} (see also \cite{TW}) the projective cover $P^+_s$  of $X^+_s$ has socle filtration of length three consisting of submodules $S_0 \subset S_1 \subset S_2 = P^+_s$ with $S_0 \cong X_s^+ \cong S_2/S_1$ and $S_1/S_0 \cong 2 X_{p - s}^-$.  

\begin{claim}\label{claim:Wood}
 $\PhiL(\ov X_s^+) \cong P^+_s / X_s^+$
\end{claim}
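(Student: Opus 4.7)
The plan is to construct a $V$-module map $\phi \colon \PhiL(\ov X_s^+) \to P_s^+/X_s^+$ via the universal property of $\PhiL$, and then establish it is an isomorphism using the projectivity of $P_s^+$ together with a length-counting argument.

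First I would identify the degree-zero component of $P_s^+/X_s^+$ with $\ov X_s^+$: this copy comes from the top quotient $P_s^+/S_1 \cong X_s^+$ of the socle filtration, while the submodule $S_1/S_0 \cong 2 X_{p-s}^-$ lies entirely in strictly positive degrees because $h_{p-s}^- > h_s^+$. Being of minimal degree, this copy of $\ov X_s^+$ is annihilated by $\LVL_{\le -1}$ and hence sits in $\Omega_0(P_s^+/X_s^+)$, so \cref{prop:UnivPaper} supplies the desired map $\phi$, which restricts to the identity on the degree-zero pieces. For surjectivity, the key point is that $P_s^+/X_s^+$ has a unique maximal submodule: since $P_s^+$ is the projective cover of the simple module $X_s^+$, it has unique radical $S_1$ (which contains $S_0$), and the correspondence theorem gives $P_s^+/S_0$ the unique maximal submodule $S_1/S_0 \cong 2 X_{p-s}^-$. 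Because $\mathrm{im}(\phi)$ contains the nonzero piece $\ov X_s^+$ in degree zero while $(S_1/S_0)_0 = 0$, the image is not contained in this maximal submodule, forcing $\mathrm{im}(\phi) = P_s^+/X_s^+$.

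For injectivity I would use projectivity of $P_s^+$ to lift the quotient $P_s^+ \twoheadrightarrow X_s^+$ through $\PhiL(\ov X_s^+) \twoheadrightarrow X_s^+$, producing a map $\psi \colon P_s^+ \to \PhiL(\ov X_s^+)$; this $\psi$ is surjective because $\PhiL(\ov X_s^+)$ is generated by its degree-zero part $\ov X_s^+$ and admits $X_s^+$ as its unique simple quotient, so any submodule mapping onto this quotient must be everything. By additivity of composition length in the short exact sequence defined by $\phi \circ \psi \colon P_s^+ \twoheadrightarrow P_s^+/X_s^+$, the kernel $\ker(\phi \circ \psi)$ has length $1$, which must therefore equal the simple socle $S_0 = X_s^+$ (the unique simple submodule of $P_s^+$). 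Hence $\ker \phi = \psi(S_0)$, which is either zero or a copy of the simple module $X_s^+$ sitting inside $\PhiL(\ov X_s^+)$. In the latter case, its degree-zero piece would coincide with all of $\PhiL(\ov X_s^+)_0 = \ov X_s^+$, and since $\PhiL(\ov X_s^+)$ is generated by its degree-zero part this forces $\psi(S_0) = \PhiL(\ov X_s^+)$, hence $\phi = 0$, contradicting surjectivity. Thus $\ker \phi = 0$ and $\phi$ is an isomorphism.

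The hard part will be invoking the structural facts about the $\mathcal{W}(p)$-module category that the argument relies on: namely, that $P_s^+$ has the stated socle filtration (so composition lengths are well defined and finite), that it is the projective cover with unique simple quotient $X_s^+$, and that $\PhiL(\ov X_s^+)$ itself has finite length so that length-counting is applicable. These are standard in the triplet literature (\cite{NTTrip}, \cite{TW}, and related work), but they must be explicitly cited rather than reproved. Once they are in place, the length-counting step makes the injectivity argument essentially automatic, and the role of $\psi$ is simply to transport information from the well-understood projective cover $P_s^+$ to the generalized Verma module $\PhiL(\ov X_s^+)$.
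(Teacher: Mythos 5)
Your proof is correct, but it takes a genuinely different route from the paper's. Both arguments begin the same way up to a point: both produce the surjection $\psi\colon P_s^+ \to \PhiL(\ov X_s^+)$ by lifting the projection $P_s^+ \twoheadrightarrow X_s^+$ through the canonical map $\PhiL(\ov X_s^+) \twoheadrightarrow X_s^+$, and both exploit the fact that $\PhiL(\ov X_s^+)$ is generated by its lowest weight space $\ov X_s^+$. After that the two proofs diverge. The paper never builds a map $\phi\colon \PhiL(\ov X_s^+)\to P_s^+/X_s^+$; it works entirely with the kernel of $\psi$, showing it contains $S_0$ (by counting appearances of $X_s^+$ as a composition factor against the size of the lowest weight space) and then showing the kernel is no bigger by an $\operatorname{Ext}$-vanishing argument: any extension $0\to X_{p-s}^-\to E\to\PhiL(\ov X_s^+)\to 0$ splits because the lowest weight spaces of $E$ and $\PhiL(\ov X_s^+)$ agree (as $\mathrm{cw}(X_{p-s}^-)>\mathrm{cw}(X_s^+)$), so the universal property supplies a section. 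You instead construct $\phi$ explicitly via \cref{prop:UnivPaper}, show surjectivity by exhibiting an element of the image outside the unique maximal submodule $S_1/S_0$, and then prove injectivity by a composition-length count: $\ker(\phi\circ\psi)$ has length one, hence equals the simple socle $S_0$, and $\psi(S_0)=\ker\phi$ is ruled out from being a copy of $X_s^+$ by the generation-in-degree-zero argument. Your route avoids both the indecomposability input (the paper's appeal to $\PhiL$ preserving indecomposables) and the extension-splitting step, trading them for a length calculation that relies more explicitly on the known finite socle filtration of $P_s^+$; the paper's route localizes all the triplet-specific input in the socle filtration and the weight inequality, and avoids needing to know that the socle of $P_s^+$ is simple per se (though it does, of course, use $S_0\cong X_s^+$). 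Both proofs are about the same length, and yours is arguably easier to check once the structure of $P_s^+$ is accepted as given.
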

\begin{proof}
The $\Aa$-module $\overline{X}_s^+$ is indecomposable, and as $\PhiL$ takes indecomposable modules to indecomposable modules (eg.~\cite{DGK}), one has that $\PhiL(\ov X_s^+)$ is an indecomposable  admissible $\mathcal{W}(p)$ module. It follows that  $\ov X_s^+$ will be the weight space of least conformal weight in $\PhiL(\ov X_s^+)$, and as $X_s^+$ is generated by its lowest weight space $\ov X_s^+$, we get a canonical surjective map $\PhiL(\ov X_s^+) \to X_s^+$. By projectivity, the map from the projective cover $P_s^+ \to X_s^+$ lifts to a map $P_s^+ \to \PhiL(\ov X_s^+)$. As this map is surjective on the least weight space, the weight of $X_s^+$, and $\PhiL(\ov X_s^+)$ is generated by this subspace by construction, the map $P_s^+ \to \PhiL(\ov X_s^+)$ is surjective and so $\PhiL(\ov X_s^+)$ is a quotient of $P_s^+$.

The kernel of this quotient must contain the socle (which isomorphic to $X_s^+$), otherwise  $\PhiL(\ov X_s^+)$ would have two composition factors isomorphic to $X_s^+$ contradicting the size of its lowest weight space. The kernel cannot be larger, otherwise $\PhiL(\ov X_s^+)$ would admit a nontrivial extension by $X^-_{p-s}$ (which as noted, has greater conformal weight than $X_s^+$). Note that if we had such an extension $0 \to X^-_{p - s} \to E \to \PhiL(\ov X_s^+) \to 0$, the lowest weight spaces of $E$ and $\PhiL(\ov S_s^+)$ would be isomorphic, producing a universal map $\PhiL(\ov X_s^+) \to E$ whose composition with the map in the above sequence would be the universal map from $\PhiL(\ov X_s^+)$ to itself -- that is, the identity. Consequently we would have a splitting of our exact sequence and the extension would be trivial. Thus $\PhiL(\ov X_s^+)$ is isomorphic to ${P_s^+}/{X_s^+}$. \end{proof}
 
In particular, by \cref{claim:Wood}, $W=\PhiL(\ov X_s^+)=S_2/S_0$ would have a sub-module isomorphic to $Z=S_1/ S_0 \cong 2 X_{p - s}^-$, and the conformal weight $cw(Z)=cw(X_{p - s}^-)$ would then be strictly larger than the conformal weight $cw(W)=cw(\ov X_s^+)$.

\begin{prop}\label{prop:Trip}
    $\mathcal{W}(p)$ does not satisfy smoothing.
\end{prop}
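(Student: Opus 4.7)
The plan is to derive this as a direct application of the contrapositive of Part (b) of \cref{thm:Big}, using the explicit pair $Z \subset W$ constructed from the triplet modules in the preceding discussion. Specifically, I would argue by contradiction: assume $\mathcal{W}(p)$ satisfies smoothing, so by \cref{thm:Big}(b) no generalized Verma module $W = \PhiL(W_0)$ with $L_0$ acting as a scalar on $W_0$ can admit a proper submodule $Z$ on which all $L_0$-eigenvalues $c_Z$ satisfy $c_Z - c_W > 0$.

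The candidate pair is already essentially in hand. I would take $W_0 = \ov X_s^+$, which is a simple $\Aa_0(\mathcal{W}(p))$-module on which $L_0$ acts by the scalar $c_W = cw(X_s^+)$, and let $W = \PhiL(\ov X_s^+)$. By \cref{claim:Wood}, $W \cong P_s^+/X_s^+ = S_2/S_0$ as a $\mathcal{W}(p)$-module. Then $Z := S_1/S_0 \cong 2 X_{p-s}^-$ is a proper nonzero submodule of $W$. Because the composition factors of $Z$ are all isomorphic to $X_{p-s}^-$, every $L_0$-eigenvalue on $Z$ is of the form $cw(X_{p-s}^-) + k$ for some integer $k \geq 0$. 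Since $cw(X_{p-s}^-) > cw(X_s^+) = c_W$ (as recalled from \cite[Eq.~(3.8)]{TW}), each such eigenvalue $c_Z$ satisfies $c_Z - c_W > 0$, which is precisely the forbidden configuration.

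This contradicts \cref{thm:Big}(b), completing the proof that $\mathcal{W}(p)$ does not satisfy smoothing. There is no real obstacle here beyond verifying the conformal weight inequality and the identification of the submodule structure of $P_s^+$, both of which are imported from the literature (and the latter is used in the proof of \cref{claim:Wood}). The substantive work has already been done: constructing the module $\PhiL(\ov X_s^+)$ via \cref{claim:Wood} and establishing the general obstruction in \cref{thm:Big}(b). The final proposition is then a one-line invocation of these two inputs.
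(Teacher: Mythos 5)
Your proof is correct and follows exactly the route the paper intends: the paper's one-line proof invokes \cref{thm:Big} (together with \cref{thm:Sewing}) on the pair $Z = S_1/S_0 \subset W = \PhiL(\ov X_s^+) = S_2/S_0$ constructed from \cref{claim:Wood}, which is precisely the contrapositive argument you spell out, including the conformal weight inequality $cw(X_{p-s}^-) > cw(X_s^+)$.
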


\begin{proof}The example shows by \cref{thm:Sewing} and \cref{thm:Big}  that the triplet does not satisfy smoothing.
\end{proof}

\subsubsection{More general triplet vertex algebras}\label{sec:SF}
In \cite{GenTrip0, AMGenTrip1, AMGenTrip2, AMGenTrip3} the more general triplet vertex algebras $\mathcal{W}_{p_+,p_-}$ with $p_\pm \ge 2$ and $(p_+,p_-)=1$ are studied.  From their results, for $p_+=2$ and $p_-$ odd, the $\mathcal{W}_{p_+,p_-}$ are $C_2$-cofinite and not rational.  We would like to know the answer to \cref{ques:CQ} for this family of VOAs.

\subsubsection{Other $C_2$-cofinite, non-rational VOAs from extensions}\label{sec:Extensions}
In \cite{CKL} the authors discover three new series of $C_2$-cofinite and non-rational VOAs, via application of the vertex tensor category theory of  \cite{HLZ1, HLZ}, which are not directly related to the triplets.  They also list certain modules for these examples.  We would like to know the answer to \cref{ques:CQ} for these new families of VOAs.

\subsection{Local freeness in case \texorpdfstring{$V$}{V} does not satisfy smoothing}\label{sec:BundleQuestions}
\begin{question}\label{ques:VBQ}
    Are there particular choices of modules $W^\bullet$ over a  $V$ that do not satisfy smoothing, for which  sheaves $\VV(V; W^\bullet)$ form vector bundles on $\overline{\mathcal{M}}_{g,n}$?  
\end{question}

By \cref{cor:SewingAndFactorizationCorollary},  if $V$ satisfies smoothing, and if the sheaves of coinvariants are coherent,  they form vector bundles.  However, if $V$ does not satisfy smoothing, it is still an open question about whether these sheaves are locally free. For instance one could ask this for the triplet vertex algebras, which do not satisfy smoothing, but are $C_2$-cofinite, so their representations define coherent sheaves  on  $\overline{\mathcal{M}}_{g,n}$.  

\subsection{Generalized Constructions}\label{sec:GoodTriples}
In \cref{sec:Triples} the notion of triples of associative algebras is introduced, and to a good triple (see \cref{good definition}) we  associate many of the standard notions affiliated with a VOA from higher level Zhu algebras to mode transition algebras (see \cref{sec:GHZ} and \cref{sec:GMTA}).   Some of the results proved here apply in this more general context. For instance, as was already noted in the introduction,  
the exact sequence in (5), and Part \textit{(a)} of \cref{thm:Big} hold in this generality.  It would be interesting to further develop this theory, and it
is therefore natural to ask the following question:
\begin{question}\label{ques:GoodTriples}
    What are other examples of generalized (higher level) Zhu algebras and generalized mode transition algebras, beyond the context of VOAs?
\end{question}

\appendix

\section{Split filtrations}\label{sec:Appendix}
This appendix contains a number of details about graded and filtered completions, and their relationships to one another.  These serve to provide simple definitions of the building blocks of our constructions and uniform proofs of their properties. 

\subsection{Filtrations}

The purpose of this first section is to provide a framework in which we can simultaneously discuss and compare filtered and graded versions of certain constructions. In particular, this will give us a language appropriate for dealing simultaneously with both graded and filtered versions of the universal enveloping algebra of a vertex operator algebra which we recall in \cref{enveloping algebra defs}.

\begin{defn}[Left and right filtrations]
\label{filtration def}
    Let $X$ be an Abelian group. A left filtration on $X$ is a sequence of subgroups $X_{\leq n} \subset X_{\leq n+1} \subset X$ for $n \in \mathbb Z$. Similarly, a right filtration on $X$ is a sequence of subgroups $X_{\geq n} \subset X_{\geq n-1} \subset X$ for $n \in \mathbb Z$.
\end{defn}

\begin{remark} \label{ambidextrous filtered}
If $X$ has a left filtration of subgroups $X_{\leq n}$ we may produce a right filtration by setting $X_{\geq n} = X_{\leq -n}$. Hence the concepts of left and right filtrations are essentially equivalent. We will work in this section exclusively with left filtrations, but will have use for both left and right filtrations eventually. The reader should therefore keep in mind that the results in this section all have their right counterparts. If $X$ is a graded Abelian group, we can naturally regard it as filtered by setting $X_{\leq n} = \bigoplus_{i \leq n} X_i$.
\end{remark}
\begin{notation} \label{filtered notation}
If $X$ is a filtered Abelian group and $S \subset X$ is a subset, we write $S_{\leq n}$ to mean $S\cap X_{\leq n}$.
\end{notation}

We will now introduce some concepts which we will use throughout.

\begin{defn}[Exhaustive filtration]
Let $X$ be a (left) filtered Abelian group. We say that the filtration on $X$ is exhaustive if $\bigcup_n X_{\leq n} = X$ and separated if $\bigcap_n X_{\leq n} = 0$. 	
\end{defn}

\begin{defn}[Splittings of filtrations] \label{split def} 
Given a (left) filtered Abelian group $X$, we define the associated graded group to be $\gr X = \bigoplus_n \left(X_{\leq n}/X_{\leq n - 1}\right)$. 
A splitting of $X$ is defined to be a graded subgroup $X' = \bigoplus_n X'_n \subset X$ with $X'_n \subset X_{\leq n}$ such that for each $n$, the induced map $X'_n \to (\gr X)_n$ is an isomorphism.
\end{defn}

\begin{defn}[Split-filtered Abelian groups] \label{split filter def}
A split-filtered Abelian group is a filtered Abelian group $(X, \leq)$ together with a graded Abelian group $X' = \bigoplus_n X'_n$, and an inclusion $X' \subset X$ which defines a splitting as in \cref{split def}. 
\end{defn}

\begin{notation} \label{split parts notation}
For $X$ a split-filtered Abelian group, and $x \in X_{\leq n}$, we write $x_n \in X_n'$ and $x_{< n} \in X_{\leq n-1}$ for the unique elements such that $x = x_n + x_{< n}$.  
\end{notation}

\begin{ex}[Concentrated split-filtrations] \label{concentrated at zero}
If $X$ is an Abelian group, with no extra structure, we may define a split filtered structure on it, $X[d]$, which we refer to as ``concentrated in degree $d$,'' by:
\[ 
X[d]_{\leq p} = 
\begin{cases}
    0 & \text{if } p < d, \\
    X & \text{if } p \geq d.
\end{cases} \hspace{1cm} \text{ and } \hspace{1cm} 
X[d]'_p = 
\begin{cases}
    0 & \text{if } p \neq d,\\
    X & \text{if } p = d.
\end{cases}
\]
If $X$ is an Abelian group, with no extra structure, we may define the trivial split-filtration on $X$ to be $X[0]$.
\end{ex}

\begin{ex}\label{graded as split-filtered}
If $X = \bigoplus_n X_n$ is a graded Abelian group, we may also consider it as a split Abelian group with respect to the filtration $X_{\leq n} = \bigoplus_{p \leq n} X_p$. In this case the inclusion of $X$ into itself provides the splitting.
\end{ex}

\begin{defn}[Split-filtered maps] \label{split maps def}
If $X$ and $Y$ are split-filtered Abelian groups and $d \in \mathbb Z$, we say that a group homomorphism $f \colon Y \to X$ is a map of degree $d$ if $f(Y_{\leq p}) \subset X_{\leq p + d}$ and $f(Y'_p) \subset X'_{p + d}$ for all $p$.
\end{defn}

\begin{defn}[Split-filtered subgroups]
If $X$ and $Y$ are split filtered Abelian groups with $Y \subset X$, we say $Y$ is a split-filtered subgroup of $X$ is the inclusion is a degree $0$ map of split-filtered Abelian groups.
\end{defn}

\begin{lem} \label{split kernels}
Let $f \colon X \to Y$ be a degree $d$ homomorphism of split-filtered Abelian groups. Then $\ker f$ is a split-filtered subgroup of $X$.
\end{lem}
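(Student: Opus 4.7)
The plan is to take the obvious induced structure: for the filtration, set $(\ker f)_{\leq n} := \ker f \cap X_{\leq n}$, and for the splitting, set $(\ker f)'_n := \ker f \cap X'_n$, so $(\ker f)' := \bigoplus_n (\ker f)'_n \subset \ker f$. I then need to check two things: $(\ker f)'$ is a graded subgroup of $\ker f$ with $(\ker f)'_n \subset (\ker f)_{\leq n}$ (immediate from $X'_n \subset X_{\leq n}$), and the induced map $(\ker f)'_n \to \gr(\ker f)_n$ is an isomorphism.

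The key preliminary observation, used twice, is that in any split-filtered Abelian group $Z$ one has $Z'_n \cap Z_{\leq n-1} = 0$: the composition $Z'_n \hookrightarrow Z_{\leq n} \twoheadrightarrow (\gr Z)_n$ is precisely the isomorphism in \cref{split def}, so its kernel, which equals $Z'_n \cap Z_{\leq n-1}$, is trivial. Applied to $X$ this yields injectivity of $(\ker f)'_n \to (\ker f)_{\leq n}/(\ker f)_{\leq n-1}$, since $(\ker f)'_n \cap (\ker f)_{\leq n-1} \subset X'_n \cap X_{\leq n-1} = 0$.

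For surjectivity, I take $x \in (\ker f)_{\leq n}$ and use the splitting of $X$ to write $x = x_n + x_{<n}$ with $x_n \in X'_n$ and $x_{<n} \in X_{\leq n-1}$ in the notation of \cref{split parts notation}. Because $f$ has degree $d$, we have $f(x_n) \in Y'_{n+d}$ and $f(x_{<n}) \in Y_{\leq n+d-1}$. From $0 = f(x) = f(x_n) + f(x_{<n})$ we obtain $f(x_n) = -f(x_{<n}) \in Y'_{n+d} \cap Y_{\leq n+d-1}$, which vanishes by the observation above applied to $Y$. Hence $f(x_n) = 0 = f(x_{<n})$, so $x_n \in (\ker f)'_n$ and $x_{<n} \in (\ker f)_{\leq n-1}$, and $x$ maps to the class of $x_n$ in $\gr(\ker f)_n$. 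This completes the verification.

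There is no serious obstacle: the result is a formal consequence of the definition of a splitting, once one identifies that the degree-$d$ compatibility of $f$ sends the splitting subgroup $X'_n$ into $Y'_{n+d}$ (and not merely into $Y_{\leq n+d}$), which is exactly what forces $f(x_n)$ and $f(x_{<n})$ to vanish separately. The right-filtered analogue (which, per \cref{ambidextrous filtered}, we will need as well) follows by the same argument with inequalities reversed.
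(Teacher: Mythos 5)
Your proof is correct and uses essentially the same idea as the paper's: the paper phrases it compactly by saying $f$ induces a map $X'_p \oplus X_{\leq p-1} \to Y'_{p+d} \oplus Y_{\leq p+d-1}$ preserving the decomposition, whereas you unpack this into the explicit check that $f(x_n)$ and $f(x_{<n})$ lie in complementary pieces and hence vanish separately. No substantive difference in the argument.
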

\begin{proof}
We verify that $(\ker f)_{\leq p} = (\ker f')_p + (\ker f)_{\leq p - 1}$, where $f' \colon X'_p \to Y'_{p + d}$ is the restriction of $f$. For this, we simply note that by definition, $f$ induces a map $X'_p \oplus X_{\leq p - 1} \to Y'_{p + d} \oplus Y_{\leq p + d - 1}$ which preserves the decomposition.
\end{proof}

The following lemma is straightforward to verify.
\begin{lem} \label{split images}
Suppose $f \colon Y \to X$ is a degree $d$ map of split-filtered Abelian groups. Then restricting the filtration on $X$ to the image of $f$, we find $(\im f)_{\leq p} = \im\left(f|_{Y_{\leq p - d}}\right)$. Further, $\im f' \subset \im f$ defines a splitting, giving $\im f$ the structure of a split-filtered Abelian group.
\end{lem}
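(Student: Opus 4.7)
The plan is to verify the two assertions separately. First I would show $(\im f)_{\leq p} = \im(f|_{Y_{\leq p-d}})$. The inclusion from right to left is immediate from $f$ being a degree $d$ map, since $f(Y_{\leq p-d}) \subset X_{\leq p}$. For the reverse inclusion, given $x = f(y) \in (\im f)_{\leq p}$ with $y \in Y_{\leq n}$ for some $n$ (invoking the implicit exhaustiveness of the filtration on $Y$), I would argue by descending induction on $n$. If $n \leq p - d$ there is nothing to do; otherwise decompose $y = y_n + y_{<n}$ via the splitting with $y_n \in Y'_n$ and $y_{<n} \in Y_{\leq n-1}$. Then $f(y_n) \in X'_{n+d}$, while rearranging $f(y_n) = f(y) - f(y_{<n})$ shows $f(y_n) \in X_{\leq n+d-1}$, using that $f(y) \in X_{\leq p} \subset X_{\leq n+d-1}$ (here $n > p-d$ implies $n + d - 1 \geq p$). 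Since $X'_{n+d}$ injects into $\gr(X)_{n+d}$, its intersection with $X_{\leq n+d-1}$ is trivial, so $f(y_n) = 0$ and $f(y) = f(y_{<n})$ with $y_{<n} \in Y_{\leq n-1}$. Finitely many iterations deposit the preimage into $Y_{\leq p-d}$.

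Next, for the splitting assertion, I would use the first part to identify $(\im f)_{\leq n} = f(Y_{\leq n-d})$ and set $(\im f')_n := f(Y'_{n-d}) \subset X'_n$. To confirm $\im f'$ splits the filtration on $\im f$, it suffices to check that the induced map $(\im f')_n \to (\im f)_{\leq n}/(\im f)_{\leq n - 1}$ is an isomorphism. Surjectivity reduces to decomposing $y \in Y_{\leq n-d}$ as $y_{n-d} + y_{<n-d}$, whence $f(y) \in f(Y'_{n-d}) + f(Y_{\leq n-d-1}) = (\im f')_n + (\im f)_{\leq n-1}$. Injectivity follows because $(\im f')_n \cap (\im f)_{\leq n-1}$ sits inside $X'_n \cap X_{\leq n-1} = 0$, the latter vanishing for the same reason as in the first part.

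The only delicate step is the descending induction of the first assertion, and this is the crux of why the lemma holds: it requires the filtered compatibility $f(Y_{\leq p}) \subset X_{\leq p+d}$ and the graded compatibility $f(Y'_p) \subset X'_{p+d}$ to work in concert, together with the defining splitting property $X'_{n+d} \cap X_{\leq n+d-1} = 0$. Once this is in hand the second assertion is essentially formal, as it inherits directness from the ambient splitting of $X$.
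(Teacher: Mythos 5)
Your proof is correct, and since the paper states this lemma without proof (``straightforward to verify''), the argument you give is the one the authors have in mind. The descending induction in part one is the genuine content: you kill the top graded piece of a preimage by playing the graded compatibility $f(Y'_n) \subset X'_{n+d}$ against the filtered compatibility $f(Y_{\leq n-1}) \subset X_{\leq n+d-1}$ and the defining splitting property $X'_{n+d} \cap X_{\leq n+d-1} = 0$. Once $(\im f)_{\leq p} = f(Y_{\leq p - d})$ is in hand, your verification that $(\im f')_n = f(Y'_{n-d})$ surjects onto $(\im f)_{\leq n}/(\im f)_{\leq n-1}$ with zero kernel is exactly the routine check needed. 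You are also right to flag exhaustiveness of the filtration on $Y$ as an implicit hypothesis: it is genuinely necessary (without it an element of $\im f \cap X_{\leq p}$ need not lie in $f(Y_{\leq p-d})$), and though the paper's \cref{split filter def} does not state it, all filtrations appearing in the paper's constructions are exhaustive, so the usage is consistent.
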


\begin{lem} \label{cokernel splitting}
Suppose $f \colon Y \to X$ is a degree $d$ map of split-filtered Abelian groups. Then $\coker(f') \subset \coker(f)$ defines a splitting, giving $\coker(f)$ the structure of a split-filtered Abelian group.
\end{lem}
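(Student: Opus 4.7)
The plan is to equip $\coker(f)$ with the quotient filtration from $X$ and to show that the natural map induced by $X' \hookrightarrow X$ realizes $\coker(f')$ as a splitting. Let $\pi \colon X \to \coker(f)$ be the projection and set $\coker(f)_{\leq p} := \pi(X_{\leq p})$; this is plainly a left filtration. Because $f$ is split-filtered of degree $d$, its restriction $f|_{Y'} \colon Y' \to X'$ is a homogeneous map of graded groups, so $\coker(f')$ is naturally graded with $(\coker f')_p = X'_p / f(Y'_{p-d})$. The inclusion $X' \hookrightarrow X$ sends $f(Y')$ into $f(Y)$, descending to a map $\iota \colon \coker(f') \to \coker(f)$ that carries $(\coker f')_p$ into $\coker(f)_{\leq p}$. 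What remains is to show that $\iota$ induces an isomorphism $(\coker f')_p \xrightarrow{\sim} \coker(f)_{\leq p}/\coker(f)_{\leq p-1}$ for every $p$.

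Using the internal decomposition $X_{\leq p} = X'_p \oplus X_{\leq p-1}$ coming from the splitting of $X$, the quotient $\coker(f)_{\leq p}/\coker(f)_{\leq p-1}$ is identified with $X'_p$ modulo the subgroup $K_p := \{x \in X'_p : x - f(y) \in X_{\leq p-1} \text{ for some } y \in Y\}$. Surjectivity on associated graded pieces is immediate: any representative $\xi \in X_{\leq p}$ decomposes uniquely as $x'_p + x_{<p}$ and reduces to $x'_p$ modulo $X_{\leq p-1}$. Thus the proof reduces to the identity $K_p = f(Y'_{p-d})$; the inclusion $\supseteq$ is clear by taking $y = y'_{p-d}$, so only $K_p \subseteq f(Y'_{p-d})$ requires work.

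I plan to prove this containment by induction on the smallest integer $N$ with $y \in Y_{\leq N}$, implicitly assuming the filtration on $Y$ is exhaustive (as is standard in the paper's usage). If $N \leq p-d-1$, then $f(y) \in X_{\leq p-1}$ forces $x \in X'_p \cap X_{\leq p-1} = 0 = f(0)$. If $N = p-d$, decomposing $y = y'_{p-d} + y_{<p-d}$ gives $f(y_{<p-d}) \in X_{\leq p-1}$, and the relation $x - f(y) \in X_{\leq p-1}$ collapses to $x - f(y'_{p-d}) \in X'_p \cap X_{\leq p-1} = 0$, yielding $x = f(y'_{p-d})$.

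The main obstacle is the inductive step $N > p-d$. Writing $y = y'_N + y_{<N}$, one computes
\[
f(y'_N) \;=\; f(y) - f(y_{<N}) \;\in\; X_{\leq p} + X_{\leq p-1} + X_{\leq N-1+d}.
\]
Since $N > p-d$ forces $N-1+d \geq p$, the right-hand side lies in $X_{\leq N-1+d}$. Combined with $f(y'_N) \in X'_{N+d}$, the splitting identity $X'_{N+d} \cap X_{\leq N-1+d} = 0$ forces $f(y'_N) = 0$. Hence $y$ may be replaced by $y_{<N} \in Y_{\leq N-1}$ without altering $f(y)$, so the inductive hypothesis applies and we conclude $x \in f(Y'_{p-d})$. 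This cancellation of the top-degree contribution, made possible by the compatibility of $f$ with the splittings of both $X$ and $Y$, is the heart of the argument and the step deserving most care.
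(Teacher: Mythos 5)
Your proof is correct, but it takes a genuinely different route from the paper's. The paper first invokes its Lemma on split images to replace $f$ by the inclusion $\im f \hookrightarrow X$ (with $\im f$ carrying the subspace split-filtration), and then applies the snake lemma to the diagram of split short exact sequences $0 \to (\im f)'_p \to (\im f)_{\leq p} \to (\im f)_{\leq p-1} \to 0$ over $0 \to X'_p \to X_{\leq p} \to X_{\leq p-1} \to 0$. You instead carry out a direct element-chase, reducing the splitting claim to the identity $K_p = f(Y'_{p-d})$ and establishing the nontrivial containment by peeling off top-degree pieces inductively. In effect, your inductive step (using $X'_{N+d} \cap X_{\leq N-1+d} = 0$ to conclude $f(y'_N) = 0$) is the hidden content of the paper's Lemma on split images; you are re-deriving it inside the cokernel argument rather than citing it. The two approaches rely on exactly the same hidden hypothesis: both need the filtration on $Y$ to be exhaustive, so that every $y$ lands in some $Y_{\leq N}$ and the reduction can terminate. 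You flag this explicitly; the paper leaves it implicit in the unproved ``straightforward'' Lemma on split images (which, as stated, in fact fails without exhaustiveness). One minor point: framing the induction on the ``smallest'' such $N$ tacitly assumes the filtration on $Y$ is also separated; this is harmless to fix --- just induct on \emph{any} $N$ with $y \in Y_{\leq N}$ and show $N$ can be decreased until $N \leq p - d$, without caring about minimality. Finally, you should perhaps say a word about why the graded-piece isomorphisms imply $\iota$ is injective (so that $\coker f'$ really is a subgroup of $\coker f$, as the definition of a splitting requires); the argument is short — the top nonzero homogeneous component of any element in $\ker\iota$ must vanish by the graded-piece isomorphism — but it is part of what needs checking.
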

\begin{proof}
Via \cref{split images}, we know that 
$\im(f') \subset \im(f)$ defines a split-filtered structure on $\im(f)$. As $\coker(f) = \coker\left(\im(f) \to X\right)$, it therefore suffices to consider the case where $f$ is injective.
We have a diagram of (split) short exact sequences:
\[\xymatrix@R=.4cm @C=1.3cm{
0 \ar[r] & Y_p' \ar[d]  \ar[r] & Y_{\leq p} \ar[r] \ar[d] & Y_{\leq p - 1} \ar[d] \ar[r] & 0 \\
0 \ar[r] & X_p' \ar[r]  & X_{\leq p} \ar[r] & X_{\leq p - 1} \ar[r]& 0,
}\]
where the vertical maps are injections. By the snake lemma, this gives a split short exact sequence of cokernels $(X/Y)_{\leq p} = (X'/Y')_{p} \oplus (X/Y)_{\leq p}$. In particular, the inclusion $X_p \to X_{\leq p}$ induces an inclusion $(X'/Y')_p \subset (X/Y)_{\leq p}$ giving our desired splitting.
\end{proof}

\begin{prop} \label{split colimits}
The category of split-filtered Abelian groups is an Abelian category which is cocomplete, i.e. closed under colimits.
\end{prop}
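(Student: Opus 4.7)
The plan is to verify directly the axioms of an Abelian category, using the kernel and cokernel constructions of \cref{split kernels} and \cref{cokernel splitting}, and then to exhibit arbitrary coproducts to obtain cocompleteness. The zero object is the trivial group with its unique split-filtered structure, and binary biproducts are the direct sums $X \oplus Y$ equipped with $(X \oplus Y)_{\leq n} = X_{\leq n} \oplus Y_{\leq n}$ and $(X \oplus Y)'_n = X'_n \oplus Y'_n$; these serve simultaneously as products and coproducts by inspection. Arbitrary coproducts are handled analogously by componentwise direct sums indexed over any set, and combined with coequalizers (which are built from cokernels and biproducts in any preadditive category with cokernels) they will yield all small colimits.

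The main obstacle will be showing that every monomorphism equals the kernel of its cokernel (and dually for epimorphisms). This reduces to a \emph{strictness} property: if $f \colon A \to B$ is an injective degree-$0$ map of split-filtered abelian groups, then $f(A_{\leq p}) = f(A) \cap B_{\leq p}$ and $f(A'_p) = f(A) \cap B'_p$ for every $p$. To establish the first equality, I would take $a \in A$ with $f(a) \in B_{\leq p}$, let $q$ be minimal with $a \in A_{\leq q}$, and decompose $a = a_q + a_{<q}$ as in \cref{split parts notation}. Then $f(a_q) \in B'_q$ while simultaneously $f(a_q) = f(a) - f(a_{<q}) \in B_{\leq q-1}$, forcing $f(a_q) \in B'_q \cap B_{\leq q-1} = 0$ by the defining property of a splitting (the map $B'_q \to B_{\leq q}/B_{\leq q-1}$ is an isomorphism). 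Injectivity of $f$ then forces $a_q = 0$, contradicting the minimality of $q$ unless $q \leq p$. The analogous decomposition of an element of $B'_p \cap f(A)$ gives the second equality. A parallel argument, with injectivity replaced by a recursive subtraction of lifts drawn from $\ker f$, shows that a surjective degree-$0$ map $g \colon A \to B$ satisfies $g(A_{\leq p}) = B_{\leq p}$ and $g(A'_p) = B'_p$.

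Given these strictness statements, the monomorphisms and epimorphisms in the category are precisely the injective and surjective degree-$0$ maps, since a nontrivial split-filtered kernel or cokernel would obstruct the relevant universal property. For an injective $f \colon A \to B$, strictness identifies $f$ as an isomorphism onto its image as split-filtered groups, and \cref{split kernels} then identifies the kernel of $B \twoheadrightarrow B/\im f$ with $\im f \cong A$, verifying that mono equals kernel of cokernel; the surjective case follows symmetrically, identifying $A/\ker f$ with $B$ as split-filtered groups via \cref{cokernel splitting}. With these identifications established the category is Abelian, and combined with the arbitrary coproducts above it is cocomplete.
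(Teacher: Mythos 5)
Your proof is correct and follows essentially the same route as the paper, which cites \cref{split kernels}, \cref{split images}, and \cref{cokernel splitting} for the abelian axioms and then invokes \cite[Prop.~2.6.8]{Weibel:HA} together with closure under direct sums for cocompleteness. The main difference is that you spell out the strictness argument (that injective/surjective degree-$0$ maps preserve and reflect the filtration and grading) which the paper delegates to \cref{split images}, there asserted as ``straightforward to verify'' without proof; note that both your argument and that lemma tacitly use that the filtration is exhaustive when choosing a minimal $q$ with $a \in A_{\leq q}$.
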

\begin{proof}
The fact that we have an Abelian category is a consequence of \cref{cokernel splitting}, \cref{split images}, \cref{split kernels}. By \cite[Prop.~2.6.8]{Weibel:HA} cocompleteness follows from being closed under direct sums, which can be checked by noticing that 
$\bigoplus_{\lambda \in \Lambda} X^\lambda$ is 
split-filtered with respect to the graded subgroup $\bigoplus_{\lambda \in \Lambda} (X^\lambda)'$.
\end{proof}

\subsection{Modules and tensors} 

\begin{defn}
Let $R$ be a ring and $M$ a left (or right) $R$-module. We say that $M$ is a split-filtered $R$ module if it is a split filtered Abelian group such that $M_{\leq n}$, $M'$, and $M'_n$ are $R$-submodules of $M$ for all $n$.
\end{defn}

\begin{lem} \label{unflat injectivity}
    Let $R$ be a ring. Suppose $M$ is a split filtered right $R$-module and $N$ a left $R$-module. Then the natural maps $M_{\leq p} \otimes_R N \to M \otimes_R N$ and $M_{p}' \otimes_R N \to M \otimes_R N$ are injective.
\end{lem}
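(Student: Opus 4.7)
My plan is to exploit the $R$-module direct sum decomposition afforded by the splitting data. The key observation is that for each $p$, the defining property of a splitting --- namely that $M'_p \to M_{\leq p}/M_{\leq p-1}$ is an isomorphism of $R$-modules --- is equivalent to the $R$-module decomposition
\[ M_{\leq p} = M_{\leq p-1} \oplus M'_p, \]
since surjectivity of the induced map gives $M'_p + M_{\leq p-1} = M_{\leq p}$ while injectivity gives $M'_p \cap M_{\leq p-1} = 0$. Iterating this for each $q < p$ immediately yields
\[ M_{\leq p} = M_{\leq q} \oplus \bigoplus_{q < n \leq p} M'_n \]
as $R$-modules.

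Assuming the filtration is exhaustive (which is implicit in the applications of interest, notably the filtrations on the universal enveloping algebras in \cref{sec:UnivEnv}), this globalizes to $M = M_{\leq p} \oplus \bigoplus_{n > p} M'_n$ as $R$-modules. In particular $M_{\leq p}$ is an $R$-module direct summand of $M$, and since tensor product preserves split monomorphisms, the induced map $M_{\leq p} \otimes_R N \to M \otimes_R N$ is split injective. The same iterated splitting exhibits $M'_p$ as a direct summand of $M_{\leq p}$ (take $q = p-1$), hence as a direct summand of $M$, so the analogous conclusion for $M'_p \otimes_R N \to M \otimes_R N$ follows by the same reasoning. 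Alternatively, one could observe that $M'_p \otimes_R N \to M_{\leq p} \otimes_R N$ is split injective from the local decomposition and then compose with the first map.

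The only real obstacle is the exhaustivity hypothesis, which is needed to promote the local decompositions of $M_{\leq p}$ to a global decomposition of $M$. Should one wish to dispense with it, one would instead pass to $M_\infty := \bigcup_n M_{\leq n}$ and invoke that $M_\infty \otimes_R N = \varinjlim_n (M_{\leq n} \otimes_R N)$, using that tensor commutes with filtered colimits and that filtered colimits of injections remain injective; this gives $M_{\leq p} \otimes_R N \hookrightarrow M_\infty \otimes_R N$. Since every split-filtered module arising in the body of the paper is exhaustive, I would simply note this and carry out the direct-summand argument above, which reduces the lemma to the trivial fact that tensoring over $R$ preserves split monomorphisms.
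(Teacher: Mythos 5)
Your proof is correct and rests on the same core mechanism as the paper's: the splitting gives an $R$-module decomposition $M_{\leq i+1} = M_{\leq i} \oplus M'_{i+1}$, and tensoring preserves split monomorphisms. The only real difference is packaging. The paper iterates one step at a time, showing $M_{\leq i} \otimes_R N \to M_{\leq i+1} \otimes_R N$ is split injective and then passing to the directed limit $M \otimes_R N = \varinjlim_i \left(M_{\leq i} \otimes_R N\right)$; you globalize first to $M = M_{\leq p} \oplus \bigoplus_{n > p} M'_n$ and tensor once. Both deliver the slightly stronger conclusion that the maps are actually split injective.

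Your remark about exhaustivity deserves emphasis, because it is a genuine omission in the statement. The paper's definition of a split-filtered module does not require exhaustivity, yet the paper's own proof silently uses it in asserting $M \otimes_R N = \varinjlim_i \left(M_{\leq i} \otimes_R N\right)$, which needs $M = \bigcup_i M_{\leq i}$. Without it, the lemma is false: take $R = \ZZ$, $N = \ZZ/4\ZZ$, $M = \ZZ$ with $M_{\leq n} = 2\ZZ$ for $n \geq 0$, $M_{\leq n} = 0$ for $n < 0$, and $M'_0 = 2\ZZ$, $M'_n = 0$ otherwise; this is split-filtered but not exhaustive, and $M_{\leq 0} \otimes_\ZZ N \to M \otimes_\ZZ N$ is multiplication by $2$ on $\ZZ/4\ZZ$, which is not injective. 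Since every filtration appearing in the body of the paper is exhaustive this is harmless in context, but you were right to surface the hypothesis rather than bury it in the colimit step, and your fallback via filtered colimits of injections (landing only in $M_\infty \otimes_R N$) correctly identifies exactly where the argument stalls when exhaustivity fails.
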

\begin{proof}
    As $M \otimes_R N$ is a directed limit of $M_{\leq i} \otimes_R N$ taken over all $i$, it follows that an element $x \in M_{\leq p} \otimes_R N$ maps to $0$ in $M \otimes_R N$ if and only if it maps to $0$ in $M_{\leq i} \otimes_R N$ for some $i > p$. Consequently, by induction, it suffices to show that the map $M_{\leq i} \otimes_R N \to M_{\leq i + 1} \otimes_R N$ is injective for all $i$. But note that (for any $i$, not just $i > p$):
    \[M_{\leq i + 1} \otimes_R N = (M_{\leq i} \oplus M_{i + 1}') \otimes_R N \cong (M_{\leq i} \otimes_R N) \oplus (M_{i + 1}' \otimes_R N),\]
    from which we see that $M_{\leq i} \otimes_R N \to M_{\leq i + 1} \otimes_R N$ is in fact split injective and therefore $M_{\leq p} \otimes_R N \to M \otimes_R N$ is injective.

    But by this same reasoning, we see  (taking $i + 1 = p$) that $M_p' \otimes_R N \to M_{\leq p} \otimes_R N$ is split injective. Since $M_{\leq p} \otimes_R N \to M \otimes_R N$ is injective by the previous paragraph, we find that $M_p' \otimes_R N \to M \otimes_R N$ is injective as well.
\end{proof}

\begin{deflem} \label{split tensor over unfiltered}
Let $R$ be a ring, $M$ a split-filtered right $R$-module and $N$ a split-filtered left $R$-module. Then $M \otimes_R N$ is naturally a split-filtered $R$-module by defining $(M \otimes_R N)_{\leq n}$ to be $\sum\limits_{p + q = n} M_{\leq p} \otimes_R N_{\leq q}$ and $(M \otimes_R N)'_n$ to be $\bigoplus\limits_{p + q = n} M_p' \otimes_R N_q'$.
\end{deflem}
Note that it follows from \cref{unflat injectivity} that these are in fact submodules of $M \otimes_R N$.
\begin{proof}
We have:
\begin{multline*}
M_{\leq p} \otimes_R N_{\leq q} = 
(M'_p \oplus  M_{\leq p - 1}) \otimes_R (N'_{q} \oplus N_{\leq q-1 }) \\= 
(M'_{p} \otimes_R N'_{q}) \oplus (M_{\leq p - 1} \otimes_R N'_{q}) \oplus (M'_{p} \otimes_R N_{\leq q - 1}) \oplus (M_{\leq p - 1} \otimes_R N_{\leq q - 1}) \\
\subseteq (M' \otimes_R N')_n \oplus (M \otimes_R N)_{\leq n - 1}.
\end{multline*}
This shows $(M \otimes_R N)_{\leq n }\subseteq (M' \otimes_R N')_n \oplus (M \otimes_R N)_{\leq n - 1}$. The other inclusion is straightforward.
\end{proof}

\subsection{Rings and ideals}

\begin{defn} \label{def:filtered ring}
If $U$ is a filtered Abelian group with a (not necessarily associative, not necessarily unital) ring structure, we say that it is a filtered ring if $U_{\leq p}U_{\leq q} \subset U_{\leq p + q}$. If $U$ is a filtered ring and we are given $U'$ a graded subring providing a splitting, we say $U$ is a split-filtered ring.
\end{defn}

\begin{defn} \label{split filtered module}
    Let $U$ be a split-filtered ring and $M$ a left $U$-module, split-filtered as an Abelian group. 
    We say that $M$ is a split-filtered $U$-module if we have $U_{\leq p} M_{\leq q} \subset M_{\leq p + q}$ and $U_{p}'M_q' \subset M_{p + q}'$.
    Equivalently, $M$ is a split filtered $U$-module if the multiplication map $U \otimes_{\ZZ} M \to M$    is a split filtered map (where the split-filtration on $U \otimes_{\ZZ} M$ is described in \cref{split tensor over unfiltered}).
\end{defn}

\begin{deflem} \label{split tensor}
Let $U$ be a split-filtered ring. $M$ a split-filtered right $U$-module and $N$ a split-filtered left $U$-module. 
Then $M \otimes_U N$ is naturally a split-filtered $U$-module by defining $(M \otimes_U N)_{\leq n}$ and $(M \otimes_U N)'_n$ to be the images in 
$M \otimes_U N$ of $\bigoplus\limits_{p + q = n} M_{\leq p} \otimes_\ZZ N_{\leq q}$ and 
$\bigoplus\limits_{p + q = n} M_p' \otimes_\ZZ N_q'$ respectively.
\end{deflem}
\begin{proof}
Consider the map
$f \colon M \otimes_{\mathbb Z} U \otimes_{\mathbb Z} N \to M \otimes_{\mathbb Z} N$
given by $f(x \otimes u \otimes y) = xu \otimes y - x \otimes uy$. By definition, $M \otimes_U N$ is defined as the cokernel of this map. Regarding the domain and codomain as split-filtered via \cref{split tensor over unfiltered}, we see that this is a degree $0$ map of split-filtered Abelian groups. So by \cref{cokernel splitting}, the cokernel is split filtered.
\end{proof}

\begin{lem} \label{split the products}
Let $U$ be a split-filtered ring and let $S, T \subset U$ be arbitrary split-filtered additive subgroups. Then $ST$ is split filtered with $(ST)_{\leq n} = \sum_{p + q = n} S_{\leq p} T_{\leq q}$ and $(ST)'_n = \sum_{p + q = n} S_p' T_q'$.
\end{lem}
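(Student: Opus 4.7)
The plan is to verify directly the axioms of \cref{split filter def}: that the prescribed subgroups $(ST)_{\leq n}$ form an increasing filtration, that $\bigoplus_n (ST)_n'$ is a graded subgroup, and that for each $n$ the composite $(ST)_n' \hookrightarrow (ST)_{\leq n} \twoheadrightarrow (ST)_{\leq n}/(ST)_{\leq n-1}$ is an isomorphism. The first two conditions are immediate from the definitions: if $p+q=n$ then $S_{\leq p}T_{\leq q}\subset S_{\leq p+1}T_{\leq q}\subset (ST)_{\leq n+1}$, and $S_p'T_q'\subset S_{\leq p}T_{\leq q}\subset (ST)_{\leq n}$, so all the required containments hold. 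Moreover $ST=\bigcup_n (ST)_{\leq n}$ can be checked on generators: any product $st$ with $s\in S_{\leq p}$ and $t\in T_{\leq q}$ lies in $(ST)_{\leq p+q}$.

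The main step is to prove surjectivity of $(ST)_n'\to (ST)_{\leq n}/(ST)_{\leq n-1}$. It suffices to do this on generators. Given $st$ with $s\in S_{\leq p}$ and $t\in T_{\leq q}$, $p+q=n$, use the splittings of $S$ and $T$ to decompose (as in \cref{split parts notation}) $s=s_p+s_{<p}$ with $s_p\in S_p'$ and $s_{<p}\in S_{\leq p-1}$, and $t=t_q+t_{<q}$ with $t_q\in T_q'$ and $t_{<q}\in T_{\leq q-1}$. Expanding the product,
\[
st \;=\; s_p t_q \;+\; \bigl(s_p t_{<q}+s_{<p} t_q + s_{<p} t_{<q}\bigr),
\]
the first summand lies in $S_p'T_q'\subset (ST)_n'$, and each of the remaining three summands lies in $S_{\leq p}T_{\leq q-1}+S_{\leq p-1}T_{\leq q}\subset (ST)_{\leq n-1}$. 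Hence $st$ is congruent modulo $(ST)_{\leq n-1}$ to an element of $(ST)_n'$, giving surjectivity.

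For injectivity, the key point is to transfer the question back to the ambient ring $U$. Since $S$ and $T$ are split-filtered subgroups of $U$, the inclusions are degree $0$ maps, so $S_p'\subset U_p'$ and $T_q'\subset U_q'$. Because $U$ is a split-filtered ring, by definition $U'$ is a graded subring of $U$, so $U_p'U_q'\subset U_{p+q}'$. Consequently $(ST)_n'=\sum_{p+q=n}S_p'T_q'\subset U_n'$, while $(ST)_{\leq n-1}\subset U_{\leq n-1}$. As $U$ is itself split-filtered, $U_n'\cap U_{\leq n-1}=0$, so any element of $(ST)_n'\cap (ST)_{\leq n-1}$ vanishes. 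Combined with surjectivity, this yields the desired isomorphism and completes the verification.

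I do not foresee any substantive obstacle; the argument is a matter of unpacking the compatibility of the splittings of $S$, $T$, and $U$. The only subtle point is the injectivity step, where one must crucially invoke that the splitting of the ambient ring $U$ itself is compatible with the ring structure (i.e.\ that $U'$ is a \emph{graded subring}), since injectivity fails in general if one only has a splitting as an Abelian group.
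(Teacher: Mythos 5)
Your proof is correct, but it takes a genuinely different route from the paper's. The paper's argument is a one-liner built on the abstract machinery of the appendix: it endows $S \otimes_{\ZZ} T$ with the split-filtered structure from \cref{split tensor over unfiltered}, observes that the multiplication $S \otimes_{\ZZ} T \to U$ is a degree $0$ map of split-filtered Abelian groups (this is exactly where the split-filtered ring hypothesis on $U$ enters), and then invokes \cref{split images}, which says that the image of such a map inherits a split-filtered structure with the asserted filtered and graded pieces. Your proof instead unpacks the definition of a splitting directly: you check the filtration is increasing and exhaustive, establish surjectivity of $(ST)'_n \to (ST)_{\leq n}/(ST)_{\leq n-1}$ by expanding $st = s_p t_q + (\text{lower order})$, and get injectivity from $(ST)'_n \subset U'_n$, $(ST)_{\leq n-1}\subset U_{\leq n-1}$, and $U'_n \cap U_{\leq n-1}=0$. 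Both proofs rely on the same essential input (that $U'$ is a graded subring, so products of homogeneous parts stay homogeneous), but the paper factors this through general lemmas that are reused elsewhere, while yours is self-contained and more explicit. One small organizational note: you call the directness of $\bigoplus_n (ST)'_n$ ``immediate,'' but it really rests on the later observation $(ST)'_n\subset U'_n$ (so the sum is direct because it sits inside the direct sum $\bigoplus U'_n$); you might state that up front rather than only in the injectivity paragraph.
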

\begin{proof}
If we consider the tensor product $S \otimes_{\mathbb Z} T$, with its split-filtered structure of \cref{split tensor}, we see that the multiplication map $S \otimes_{\mathbb Z} T \to ST \subset U$ is a degree $0$ map of split-filtered groups. The result now follows from \cref{split images}.
\end{proof}

\begin{lem} \label{split ideal}
Let $U$ be a split-filtered associative, unital ring, and let $X \subset U$ be a split-filtered additive subgroup. Then the ideal generated by $X$ in $U$ is also split-filtered with homogeneous part the ideal of $U'$ generated by $X'$.
\end{lem}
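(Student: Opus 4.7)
The plan is to realize the ideal generated by $X$ as the image of a degree~$0$ map of split-filtered Abelian groups, and then to read off its split-filtered structure using \cref{split images}. Since $U$ is unital and associative, the two-sided ideal generated by $X$ is $I = UXU$, which is exactly the image of the triple multiplication map
\[
m \colon U \otimes_{\ZZ} X \otimes_{\ZZ} U \longrightarrow U, \qquad u \otimes x \otimes v \longmapsto uxv.
\]

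First I would equip $U \otimes_{\ZZ} X \otimes_{\ZZ} U$ with its natural split-filtered structure by applying \cref{split tensor over unfiltered} twice (tensor products of split-filtered Abelian groups over $\ZZ$ are split-filtered). Unpacking the definitions gives
\[
(U \otimes_{\ZZ} X \otimes_{\ZZ} U)_{\leq n} = \sum_{p+q+r=n} U_{\leq p} \otimes X_{\leq q} \otimes U_{\leq r}, \quad (U \otimes_{\ZZ} X \otimes_{\ZZ} U)'_n = \bigoplus_{p+q+r=n} U'_p \otimes X'_q \otimes U'_r.
\]
Because $U$ is a split-filtered ring, the multiplication map $m$ sends $U_{\leq p} \otimes X_{\leq q} \otimes U_{\leq r}$ into $U_{\leq p+q+r}$, and sends $U'_p \otimes X'_q \otimes U'_r$ into $U'_{p+q+r}$. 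Hence $m$ is a degree~$0$ map of split-filtered Abelian groups.

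Now \cref{split images} immediately gives that $I = \im(m)$ is a split-filtered subgroup of $U$, with filtration $I_{\leq n} = \sum_{p+q+r=n} U_{\leq p}\, X_{\leq q}\, U_{\leq r}$ and graded splitting
\[
I'_n \;=\; \sum_{p+q+r=n} U'_p\, X'_q\, U'_r.
\]
The right-hand side is by definition the degree~$n$ part of the two-sided ideal $U' X' U'$ generated by $X'$ inside the graded ring $U'$. This verifies both assertions: $I$ is split-filtered, and its homogeneous part $I' = \bigoplus_n I'_n$ is precisely the ideal of $U'$ generated by $X'$. There is no genuine obstacle here; the only minor point requiring care is to use the unitality of $U$ to identify the ideal generated by $X$ with $UXU$ (so that it is described as the image of a single map and not as an infinite sum of images), after which the split-filtered formalism developed in the preceding lemmas applies directly.
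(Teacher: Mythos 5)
Your proof is correct and follows essentially the same approach as the paper: realize the ideal $UXU$ as the image of the degree-$0$ multiplication map $U \otimes_{\ZZ} X \otimes_{\ZZ} U \to U$, equip the source with the split-filtered structure from \cref{split tensor over unfiltered}, and conclude via \cref{split images}. The paper's proof is more terse but uses exactly the same ingredients.
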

\begin{proof}
It follows from \cref{split tensor over unfiltered} that $U \otimes_{\mathbb Z} X \otimes_{\mathbb Z} U$ is split filtered with homogeneous part $U' \otimes_{\mathbb Z} X' \otimes_{\mathbb Z} U'$. As the multiplication map $U \otimes_{\mathbb Z} X \otimes_{\mathbb Z} U \to U$ is a map of degree $0$, it follows that its image, the ideal generated by $X$ is split-filtered.
\end{proof}

\begin{lem} \label{universal split filtration}
Suppose $L$ is a split-filtered Lie algebra over a commutative (associative and unital) ring $R$. Then the universal enveloping algebra $U(L)$ is a split-filtered algebra with respect to the graded subalgebra $U(L') \subset U(L)$.
\end{lem}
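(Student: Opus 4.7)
My plan is to build $U(L)$ as a quotient of the tensor algebra $T(L)$, realize every ingredient as the image or cokernel of a degree-zero map of split-filtered Abelian groups, and then string together the structural results of \cref{split images}, \cref{cokernel splitting}, \cref{split tensor} and \cref{split ideal} to transport the split-filtered structure all the way down to $U(L)$.

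First I would check that the tensor algebra $T(L) = \bigoplus_{n\geq 0} L^{\otimes n}$ inherits a split-filtered structure with homogeneous part $T(L') = \bigoplus_{n\geq 0} (L')^{\otimes n}$. Since $L$ is itself split-filtered, an iterated application of \cref{split tensor over unfiltered} (tensoring over $R$, which is concentrated in degree zero via \cref{concentrated at zero}) endows each $L^{\otimes n}$ with a split filtration whose homogeneous part is $(L')^{\otimes n}$, and then \cref{split colimits} lets us take the countable direct sum, producing the desired split-filtered structure on $T(L)$. Because each factor-wise multiplication $L^{\otimes p}\otimes_{R} L^{\otimes q}\to L^{\otimes (p+q)}$ is degree zero, $T(L)$ is in fact a split-filtered ring in the sense of \cref{def:filtered ring}.

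Next I would produce the Lie relations as a split-filtered additive subgroup of $T(L)$. Consider the $R$-linear map
\[
\varphi \colon L\otimes_R L \longrightarrow T(L), \qquad x\otimes y \longmapsto x\otimes y - y\otimes x - [x,y].
\]
The domain is split-filtered by \cref{split tensor}, the first two terms land in $L^{\otimes 2}\subset T(L)$ in a manifestly split-filtered way, and the bracket term lands in $L\subset T(L)$; using the hypothesis $[L_{\leq p},L_{\leq q}]\subset L_{\leq p+q}$ and $[L'_p,L'_q]\subset L'_{p+q}$, one checks $\varphi$ is a degree zero map of split-filtered $R$-modules. By \cref{split images}, $\operatorname{Im}\varphi$ is split-filtered with homogeneous part $\operatorname{Im}(\varphi|_{L'\otimes L'})$, namely the span of the Lie relations for elements of $L'$. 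Now invoke \cref{split ideal}: the two-sided ideal $I\triangleleft T(L)$ generated by $\operatorname{Im}\varphi$ is split-filtered, and its homogeneous part $I'\triangleleft T(L')$ is precisely the ideal of $T(L')$ generated by the Lie relations for $L'$.

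Finally, the inclusion $I\hookrightarrow T(L)$ is a degree zero map of split-filtered groups, so by \cref{cokernel splitting} the quotient
\[
U(L) \;=\; T(L)/I
\]
inherits a split-filtered structure whose homogeneous part is $T(L)'/I' = T(L')/I'$. But by construction $I'$ is the defining ideal of the universal enveloping algebra of the graded Lie algebra $L'$, so $T(L')/I' = U(L')$ sits inside $U(L)$ as the promised graded subalgebra splitting the filtration. The one place that deserves a moment of care, and that I expect to be the only real point to verify rather than to simply cite, is the identification of $I'$ with the ideal generated by Lie relations on $L'$ alone; this is exactly what \cref{split images} applied to $\varphi$ gives, together with the fact that \cref{split ideal} preserves the description of the homogeneous generators.
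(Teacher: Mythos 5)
Your proof is correct and follows the same route as the paper's: build $T(L)$ as a split-filtered ring via \cref{split tensor over unfiltered} and \cref{split colimits}, present the Lie relations as the image of a degree-zero map so \cref{split images} applies, pass to the generated ideal via \cref{split ideal}, and finally invoke \cref{cokernel splitting} for the quotient. The only cosmetic difference is that the paper takes the domain of the relation map to be $L\otimes_{\ZZ}L$ rather than $L\otimes_R L$; both choices have the same image in $T(L)$, so this does not affect the argument, and your citation of \cref{split ideal} for the ideal step is in fact the correct reference.
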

\begin{proof}
It follows from \cref{split tensor} and \cref{split colimits} that the tensor algebra $T(L)$ is split-filtered with respect to $T(L')$. Let $X \subset T(L)$ be the image of the map $L \otimes_{\mathbb Z} L \to T(L)$ defined by $x \otimes y \mapsto x \otimes y - yx - [x, y]$ (note the tensor in the preimage is over $\mathbb Z$ and in the image is over $R$). As this is a map of degree $0$, its image $X$ is split-filtered with homogeneous part spanned by the analogous expressions with homogeneous elements. 
By \cref{split images}, it follows that the ideal generated by $X$ is also split-filtered. Finally \cref{cokernel splitting} tells us that the quotient by this ideal, the universal enveloping algebra, is also split filtered as described.
\end{proof}

\subsection{Seminorms}

The algebraic structures which naturally arise in studying the universal enveloping algebras of a VOA come with additional topological structure in the form of a seminorm. In this section we will examine seminorms and their interactions with gradings, filtrations and split-filtrations.

\begin{defn} \label{seminorm def}
A system of neighborhoods of $0$ in an Abelian group $X$ is a collection of subgroups $\N^nX \subset X$, $n \in \mathbb Z$, with $\N^nX \subset \N^{n-1}X$ and $\bigcup_n \N^nX = X$.	
\end{defn}

\begin{defn}
    A (non-Archimedean) seminorm on an Abelian group $X$ is a function $X \to \mathbb R_{\geq 0}$, $x \mapsto |x|$ such that $|0| = 0$ and $|x + y| \leq \max\{|x|, |y|\}.$
\end{defn}

\begin{defn}
    A psuedometric on a set $X$ is a function $d: X \times X \to \RR_{\geq 0}$ such that $d(x, x) = 0$, $d(x, y) = d(y, x)$ and $d(x, z) \leq d(x, y) + d(y, z)$ for all $x, y, z \in X$.
\end{defn}
\begin{remark} \label{seminorm remark}
The notion of a system of neighborhoods is equivalent to the notion of an Abelian group seminorm (we always assume these to be non-Archimedean), where we would set $|x| = e^{-n}$ if $x \in \N^n X \setminus \N^{n+1}X$ or $|x| = 0$ if $x \in \bigcap_n \N^nX$. Such a seminorm also gives rise to a pseudometric by setting $d(x, y) = |x - y|$. Finally, these give rise to a topology on $X$ whose basis is given by open balls with respect to this pseudometric. We see that addition is continuous with respect to this topology.
\end{remark}
With this remark in mind, we will refer to systems of neighborhoods of $0$ and seminorms interchangeably, and will often refer to an Abelian group with a system of neighborhoods of $0$ as a seminormed Abelian group.
\begin{remark} \label{filtered is seminormed}
It follows from the definition that a system of neighborhoods (and hence a seminorm) is precisely the same as an exhaustive right filtration. We may therefore consider the seminorm associate to either a right or left exhaustive filtration (in view of \cref{ambidextrous filtered}).
\end{remark}

\begin{defn}[Restriction of seminorms] \label{seminorm restriction def}
If $X$ is a seminormed Abelian group and $Y\subset X$ is a subgroup, we will consider $Y$ a seminormed Abelian group via the restriction of the seminorm. That is, we set $\N^nY = \N^nX \cap Y$.	
\end{defn}

\begin{defn}[Seminormed rings and modules]
Let $U$ be a ring which is seminormed as an Abelian group. We say that $U$ is a seminormed ring if $|xy| \leq |x||y|$, or, equivalently, $(\N^pU)(\N^q U) \subset \N^{p + q}U$. If $M$ is a left $U$-module which is seminormed as an Abelian group, we say that it is a seminormed left module if $|xm| \leq |x||m|$ for all $x \in U$, $m \in M$.
\end{defn}
\begin{remark} \label{filtered continuous mult}
It follows immediately from \cref{def:filtered ring} that a filtered ring (not necessarily associative or unital) becomes a seminormed ring with respect to the seminorm induced by the filtration as in \cref{filtered is seminormed}, and that multiplication map is continuous with respect to the induced topology.
\end{remark}
\begin{warning}
We will often consider seminorms on rings which are not ring seminorms, but just Abelian group seminorms.
\end{warning}

\begin{defn}[Split-filtered seminorms]
Let $X$ be an split-filtered Abelian group. We say that a seminorm is split-filtered if each of its neighborhoods $\N^nX$ are split-filtered subgroups of $X$. In this case we will simply refer to $X$ as a split-filtered seminormed Abelian group. 
\end{defn}

The following notion captures a property that we will often seek: that smaller filtered parts of a given filtered Abelian group lie in progressively smaller neighborhoods.

\begin{defn}[Tight seminorms]
Suppose $X$ is a filtered seminormed Abelian group. We say that the $X$ is tightly seminormed if
for all $m, p$ there exists $d$ such that $X_{\leq -d} \subset \N^m X_{\leq p}$.
\end{defn}

\begin{lem} \label{tight density}
Suppose $X$ is a split-filtered seminormed Abelian group whose seminorm is tight. Then $X'$ is dense in $X$.
\end{lem}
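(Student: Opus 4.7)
The plan is to approximate a given $x \in X$ by a finite sum of its homogeneous components from $X'$, and to use tightness to control the tail of this sum in the seminorm. Concretely, the goal is: for every $x \in X$ and every $m \in \mathbb Z$, to produce $y \in X'$ with $x - y \in \N^m X$.

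First, using exhaustiveness of the filtration on $X$, I would choose $p$ with $x \in X_{\leq p}$. Then I would iteratively apply the decomposition of \cref{split parts notation}: write $x = x_p + x_{<p}$ with $x_p \in X'_p$ and $x_{<p} \in X_{\leq p-1}$; decompose $x_{<p} = x_{p-1} + x_{<p-1}$; and so on. For any $k \geq 0$ this yields
\[
x \;=\; x_p + x_{p-1} + \cdots + x_{p-k} + r_k,
\qquad x_j \in X'_j,\ r_k \in X_{\leq p-k-1}.
\]

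Second, given $m$, the tightness hypothesis (with any choice of second parameter, e.g.\ $p$) produces $d$ with $X_{\leq -d} \subset \N^m X_{\leq p} \subset \N^m X$. Choosing $k$ large enough that $p-k-1 \leq -d$ gives $r_k \in X_{\leq p-k-1} \subset X_{\leq -d} \subset \N^m X$. Setting $y := x_p + x_{p-1} + \cdots + x_{p-k}$, this is a finite sum of homogeneous elements in $X'$, hence lies in $X' = \bigoplus_n X'_n$, and $x - y = r_k \in \N^m X$. As $m$ is arbitrary, every element of $X$ lies in the closure of $X'$.

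There is no serious obstacle here; this is a direct unpacking of the definitions. The only subtle points are: (i) recognizing that tightness applied with the given $p$ forces the tail $r_k$ into $\N^m X$ once its filtered degree is sufficiently negative, and (ii) confirming that after finitely many splitting steps the partial sum $y$ genuinely lies in $X'$ (which it does, since $X'$ is a direct sum of the $X'_n$ and closed under finite sums). Exhaustiveness of the filtration is needed to get started, and is the standard convention in the contexts where this lemma will be applied.
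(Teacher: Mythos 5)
Your proof is correct and follows essentially the same approach as the paper: pick $p$ with $x \in X_{\leq p}$, iterate the splitting $X_{\leq j} = X'_j \oplus X_{\leq j-1}$ to peel off homogeneous components, and use tightness to push the remaining tail into $\N^m X$. The only cosmetic difference is that the paper carries an extra parameter $n$ (starting from $x \in \N^n X_{\leq p}$ and decomposing inside $\N^n X$, using that the seminorm itself is split-filtered), whereas you decompose directly inside $X_{\leq p}$; this makes your version marginally leaner and shows that the split-filtered structure on the seminorm is not actually needed for this particular lemma — only the split filtration on $X$ and tightness.
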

\begin{proof}
Let $x \in X$. We can choose $n, p$ with $x \in \N^nX_{\leq p} \subset X$. For any $m$, we need to show that there exists $x' \in X'$ with $x - x' \in \N^m X$. We can write $\N^n X_{\leq p} = \N^n X'_p \oplus \N^n X_{\leq p - 1}$ and iterating this expression, we find $\N^n X_{\leq p} = \bigoplus_{i = p - d + 1}^p \N^n X'_i \oplus \N^n X_{\leq p - d}$ for any $d > 0$. But by the tightness of the seminorm, choosing $d >\!\!>  0$ we can ensure $\N^n X_{\leq p - d} \subset X_{\leq p - d} \subset \N^m X_{\leq p}$. In particular, we may write $x = x' + y$ with $x' \in \bigoplus_{i = p - d + 1}^p \N^n X'_i \subset X'$ and $y \in \N^m X_{\leq p}$ as desired.
\end{proof}

\subsection{Graded and filtered completions}
\begin{defn}[Graded-complete and filtered-complete Abelian groups]
Let $X$ be a normed Abelian group. If $X$ is graded, we say that it is graded-complete if each of the graded subspaces $X_n$ is complete. If $X$ is filtered, we say that is is filtered-complete if each subspace $X_{\leq n}$ is complete. 
\end{defn}
\begin{defn}[Short homomorphisms]
Let $X, Y$ be seminormed Abelian groups. A group homomomorphism $f \colon X \to Y$ is called a short (or metric) homomorphism if $|f(x)| \leq |x|$ for all $x \in X$.
\end{defn}

\begin{deflem}[Separated completions]
Let $X$ be a seminormed Abelian group. Then we may form the (separated) completion $\wh X$ of $X$, which a complete normed Abelian group equipped with a short map $\iota \colon X \to \wh X$ which is universal for short maps to complete normed Abelian groups. That is, for every complete normed Abelian group $Y$ and short homomorphism $X \to Y$, there is a unique factorization of this map as $X \overset{\iota}\to \wh X \to Y$.
\end{deflem}
This can be constructed in the usual way via equivalence classes of Cauchy sequences. The following Lemma is a consequence of the fact that a metric space maps injectively into its completion:
\begin{lem} \label{completion kernel}
Let $X$ be a seminormed Abelian group. Then the canonical map $X \to \wh X$ has kernel $\bigcap_{n \in \mathbb Z} \N^nX$. In particular, $X \to \wh X$ is injective exactly when the seminorm on $X$ is actually a norm.
\end{lem}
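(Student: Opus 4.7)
My plan is to unpack the universal property of the completion via its explicit construction as equivalence classes of Cauchy sequences, and then directly translate convergence in the seminormed topology into membership in the intersection of all neighborhoods of $0$.

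First, I would recall from the preceding text that the canonical map $\iota \colon X \to \wh X$ sends an element $x$ to the class of the constant Cauchy sequence $(x,x,x,\ldots)$. Since two Cauchy sequences represent the same element of $\wh X$ precisely when their difference is a null sequence, the element $\iota(x)$ vanishes if and only if the constant sequence $(x)$ is itself null; equivalently, if and only if $|x| = 0$ in the sense of the seminorm, i.e. if and only if $x \in \N^n X$ for every $n \in \ZZ$. Thus $\ker \iota = \bigcap_n \N^n X$.

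For the ``in particular'' part, I would observe that by definition the seminorm associated to the system of neighborhoods (as set up in \cref{seminorm remark}) is a genuine norm exactly when $|x|=0$ forces $x=0$, which is equivalent to $\bigcap_n \N^n X = 0$. Combining with the first part gives injectivity precisely under that hypothesis.

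There is essentially no obstacle here: the only thing to be slightly careful about is reconciling the abstract universal property of the separated completion (which is how $\wh X$ was introduced) with the explicit Cauchy-sequence model. Since the Cauchy-sequence construction is a standard realization of the universal object, I would simply invoke uniqueness of the completion to identify it with this model and then read off the kernel as above.
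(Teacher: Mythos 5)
Your proof is correct and is essentially an unpacking of the paper's one-line justification, which simply observes that the lemma follows from the fact that a metric space maps injectively into its completion. Both arguments reduce to the same point: an element of $X$ dies in the separated completion precisely when its seminorm is zero, i.e. when it lies in $\bigcap_n \N^n X$.
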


\begin{lem} \label{subquotient completions}
Let $W \subset Z \subset X$ be subgroups of a seminormed Abelian group $X$. Then in the induced seminorm on $Z/W$, the separated completion of $Z/W$ can be identified with $\wh Z/\wh W$, and $\wh Z$, $\wh W$ can be identified with the closures of the images of $Z$ and $W$ in $\wh X$ respectively.	
\end{lem}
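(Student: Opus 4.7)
The plan is to first identify $\wh Z$ and $\wh W$ inside $\wh X$, and then use those identifications to produce an isometric isomorphism $\wh{Z/W} \cong \wh Z/\wh W$.

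First I would observe that since the seminorm on $Z$ is by definition the restriction of the seminorm on $X$, the composition $Z \hookrightarrow X \to \wh X$ is short, and so by the universal property of completion it factors uniquely through a short map $\iota_Z \colon \wh Z \to \wh X$, and similarly $\iota_W \colon \wh W \to \wh X$. To see that $\iota_Z$ is injective: by \cref{completion kernel} its kernel is the image in $\wh Z$ of $\bigcap_n \N^n X \cap Z = \bigcap_n \N^n Z$, which is zero in $\wh Z$ (again by \cref{completion kernel}, since $\wh Z$ is separated). The image of $\iota_Z$ is a complete subgroup of $\wh X$ containing $Z$, hence contains the closure of $Z$ in $\wh X$; conversely each element of $\wh Z$ is a limit in $\wh Z$ of a Cauchy sequence in $Z$, and $\iota_Z$ carries this to the limit in $\wh X$ of the same sequence, so it lies in the closure of $Z$. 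Thus $\wh Z$ identifies with $\overline{Z} \subset \wh X$, and similarly for $\wh W$. Under these identifications, $\wh W$ is a closed subgroup of $\wh Z$, so $\wh Z/\wh W$ is complete in the induced (quotient) norm.

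Next I would compare the two norms on $Z/W$. On the one hand, the quotient seminorm sends $[z]$ to $\inf_{w \in W}|z-w|$; on the other, viewing $Z/W \subset \wh Z/\wh W$, we get the norm $\inf_{\hat w \in \wh W}|z-\hat w|$. Clearly the latter is $\leq$ the former. For the reverse direction I would use the non-Archimedean property: given $\hat w \in \wh W$ and any $\delta > 0$, pick $w \in W$ with $|\hat w - w| < \delta$; then
\[|z - w| \leq \max(|z-\hat w|,\,|\hat w - w|) \leq \max(|z-\hat w|,\,\delta).\]
Letting $\delta \to 0$ and then taking the infimum over $\hat w$ gives $\inf_{w\in W}|z - w| \leq \inf_{\hat w \in \wh W}|z-\hat w|$. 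So the two seminorms on $Z/W$ agree.

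Finally I would assemble the isomorphism. Since $Z/W \to \wh Z/\wh W$ is short and the target is complete, the universal property of completion produces a short map $\varphi \colon \wh{Z/W} \to \wh Z/\wh W$. The previous paragraph shows that $\varphi$ is an isometric embedding on the dense image of $Z/W$ (and therefore everywhere by continuity), so $\varphi$ is injective with closed image. For surjectivity, any class $[\hat z] \in \wh Z/\wh W$ with $\hat z \in \wh Z$ can be approximated by $z \in Z$ under $\iota_Z$, and the image of $[z] \in Z/W$ in $\wh Z/\wh W$ approximates $[\hat z]$; since $\varphi$ has closed image containing the image of the dense subgroup $Z/W$, it is surjective. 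The main subtlety, and the only place where any care is really needed, is the non-Archimedean argument that reconciles the two norms on $Z/W$; everything else is a formal consequence of the universal property of completion together with \cref{completion kernel}.
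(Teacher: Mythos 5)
Your proof is correct, and it reaches the same conclusion by a genuinely different route from the paper. The paper's proof is top-down and categorical: it uses the universal property of $\wh Z$ to produce a map $\wh Z \to \wh{(Z/W)}$, observes that the kernel of a continuous map into a Hausdorff space is closed so that $\wh W = \ov W$ lies in the kernel, and then asserts that the resulting comparison map makes $\wh Z/\wh W$ itself satisfy the universal property defining $\wh{(Z/W)}$. Your argument instead builds the map in the other direction, $\wh{(Z/W)} \to \wh Z/\wh W$, and verifies it is a bijective isometry by hand: the key step, absent from the paper, is the $\delta$-argument showing that the quotient seminorm $\inf_{w \in W}|z-w|$ on $Z/W$ agrees with the pullback seminorm $\inf_{\hat w \in \wh W}|z-\hat w|$. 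This buys you an explicit statement that the canonical identification is an isometry (the paper gets this implicitly from universality, since both directions are short maps between initial objects), at the cost of somewhat longer verifications. Two small points worth tightening: your claim that $\ker \iota_Z$ ``is the image of $\bigcap_n \N^n Z$ in $\wh Z$'' is not literally how one sees injectivity --- the cleaner argument is that a Cauchy sequence $(z_n)$ in $Z$ representing an element of $\ker\iota_Z$ satisfies $|z_n|_X \to 0$, hence $|z_n|_Z \to 0$ since the seminorm is restricted, hence represents $0$ in $\wh Z$; and your claim that the image of $\iota_Z$ is complete needs $\iota_Z$ to be isometric (which holds and follows from the same dense-set argument you use later for $\varphi$, but you didn't state it), since an injective short map need not have a continuous inverse. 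Neither of these is a real gap. Also note the non-Archimedean inequality isn't essential in your $\delta$-step: the triangle inequality $|z-w| \leq |z-\hat w| + |\hat w - w|$ would give the same conclusion, so the argument is slightly more robust than you suggest.
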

\begin{proof}
The latter identification of completions and closures is straightforward to check. We note that there is a universal map $\wh Z \to \wh{\left(Z/W\right)}$ of separated completions with $W$ in the kernel. But as the image is Hausdorff, it follows that $\ov W$ must also be in the kernel. But now we see that the map $Z/W \to \wh Z/\wh W$ is therefore universal giving us $\wh Z/\wh W \cong \wh{\left(Z/W\right)}$ as desired.
\end{proof}

We have various closely related universal constructions as follows. 
\begin{deflem}[Filtered and graded completions] \label{fg completion def}
Let $X$ be a seminormed Abelian group. If $X$ is graded, then we can construct a short homomorphism $X \to \grcomp X$ which is universal for short homomorphisms to graded-complete Abelian groups. If $X$ is filtered, then we can construct a short homomorphism $X \to \filcomp X$ which is universal for short homomorphisms to filtered-complete Abelian groups.
\end{deflem}
\begin{proof}
We set $\grcomp X = \bigoplus_n \grcomp X_n$ where $\grcomp X_n = \wh{X_n}$, and $\filcomp X = \bigcup_n \filcomp X_{\leq n}$ where $\filcomp X_{\leq n} = \wh{X_{\leq n}}$.
\end{proof}
\begin{remark}
These are also described in \cite{MNT} as the degreewise completion and the filterwise completion respectively.
\end{remark}

\begin{lem}\label{split completions}
For $X$ a split-filtered tightly seminormed Abelian group,
 $\filcomp X$ is a split-filtered tightly seminormed Abelian group with respect to the graded subgroup $\grcomp{X'}$.
\end{lem}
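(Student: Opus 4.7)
The plan is to build the splitting of $\filcomp X$ by taking completions componentwise. First, I would verify that the inclusion $X'_n \hookrightarrow X_{\leq n}$ is an isometry when both carry their restricted seminorms from $X$ (this is definitional), so it extends to an injection $\wh{X'_n} \hookrightarrow \wh{X_{\leq n}} = \filcomp X_{\leq n}$. Summing over $n$ gives an inclusion $\grcomp{X'} = \bigoplus_n \wh{X'_n} \subset \filcomp X$ of graded into filtered. It then remains to show that for each $n$ the induced map $\wh{X'_n} \to \filcomp X_{\leq n}/\filcomp X_{\leq n-1}$ is an isomorphism and that the seminorm on $\filcomp X$ is split-filtered and tight.

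For the splitting claim, I would prove the stronger statement $\filcomp X_{\leq n} = \wh{X'_n} \oplus \filcomp X_{\leq n-1}$ using the uniquely defined degree-wise projection $\pi_n \colon X_{\leq n} \to X'_n$. Given a Cauchy sequence $(x_k)$ in $X_{\leq n}$, write $x_k = (x_k)_n + (x_k)_{<n}$. The key observation is that, since the seminorm is split-filtered, each neighborhood $\N^m X$ is a split-filtered subgroup, i.e.\ $\N^m X \cap X_{\leq n} = (\N^m X \cap X'_n) \oplus (\N^m X \cap X_{\leq n-1})$; therefore if $x_k - x_\ell \in \N^m X$, the uniqueness of the splitting forces both $(x_k)_n - (x_\ell)_n$ and $(x_k)_{<n} - (x_\ell)_{<n}$ to lie in $\N^m X$ as well. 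Hence $(x_k)_n$ and $(x_k)_{<n}$ are Cauchy in $X'_n$ and $X_{\leq n-1}$ respectively, which shows surjectivity onto the quotient. The same argument applied to a pair of sequences in $X'_n$ and $X_{\leq n-1}$ whose difference tends to zero shows that the sum is direct, giving the desired isomorphism.

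For tightness, I would observe that by definition the neighborhoods of $\filcomp X_{\leq p}$ are the closures in $\filcomp X_{\leq p}$ of the neighborhoods $\N^m X_{\leq p}$, and similarly $\filcomp X_{\leq -d}$ is the closure of $X_{\leq -d}$ inside $\filcomp X_{\leq p}$ (using the isometric inclusion $X_{\leq -d} \hookrightarrow X_{\leq p}$ and \cref{subquotient completions}). Given $m,p$, the tightness hypothesis on $X$ furnishes $d$ with $X_{\leq -d} \subset \N^m X_{\leq p}$; taking closures inside $\filcomp X_{\leq p}$ yields $\filcomp X_{\leq -d} \subset \N^m \filcomp X_{\leq p}$. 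Finally, to confirm the seminorm on $\filcomp X$ is split-filtered, the same argument via the projection $\pi_n$, now applied to limits of elements in $\N^m X$, shows that $\N^m \filcomp X \cap \filcomp X_{\leq n}$ decomposes compatibly with the splitting.

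The step I expect to require the most care is verifying that the componentwise decomposition of a Cauchy sequence is again Cauchy; this is precisely where the hypothesis that each $\N^m X$ is a split-filtered subgroup (rather than merely a subgroup) is essential, and it is the mechanism that prevents the splitting from being destroyed upon passage to the completion.
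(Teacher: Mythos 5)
Your proof is correct and establishes everything claimed. It does take a genuinely different (and shorter) route than the paper for the key step of showing the splitting survives completion. You exploit the fact that the degreewise projection $\pi_n \colon X_{\leq n} \to X'_n$ is short — an immediate consequence of each $\N^m X$ being a split-filtered subgroup — to decompose any Cauchy sequence in $X_{\leq n}$ into Cauchy sequences in $X'_n$ and $X_{\leq n-1}$, and the same observation applied to a vanishing difference gives directness of the sum. The paper instead proves $\filcomp X_{\leq p-1} \cap \grcomp{X'}_p = 0$ by an induction that repeatedly peels off graded components to replace the $X_{\leq p-1}$-side by an equivalent Cauchy sequence lying in $X_{\leq p - d}$, and then appeals to tightness (choosing $d$ so that $X_{\leq p-d} \subset \N^m X_{\leq p-1}$) to conclude the class is zero. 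Your argument avoids that detour entirely: since $x_k - y_k \in \N^k X_{\leq p}$ and the neighborhoods decompose, you get $y_k \in \N^k X'_p$ directly, so $y_k \to 0$. A conceptual bonus of your approach is that it makes visible that tightness is needed only to transport tightness to $\filcomp X$; the split-filtered structure on the completion requires only that the seminorm be split-filtered. The remaining pieces (surjectivity via decomposing convergent series, and tightness of $\filcomp X$ by taking closures of $X_{\leq -d} \subset \N^m X_{\leq p}$) coincide with the paper's treatment.
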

\begin{proof}
Let us note that the natural morphism $\grcomp{X'} \to \filcomp X$ is an inclusion. For this, suppose that we have a pair of Cauchy sequences $(a_n), (b_n)$ in $X'_p$ which have the same image in $\filcomp X$. Without loss of generality, we may select subsequences and re-index (possibly after modifying our starting index so an appropriate integer), and assume $a_n - b_n \in \N^n X_{\leq p}$ for all $n$. But as $\N^n X'_p = \N^n X_{\leq p}\cap X'_p$ tells us that these Cauchy sequences have the same limit in $\grcomp{X'}$ as well, giving injectivity.

Next we check that $\filcomp X_{\leq p - 1} \cap \grcomp{X'}_p = 0$. Suppose we have an equality of classes of Cauchy sequences $(x_n) = (y_n)$ where $x_n \in X_{\leq p - 1}$, $y_n \in X'_p$ and $x_n - y_n \in \N^n X_{\leq p}$. 
We claim that we may replace $(x_n)$ by an equivalent Cauchy sequence $(x_n')$ with $x_n' \in X_{\leq p - d}$ for all $d > 0$. 
To see this, we argue by induction on $d$. Suppose $x_n \in X_{\leq p - (d - 1)}$ we use the fact that our seminorm is split filtered to write $x_n = x'_n + x''_n$ with $x'_n\in X_{\leq p - d}$ and $x''_n \in X_{p - (d - 1)}$. As $y_n - x_n = y_n - x'_n - x''_n \in \N^n X_{\leq p}$ and 
\[\N^n X_{\leq p} = \N^n X_p + \N^n X_{\leq p - 1} = \cdots = \N^n X_p \oplus \N^n X_{p - 1} \oplus \cdots \oplus \N^n X_{p - d + 1} \oplus \N^n X_{\leq p - d},\]
we see that an element of $X_{\leq p}$ lies in the $n$'th neighborhood $\N^n X_{\leq p}$ if and only if each of its factors with respect to the decomposition \[X_{\leq p} = X_p \oplus X_{p - 1} \oplus \cdots \oplus X_{p - d + 1} \oplus X_{\leq p - d},\]
lies in their corresponding $n$'th neighborhood.
We therefore find $x''_n \in \N^n X_{p - d + 1}$ for all $n$. Consequently $\lim\limits_{n \to \infty} x''_n = 0$, which says the Cauchy sequences $(x_n)$ and $(x'_n)$ are equivalent. This verifies our claim.

Now, we claim that $(y_n) = 0$. By definition of the completion, this amounts to $(y_n) \in \N^m X_{\leq p - 1}$ for all $m$. By our hypothesis, for any $m$ there exists $d'$ such that $X_{\leq -d'} \subset \N^m X_{\leq p - 1}$. In particular, choosing $d = d' + p$ in the prior argument, we find that we may choose a Cauchy sequence $(y'_n)$ equivalent to the first, with $y'_n \in X_{\leq -d'} \subset \N^m X_{\leq p - 1}$, showing that $(y'_n) \in \N^m \filcomp X_{\leq p - 1}$ for all $m$, verifying our claim.

Now we check $\N^n \filcomp X_{\leq p} \subset \N^n \filcomp X_{\leq p - 1} + \N^n \grcomp{X'}_p$. For this, let $\sum x_i \in \N^n \filcomp X_{\leq p }$ be a convergent infinite series. Without loss of generality, we may assume $x_i \in \N^{n+i} X_{\leq p}$ for all $i$. As our seminorm is split-filtered, we can write $x_i = (x_i)_p + (x_i)_{< p}$ as in \cref{split parts notation}. But now we see that the sums $\sum_i (x_i)_p$ and $\sum_i (x_i)_{< p}$ both converge in $\N^n \grcomp{X'}_p$ 
and $\N^n \filcomp X_{\leq p - 1}$ respectively, showing that 
$\sum x_i \in \N^n \filcomp X_{\leq p - 1} + \N^n \grcomp{X'}_p$ as desired.

As $\filcomp X = \bigcup_n \N^n \filcomp X$ and 
$\grcomp{X'} = \bigcup_n \N^n \grcomp{X'}$ 
we conclude that $X$ is split-filtered seminormed by \cref{split colimits}.

To check that it is tightly seminormed, We notice that whenever $X_{\leq -d} \subset \N^mX_{\leq p}$ we find that, upon taking closures in $\filcomp X_{\leq p}$, that 
$\filcomp X_{\leq -d} \subset \N^m \filcomp X_{\leq p}$, showing that $\filcomp X$ is also tightly seminormed (with the same choice of $d$ for a given $m, p$).
\end{proof}

\begin{remark}
In light of this result, it makes sense to refer to $\filcomp X$ as the completion of $X$, when $X$ is split-filtered, with the understanding that the graded subgroup is given by $\grcomp{X'}$. In the case $X = \filcomp X$, we say $X$ is complete.
\end{remark}

\begin{deflem} \label{closure split}
Suppose $X$ is a split-filtered, tightly seminormed, complete Abelian group, and suppose $Y \subset X$ is a split-filtered subgroup. Define the closure $\ov Y$ of $Y$ in $X$ to be the filtered subgroup with $\ov Y_{\leq p}$ the closure of the image of $Y_{\leq p}$ in $X_{\leq p}$ and $\ov{Y'}_p$ the closure of the image of $Y'_p$ in $X'_p$. Then $\ov Y$ is split-filtered with respect to the graded subgroup $\ov Y'$. 
\end{deflem}
\begin{proof}
It follows from the definition that the restriction of a tight seminorm is again a tight seminorm. As we can identify the closures with the completions by \cref{subquotient completions}, the result follows from \cref{split completions}.
\end{proof}

\subsection{Canonical seminorms}

The seminorms used in studying universal enveloping algebras of VOAs arise in a very specific way, as described in \cite{tuy, FrenkelZhu, bzf, FrenkelLanglands, NT, MNT}. We will recall an generalized definition of these seminorms, as in \cite{MNT}, and then examine some abstract features in the context of split-filtrations, which will allow us to relate the filtered and graded versions.

\begin{defn}[The canonical seminorm]\label{canonical def}
Let $U$ be a filtered ring. The canonical system of neighborhoods on $U$ is defined by $\cN^n U = U U_{\leq -n}$ (a left ideal of $U$ if $U$ is associative). We will write $^c|\cdot|$ for the corresponding canonical seminorm.
\end{defn}

\begin{lem} \label{canonical lemma}
Suppose $U' \subset U$ is a split-filtered ring. Then the canonical seminorm is split-filtered and tight.
\end{lem}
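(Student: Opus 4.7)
The plan is to verify two separate conditions. First, that each neighborhood $\cN^n U = U U_{\leq -n}$ is a split-filtered subgroup of $U$ in the sense of \cref{split filter def}; and second, that the resulting seminorm is tight, i.e., for every $m$ and $p$ there is $d$ with $U_{\leq -d}\subset \cN^m U \cap U_{\leq p}$.

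For the split-filtered condition, I would appeal directly to \cref{split the products}. The filtered piece $U_{\leq -n}\subset U$ is naturally a split-filtered subgroup of $U$ with homogeneous part $\bigoplus_{i\leq -n} U'_i$, because the decomposition $U = U' \oplus \cdots$ restricts degree-by-degree. Applying \cref{split the products} to $S = U$ and $T = U_{\leq -n}$ gives that $U \cdot U_{\leq -n}$ is split-filtered, with
\[
(U U_{\leq -n})_{\leq k} = \sum_{p+q=k} U_{\leq p}(U_{\leq -n})_{\leq q},
\qquad
(U U_{\leq -n})'_k = \sum_{\substack{p+q=k\\ q\leq -n}} U'_p U'_q.
\]
This is exactly what it means for $\cN^n U$ to be split-filtered inside $U$.

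For tightness, I would use that the universal enveloping algebras to which this lemma is applied in \cref{sec:UnivEnv} are unital. Given $m$ and $p$, set $d = \max(m,-p)$, so $U_{\leq -d}\subset U_{\leq -m}$ and $U_{\leq -d}\subset U_{\leq p}$. Since $1\in U$, one has $U_{\leq -m} = 1\cdot U_{\leq -m}\subset U\cdot U_{\leq -m} = \cN^m U$, and combining,
\[
U_{\leq -d}\subset \cN^m U \cap U_{\leq p} = \cN^m U_{\leq p},
\]
as required.

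Neither step involves real difficulty: the split-filtered assertion is a direct invocation of \cref{split the products}, and tightness is essentially immediate once one uses the unit. The only subtle point worth flagging is the unitality assumption, since \cref{def:filtered ring} allows non-unital rings; one could either state the lemma for unital $U$ (which is what is needed for the applications to $\hUu$, $\hUuL$, $\hUuR$) or, if one insists on generality, replace the product $1\cdot U_{\leq -m}$ step by a direct appeal to the identity $U_{\leq -m}\subset U U_{\leq -m}$ whenever this containment is guaranteed by the ambient structure.
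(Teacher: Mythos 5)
Your argument is correct and, for the split-filtered assertion, slightly cleaner than the paper's. You invoke the full force of \cref{split the products}, which already identifies $\cN^n U = UU_{\leq -n}$ as a split-filtered subgroup of $U$ with explicit filtered and graded pieces. The paper, by contrast, cites \cref{split the products} only to express an element of $\cN^n U_{\leq p}$ as a sum of products $\alpha\beta$ with $\alpha \in U_{\leq a}$, $\beta \in U_{\leq b}$, $a+b=p$, $b \leq -n$, and then re-derives the splitting by hand: decomposing $\alpha = \ov\alpha + \alpha'$, $\beta = \ov\beta + \beta'$ and grouping terms to conclude $\cN^n U_{\leq p} \subset \cN^n U'_p + \cN^n U_{\leq p-1}$. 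The two approaches agree in the end, since for any split-filtered subgroup $Y \subset U$ the graded piece coming from \cref{split images} coincides with the intersection $Y \cap U'_p$ (the difference of an element of $Y_{\leq p}$ and its graded projection lies in $Y \cap U_{\leq p-1}$). Your route just makes explicit that the by-hand decomposition is unnecessary.

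For tightness your argument matches the paper's, and your flag about unitality is a genuine observation. The paper asserts $U_{\leq -d} \subset \cN^m U$ for $d \geq \max\{m,-p\}$ without comment, but this requires $U_{\leq -m} \subset UU_{\leq -m}$, which is not guaranteed by \cref{def:filtered ring} (a filtered ring is allowed to be non-unital). For instance, if $U = \bigoplus_{n \leq 0} U_n$ with the zero product, then $\cN^m U = 0$ for $m > 0$ and tightness fails. In every application in the paper $U$ is unital (it is a universal enveloping algebra or a completion thereof), so nothing downstream is affected, but the lemma as stated ought to carry the unitality hypothesis --- exactly as you suggest.
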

\begin{proof}
Suppose $u \in \cN^n U_{\leq p}$. 
By \cref{split the products}, we can write $u$ as a sum of elements of the form 
$\alpha\beta$ with $\alpha \in U_{\leq a}$ and $\beta \in U_{\leq b}$ with 
$a + b = p$ and $b \leq -n$.   

Using our splitting we may write $\alpha = \ov \alpha + \alpha'$ and $\beta = \ov \beta + \beta'$ with $\ov \alpha \in U'_a$, $\alpha' \in U_{\leq a-1}$, $\ov \beta \in U'_b$, $\beta' \in U_{\leq b - 1}$, and so we have $\alpha' \ov \beta \in \cN^nU_{\leq p - 1}$ giving us:
\[\alpha \beta = \ov \alpha \ov \beta + \ov \alpha \beta' + \alpha' \ov \beta + \alpha' \beta' \in   \cN^nU'_p + \cN^nU_{\leq p - 1} +\cN^{n+1}U_{\leq p-1} = \cN^nU'_p + \cN^nU_{\leq p - 1}.\]
It follows that $\cN^n U_{\leq p} \subset \cN^nU_{\leq p - 1} + \cN^nU'_p$, and hence $\cN^n U_{\leq p} = \cN^nU_{\leq p - 1} + \cN^nU'_p$, showing the canonical seminorm is split filtered.

To check that it is tight, we simply notice that for any $m, p$, we have for $d \geq \max\{m, -p\}$, $U_{\leq -d} \subset U_{\leq p} \cap \cN^m U = \cN^m U_{\leq p}$.
\end{proof}

The following Lemmas are easily verified. 
\begin{lem} \label{canonical identity}
Let $U$ be a filtered associative ring. Then for any $p, q, n \in \ZZ$, we have
\[ \left(\cN^nU_{\leq p}\right) U_{\leq q} \subset \cN^{n-q} U_{\leq p + q} \quad \text{ and } \quad U_{\leq p}\left(\cN^nU_{\leq q}\right) \subset \cN^n U_{\leq p + q}.	\]
\end{lem}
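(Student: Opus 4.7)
The plan is to unpack the definition of the canonical seminorm $\cN^n U = U U_{\leq -n}$ and then perform a direct computation, using associativity of $U$ and compatibility of the filtration with multiplication. Both inclusions will reduce to checking two conditions on the products: that they land inside $U U_{\leq -m}$ for the appropriate $m$, and that they respect the filtration degree.

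For the second inclusion, the argument is essentially immediate: starting from $U_{\leq p}(\cN^n U_{\leq q}) \subset U_{\leq p}(U U_{\leq -n})$, associativity gives a containment inside $U U_{\leq -n} = \cN^n U$, while the product of filtered pieces $U_{\leq p} \cdot U_{\leq q} \subset U_{\leq p+q}$ takes care of the filtration degree. Intersecting yields $\cN^n U_{\leq p+q}$.

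For the first inclusion, I would invoke \cref{split the products} to write
\[
\cN^n U_{\leq p} \;=\; (U U_{\leq -n})_{\leq p} \;=\; \sum_{j \leq -n} U_{\leq p-j}\, U_{\leq j}.
\]
Multiplying on the right by $U_{\leq q}$ and using filtered multiplication yields, for each $j \leq -n$, a term $U_{\leq p-j}\, U_{\leq j+q}$. Since $j+q \leq -n+q = -(n-q)$, this factors through $U \cdot U_{\leq -(n-q)} = \cN^{n-q}U$, and meanwhile the total filtration degree is $\leq p+q$. Intersecting gives the required containment in $\cN^{n-q}U_{\leq p+q}$.

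The main obstacle, if one can call it that, is simply keeping the indices straight: the left action by $U_{\leq q}$ shifts the superscript on $\cN$ from $n$ to $n-q$ because the inner factor $U_{\leq j}$ grows to $U_{\leq j+q}$, whereas the right-multiplication version does not touch the inner factor at all. Everything else is formal bookkeeping from the definitions.
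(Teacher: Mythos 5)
Your argument for the second inclusion is the right one and is essentially what the paper has in mind: $U_{\leq p}\bigl(\cN^n U_{\leq q}\bigr)\subset U_{\leq p}(UU_{\leq -n})\cap U_{\leq p}U_{\leq q}\subset UU_{\leq -n}\cap U_{\leq p+q}=\cN^n U_{\leq p+q}$. For the first inclusion, however, you reach for \cref{split the products}, which is stated only for \emph{split}-filtered rings, whereas \cref{canonical identity} assumes only a filtered associative ring. Without a splitting, the identity $\cN^n U_{\leq p}=\sum_{j\leq -n}U_{\leq p-j}U_{\leq j}$ is not available in general (an element of $(UU_{\leq -n})\cap U_{\leq p}$ can be a sum of products $a_ib_i$ none of which individually lies in $U_{\leq p}$), so your proof silently narrows the hypotheses of the lemma.

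The detour is also unnecessary: the first inclusion admits the same direct argument you used for the second. For $x\in\cN^n U_{\leq p}=(UU_{\leq -n})\cap U_{\leq p}$ and $y\in U_{\leq q}$, associativity and the filtered ring property give
\[
xy\in (UU_{\leq -n})U_{\leq q}=U(U_{\leq -n}U_{\leq q})\subset UU_{\leq -n+q}=\cN^{n-q}U,
\]
while $xy\in U_{\leq p}U_{\leq q}\subset U_{\leq p+q}$. Intersecting gives $xy\in\cN^{n-q}U_{\leq p+q}$, with no appeal to a splitting. Since the paper declares the lemma ``easily verified'' and supplies no proof, this symmetric direct computation is almost certainly the intended one; your write-up would be both shorter and strictly more general if you replaced the appeal to \cref{split the products} with it.
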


\begin{lem} \label{canonical image}
Let $f \colon  X \to Y$ be a surjective filtered homomorphism of filtered associative rings. Then $f(\cN^nX) = \cN^nY$.
\end{lem}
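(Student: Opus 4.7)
The proof divides into two inclusions, both essentially bookkeeping exercises with the definition $\cN^n U = U \cdot U_{\leq -n}$. For the forward containment $f(\cN^n X) \subseteq \cN^n Y$, I would observe that, since $f$ is a filtered ring homomorphism, $f(X_{\leq -n}) \subseteq Y_{\leq -n}$, and therefore
\[
f(\cN^n X) \;=\; f(X \cdot X_{\leq -n}) \;\subseteq\; f(X)\cdot f(X_{\leq -n}) \;\subseteq\; Y \cdot Y_{\leq -n} \;=\; \cN^n Y.
\]
This uses only that $f$ is a ring map preserving the filtration, not surjectivity.

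For the reverse containment $\cN^n Y \subseteq f(\cN^n X)$, an element of $\cN^n Y$ is a finite sum of products $y \cdot y'$ with $y \in Y$ and $y' \in Y_{\leq -n}$. Using surjectivity, lift $y$ to any preimage $x \in X$, and lift $y'$ to a preimage $x' \in X_{\leq -n}$; the latter requires that $f$ restricts to a surjection $X_{\leq -n} \twoheadrightarrow Y_{\leq -n}$. Then $x \cdot x' \in X \cdot X_{\leq -n} = \cN^n X$ and $f(x \cdot x') = y \cdot y'$, so summing over the finitely many terms finishes the inclusion and gives the equality.

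The only subtle point, which I expect to be the main (minor) obstacle, is justifying that $f$ restricts to a surjection on each filtered piece $X_{\leq p} \twoheadrightarrow Y_{\leq p}$ from the assumption that $f$ is a surjective filtered homomorphism. In the intended applications -- quotients by (split) filtered two-sided ideals coming from good triples, as formalized in \cref{split the products} and \cref{cokernel splitting} -- this is automatic, because one defines $Y_{\leq p}$ as the image of $X_{\leq p}$ under the quotient map. In the abstract statement one should therefore read ``surjective filtered homomorphism'' in this strict sense, as is standard for filtered algebras; once this is granted, the argument is the one above and no further ingredients are needed.
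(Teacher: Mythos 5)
Your proof is correct, and since the paper dismisses this lemma with ``easily verified,'' there is no written argument to compare against; what you wrote is exactly the expected one-line verification. The forward inclusion $f(\cN^nX)\subseteq\cN^nY$ follows from $f$ being a filtered ring map, and the reverse inclusion follows by lifting each factor in a product $y\cdot y'$ with $y'\in Y_{\leq -n}$.

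The point you flag in the last paragraph is real and worth keeping in mind. Read literally, ``surjective filtered homomorphism'' (a degree-$0$ map in the sense of \cref{split maps def} that is surjective on underlying sets) is not enough: one can cook up a filtered ring surjection $f\colon X\to Y$ with $f(X_{\leq -n})\subsetneq Y_{\leq -n}$, and then $\cN^nY$ can strictly contain $f(\cN^nX)$. What is needed, and what you correctly supply, is strictness, i.e.\ $f(X_{\leq p})=Y_{\leq p}$ for all $p$. You are right that this holds automatically in every place the paper invokes the lemma: the relevant $Y$ always arises as a quotient of $X$ carrying the image filtration, via \cref{cokernel splitting} (and its split refinement), so the surjection is strict by construction. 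Equivalently, in the abelian category of (split-)filtered groups set up in \cref{split colimits}, a ``surjection'' is a cokernel, and cokernels carry the image filtration, so the categorical reading of ``surjective filtered homomorphism'' already encodes strictness. With that understanding your argument is complete.
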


By \cref{canonically continuous}, a useful property of the canonical topology is that multiplication is continuous with respect to it, at least when restricted to the various filtered parts.

\begin{lem} \label{canonically continuous}
Let $U$ be a filtered associative ring equipped with a seminorm such that
\[ \left(\N^nU_{\leq p}\right) U_{\leq q} \subset \N^{n-q} U_{\leq p + q} \quad \text{ and } \quad U_{\leq p}\left(\N^nU_{\leq q}\right) \subset \N^n U_{\leq p + q}.	\]
Then for any $p, q$, the multiplication map $U_{\leq p} \times U_{\leq q} \to U_{\leq p + q}$ is continuous with respect to the seminorm in both variables. Consequently, the completion $\filcomp U$ naturally has the structure of an associative ring.
\end{lem}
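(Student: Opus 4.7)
The plan is to verify continuity at an arbitrary point $(a,b) \in U_{\leq p} \times U_{\leq q}$ directly from the hypothesized compatibility between the seminorm and the filtration, and then extend the multiplication to $\filcomp U$ by the universal property of completions.

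First I would expand the standard identity governing continuity of multiplication, namely
\[(a+x)(b+y) - ab \;=\; xb \;+\; ay \;+\; xy,\]
for $x \in U_{\leq p}$ and $y \in U_{\leq q}$. Given a target neighborhood $\N^N U_{\leq p+q}$ of $ab$, I would propose choosing $x \in \N^{N+q} U_{\leq p}$ and $y \in \N^N U_{\leq q}$. The first hypothesis then gives $xb \in \left(\N^{N+q}U_{\leq p}\right)U_{\leq q} \subset \N^{N}U_{\leq p+q}$; the second gives $ay \in U_{\leq p}\left(\N^NU_{\leq q}\right) \subset \N^N U_{\leq p+q}$; and applying the second hypothesis to $x \in U_{\leq p}$ and $y \in \N^N U_{\leq q}$ yields $xy \in \N^N U_{\leq p+q}$. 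Summing these three inclusions gives $(a+x)(b+y) - ab \in \N^N U_{\leq p+q}$, which is exactly the continuity statement for the multiplication $U_{\leq p} \times U_{\leq q} \to U_{\leq p+q}$ at the point $(a,b)$.

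Having secured continuity on each filtered piece, I would next extend the multiplication to the completion. By \cref{fg completion def}, $\filcomp U_{\leq p+q}$ is the separated completion of $U_{\leq p+q}$, which is in particular a complete Hausdorff uniform space. By the standard extension principle for uniformly continuous maps into complete Hausdorff spaces, the (bilinear, separately uniformly continuous) multiplication $U_{\leq p} \times U_{\leq q} \to U_{\leq p+q} \hookrightarrow \filcomp U_{\leq p+q}$ extends uniquely to a continuous map $\filcomp U_{\leq p} \times \filcomp U_{\leq q} \to \filcomp U_{\leq p+q}$. These extensions are manifestly compatible as $p,q$ vary, so taking the directed union $\filcomp U = \bigcup_p \filcomp U_{\leq p}$ produces a well-defined multiplication on $\filcomp U$.

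Finally, to check that $\filcomp U$ is an associative ring, I would verify associativity and distributivity by a density argument: both sides of each identity are continuous functions $\filcomp U^{\times 3} \to \filcomp U$ (where one restricts to triples $\filcomp U_{\leq p_1} \times \filcomp U_{\leq p_2} \times \filcomp U_{\leq p_3}$), and they agree on the dense subset $U^{\times 3}$, hence they agree everywhere. The main (and really only) technical point is the continuity claim in the first paragraph; once that is in place, the rest is formal, and the slight asymmetry in the required shift in $n$ (using $N+q$ on the left factor but $N$ on the right) is the only subtle feature worth emphasizing.
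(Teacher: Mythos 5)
Your proof is correct and follows essentially the same approach as the paper: use the hypothesized inclusions to control the error in the product, then extend to the completion. In fact you are slightly more thorough than the published proof — you establish joint (indeed uniform) continuity via the three-term decomposition $(a+x)(b+y)-ab = xb + ay + xy$, whereas the paper only checks continuity separately in each variable, and you spell out the extension-to-completion and density arguments that the paper leaves implicit.
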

\begin{remark}
It follows that under these hypotheses, if $U$ and its seminorm is split-filtered, then the multiplication map $U'_{p} \times U'_{q} \to U'_{p + q}$ is also continuous (being the restriction of a continuous map). Consequently, in this case, the completion $\filcomp U$ is also a split-filtered associative ring, which is tightly split-filtered if $U$ is (by \cref{split completions}).
\end{remark}

\begin{proof}
Let $u_1 \in U_{\leq p}, u_2 \in U_{\leq q}$. 
Then we must show that multiplication is continuous with respect to both variables at $(u_1, u_2)$. That is, given $d \in \mathbb Z$, we must show there exist $n_1, n_2$ such that $(u_1 + \N^{n_1} U_{\leq p})u_2 \subset u_1 u_2 + \N^d U_{\leq p + q}$ and $u_1(u_2 + \N^{n_2} U_{\leq q}) \subset u_1 u_2 + \N^d U_{\leq p + q}$. 
By our hypotheses, for $n_2 \geq d$, we have $u_1(u_2 + \N^{n_2} U_{\leq q}) \subset u_1u_2 + \N^{n_2} U_{\leq p + q} \subset u_1u_2 + \N^{d} U_{\leq p + q}$. 
On the other hand, for $n_1 \geq q + d$, we find 
$(\N^{n_1} U_{\leq p}) u_2 \subset \left(\N^{n_1} U_{\leq p} \right) U_{\leq q} \subset \N^{n_1 - q} U_{\leq p + q} \subset \N^d U_{\leq p + q},$
as desired.
\end{proof}

\begin{remark} \label{good and tight}
If $U$ is a split-filtered seminormed ring with $\cN^n U_{\leq p} \subset \N^n U_{\leq p}$ then by \cref{canonical lemma}, it is tightly seminormed.
\end{remark}

The canonical seminorm on a split-filtered associative ring has a number of useful properties which we would like to axiomatize. As we have seen, it is tight and split-filtered (\cref{canonical lemma}) and verifies the identities of \cref{canonical identity}. 
\begin{defn} \label{alm can def}
Let $U$ be a split-filtered seminormed associative ring. We say the seminorm is almost canonical if it verifies the following conditions:
\begin{enumerate}
\item \label{alm can 1} the seminorm is split filtered,
\item \label{alm can 2} $\N^nU_{\leq p} = \cN^nU_{\leq p} + \N^{n+1}U_{\leq p}$ for all $n, p$,
\item \label{alm can 3} $\left(\N^nU_{\leq p}\right) U_{\leq q} \subset \N^{n-q} U_{\leq p + q}$ and $U_{\leq p}\left(\N^nU_{\leq q}\right) \subset \N^n U_{\leq p + q}$ for all $p, q, n$.
\end{enumerate}

\end{defn}

\begin{lem} \label{alm can graded lem}
Let $U$ be a split-filtered almost canonically seminormed associative ring. Then 
$\N^nU'_{p} = \cN^nU'_{p} + \N^{n+1}U'_{p}$ for all $n, p$.
\end{lem}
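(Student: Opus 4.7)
The plan is to prove both inclusions, with the reverse inclusion being the content. First, I would observe that the ``$\supseteq$'' direction is immediate: the almost canonical axiom~\eqref{alm can 2} applied to any $p$ (or to $p$ and any $n$) implies in particular that $\cN^nU_{\leq p} \subseteq \N^nU_{\leq p}$, since the right-hand side of that equation contains $\cN^n U_{\leq p}$ as a summand. Intersecting with $U'_p$ gives $\cN^nU'_p \subseteq \N^nU'_p$, and of course $\N^{n+1}U'_p \subseteq \N^nU'_p$, giving the inclusion $\cN^nU'_p + \N^{n+1}U'_p \subseteq \N^n U'_p$.

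For the ``$\subseteq$'' direction, fix $x \in \N^nU'_p = \N^n U \cap U'_p$. Since $x \in U'_p \subseteq U_{\leq p}$, we have $x \in \N^nU_{\leq p}$. Applying axiom~\eqref{alm can 2}, write $x = y + z$ with $y \in \cN^n U_{\leq p}$ and $z \in \N^{n+1}U_{\leq p}$. The goal is now to refine this decomposition so that $y$ and $z$ themselves lie in $U'_p$.

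To do this, I would invoke the fact that \emph{both} seminorms are split-filtered: $\N^n U$ is split-filtered by axiom~\eqref{alm can 1}, and $\cN^n U$ is split-filtered by \cref{canonical lemma}. Concretely this means
\[
\cN^n U_{\leq p} = \cN^n U'_p \oplus \cN^n U_{\leq p-1}, \qquad \N^{n+1}U_{\leq p} = \N^{n+1}U'_p \oplus \N^{n+1}U_{\leq p-1}.
\]
Writing $y = y' + y''$ and $z = z' + z''$ according to these decompositions, with primed elements in the $U'_p$-summands and doubly primed elements in the $U_{\leq p - 1}$-summands, we get
\[
x = (y' + z') + (y'' + z''),
\]
where $y' + z' \in \cN^nU'_p + \N^{n+1}U'_p \subseteq U'_p$ and $y'' + z'' \in U_{\leq p-1}$.

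The main (and essentially only nontrivial) step is now to extract that $y'' + z'' = 0$. This follows from the defining property of a splitting: since $U'$ provides a splitting of the filtration on $U$, one has $U'_p \cap U_{\leq p-1} = 0$. Combined with $x \in U'_p$ and $y' + z' \in U'_p$, this forces $y'' + z'' = 0$, hence $x = y' + z' \in \cN^nU'_p + \N^{n+1}U'_p$, which completes the proof. I do not anticipate a real obstacle here; the only thing to be careful about is to make sure both the $\N$- and the $\cN$-seminorms really are split-filtered on $U_{\leq p}$, so that the pieces land in $U'_p$ respectively $U_{\leq p-1}$ in a compatible way.
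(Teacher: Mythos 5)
Your proof is correct and follows essentially the same route as the paper's: decompose $\N^n U_{\leq p} = \cN^n U_{\leq p} + \N^{n+1}U_{\leq p}$ via axiom~\eqref{alm can 2}, use the split-filteredness of both the $\N$-seminorm (axiom~\eqref{alm can 1}) and the canonical seminorm (\cref{canonical lemma}) to break each term into its $U'_p$ and $U_{\leq p-1}$ pieces, and then kill the $U_{\leq p-1}$ part using $U'_p \cap U_{\leq p-1} = 0$. The paper phrases this as a one-line identity and then "looks modulo $U_{\leq p-1}$," while you run the same argument elementwise and are somewhat more explicit in citing \cref{canonical lemma} for the split-filteredness of $\cN$, but mathematically the two proofs coincide.
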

\begin{proof}
Using the fact that the seminorm is split filtered and \cref{alm can def}~\eqref{alm can 2}, we have
$ \N^nU'_{p} + \N^nU_{\leq p - 1} = \cN^nU'_{p} + \cN^nU_{\leq p - 1} + \N^{n+1}U'_{p} + \N^{n+1}U_{\leq p - 1}$
from which the result follows looking modulo $U_{\leq p - 1}$. 
\end{proof}

\begin{lem} \label{almost canonical conditions}
Let $U$ be a split-filtered seminormed associative ring. Then the following are equivalent:
\begin{enumerate}
	\item \label{alm can condition} $\N^nU_{\leq p} = \cN^nU_{\leq p} + \N^{n+1}U_{\leq p}$ for all $n, p$,	\item $\N^nU_{\leq p} = \cN^nU_{\leq p} + \N^{n+d}U_{\leq p}$ for all $n, p$ and $d > 0$,
	\item \label{alm can con 3} $\cN^nU_{\leq p}$ is contained in and dense in $\N^n U_{\leq p}$.
\end{enumerate}	
\end{lem}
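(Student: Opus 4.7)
The plan is to establish the cyclic chain of implications $(1) \Rightarrow (2) \Rightarrow (3) \Rightarrow (1)$; the implication $(2) \Rightarrow (1)$ is immediate by specializing to $d=1$, so these implications suffice. Throughout, I will use the fact that $\cN^n U = U U_{\leq -n}$ is a decreasing family of left ideals in $n$, and in particular $\cN^{n+d}U_{\leq p} \subset \cN^n U_{\leq p}$ for every $d \geq 0$.

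For $(1) \Rightarrow (2)$, I will induct on $d \geq 1$, with $d=1$ being the hypothesis. For the inductive step, assume $\N^n U_{\leq p} = \cN^n U_{\leq p} + \N^{n+d}U_{\leq p}$. Apply $(1)$ with $n$ replaced by $n+d$ to rewrite $\N^{n+d}U_{\leq p} = \cN^{n+d}U_{\leq p} + \N^{n+d+1}U_{\leq p}$, and then absorb $\cN^{n+d}U_{\leq p}$ into $\cN^n U_{\leq p}$ using the inclusion noted above. Combining these yields $\N^n U_{\leq p} = \cN^n U_{\leq p} + \N^{n+d+1}U_{\leq p}$, completing the induction.

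For $(2) \Leftrightarrow (3)$, the content is simply unpacking the meaning of density in a seminormed subgroup. Condition $(2)$ forces $\cN^n U_{\leq p} \subset \N^n U_{\leq p}$ (take $d$ so that $\N^{n+d}U_{\leq p} \subset \N^n U_{\leq p}$, which holds for $d \geq 0$), giving the containment assertion in $(3)$. For a subgroup $S \subset G$ of a seminormed Abelian group, the closure of $S$ in $G$ is $\bigcap_m (S + \N^m G)$, so $S$ is dense in $G$ if and only if $G = S + \N^m G$ for every $m$. Applying this with $S = \cN^n U_{\leq p}$ inside $G = \N^n U_{\leq p}$ (with restricted seminorm, so $\N^m G = \N^m U_{\leq p} \cap \N^n U_{\leq p}$, which equals $\N^{\max(m,n)} U_{\leq p}$), density becomes the condition $\N^n U_{\leq p} = \cN^n U_{\leq p} + \N^{n+d}U_{\leq p}$ for every $d \geq 0$, which is precisely $(2)$. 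Finally, $(3) \Rightarrow (1)$ is recovered by taking $d = 1$ in the density characterization just described.

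No step should present a genuine obstacle: the induction is routine, and the equivalence $(2) \Leftrightarrow (3)$ is essentially a translation of terminology. The only point requiring a bit of care is keeping track of the restricted seminorm on the subgroup $\N^n U_{\leq p} \subset U$, and verifying that its neighborhoods of zero agree with $\N^{n+d} U_{\leq p}$ for $d \geq 0$, so that density there matches the algebraic identity in $(2)$.
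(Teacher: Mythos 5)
Your proof is correct and takes essentially the same route as the paper, which gives only the one-line remark that the equivalence ``follows by iterating the expression in part (1).'' You have simply unpacked that iteration into an explicit induction for $(1)\Rightarrow(2)$ and spelled out the density reformulation $(2)\Leftrightarrow(3)$, including the observation that the restricted neighborhoods of $\N^n U_{\leq p}$ are $\N^{\max(m,n)}U_{\leq p}$ — all of which is implicit in the paper's terse statement.
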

\begin{proof}
This follows by iterating the expression in part~\eqref{alm can condition}.	
\end{proof}

\begin{remark} \label{almost canonical is tight}
    In particular, if $U$ carries an almost canonical seminorm, then by \cref{alm can def}\eqref{alm can 2}, it satisfies the equivalent conditions of \cref{almost canonical conditions}, and from \cref{almost canonical conditions}\eqref{alm can con 3}, it follows from \cref{good and tight} that its seminorm is also tight. 
\end{remark}

\begin{lem} \label{almost canonical image}
Let $f \colon X \to Y$ be a surjective map of split-filtered associative rings, and suppose $X$ is endowed with an almost canonical seminorm. Then the system of neighborhoods $\N^n Y_{\leq p} = f(\N^n X_{\leq p})$ defines an almost canonical seminorm on $Y$.
\end{lem}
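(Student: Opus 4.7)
The plan is to push forward each component of the almost canonical structure from $X$ to $Y$ along $f$ and verify the three conditions of \cref{alm can def} directly. The key preliminary observation is that since $f$ is a surjective map of split-filtered rings, we have $f(X_{\leq p}) = Y_{\leq p}$ and $f(X'_p) \subset Y'_p$ for all $p$. Combined with the formula $\cN^nU_{\leq p} = \sum_{j \leq -n} U_{\leq p-j}U_{\leq j}$ from \cref{split the products}, this immediately yields the key identity
\begin{equation*}
f(\cN^n X_{\leq p}) = \sum_{j \leq -n} f(X_{\leq p-j})f(X_{\leq j}) = \sum_{j \leq -n} Y_{\leq p-j}Y_{\leq j} = \cN^n Y_{\leq p},
\end{equation*}
which sharpens \cref{canonical image} to the filtered level under our hypotheses.

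For condition \eqref{alm can 1}, I first check that $\N^nY := f(\N^nX)$ is a split-filtered subgroup of $Y$. Using the split-filtered decomposition $\N^n X_{\leq p} = \N^n X'_p \oplus \N^n X_{\leq p-1}$ and applying $f$, we obtain $\N^n Y_{\leq p} = f(\N^n X'_p) + f(\N^n X_{\leq p-1})$. Setting $(\N^n Y)'_p := f(\N^n X'_p) \subset Y'_p$, the sum is automatically direct because it lies inside the direct sum decomposition $Y_{\leq p} = Y'_p \oplus Y_{\leq p-1}$. An easy induction on $p$ then shows $\N^n Y \cap Y'_p = f(\N^n X'_p)$, confirming this is the correct splitting.

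For condition \eqref{alm can 2}, applying $f$ to $\N^n X_{\leq p} = \cN^n X_{\leq p} + \N^{n+1} X_{\leq p}$ gives
\begin{equation*}
\N^n Y_{\leq p} = f(\cN^n X_{\leq p}) + f(\N^{n+1} X_{\leq p}) = \cN^n Y_{\leq p} + \N^{n+1} Y_{\leq p},
\end{equation*}
where the second equality uses the key identity above and the definition of $\N^{n+1} Y_{\leq p}$. For condition \eqref{alm can 3}, given $y \in \N^n Y_{\leq p}$ and $z \in Y_{\leq q}$, lift them to $x \in \N^n X_{\leq p}$ and $w \in X_{\leq q}$ via surjectivity; then the corresponding containment on $X$ gives $xw \in \N^{n-q} X_{\leq p+q}$, whence $yz = f(xw) \in \N^{n-q} Y_{\leq p+q}$. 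The other inclusion is symmetric.

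The only real subtlety is establishing the key identity $f(\cN^n X_{\leq p}) = \cN^n Y_{\leq p}$ at each filtered level — once this is in hand, all three axioms reduce to routine pushforward arguments using that $f$ is a homomorphism of split-filtered rings. This is the step that genuinely uses the filtered surjectivity of $f$, as opposed to just surjectivity of the underlying ring map.
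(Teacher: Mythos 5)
Your proof is correct and takes the same push-forward route as the paper's, but it is more careful, and the extra care is warranted. The paper's argument cites \cref{canonical image} for $f(\cN^nX)=\cN^nY$ and then ``applies $f$'' to \cref{alm can def}\eqref{alm can 2} and \eqref{alm can 3}; however those conditions live at each filtered level, so the identity actually needed is $f(\cN^n X_{\leq p}) = \cN^n Y_{\leq p}$. Your derivation of this from $\cN^nU_{\leq p}=\sum_{j\leq -n}U_{\leq p-j}U_{\leq j}$ (\cref{split the products}) is exactly right, and pinpoints where filtered-level surjectivity of $f$ enters. You also verify condition \eqref{alm can 1}, which the paper's proof does not address; this is a necessary step (your argument there needs no induction, though: the decomposition $Y_{\leq p}=Y'_p\oplus Y_{\leq p-1}$ already forces $\N^nY\cap Y'_p=f(\N^nX'_p)$ in one pass). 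One point worth stating rather than assuming: the equality $f(X_{\leq p})=Y_{\leq p}$ is indeed what ``surjective map of split-filtered rings'' must mean here (zero cokernel in the Abelian category of \cref{split colimits}), but it does not follow from surjectivity of the underlying ring map alone, so it deserves to be flagged as part of the hypothesis rather than used silently.
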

\begin{proof}

By \cref{canonical image} the image of the canonical neighborhoods in $X$ are canonical neighborhoods in $Y$, and by definition the images of neighborhoods in $X$ are neighborhoods in $Y$. The result then follows directly by applying the homomorphism $f$ to the properties of \cref{alm can def}~\eqref{alm can 2} and~\eqref{alm can 3}.
\end{proof}

\begin{lem} \label{almost canonical completion}
Suppose $U$ is a split-filtered associative ring with an almost canonical seminorm. Then the induced norm on the filtered completion $\filcomp{U}$ is also almost canonical.
\end{lem}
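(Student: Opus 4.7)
The plan is to verify each of the three defining conditions of \cref{alm can def} for the completion $\filcomp{U}$, using the corresponding condition on $U$.

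First I would dispose of condition~\eqref{alm can 1}, split-filteredness. Since $U$ is almost canonically seminormed, \cref{almost canonical is tight} says that its seminorm is tight, and by hypothesis it is split-filtered. Thus \cref{split completions} applies and gives that $\filcomp{U}$ is a split-filtered, tightly seminormed Abelian group with graded subgroup $\grcomp{U'}$.

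Next I would handle condition~\eqref{alm can 3}. This condition on $U$ is precisely the hypothesis of \cref{canonically continuous}, which guarantees that multiplication on $U$ extends by continuity to a multiplication on $\filcomp{U}$ making it an associative ring. By the very definition of the filtered completion, $\N^{n}\filcomp{U}_{\leq p}$ is the closure of the image of $\N^{n}U_{\leq p}$ inside the complete space $\filcomp{U}_{\leq p}$, and $\filcomp{U}_{\leq q}$ is the closure of $U_{\leq q}$. Because multiplication is jointly continuous on these filtered pieces and already sends $\N^{n}U_{\leq p}\cdot U_{\leq q}$ and $U_{\leq p}\cdot \N^{n}U_{\leq q}$ into $\N^{n-q}U_{\leq p+q}$ and $\N^{n}U_{\leq p+q}$ respectively, taking closures yields the two inclusions of condition~\eqref{alm can 3} for $\filcomp{U}$.

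The main obstacle is condition~\eqref{alm can 2}. Using \cref{almost canonical conditions}, it is enough to show that $\cN^{n}\filcomp{U}_{\leq p}$ is dense in $\N^{n}\filcomp{U}_{\leq p}$. The key observation will be that the image of $\cN^{n}U_{\leq p}$ in $\filcomp{U}$ is contained in $\cN^{n}\filcomp{U}_{\leq p}$: indeed, each element of $\cN^{n}U_{\leq p}=(U\cdot U_{\leq -n})\cap U_{\leq p}$ is a finite sum of products $u\cdot u'$ with $u'\in U_{\leq -n}\subset \filcomp{U}_{\leq -n}$, hence lies in $\filcomp{U}\cdot \filcomp{U}_{\leq -n}=\cN^{n}\filcomp{U}$ intersected with $\filcomp{U}_{\leq p}$. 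On the other hand, \cref{almost canonical conditions} applied to $U$ tells us $\cN^{n}U_{\leq p}$ is dense in $\N^{n}U_{\leq p}$, and the latter is by construction dense in $\N^{n}\filcomp{U}_{\leq p}$. Chaining these two density statements shows $\cN^{n}\filcomp{U}_{\leq p}$ is dense in $\N^{n}\filcomp{U}_{\leq p}$, completing condition~\eqref{alm can 2} and the proof.
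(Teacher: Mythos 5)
Your proof is correct and follows essentially the same route as the paper's: condition~\eqref{alm can 1} via tightness plus \cref{split completions}, condition~\eqref{alm can 2} by chaining the density of $\cN^nU_{\leq p}$ in $\N^nU_{\leq p}$ with the density of the latter in $\N^n\filcomp{U}_{\leq p}$, and condition~\eqref{alm can 3} by the continuity of multiplication from \cref{canonically continuous} and passing to closures. Your explicit justification that the image of $\cN^nU_{\leq p}$ lands inside $\cN^n\filcomp{U}_{\leq p}$ is a small but welcome bit of added detail that the paper leaves implicit.
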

\begin{proof}
By \cref{good and tight} and \cref{split completions}, $\filcomp U$ is split-filtered and tightly seminormed, implying that $\filcomp U$ satisfies \cref{alm can def}~\eqref{alm can 1}. We proceed to \cref{alm can def}~\eqref{alm can 2} using the equivalent conditions of \cref{almost canonical conditions}.
As the neighborhoods $\N^n\filcomp U_{\leq p}$ can be identified as the closure of the image of $\N^n U_{\leq p}$ and $\cN^nU_{\leq p}$ is dense in $\N^n U_{\leq p}$, it follows that the image of $\cN^nU_{\leq p}$ is dense in $\N^n \filcomp{U}_{\leq p}$. But as the image of $\cN^n U_{\leq p}$ is contained in $\cN^n \filcomp{U}_{\leq p}$, it follows that $\cN^n \filcomp{U}_{\leq p}$ is also dense in $\N^n\filcomp{U}_{\leq p}$, verifying \cref{alm can def}~\eqref{alm can 2}.

We verify the first part of \cref{alm can def}~\eqref{alm can 3} (the second part is analogous). The multiplication map $\N^n U_{\leq p} \times U_{\leq q} \to U_{\leq p + q}$  is continuous by \cref{canonically continuous} and it factors through  $\N^{n-q} U_{\leq p + q}$. By continuity, taking closures (of the images) in the completions $\filcomp U_{\leq p}, \filcomp U_{\leq q}, \filcomp U_{\leq p + q}$ of $U_{\leq p}, U_{\leq q}, U_{\leq p + q}$ respectively, we find that our map extends to a continuous map $\ov{\N^n U_{\leq p}} \times \ov{U_{\leq q}} \to \ov{U_{\leq p + q}}$ which factors through $\ov{\N^{n - q} U_{\leq p + q}}$. Since the closure of the image in a completion can be identified with the completion itself, and $\ov{\N^n U_{\leq p}} = \N^n\filcomp U_{\leq p}$, $\ov{\N^{n - q} U_{\leq p + q}} = \N^{n - q}\filcomp U_{\leq p + q}$, we interpret our multiplication as a continuous map $\N^n \filcomp U_{\leq p} \times {\filcomp U_{\leq q}} \to {\filcomp U_{\leq p + q}}$ which factors through $\N^{n - q} \filcomp U_{\leq p + q}$, as desired.
\end{proof}

\subsection{Completed tensors}
Completed tensors, introduced here in \cref{def:CT}, make a number of arguments  more natural.

\begin{defn}[Seminorm on tensors] \label{tensor neighborhoods}
Let $R$ be a seminormed ring, $M$ a right seminormed $R$-module and $N$ a left seminormed $R$-module. We define a seminorm on $M \otimes_R N$ by the following neighborhoods of $0$:
\[
\Nn{n} (M \otimes_R N) = 
\sum_{p + q = n} \im\big((\Nn{p}M \otimes_{\Nn{0} R} \Nn{q} N) \to M \otimes_R N\big).
\]
\end{defn}
\begin{defn}\label{def:CT}(Complete tensors)
Let $R$ be a seminormed ring, $M$ a right seminormed $R$-module, and $N$ a left seminormed $R$-module. The complete tensor product $M \wh \otimes_R N$ is defined to be the completion of the seminormed Abelian group $M \otimes_R N$ with seminorm as described in \cref{tensor neighborhoods}.
\end{defn}

\begin{deflem}[Complete tensors, filtered and graded] \label{complete ft def}
Let $R$ be a seminormed ring, $M$ a right seminormed $R$-module and $N$ a left seminormed $R$-module. If $R, M, N$ are graded then we can construct a short homomorphism $M \times N \to M \gctensor_R N$ which is universal for $R$-bilinear maps to graded-complete Abelian groups. If $R, M, N$ are filtered then we can construct a short homomorphism $M \times N \to M \fctensor_R N$ which is universal for $R$-bilinear maps to filtered-complete Abelian groups.
\end{deflem}

\begin{proof}
These are  $M \gctensor_R N = \grcomp{M \otimes_R N}$ and $M \fctensor_R N = \filcomp{M \otimes_R N}$ respectively.
\end{proof}

\subsection{Discrete quotients}

This section will be particularly useful in construction of generalized Verma modules and new algebraic structures (the mode transition algebras of \cref{sec:ModeAlgebra}) which will play an important role for us.

If $U$ is a filtered ring, then $U_{\leq 0}$ is always a subring and $U_{\leq -n}$ for $n > 0$ is a two-sided ideal of $U_{\leq 0}$. Moreover, for $n > 0$, we have 
$U \otimes_{U_{\leq 0}} U_{\leq 0}/U_{\leq -n} = U/UU_{\leq -n} = U/\cN^nU$.

\begin{lem} \label{discrete lemma}
Suppose $U$ is a split-filtered almost canonically seminormed ring. 
Then
\[U \fctensor_{U_{\leq 0}} U_{\leq 0}/U_{\leq -n} \cong 
\filcomp U/[\N^n \filcomp U \cong 
\grcomp{U'}/\N^n \grcomp{U'} \cong U' \gctensor_{U'_{\leq 0}} U'_{\leq 0}/U'_{\leq -n}\]
with isomorphism induced by the continuous map 
$U \otimes_{U_{\leq 0}} U_{\leq 0}/U_{\leq -n} \to U/\N^n U$ via $u \otimes \ov{a} \mapsto \ov{ua}$.

In particular, as a topological space, these have the discrete topology.
\end{lem}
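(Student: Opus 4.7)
My plan is to establish the four-way isomorphism by identifying each term with a common quotient, namely $U/\N^n U$ (filtered side) or $U'/\N^n U'$ (graded side). The engine driving everything is almost canonicity of the seminorm, which makes $\cN^n U$ dense in $\N^n U$ in a very precise sense; the completion then flattens the distinction between these two, and the discrete topology claim falls out automatically.

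First I would establish $U \fctensor_{U_{\leq 0}} U_{\leq 0}/U_{\leq -n} \cong U/\N^n U$. As Abelian groups, $U \otimes_{U_{\leq 0}} U_{\leq 0}/U_{\leq -n} = U/UU_{\leq -n} = U/\cN^n U$, and the tensor seminorm from \cref{tensor neighborhoods} descends to neighborhoods $(\N^k U + \cN^n U)/\cN^n U$. The crucial computation is the stabilization identity
\[ \N^n U_{\leq p} = \cN^n U_{\leq p} + \cN^{n+1}U_{\leq p} + \cdots + \cN^{k-1}U_{\leq p} + \N^k U_{\leq p} = \cN^n U_{\leq p} + \N^k U_{\leq p} \]
for every $k \geq n$, obtained by iterating \cref{alm can def}\eqref{alm can 2} and using $\cN^j U \subset \cN^n U$ for $j \geq n$. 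Consequently $\N^k(U/\cN^n U)$ stabilizes at $\N^n U/\cN^n U$ for all $k \geq n$, so the intersection of all neighborhoods equals $\N^n U/\cN^n U$; the separated quotient is $U/\N^n U$, whose residual topology is trivial at index $n$ and therefore discrete (and hence its own completion). This both gives the first isomorphism and establishes the final ``discrete topology'' claim of the lemma.

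For the second isomorphism $U/\N^n U \cong \filcomp U/\N^n \filcomp U$, the inclusion $\iota \colon U \to \filcomp U$ induces a map of quotients; injectivity reduces to $U \cap \N^n \filcomp U = \N^n U$, which follows because $\iota$ is isometric modulo its kernel $\bigcap_k \N^k U \subset \N^n U$, and surjectivity is the standard observation that a Cauchy sequence representing $y \in \filcomp U$ is eventually constant modulo $\N^n \filcomp U$. For the third isomorphism $\filcomp U/\N^n \filcomp U \cong \grcomp{U'}/\N^n \grcomp{U'}$, I would invoke \cref{split completions}: $\filcomp U$ is split-filtered with homogeneous part $\grcomp{U'}$, whence $\grcomp{U'} \cap \N^n \filcomp U = \N^n \grcomp{U'}$ (giving injectivity on each homogeneous piece and hence on the whole direct sum); surjectivity comes from density of $\grcomp{U'}$ in $\filcomp U$, supplied by \cref{tight density} together with tightness of the seminorm on $\filcomp U$ (\cref{almost canonical is tight} combined with \cref{split completions}). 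Finally, the graded identification $\grcomp{U'}/\N^n \grcomp{U'} \cong U' \gctensor_{U'_{\leq 0}} U'_{\leq 0}/U'_{\leq -n}$ is obtained by running the first two steps verbatim with $U'$ in place of $U$ and $\gctensor$ in place of $\fctensor$, using \cref{alm can graded lem} to verify that the restricted seminorm on $U'$ is almost canonical. The main technical obstacle throughout is the stabilization identity in step one; once that is in hand, the remaining work is formal density/completion bookkeeping, and the map $u \otimes \ov a \mapsto \ov{ua}$ clearly induces each of the claimed isomorphisms.
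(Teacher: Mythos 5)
Your proof is correct but reorganizes the paper's argument in a genuinely different way. You first show that $U\fctensor_{U_{\leq 0}} U_{\leq 0}/U_{\leq -n}$ is the \emph{discrete} group $U/\N^n U$, making the stabilization identity $\N^n U_{\leq p} = \cN^n U_{\leq p} + \N^k U_{\leq p}$ (the second item of \cref{almost canonical conditions}) the central pivot; the paper instead identifies the tensor with $\filcomp U/\N^n\filcomp U$ directly by invoking $\wh{Z/W}\cong\wh Z/\wh W$ (\cref{subquotient completions}) together with $\ov{\cN^n U_{\leq p}}=\N^n U_{\leq p}$, and only extracts discreteness at the very end from the graded side. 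Front-loading the discreteness pays off in your third step: once the target $\filcomp U/\N^n\filcomp U$ is known to be $U/\N^n U$, surjectivity of $\grcomp{U'}\to\filcomp U/\N^n\filcomp U$ follows from density of $\grcomp{U'}$ in $\filcomp U$ by approximating to within the single fixed neighborhood $\N^n$, whereas the paper takes the longer route of factoring the map through the finite (hence complete) direct sum $\bigoplus_{-m<i\leq p}\grcomp{U'}_i$ for $m\gg0$ before combining density with closedness of the image; your version is cleaner here. Two small things to be careful about: phrase the density-implies-surjectivity step so it is clear you only need $\N^n$-approximability and are not invoking the false general principle that a continuous map with dense image is surjective; and note that your formula $(\N^k U + \cN^n U)/\cN^n U$ for the induced neighborhoods on $U/\cN^n U$ presupposes a choice of seminorm on the factor $U_{\leq 0}/U_{\leq -n}$ --- the paper makes the same implicit convention (the quotient/discrete one), under which your identification is correct, but since \cref{complete ft def} takes the seminorm of \cref{tensor neighborhoods} as primitive, this is worth flagging explicitly.
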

\begin{proof}
It is immediate that, assuming the claimed equalities hold, the natural quotient topology on $\grcomp{U'}/\N^n \grcomp{U'}$ is discrete.

As we have noticed, $U \otimes_{U_{\leq 0}} U_{\leq 0}/U_{\leq -n} \cong U/\cN^nU$. We can therefore identify the separated completion of $U_{\leq p} \otimes_{U_{\leq 0}} U_{\leq 0}/U_{\leq -n}$ with the separated completion of $U_{\leq p}/\cN^nU_{\leq p}$. But since $\ov{\cN^nU_{\leq p}} = \N^n U_{\leq p}$ by \cref{almost canonical conditions}, the isomorphism $U \fctensor_{U_{\leq 0}} U_{\leq 0}/U_{\leq -n} \cong \filcomp U/\N^n \filcomp U$ follows by \cref{subquotient completions}.

Next, we note that the natural map $\grcomp{U'} \to \filcomp U/\N^n \filcomp U$ has kernel $\N^n \grcomp{U'}$. As $U'_{\leq -m} \subset \cN^n U \subset \N^n U$ for $m \gg0$ it follows that our map $U' \to \filcomp U/\N^n \filcomp U$, which has dense image by \cref{tight density} factors through the surjection $U' / U'_{\leq -m} \to U' / \N^n U'$. 
In particular, the restriction of this map to $U'_{\leq p} / U'_{\leq -m}$ factors through $\wh{U'_{\leq p}/U'_{\leq -m}}$ and hence the image of this part coincides with the image of $\grcomp{U'_{\leq p}}$. But $\wh{U'_{\leq p}/U'_{\leq -m}} = \bigoplus\limits_{-m < i \leq p} \wh U'_i = \bigoplus\limits_{-m < i \leq p} \grcomp{U'}_i$. We therefore find that the map $\grcomp{U'}_{\leq p} \to \filcomp U_{\leq p}/\N^n \filcomp U_{\leq p}$ factors through $\bigoplus\limits_{-m < i \leq p} \grcomp{U'}_i$ which is a complete space. As this map has dense image, it is surjective and from our prior description of the kernel, we see $\grcomp{U'}_{\leq p}/\N^n \grcomp{U'}_{\leq p} \cong \filcomp U_{\leq p}/\N^n \filcomp U_{\leq p}$. Taking a union over all $p$ gives the identification $\filcomp U \cong \grcomp{U'}/\N^n \grcomp{U'}$. 

Making the same observations as in the beginning of the proof with $U'$ instead of $U$, we may identify the separated completion of $U'_{\leq p} \otimes_{U'_{\leq 0}} U'_{\leq 0}/U'_{\leq -n}$ with the separated completion of $U'_{\leq p}/\cN^nU'_{\leq p}$. Choosing $m$ as in the previous paragraph, we find that we have a surjective map $U' / U'_{\leq -m} \to U' / \N^n U'$ which allows us to identify the separated completion of $U'_{\leq p}/\cN^nU'_{\leq p}$ with $\grcomp{U'}_{\leq p}/\N^n \grcomp{U'}_{\leq p}$ as desired.
\end{proof}

\subsection{Triples of associative algebras}\label{sec:Triples}

In this section we collect some of our previous facts which will be useful for the construction of our universal enveloping algebras of a VOA. As we simultaneously construct and relate three versions of the enveloping algebra (left, right and finite \cref{enveloping algebra defs}), we will therefore introduce notions for working with triples of associative algebras here.

\begin{defn} \label{good definition}
A good triple of associative algebras $(U^{\mathsf{L}}, U', U^{\mathsf{R}})$ consists of the data of a left split-filtered associative algebra $U^{\mathsf{L}}$,  a right split-filtered associative algebra $U^{\mathsf{R}}$, and a graded subalgebra $U'$ of both $U^{\mathsf{L}}$ and $U^{\mathsf{R}}$.
\end{defn}
\begin{defn}
A morphism of good triples $(X^{\mathsf{L}}, X', X^{\mathsf{R}}) \to 	(Y^{\mathsf{L}}, Y', Y^{\mathsf{R}})$ is a pair of degree $0$ maps of split-filtered associative algebras $X^{\mathsf{L}} \to Y^{\mathsf{L}}$ and $X^{\mathsf{R}} \to Y^{\mathsf{R}}$ which agree on $X' \to Y'$.
\end{defn}
\begin{defn} \label{good seminorm def}
A good seminorm on a good triple of associative algebras $(U^{\mathsf{L}}, U', U^{\mathsf{R}})$ consists of almost canonical split-filtered seminorms on $U^{\mathsf{L}}$ and $U^{\mathsf{R}}$ defined by neighborhoods $\N_\mathsf{L}^n U^{\mathsf{L}}$ and $\N_\mathsf{R}^n U^{\mathsf{R}}$ respectively such that $\N_\mathsf{L}^n U'_p = \N_\mathsf{R}^{n+p} U'_p$.
\end{defn}
\begin{remark} \label{zero neighborhoods}
We note that in the case $p = 0$ we have $\N_\mathsf{L}^n U'_0 = \N_\mathsf{R}^n U'_0$, and in this case we can unambiguously write $\N^n U'_0$ for each. Also in this case, it follows from \cref{alm can def}~\eqref{alm can 3} that $\N^n U'_0$ is a two sided ideal of $U'_0$.
\end{remark}

\begin{lem} \label{good ideals from homogeneous}
Suppose $(U^{\mathsf{L}}, U', U^{\mathsf{R}})$ is a good triple of associative unital algebras and $I \triangleleft U'$ is a homogeneous ideal. Let $I^{\mathsf{L}} = U^{\mathsf{L}} I U^{\mathsf{L}}$ and $I^{\mathsf{R}} = U^{\mathsf{R}} I U^{\mathsf{R}}$ be the ideals of $U^{\mathsf{L}}$ and $U^{\mathsf{R}}$ generated by $I$. Then $(I^{\mathsf{L}}, I, I^{\mathsf{R}})$ is a good triple (of ideals). 
\end{lem}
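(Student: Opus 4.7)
The plan is to check that the definition of a good triple (\cref{good definition}) is satisfied for $(I^{\mathsf{L}}, I, I^{\mathsf{R}})$ by exhibiting split filtrations on $I^{\mathsf{L}}$ and $I^{\mathsf{R}}$ whose graded part is precisely $I$. Since the hypothesis asserts that $I$ is a homogeneous (hence graded) ideal of $U'$, we can first regard $I$ itself as a split-filtered subgroup of $U^{\mathsf{L}}$ via \cref{graded as split-filtered}, using the inclusion $I \hookrightarrow U' \hookrightarrow U^{\mathsf{L}}$ of a graded group into the splitting of the filtered group $U^{\mathsf{L}}$.

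The key input then is \cref{split the products}, which describes both the filtered and graded parts of a product of split-filtered subgroups of a split-filtered ring. Applying it iteratively (first to $U^{\mathsf{L}} \cdot I$, then to $(U^{\mathsf{L}} \cdot I) \cdot U^{\mathsf{L}}$) yields the formulas
\[
(I^{\mathsf{L}})_{\leq n} \;=\; \sum_{p+q+r=n} U^{\mathsf{L}}_{\leq p}\, I_{\leq q}\, U^{\mathsf{L}}_{\leq r},
\qquad
(I^{\mathsf{L}})'_n \;=\; \sum_{p+q+r=n} U'_p\, I_q\, U'_r,
\]
which already exhibits $I^{\mathsf{L}}$ as split-filtered with a graded splitting sitting inside $U'$. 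The main (though routine) step is then to identify this splitting with $I$ itself: since $I$ is a two-sided ideal of the graded ring $U'$, the inclusion $\sum_{p+q+r=n} U'_p I_q U'_r \subset I_n$ is automatic, and the reverse inclusion follows from unitality of $U^{\mathsf{L}}$, which gives $1 \in U'_0$ and hence $I_n = 1\cdot I_n \cdot 1 \subset U'_0 I_n U'_0$. Thus $(I^{\mathsf{L}})'_n = I_n$, so $I \subset I^{\mathsf{L}}$ is a graded subgroup that splits the induced filtration on $I^{\mathsf{L}}$.

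The identical argument, with left/right filtrations interchanged, shows $I \subset I^{\mathsf{R}}$ is a graded subgroup splitting the induced right filtration on $I^{\mathsf{R}}$. By construction $I^{\mathsf{L}}$ is a two-sided ideal of $U^{\mathsf{L}}$ (hence an associative, possibly non-unital, algebra) and similarly for $I^{\mathsf{R}}$, while $I$ is an associative graded subalgebra of both. Unwinding \cref{good definition}, this is exactly the data of a good triple $(I^{\mathsf{L}}, I, I^{\mathsf{R}})$. The main (minor) obstacle is simply making sure that unitality of $U^{\mathsf{L}}$ and $U^{\mathsf{R}}$ ensures $1 \in U'_0$, so that the triple products $U' I U'$ collapse to $I$; this is the only place where the unitality hypothesis is used.
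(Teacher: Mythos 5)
Your proof is correct and ends up doing essentially the same computation as the paper, just with different packaging. Where you apply \cref{split the products} iteratively to $U^{\mathsf{L}}\cdot I$ and then $(U^{\mathsf{L}}\cdot I)\cdot U^{\mathsf{L}}$ to extract the explicit formulas for $(I^{\mathsf{L}})_{\leq n}$ and $(I^{\mathsf{L}})'_n$, the paper instead first observes that $(I,I,I)$ is itself a good triple (regarding the graded $I$ as split-filtered via \cref{graded as split-filtered}) and then invokes \cref{split ideal} as a black box, which directly asserts that the ideal generated by a split-filtered subgroup $X$ is split-filtered with homogeneous part the ideal of $U'$ generated by $X'$. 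Both \cref{split the products} and \cref{split ideal} are proved by the same mechanism (push a split filtration through the multiplication map using \cref{split images}), so the underlying argument is identical. Your final step — that $\sum_{p+q+r=n} U'_p I_q U'_r = I_n$ by two-sidedness of $I$ in $U'$ plus $1\in U'_0$ — is exactly what the paper's one-line remark "the ideal generated by $I$ in $U'$ is $I$ itself" encodes, and you are right to flag that unitality is precisely what is needed there. The only advantage of the paper's phrasing is brevity; your version has the minor advantage of making the degree-$n$ formulas visible.
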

\begin{proof}
We note that the triple $(I, I, I)$ is good, where we regard $I$ itself as left and right filtered as in \cref{graded as split-filtered}. The result now follows from \cref{split ideal}, in light of the observation that the ideal generated by $I$ in $U'$ is $I$ itself.
\end{proof}

The following Lemma is an immediate consequence of \cref{closure split} (note that good seminorms give rise to almost canonical ones which are tight by \cref{almost canonical is tight}).
\begin{lem} \label{good ideal closures}
Let $(U^{\mathsf{L}}, U', U^{\mathsf{R}})$ be a good triple of associative unital algebras and $(I^{\mathsf{L}}, I, I^{\mathsf{R}})$ a good triple of ideals. Then the closures $(\ov I^{\mathsf{L}}, \ov I, \ov I^{\mathsf{R}})$ is a good triple of ideals.
\end{lem}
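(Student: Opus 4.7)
The plan is to invoke \cref{closure split} twice, once on each side, and then check that closure preserves the ideal structure by continuity of multiplication.

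First I would observe that the hypotheses of \cref{closure split} are satisfied for both $U^{\mathsf{L}}$ and $U^{\mathsf{R}}$: the good seminorms are split-filtered by definition and tight by \cref{almost canonical is tight}, and in the intended context of application (to $\widehat{\Uu}^{\mathsf{L}}$ and $\widehat{\Uu}^{\mathsf{R}}$) the ambient algebras are complete. Applying \cref{closure split} to the split-filtered subgroup $I^{\mathsf{L}} \subset U^{\mathsf{L}}$ yields that $\ov{I}^{\mathsf{L}}$ is split-filtered with graded part the closure of $I \subset U'$ computed inside $U^{\mathsf{L}}$, and similarly $\ov{I}^{\mathsf{R}}$ is split-filtered with graded part the closure of $I$ computed inside $U^{\mathsf{R}}$. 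A small compatibility check is needed here: the two potentially different closures of $I$ agree, since the good seminorm condition $\N_\mathsf{L}^n U'_p = \N_\mathsf{R}^{n+p} U'_p$ (see \cref{good seminorm def} and \cref{zero neighborhoods}) forces the restrictions of the left and right seminorms to $U'$ to induce the same topology. Write $\ov I$ for this common closure.

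Next I would verify that these closures remain ideals. Since the seminorms are almost canonical, \cref{alm can def}\eqref{alm can 3} supplies the hypothesis of \cref{canonically continuous}, so left and right multiplications by any fixed element of $U^{\mathsf{L}}$ (resp.\ $U^{\mathsf{R}}$) restrict to continuous endomorphisms of each filtered piece $U^{\mathsf{L}}_{\leq p}$ (resp.\ $U^{\mathsf{R}}_{\geq p}$). These continuous maps carry $I^{\mathsf{L}}$ into itself (resp.\ $I^{\mathsf{R}}$ into itself), hence also send the closure $\ov{I}^{\mathsf{L}}$ into $\ov{I}^{\mathsf{L}}$ (resp.\ $\ov{I}^{\mathsf{R}}$ into $\ov{I}^{\mathsf{R}}$), confirming that these closures are two-sided ideals. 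Intersecting $\ov{I}^{\mathsf{L}}$ with the graded subalgebra $U'$ then recovers $\ov I$ as a graded ideal of $U'$, which is automatically a (non-unital) graded subalgebra of both $\ov{I}^{\mathsf{L}}$ and $\ov{I}^{\mathsf{R}}$.

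Putting these two steps together, the triple $(\ov{I}^{\mathsf{L}}, \ov I, \ov{I}^{\mathsf{R}})$ satisfies the conditions of \cref{good definition} for a good triple of (non-unital) associative algebras, completing the proof. The only genuine content is the unambiguity of $\ov I$ as the shared graded part, and this unambiguity is built into the compatibility axiom for good seminorms; everything else is a direct invocation of \cref{closure split} together with continuity of multiplication.
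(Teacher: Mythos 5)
Your proof is correct and follows the same route the paper intends: the paper dispatches this lemma in one line as "an immediate consequence of \cref{closure split}," and you have unpacked exactly what that consequence consists of — applying \cref{closure split} to each side, verifying that the two possible closures of $I$ coincide via the good-seminorm compatibility $\N_\mathsf{L}^n U'_p = \N_\mathsf{R}^{n+p} U'_p$, and confirming that closures of ideals remain ideals via continuity of multiplication (\cref{canonically continuous} with \cref{alm can def}\eqref{alm can 3}). One small wording issue: you describe $\ov I$ as obtained by "intersecting $\ov I^{\mathsf{L}}$ with $U'$," but \cref{closure split} defines the graded part of $\ov I^{\mathsf{L}}$ directly as the degreewise closure of $I'$ inside $U'$, not as a set-theoretic intersection; the conclusion you draw is still right since $I$ is a homogeneous ideal of $U'$ and multiplication in $U'$ is continuous (being the restriction of the continuous multiplication of $U^{\mathsf{L}}$), so its closure in $U'$ is again a homogeneous ideal.
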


\begin{remark} \label{good seminorm remark}
If our seminorms on a triple $(U^{\mathsf{L}}, U', U^{\mathsf{R}})$ are canonical, they are easily verified to be good: it is split-filtered by \cref{canonical lemma} and satisfies the other conditions of \cref{alm can def} by definition of the canonical seminorm and by \cref{canonical identity}.
\end{remark}

\begin{defn}
If $(U^{\mathsf{L}}, U', U^{\mathsf{R}})$ is a good triple with a good seminorm, it's completion is the triple $\left(\filcomp{U^{\mathsf{L}}}, \grcomp{U'}, \filcomp{U^{\mathsf{R}}}\right)$. We say that triple is complete if these are the same seminormed triple under the canonical map.
\end{defn}

The following two results show that, in the appropriate sense, the class of good triples with good seminorms are closed under completions and homomorphic images.

\begin{cor}\label{good quotients}
If $(X^{\mathsf{L}}, X', X^{\mathsf{R}}) \to 	(Y^{\mathsf{L}}, Y', Y^{\mathsf{R}})$ is a surjective map of good triples, and $(X^{\mathsf{L}}, X', X^{\mathsf{R}})$ has a good seminorm, the induced seminorm on $(Y^{\mathsf{L}}, Y', Y^{\mathsf{R}})$ is good.
\end{cor}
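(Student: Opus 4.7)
The plan is to reduce the statement to three verifications: (i) that the induced seminorm on $Y^{\mathsf{L}}$ is almost canonical, (ii) that the induced seminorm on $Y^{\mathsf{R}}$ is almost canonical, and (iii) that the compatibility identity $\N_\mathsf{L}^n Y'_p = \N_\mathsf{R}^{n+p} Y'_p$ required by \cref{good seminorm def} holds. Items (i) and (ii) are immediate consequences of \cref{almost canonical image} applied to the surjective split-filtered ring homomorphisms $X^{\mathsf{L}} \twoheadrightarrow Y^{\mathsf{L}}$ and $X^{\mathsf{R}} \twoheadrightarrow Y^{\mathsf{R}}$, so the content of the proof is concentrated in (iii).

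For (iii), I first observe that the morphism of good triples restricts to a surjection $X' \twoheadrightarrow Y'$ of graded subalgebras (this follows from surjectivity on each filtered piece, using the splittings $X^{\mathsf{L}}_{\leq p} = X'_p \oplus X^{\mathsf{L}}_{\leq p-1}$ and $Y^{\mathsf{L}}_{\leq p} = Y'_p \oplus Y^{\mathsf{L}}_{\leq p-1}$ together with induction on $p$). The key lemma I would then prove is the identity
\[
f\bigl(\N_\mathsf{L}^n X^{\mathsf{L}}_{\leq p}\bigr) \cap Y'_p \;=\; f\bigl(\N_\mathsf{L}^n X'_p\bigr),
\]
and symmetrically for the right-filtered side. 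The inclusion $\supseteq$ is obvious. For the reverse inclusion, given $y = f(x)$ with $x \in \N_\mathsf{L}^n X^{\mathsf{L}}_{\leq p}$ and $y \in Y'_p$, I would use the split-filtered property of the almost canonical seminorm on $X^{\mathsf{L}}$ (condition~\eqref{alm can 1} of \cref{alm can def}) to decompose $x = x'_p + x_{<p}$ with $x'_p \in \N_\mathsf{L}^n X'_p$ and $x_{<p} \in \N_\mathsf{L}^n X^{\mathsf{L}}_{\leq p-1}$. Then $y - f(x'_p) \in Y'_p$ (since both $y$ and $f(x'_p)$ lie in $Y'_p$) while simultaneously $y - f(x'_p) = f(x_{<p}) \in Y^{\mathsf{L}}_{\leq p-1}$, and the splitting $Y'_p \cap Y^{\mathsf{L}}_{\leq p-1} = 0$ forces $y = f(x'_p) \in f(\N_\mathsf{L}^n X'_p)$.

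With the key lemma in hand, the compatibility is straightforward bookkeeping: since the seminorm on $Y'$ is the restriction of either the seminorm on $Y^{\mathsf{L}}$ or the seminorm on $Y^{\mathsf{R}}$, and by definition of the induced seminorms we have $\N_\mathsf{L}^n Y^{\mathsf{L}}_{\leq p} = f(\N_\mathsf{L}^n X^{\mathsf{L}}_{\leq p})$ and analogously on the right, I would compute
\[
\N_\mathsf{L}^n Y'_p \;=\; \N_\mathsf{L}^n Y^{\mathsf{L}}_{\leq p} \cap Y'_p \;=\; f\bigl(\N_\mathsf{L}^n X'_p\bigr) \;=\; f\bigl(\N_\mathsf{R}^{n+p} X'_p\bigr) \;=\; \N_\mathsf{R}^{n+p} Y^{\mathsf{R}}_{\geq p} \cap Y'_p \;=\; \N_\mathsf{R}^{n+p} Y'_p,
\]
where the middle equality uses the compatibility hypothesis on the good seminorm of $(X^{\mathsf{L}}, X', X^{\mathsf{R}})$.

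I do not anticipate any real obstacle here; the only slightly delicate step is the key lemma about intersections, which is the one place where the splitting hypothesis must be used essentially. Once that is established, the remainder is formal manipulation of the definitions and a direct appeal to \cref{almost canonical image}.
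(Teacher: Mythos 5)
Your proof is correct and follows the same route as the paper, which simply declares the result an immediate consequence of \cref{cokernel splitting} and \cref{almost canonical image}; your items (i) and (ii) are exactly the appeal to \cref{almost canonical image}. Your item (iii) makes explicit what the paper glosses over: since by \cref{seminorm restriction def} the neighborhoods on $Y'_p$ are a priori $f(\NL{n}X^{\mathsf{L}}_{\leq p}) \cap Y'_p$ rather than $f(\NL{n}X'_p)$, the compatibility $\NL{n}Y'_p = \NR{n+p}Y'_p$ does require your key lemma identifying these, and your argument for it (decompose $x = x'_p + x_{<p}$ using the split-filtered seminorm, note $f(x'_p)\in Y'_p$ and $f(x_{<p})\in Y^{\mathsf{L}}_{\leq p-1}$ since the morphism has degree $0$, then invoke $Y'_p \cap Y^{\mathsf{L}}_{\leq p-1} = 0$) is a correct and genuinely needed use of the splitting hypothesis.
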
 

\begin{proof}
This is an immediate consequence of \cref{cokernel splitting} and \cref{almost canonical image}.	
\end{proof}

\begin{cor} \label{good completions}
Good triples with good seminorms are closed under the operation of completion.
\end{cor}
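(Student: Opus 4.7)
The plan is to assemble this corollary directly from the completion results proved earlier in the appendix, verifying each defining condition of a good triple with good seminorm for the completed triple $(\filcomp{U^{\mathsf{L}}}, \grcomp{U'}, \filcomp{U^{\mathsf{R}}})$. Nothing genuinely new needs to be invented, but care is required to ensure that the three algebras in the completed triple fit together in the same compatible way as in the original.

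First I would verify the underlying split-filtered structure. By \cref{split completions}, applied to $U^{\mathsf{L}}$ (as a left split-filtered seminormed Abelian group with graded subgroup $U'$) and to $U^{\mathsf{R}}$, the completions $\filcomp{U^{\mathsf{L}}}$ and $\filcomp{U^{\mathsf{R}}}$ are again split-filtered and tightly seminormed, with graded subgroups given in both cases by $\grcomp{U'}$; in particular, $\grcomp{U'}$ embeds canonically in each of the two completions. To upgrade this to an assertion about associative algebras, I would invoke \cref{canonically continuous}: the almost canonicity of the seminorms on $U^{\mathsf{L}}$ and $U^{\mathsf{R}}$ includes condition \cref{alm can def}\eqref{alm can 3}, which is exactly the hypothesis needed to conclude that multiplication is continuous on each filtered piece and therefore extends to the completion. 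This produces associative algebra structures on $\filcomp{U^{\mathsf{L}}}$ and $\filcomp{U^{\mathsf{R}}}$ that restrict to the natural graded algebra structure of $\grcomp{U'}$, so the triple is indeed a good triple of algebras.

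Next I would verify that each of the induced seminorms is almost canonical and satisfies the compatibility axiom. The first half is the content of \cref{almost canonical completion}, applied separately to $U^{\mathsf{L}}$ and $U^{\mathsf{R}}$. For the compatibility $\N_\mathsf{L}^{n}\grcomp{U'}_{p}=\N_\mathsf{R}^{n+p}\grcomp{U'}_{p}$, I would use that, by definition, the neighborhoods in the completion are the closures (in $\grcomp{U'}_p$) of the images of the corresponding neighborhoods in $U'_p$, combined with the fact that on $U'_p$ the goodness hypothesis already gives $\N_\mathsf{L}^{n}U'_{p}=\N_\mathsf{R}^{n+p}U'_{p}$; taking closures of this equality in $\grcomp{U'}_p$ yields the desired identity.

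I do not expect any serious obstacle: the proof is really a bookkeeping argument, orchestrating \cref{split completions}, \cref{canonically continuous}, and \cref{almost canonical completion}, together with a short closure argument for the final compatibility. The only mildly delicate point is ensuring that the inclusions $\grcomp{U'}\hookrightarrow\filcomp{U^{\mathsf{L}}}$ and $\grcomp{U'}\hookrightarrow\filcomp{U^{\mathsf{R}}}$ produced by the two applications of \cref{split completions} genuinely coincide with the natural map $\grcomp{U'}\to\filcomp{U^{\mathsf{L}}},\filcomp{U^{\mathsf{R}}}$ induced by functoriality of completion from $U'\hookrightarrow U^{\mathsf{L}},U^{\mathsf{R}}$; this is transparent from the construction of the splitting in \cref{split completions} as the image of $\grcomp{X'}$ inside $\filcomp X$.
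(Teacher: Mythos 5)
Your proof is correct and follows the same underlying strategy as the paper, which simply cites \cref{almost canonical completion} as an ``immediate consequence.'' You go further by explicitly checking the remaining structural conditions the paper leaves tacit, most notably the compatibility $\N_\mathsf{L}^{n}\grcomp{U'}_{p}=\N_\mathsf{R}^{n+p}\grcomp{U'}_{p}$ via the closure argument and the agreement of the two copies of $\grcomp{U'}$ inside $\filcomp{U^{\mathsf{L}}}$ and $\filcomp{U^{\mathsf{R}}}$; these are genuine parts of the definition of a good seminorm that \cref{almost canonical completion} alone does not address, so your more careful bookkeeping is warranted.
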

\begin{proof}
This is an immediate consequence of \cref{almost canonical completion}.	
\end{proof}

\section{Generalized Verma modules and mode algebras}\label{sec:Generalized}

In this section, our basic object will be a graded seminormed algebra. While such an algebra may come as part of a triple as described in the previous section, the graded structure will play the decisive role here. We will, however, occasionally regard our graded algebra as also (split-)filtered as in \cref{graded as split-filtered}.

\subsection{Generalized higher Zhu algebras and Verma modules}\label{sec:GHZ}

\begin{defn}
\label{def:GHZ}
For a graded, seminormed unital algebra $U$, we define the \textit{generalized n-th Zhu algebra} as $\Aa_n(U) = U_0/\Nn {n+1} U_0$.
\end{defn}

For $\alpha \in U_0$, we write $[\alpha]_n$ to denote the image of $\alpha$ in $\Aa_n(U)$, and write $[\alpha]$ if $n$ is understood. Observe that $\Aa_n(U)=0$ if $n \leq -1$ since $\Nn{i}U_0=U_0$ whenever $i \leq 0$.

\begin{defn}
If $U$ is a graded algebra with an almost canonical seminorm and $W_0$ is a left $\Aa_n(U)$-module, we define a $U$-module $\PhiL_n(W_0)$ by
\[\PhiL_n(W_0) = \left(U/\NL {n+1} U\right) \otimes_{U_0} W_0 = \left(U/\NL {n+1} U\right) \otimes_{\Aa_n(U)} W_0.\]
\end{defn}

We will generally write $\PhiL(W_0)$ for $\PhiL_0(W_0)$. Following \cite{FrenkelZhu, DongLiMason:HigherZhu} we define:

\begin{defn}
If $U$ is a graded algebra with an almost canonical seminorm and $W$ is a left $U$-module, we define an $\Aa_n(U)$-module $\Omega_n(W)$ by
\[\Omega_n(W) = \{w \in W \mid (\NL {n+1} U)w = 0\}.\]
\end{defn}

We can show that the functors $\PhiL$ have the following universal property:
\begin{prop} \label{universal property}
Let $M$ be a $U$-module and $W_0$ an $\Aa_n(U)$-module. Then there is a natural isomorphism of bifunctors:
\[\Hom_{\Aa_n(U)}(W_0, \Omega_n(M)) = \Hom_{U}(\PhiL_n(W_0), M).\]
\end{prop}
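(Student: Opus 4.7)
The plan is to prove this as a classical tensor-hom adjunction, combined with an elementary identification of the Hom out of a cyclic module. To set the stage, observe that by \cref{zero neighborhoods} the subset $\Nn{n+1} U_0$ is a two-sided ideal of $U_0$, so $\Aa_n(U) = U_0/\Nn{n+1}U_0$ is a unital algebra and the natural projection $U_0 \to \Aa_n(U)$ makes every $\Aa_n(U)$-module into a $U_0$-module. Moreover, since $\Nn{n+1}U_0 = \NL{n+1}U \cap U_0 \subset \NL{n+1}U$, the $U_0$-action on $\Omega_n(M)$ factors through $\Aa_n(U)$ for any $U$-module $M$.

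The first step is to apply the usual tensor-hom adjunction to the definition of $\PhiL_n$, giving
\[
\Hom_U\bigl((U/\NL{n+1}U) \otimes_{U_0} W_0,\ M\bigr) \;=\; \Hom_{U_0}\bigl(W_0,\ \Hom_U(U/\NL{n+1}U,\ M)\bigr).
\]
The second step is to identify $\Hom_U(U/\NL{n+1}U, M)$ with $\Omega_n(M)$ via the evaluation map $f \mapsto f(\overline{1})$: a $U$-linear map out of the cyclic $U$-module $U/\NL{n+1}U$ is uniquely determined by the image of the class of $1$, and this image may be an arbitrary element of $M$ annihilated by $\NL{n+1}U$, i.e.\ any element of $\Omega_n(M)$. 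One checks this bijection is $U_0$-equivariant. The third step is to use that both $W_0$ and $\Omega_n(M)$ are $\Aa_n(U)$-modules, so $\Hom_{U_0}(W_0, \Omega_n(M)) = \Hom_{\Aa_n(U)}(W_0, \Omega_n(M))$ automatically. Composing these identifications and verifying naturality in $W_0$ and $M$ yields the claimed natural isomorphism.

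There is no substantial obstacle here; the statement is essentially Frobenius reciprocity for the inclusion $U_0 \hookrightarrow U$, passed through the compatible ideals $\Nn{n+1}U_0 \subset \NL{n+1}U$. The only point requiring mild care is the compatibility of the various module structures with the quotient $U_0 \to \Aa_n(U)$, which is immediate from the containment above and is already implicit in the very definition of $\PhiL_n(W_0)$ as a tensor product over $U_0$ with $W_0$ viewed via restriction of scalars.
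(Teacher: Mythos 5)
Your proof is correct and is essentially the same Frobenius‐reciprocity argument the paper gives, just packaged through the abstract tensor–hom adjunction for the $(U, U_0)$‐bimodule $U/\NL{n+1}U$ together with the identification $\Hom_U(U/\NL{n+1}U, M) \cong \Omega_n(M)$ by evaluation at $\overline{1}$, rather than writing the bijection and its inverse directly on elements. The natural isomorphism you obtain is the same one the paper constructs by hand, and your observation that $\Hom_{U_0}(W_0, \Omega_n(M)) = \Hom_{\Aa_n(U)}(W_0, \Omega_n(M))$ plays the role of the paper's remark that the inverse map is an $\Aa_n(U)$‐module map.
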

In \cref{sec:Quotient} we use \cref{universal property} (there given by \cref{prop:UnivPaper}) to conclude that Zhu's original induction functor is naturally isomorphic to $\PhiL$.

\begin{proof} We describe the equivalence as follows. For $f \colon  W_0 \to \Omega_n(M)$ we define a map $g \colon \PhiL_n(W_0) \to M$ by $g(u \otimes m) = uf(m)$. Note that if $u \in \NL {n+1} U$ then $uf(m) = 0$ as $f(m) \in \Omega_n(M)$. In the other direction, if we are given $g\colon \PhiL_n(W_0) \to M$, we note that the natural map $W_0 \to \PhiL_n(W_0)$ defined by $w \mapsto 1 \otimes w$ is injective and by definition of the $U$-module structure of $\PhiL_n(W_0)$, has image lying inside $\Omega_n(\PhiL_n(W_0))$. But as the map $g$ is a $U$-module map, it follows that $g(W_0)\subset g(\Omega_n(\PhiL_n(W_0))) \subset \Omega_n(M)$. Consequently we obtain a map $f\colon W_0 \to \Omega_n(M)$ which is easily checked to be an $\Aa_n(U)$-module map and to give an inverse correspondence to the prior prescription.
\end{proof}

Of course, we can also do a right handed version of this construction for a right $\Aa_n(U)$ module $Z_0$ and obtain in way a right $U$-module $\PhiR_n(Z_0)$. We will describe the properties of $\PhiL$ and leave the analogue statements about $\PhiR$ to the reader. 

\begin{lem}
Suppose $U$ is a split-filtered algebra with graded subalgebra $U'$ and with an almost canonical seminorm . Then
\[\PhiL(W_0) = \left(U'/\NL 1 U'\right) \otimes_{U'_0} W_0 \cong \left(U/\NL 1 U\right) \otimes_{U_0} W_0.\] 
\end{lem}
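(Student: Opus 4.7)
My strategy is to first upgrade the statement to a purely algebraic isomorphism $U/\NL 1 U \cong U'/\NL 1 U'$ of $(U',U'_0)$-bimodules, and then recover the tensor product identity by tensoring with $W_0$ over $U'_0 = U_0$. The key observation is that the split-filtered structure of $U$, together with the first neighbourhood $\NL 1 U$, allows one to kill off all of $U_{\leq -1}$ and reduce everything to $U'$.

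Concretely, I would first verify that $U_{\leq -1} \subseteq \NL 1 U$. This follows from \cref{alm can def}\eqref{alm can 3} applied with $n=0$, $p=0$, $q=-1$: we get $U_{\leq 0} \cdot U_{\leq -1} \subseteq \NL 1 U_{\leq -1}$, and since $1 \in U_{\leq 0}$ we conclude $U_{\leq -1} \subseteq \NL 1 U$. Next, I would use the split-filtered structure to show $U = U' + U_{\leq -1}$: iterating the decomposition $U_{\leq k} = U'_k \oplus U_{\leq k-1}$ down from any given $p$ to $0$ expresses an arbitrary $u \in U_{\leq p}$ as a finite sum in $\bigoplus_{0 \leq i \leq p} U'_i$ plus an element of $U_{\leq -1}$. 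Combined with the previous step, this gives $U = U' + \NL 1 U$. Moreover, by the definition of the restricted seminorm (\cref{seminorm restriction def}), we have $U' \cap \NL 1 U = \NL 1 U'$.

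These three facts together yield, via the second isomorphism theorem,
\[
U/\NL 1 U \;=\; (U' + \NL 1 U)/\NL 1 U \;\cong\; U'/(U' \cap \NL 1 U) \;=\; U'/\NL 1 U',
\]
where the isomorphism is induced by the inclusion $U' \hookrightarrow U$ and is therefore a map of left $U'$-modules. Since $\NL 1 U$ is a right $U_{\leq 0}$-submodule of $U$ (again by \cref{alm can def}\eqref{alm can 3} with $q=0$), the quotient $U/\NL 1 U$ carries a compatible right $U'_0$-action, and the displayed isomorphism respects it. Tensoring both sides with $W_0$ over $U_0 = U'_0$ then produces the desired identification of $\PhiL(W_0)$. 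I do not expect a serious obstacle here: the only thing to be careful about is that the inductive decomposition $u = \sum u'_i + v$ terminates, which is automatic since $u$ lives in some $U_{\leq p}$ of finite degree; all remaining verifications are formal consequences of the axioms of split-filtered almost canonical seminorms set out in the appendix.
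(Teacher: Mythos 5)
Your proof is correct, and it takes a genuinely more elementary route than the paper's. The paper simply cites \cref{discrete lemma}, whose proof is carried out at the level of filtered and graded completions $\filcomp{U}$, $\grcomp{U'}$ and uses density arguments (\cref{tight density}) together with the machinery of \cref{subquotient completions} and \cref{split completions} to build the isomorphism. You instead work directly with $U$ and $U'$, establishing the identity $U = U' + \NL{1}U$ from the two observations $U = U' + U_{\leq -1}$ (iterating the split decomposition, which needs the filtration to be exhaustive — an implicit hypothesis in both your argument and the paper's, since \cref{tight density} also begins by choosing $p$ with $x \in \N^nX_{\leq p}$) and $U_{\leq -1} \subseteq \cN^1 U \subseteq \NL 1 U$, and then invoking the second isomorphism theorem and the definition $\NL 1 U' = U' \cap \NL 1 U$ from \cref{seminorm restriction def}. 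The trade-off: the paper's route is "free" given that \cref{discrete lemma} is needed elsewhere (and it delivers the extra content that the quotient is discrete and handles the completed tensor product $\fctensor$ version uniformly for all $n$), while your argument is shorter, self-contained, and avoids completions entirely. One point worth stating more explicitly in a final write-up is the verification that $U/\NL 1 U$ is a right $U_0$-module (i.e.\ that the right $U_{\leq 0}$-action on $\NL 1 U$ preserves it and that $U_{\leq -1}$ acts as zero on the quotient), so that tensoring over $U_0 = U'_0$ makes sense on both sides; you mention \cref{alm can def}\eqref{alm can 3} for the $U_{\leq 0}$-stability, which handles this, but the step that $U_{\leq -1}$ kills the quotient on the right (because $U\,U_{\leq -1} = \cN^1 U \subseteq \NL 1 U$) is worth spelling out, as the paper does in the remark immediately following the lemma.
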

\begin{proof}
This is an immediate consequence of \cref{discrete lemma}.
\end{proof}

Note that $\NL 1 U$ is a left $U$ module and a right $U_{\leq 0}$ module which is annihilated on the right by $U_{\leq -1}$ in particular, we can also write the above expression as:
\[\left(U/\NL 1 U\right) \otimes_{U_0} W_0 \cong \left(U/\NL 1 U\right) \otimes_{U_{\leq 0}} W_0,\]
with respect to the truncation  quotient map $U_{\leq 0} \to U_0$ with kernel $U_{\leq -1}$. This is because the additional relations in the tensor product on the right are of the form $\alpha \beta \otimes w - \alpha \otimes \ov{\beta} w$ with $\beta \in U_{\leq -1}$ . But $\alpha \beta \in U U_{\leq -1} \in \NL 1U$ represents $0$ as does $\ov{\beta}$. Hence these extra relations all vanish.

\begin{remark} \label{induced grading}
We see that $\PhiL(W_0)$ is naturally a graded module, with grading inherited from $U/\NL 1 U$:
\[\PhiL (W_0) = \bigoplus_{p = 0}^\infty \left(U/\NL 1 U\right)_p \otimes_{U_0} W_0 = \bigoplus_{p = 0}^\infty \left(U_p/\NL 1 U_p\right)\otimes_{U_0} W_0.\]
Notice here that $U_{-m} \subset \NL m U$ and so $\left(U/\NL 1 U\right)_{p} =0$ for $p < 0$. 	
\end{remark}

\begin{lem} \label{higher zhu action}
The action of $U_0$ on $\PhiL(W_0)$ via its left module structure induces an $\Aa_d(U)$ module structure on $\PhiL(W_0)_{\leq d} = \bigoplus_{p = 0}^{d} \PhiL(W_0)_p$.
\end{lem}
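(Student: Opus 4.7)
The plan is straightforward and primarily amounts to an unpacking of definitions, combined with the product property of an almost canonical seminorm.

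First I would observe that because $U$ is graded and the $U$-module structure on $\PhiL(W_0) = (U/\NL{1}U) \otimes_{U_0} W_0$ comes from left multiplication on the first factor, the action of the degree-$0$ subalgebra $U_0$ preserves the grading described in \cref{induced grading}. Hence $U_0$ acts on each homogeneous piece $\PhiL(W_0)_p$, and therefore on the truncation $\PhiL(W_0)_{\leq d} = \bigoplus_{p=0}^d \PhiL(W_0)_p$.

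The content then is to show that this $U_0$-action on $\PhiL(W_0)_{\leq d}$ factors through the quotient $\Aa_d(U) = U_0/\Nn{d+1}U_0$; equivalently, that every $\alpha \in \Nn{d+1}U_0 = \NL{d+1}U_0$ acts as zero on $\PhiL(W_0)_p$ whenever $p \leq d$. An element of $\PhiL(W_0)_p$ is represented by $\bar u \otimes w$ with $u \in U_p$, so the question reduces to showing $\alpha u \in \NL{1}U_p$ in this range. Viewing $U$ as split-filtered via its grading (\cref{graded as split-filtered}), the product property of the almost canonical seminorm (\cref{alm can def}\eqref{alm can 3}) gives
\[
\alpha u \in (\NL{d+1} U_{\leq 0})\, U_{\leq p} \subset \NL{d+1-p} U_{\leq p} \subset \NL{1}U_{\leq p},
\]
since $d+1-p \geq 1$. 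Because $\alpha u$ is homogeneous of degree $p$ and $\NL{1}U$ is split-filtered with $\NL{1}U_{\leq p} = \NL{1}U_p \oplus \NL{1}U_{\leq p-1}$, this forces $\alpha u \in \NL{1}U_p$. Hence $\overline{\alpha u} = 0$ in $(U/\NL{1}U)_p$, and $\alpha \cdot (\bar u \otimes w) = 0$ in $\PhiL(W_0)_p$, as required.

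There is no serious obstacle here; the only point that needs a small amount of care is the passage from $\NL{1}U_{\leq p}$ to $\NL{1}U_p$, which is handled by the split-filtered nature of the seminorm (guaranteed by \cref{canonical lemma} and the assumption that the seminorm is almost canonical). Once one has this, the induced action of $\Aa_d(U)$ on $\PhiL(W_0)_{\leq d}$ is well-defined, and the module axioms for $\Aa_d(U)$ follow directly from those of the $U_0$-action.
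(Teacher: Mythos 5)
Your proof is correct, but it takes a genuinely different route from the paper's. The paper first shows $U_{\leq -d-1}\,\PhiL(W_0)_{\leq d}=0$ by degree considerations, deduces that $\cN^{d+1}U_0$ acts as zero, and then appeals to the density of $\cN^{d+1}U_0$ in $\Nn{d+1}U_0$ (via \cref{almost canonical conditions}) together with continuity of multiplication (\cref{canonically continuous}) and discreteness of $\PhiL(W_0)$ to conclude that all of $\Nn{d+1}U_0$ acts as zero. You instead apply the multiplicativity condition \cref{alm can def}\eqref{alm can 3} of the almost canonical seminorm directly to get $\alpha u\in\N^{d+1-p}U_{\leq p}\subset\N^1U_{\leq p}$, and then intersect with $U_p$ (using homogeneity of $\alpha u$) to land in $\N^1U_p$, killing the class in $(U/\NL 1 U)_p$. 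Your argument is more elementary: it is purely algebraic, avoids the topological apparatus entirely, and sidesteps the density property (2) of almost canonical seminorms, using only property (3). The paper's version is arguably more in the spirit of the surrounding section, which systematically works with seminorm-topologies, completions, and continuity; but as a self-contained proof yours is cleaner and strictly requires less of the definition. One very small remark: the appeal to the split-filtered decomposition $\N^1U_{\leq p}=\N^1U_p\oplus\N^1U_{\leq p-1}$ at the end is unnecessary---once you know $\alpha u\in U_p$ and $\alpha u\in\N^1U_{\leq p}$, you have $\alpha u\in U_p\cap\N^1 U=\N^1 U_p$ directly from the definition of the restricted seminorm.
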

\begin{proof}
We have $U_{\leq -d - 1} \PhiL(W_0)_{\leq d} = 0$ from degree considerations. It follows that 
\[(\cN^{d + 1} U_0) \PhiL(W_0)_{\leq d} = 0.\]
But by \cref{almost canonical conditions} $\cN^{d + 1} U_0$ is dense in $\Nn {d + 1} U_0$ and by \cref{canonically continuous}, the multiplication action of $U_0$ on $U_{\leq d}$ is continuous, and hence so is the multiplication of $U_0$ on $U_{\leq d}/\NL 1 U_{\leq d}$ and hence of $U_0$ on $\PhiL(W_0)$. But as $U/\NL 1 U$ has a discrete topology, so does $\PhiL(W_0)$. Since a dense subset of $\Nn{d + 1} U_0$ acts as zero, it follows that it acts as zero, making the action of the algebra $\Aa_d(U)$ well defined.
\end{proof}

\subsection{Generalized mode transition algebras}\label{sec:GMTA}

\cref{innerstar lemma} is the main technical tool used to define algebraic structures and their actions on generalized Verma modules:

\begin{lem}\label{innerstar lemma}
Suppose $U$ is a graded algebra with an almost canonical seminorm. Then we have a natural isomorphism
\begin{equation*} \left(U/\NR 1 U\right) \otimes_{U}
\left(U/\NL 1 U\right) \to
\Aa_0(U), \qquad
\ov{\alpha} \otimes \ov{\beta} \mapsto \alpha \innerstar \beta
\end{equation*}
where for $\alpha, \beta \in U$ homogeneous, we define $\alpha \innerstar \beta$ as follows:
\[\alpha \innerstar \beta = \begin{cases}
0 & \text{if } \deg(\alpha) + \deg(\beta) \neq 0 \\
[\alpha \beta] & \text{if } \deg(\alpha) + \deg(\beta) = 0
 \end{cases}
\]
and then extend the definition to general products by linearity.

\end{lem}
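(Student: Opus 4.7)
The plan is to reduce the tensor product to a concrete quotient of $U$ and then match it with $\Aa_0(U)$ via the degree-zero projection. For the reduction, I apply the standard identification $(U/I) \otimes_U (U/J) \cong U/(I+J)$ for a unital ring $U$ with right ideal $I$ and left ideal $J$, taking $I = \NR 1 U = U_{\geq 1}U$ and $J = \NL 1 U = UU_{\leq -1}$ (regarding $U$ as both left- and right-filtered via its grading); under this identification $\overline\alpha \otimes \overline\beta$ corresponds to $\overline{\alpha\beta}$. The problem then reduces to showing that the degree-zero-part map $\pi \colon U \to \Aa_0(U)$, sending $u = \sum_n u_n \mapsto [u_0]_0$, descends to an isomorphism $U/(\NR 1 U + \NL 1 U) \xrightarrow{\sim} \Aa_0(U)$.

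For the kernel computation, I exploit that $\NR 1 U$ is itself a graded subspace (being generated by the graded piece $U_{\geq 1}$), with homogeneous components $(\NR 1 U)_n = \sum_{i \geq 1} U_i U_{n-i}$. For $n \geq 1$ these lie in $U_{\geq 1}$ and so contribute zero to the degree-zero part; for $n \leq -1$, the constraint $n - i \leq -2$ forces $U_i U_{n-i} \subset U \cdot U_{\leq -1} = \NL 1 U$, and again the degree-zero part is zero; for $n = 0$, $(\NR 1 U)_0 = \NR 1 U_0$, which agrees with $\Nn 1 U_0$ using that the left and right canonical level-one neighborhoods coincide at degree zero (see \cref{zero neighborhoods}) together with the almost canonical hypothesis. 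Consequently $\pi(\NR 1 U) = 0$, and by a symmetric argument $\pi(\NL 1 U) = 0$. For the reverse inclusion, if $\pi(u) = 0$ then $u_0 \in \Nn 1 U_0 = \NR 1 U_0 \subset \NR 1 U$, while $u - u_0 \in U_{\geq 1} \oplus U_{\leq -1} \subset \NR 1 U + \NL 1 U$, so $u \in \NR 1 U + \NL 1 U$ as required.

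Tracing the composite of the two identifications, $\overline\alpha \otimes \overline\beta$ is sent to $[(\alpha\beta)_0]_0$. For homogeneous $\alpha, \beta$, the degree-zero component of $\alpha\beta$ equals $\alpha\beta$ exactly when $\deg\alpha + \deg\beta = 0$ and vanishes otherwise, matching the definition of $\alpha \ostar \beta$; extending by linearity closes the proof. The main obstacle is purely bookkeeping around the graded decompositions of $\NR 1 U$ and $\NL 1 U$, and the only non-cosmetic point is the identification $\NR 1 U_0 = \NL 1 U_0 = \Nn 1 U_0$ at degree zero, which is where the almost canonical hypothesis is invoked.
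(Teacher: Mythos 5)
Your proof takes essentially the same route as the paper's: identify $(U/\NR 1 U) \otimes_U (U/\NL 1 U)$ with $U/(\NR 1 U + \NL 1 U)$, and then show that the degree-zero projection $\pi\colon U \to \Aa_0(U)$ descends to an isomorphism from that quotient. The paper compresses this into a sentence, passing through the intermediate quotient $U/(UU_{\leq -1} + U_{\geq 1}U)$ and noting that the restriction of the quotient map to $U_0$ is onto with kernel $\Nn 1 U_0$; your version spells out the reduction via the tensor identity and the kernel computation in more detail, which is a useful expansion rather than a different method.

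However, there is an error in the middle of your kernel analysis. You write $(\NR 1 U)_n = \sum_{i \geq 1} U_i U_{n - i}$ and $U\cdot U_{\leq -1} = \NL 1 U$, but both of these identities hold only when the seminorm is \emph{canonical}. Under the stated hypothesis (almost canonical) one only has the inclusions $\cN^1 U = U_{\geq 1}U \subseteq \NR 1 U$ and $UU_{\leq -1} \subseteq \NL 1 U$; the neighborhoods may be strictly larger. Your parenthetical ``being generated by the graded piece $U_{\geq 1}$'' is likewise not a valid justification that $\NR 1 U$ is graded. The correct reason it is graded is that an almost canonical seminorm is split-filtered (\cref{alm can def}~(1)); applied to the graded algebra $U$ with itself as the graded subalgebra, this forces each $\N^n U$ to decompose as a direct sum of its homogeneous pieces. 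Once you have that, your argument closes without ever needing the explicit formula: $\pi$ vanishes on the graded pieces of $\NR 1 U$ and $\NL 1 U$ of degree $\neq 0$ automatically (there is nothing in degree $0$ to project to), and in degree $0$ you invoke $\NR 1 U_0 = \NL 1 U_0 = \Nn 1 U_0$ from \cref{zero neighborhoods}, exactly as you do. So strip out or correct those two identities (and the superfluous detour showing $U_iU_{n-i} \subset \NL 1 U$ for $n \leq -1$, which is irrelevant once you observe the degree is nonzero), and the proof is fine.
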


\begin{proof}
As our seminorm is almost canonical, the map $U_0 \to {{U}}/{\left(\NL{1} U + \NR{1} U\right)}$ factors through $U \to {{U}}/{\left(UU_{\leq -1} + U_{\geq 1}U\right)}$. But for this map, we see that both $U_{\leq -1}$ and $U_{\geq 1}$ are in the kernel, which implies that the restriction to $U_0$ is surjective. The kernel of this map $U_0 \to {{U}}/{\left(\NL{1} U + \NR{1} U\right)}$ consists of $\NL{1} U_0 \cap \NR{1} U_0 = \Nn{1} U_0$ (see \cref{zero neighborhoods}).
\end{proof}

As an application of the above result, we obtain the following.
\begin{cor}\label{cor:Compare}Let $W_0$ be a left $\Aa_0(U)$-module and $Z_0$ be a right $\Aa_0(U)$-module. Then the map defined in \cref{innerstar lemma} induces an isomorphism 
\[\PhiR(Z_0) \otimes_{U} \PhiL(W_0) \to Z_0 \otimes_{\Aa_0(U)} W_0.\]
\end{cor}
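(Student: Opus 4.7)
The plan is to reduce the stated isomorphism to a direct application of \cref{innerstar lemma} by rearranging tensor products. First I would unravel the definitions: by the very formulas defining $\PhiL$ and $\PhiR$,
\[\PhiR(Z_0) \otimes_{U} \PhiL(W_0) = \left(Z_0 \otimes_{U_0} (U/\NR 1 U)\right) \otimes_{U} \left((U/\NL 1 U) \otimes_{U_0} W_0\right),\]
and since these tensor products are associative, this is naturally isomorphic to
\[Z_0 \otimes_{U_0} \Bigl( (U/\NR 1 U) \otimes_{U} (U/\NL 1 U) \Bigr) \otimes_{U_0} W_0.\]

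Next, I would invoke \cref{innerstar lemma}, which gives an isomorphism of $(U_0, U_0)$-bimodules between the inner tensor product and $\Aa_0(U)$, sending $\ov\alpha\otimes\ov\beta$ to $\alpha\innerstar\beta$. Substituting, we get
\[Z_0 \otimes_{U_0} \Aa_0(U) \otimes_{U_0} W_0.\]
To finish, I would observe that since $Z_0$ (respectively $W_0$) is an $\Aa_0(U)$-module, the action of $U_0$ factors through the quotient $U_0 \twoheadrightarrow \Aa_0(U)$, so that $Z_0 \otimes_{U_0} \Aa_0(U) \cong Z_0$ as a right $\Aa_0(U)$-module and $\Aa_0(U)\otimes_{U_0} W_0 \cong W_0$ as a left $\Aa_0(U)$-module. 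Combining these two simplifications yields $Z_0\otimes_{\Aa_0(U)} W_0$, and tracing through the identifications confirms that the composite agrees with the map induced by $\innerstar$.

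The only subtle point, and the one I would double-check carefully, is the bimodule compatibility needed to justify rearranging $\otimes_U$ and $\otimes_{U_0}$ and then collapsing two $\otimes_{U_0}$'s into a single $\otimes_{\Aa_0(U)}$. Concretely, one needs that the isomorphism of \cref{innerstar lemma} is genuinely $(U_0,U_0)$-bilinear (which is clear from its definition as $\alpha\innerstar\beta = [\alpha\beta]_0$ on the degree-zero part), and that the natural map $Z_0\otimes_{U_0}\Aa_0(U)\otimes_{U_0} W_0 \to Z_0\otimes_{\Aa_0(U)} W_0$ defined by $z\otimes [c]_0\otimes w \mapsto z\cdot [c]_0 \otimes w = z\otimes [c]_0\cdot w$ is an isomorphism. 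This last step is a standard fact once one notes $\Aa_0(U)\otimes_{U_0}\Aa_0(U)\cong \Aa_0(U)$, since $\Nn{1}U_0$ acts as zero on both $Z_0$ and $W_0$. No serious obstacle is expected; the entire argument is a bookkeeping exercise built on \cref{innerstar lemma}.
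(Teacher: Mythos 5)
Your proof is correct and is precisely the bookkeeping argument that the paper intends: the paper gives no written proof and simply states the corollary "as an application" of \cref{innerstar lemma}, and your route (unravel $\PhiR\otimes_U\PhiL$ via bimodule associativity, insert the $(U_0,U_0)$-bimodule isomorphism $(U/\NR 1 U)\otimes_U(U/\NL 1 U)\cong\Aa_0(U)$, then collapse $\otimes_{U_0}$ to $\otimes_{\Aa_0(U)}$ using that the $U_0$-actions on $Z_0$ and $W_0$ factor through $\Aa_0(U)$) is exactly that application. The bimodule compatibility you flag as the subtle point is handled correctly by your observation that $\innerstar$ restricted to degree-zero elements is $(\alpha,\beta)\mapsto[\alpha\beta]_0$, hence $U_0$-balanced on both sides.
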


\begin{defn} \label{bimod induce def}
For a graded algebra with almost canonical seminorm $U$, and an $\Aa_0(U)$-bimodule $B$, we define a bigraded group: 
\begin{align*}
\Phi(B) &= \PhiR(\PhiL(B)) = \PhiL(\PhiR(B))= \left(U/\NL 1 U\right) \otimes_{U_{ 0}} B \otimes_{U_{ 0}} \left(U/\NR 1 U\right)	\\
&=\bigoplus_{d_1 \geq 0} \ \bigoplus_{d_2 \leq 0} \left(U/\NL 1 U\right)_{d_1} \otimes_{U_{ 0}} B \otimes_{U_{ 0}} \left(U/\NR 1 U\right)_{d_2}.
\end{align*}
\end{defn}

We now introduce the space $\Phi(B)$ and the operation $\star$ arising from $\ostar$ which, as we show below, defines an algebra structure on $\Phi(B)$ whenever $B$ is an associative ring admitting a homomorphism $f \colon  \Aa_0(U) \to B$.

\begin{defn}\label{star product} 
Let $B$ be an associative ring admitting a homomorphism $f \colon  \Aa_0(U) \to B$ and let $W_0$ be a left $B$-module. Then we can define a map $\Phi(B) \times \PhiL(W_0) \to \PhiL(W_0)$ as follows. For $x = \alpha \otimes a \otimes \alpha' \in \Phi(B)$ and $\beta \otimes w \in \PhiL(W_0)$ we set
\[x \star (\beta\otimes w) = \alpha \otimes a f(\alpha' \innerstar \beta) w.\]
\end{defn}

\begin{prop}\label{lem:abstract-module-associativity}  The map defined in \cref{star product} defines an associative algebra structure on $\Phi(B)$ such that the above action of $\Phi(B)$ on $\PhiL(W_0)$  defines a left module structure. Moreover, $\gamma \cdot (x \star \beta) = (\gamma \cdot x) \star y$ for every $x \in \Phi(B)$, $y \in \PhiL(W_0)$, and  $\gamma \in U$. Analogously,  $(x \star y) \cdot \gamma = x \star (y \cdot \gamma)$ for every $x, y \in \Phi(B)$ and $\gamma \in U$. Finally, with respect to the bigrading of \cref{bimod induce def}, we have
\begin{align*}
    &\Phi(U)_{d_1, d_2} \star \Phi(U)_{d_3, d_4} \subseteq \Phi(U)_{d_1, d_4} \qquad \text{ and }\\
    &\Phi(U)_{d_1, d_2} \star \Phi(U)_{d_3, d_4} = 0 \qquad \text{ whenever $d_2 + d_3 \neq 0$}.
\end{align*}
\end{prop}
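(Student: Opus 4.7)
The plan is to proceed by direct computation on elementary tensors, with the well-definedness of $\star$ being the only real technical point. First I would verify that the formula of \cref{star product} descends to a well-defined map $\Phi(B) \otimes \PhiL(W_0) \to \PhiL(W_0)$. \cref{innerstar lemma} already shows $\alpha' \innerstar \beta$ is well-defined as an element of $\Aa_0(U)$; what remains is compatibility with the $U_0$-balancings in the three tensor products involved. This reduces to the identities $(u\alpha') \innerstar \beta = [u]_0 \cdot (\alpha' \innerstar \beta)$ and $\alpha' \innerstar (\beta u) = (\alpha' \innerstar \beta) \cdot [u]_0$ in $\Aa_0(U)$ for $u \in U_0$, both of which follow immediately from the case-by-case definition of $\innerstar$ (since $u \in U_0$ does not shift degrees) together with the fact that $f \colon \Aa_0(U) \to B$ is a ring homomorphism. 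The multiplication on $\Phi(B)$ is then obtained by specializing $W_0 = \PhiR(B)$ (which is a left $B$-module via multiplication in $B$); unpacking the tensor yields the symmetric formula
\[ (\alpha \otimes a \otimes \alpha') \star (\beta \otimes b \otimes \beta') = \alpha \otimes a\,f(\alpha' \innerstar \beta)\,b \otimes \beta'. \]

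From this formula the bigrading claims are immediate: the middle factor $a\,f(\alpha'\innerstar\beta)\,b$ lies inside $B$ and contributes zero to the outer bi-degree, so $x_1 \star x_2$ sits in bi-degree $(\deg \alpha_1, \deg \beta')$, and it vanishes whenever $\deg \alpha' + \deg \beta \neq 0$ by the first clause of \cref{innerstar lemma}.

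Both the associativity of $\star$ on $\Phi(B)$ and the module axiom $(x_1 \star x_2) \star y = x_1 \star (x_2 \star y)$ on $\PhiL(W_0)$ then reduce, on elementary tensors $x_i = \alpha_i \otimes a_i \otimes \alpha_i'$ and $y = \beta \otimes w$, to the same normal form
\[ \alpha_1 \otimes a_1\,f(\alpha_1' \innerstar \alpha_2)\,a_2\,f(\alpha_2' \innerstar \beta)\,w \]
(and an analogous expression with one additional slot for the three-fold product inside $\Phi(B)$), so associativity is delivered by the associativity of $B$ itself. Finally, the bimodule compatibilities with the left and right actions of $U$ are visible in the formula, since the $U$-actions only modify the outermost $U/\NL 1 U$ or $U/\NR 1 U$ factor of an elementary tensor while $\star$ affects only the inner factors. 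I expect the main obstacle to be the well-definedness step, which requires coordinating $U_0$-balancing across three tensor products simultaneously; everything else collapses to a one-line verification on elementary tensors.
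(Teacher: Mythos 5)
Your proposal is correct and follows essentially the same route as the paper's proof: compute on elementary tensors, reduce the module axiom to associativity of $B$ via the key factorization through $\innerstar$, and recover the algebra structure on $\Phi(B)$ by specializing $W_0 = \PhiR(B)$; the bigrading and $U$-bimodule compatibility statements then read off the formula exactly as you describe. The one genuine addition is your explicit verification of well-definedness across the three $U_0$-balanced tensors, which the paper's proof silently omits; your reduction to the identities $(u\alpha') \innerstar \beta = [u]_0 (\alpha' \innerstar \beta)$ and $\alpha' \innerstar (\beta u) = (\alpha' \innerstar \beta)[u]_0$ for $u \in U_0$ is exactly what is needed (note these are separate from the $U$-balancing already built into \cref{innerstar lemma}), and both do follow from the degree-zero case of $\innerstar$ together with $U_0 \to \Aa_0(U)$ being a ring homomorphism.
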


\begin{proof}
We check first that this satisfies the standard associativity relationship for a module action. Let $\alpha \otimes a \otimes \alpha', \beta \otimes b \otimes \beta' \in \Phi(B)$ and $\gamma \otimes c \in \PhiL(W_0)$, then 
\begin{align*}
(\alpha \otimes a \otimes \alpha') \star \big((\beta \otimes b \otimes \beta') \star (\gamma \otimes c)\big) &=
(\alpha \otimes a \otimes \alpha') \star (\beta \otimes b f(\beta' \innerstar \gamma) c) \\
&= (\alpha \otimes a f(\alpha' \innerstar \beta) b f(\beta' \innerstar \gamma) c) \\
&= (\alpha \otimes a f(\alpha' \innerstar \beta) b \otimes \beta' ) \star (\gamma \otimes c) \\
&= \big((\alpha \otimes a \otimes \alpha') \star (\beta \otimes b \otimes \beta')\big) \star (\gamma \otimes c),
\end{align*}
as desired. The associativity of the algebra structure now follows, taking $W_0 = \PhiR(B)$. 

We now check the compatibility with the $U$-module structure on the left. Set $x=\alpha \otimes a \otimes \alpha'$ and $y= \beta \otimes b$. Then  
\begin{align*}
	\gamma \cdot \big((\alpha \otimes a \otimes \alpha') \star (\beta \otimes b)\big) &= \gamma \cdot (\alpha \otimes a f(\alpha' \innerstar \beta) b ) \\
	&= (\gamma\alpha) \otimes a f(\alpha' \innerstar \beta) b 
	= ((\gamma\alpha) \otimes a \otimes \alpha') \star (\beta \otimes b).
\end{align*}
The other-handed version of the above argument gives us the compatibility with the $U$-module structure on the right when $W_0 = \PhiR(B)$.

The last assertion follows from the product $\innerstar$ as described in \cref{innerstar lemma}.
\end{proof}

\begin{defn} \label{def:GoodTripleMTA notriple}
For a graded algebra with almost canonical seminorm $U$, we define $\Ac(U) = \Phi(\Aa_0(U))$ to be the \textit{(generalized) mode transition algebra}, and we write $\Ac(U)_d$ for the $d$th mode transition subalgebra $\Ac(U)_{d, -d}$. 
\end{defn}

In this case, it turns out that the action of $\Ac(U)$ extends to much more general modules than simply the Verma-type modules, as we now observe.

\begin{lem} \label{who kills continuous}
Suppose $U$ is a graded algebra with a good seminorm, and let $W$ be an $\NN$-graded $U$-module endowed with the discrete topology, such that the action of $U \times W \to W$ is continuous. Then for all $d \in \NN$, $\NR {m+1}  U_{m - d} W_d = 0 = \NL {d + 1} U_{m - d} W_d$.
\end{lem}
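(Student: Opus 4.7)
The plan is to prove both equalities by a two-step argument: first handle the \emph{canonical} right/left neighborhoods by direct degree bookkeeping, then propagate to the actual neighborhoods via continuity and density.

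For the canonical step, I would start from the definitions $\cN_R^{m+1}U = U_{\geq m+1}\,U$ and $\cN_L^{d+1}U = U\,U_{\leq -d-1}$. A typical element of $\cN_R^{m+1}U_{m-d}$ is a finite sum $\sum_i u_1^{(i)}u_2^{(i)}$ where $u_1^{(i)}$ is homogeneous of degree $\geq m+1$, $u_2^{(i)}$ is homogeneous, and $\deg u_1^{(i)} + \deg u_2^{(i)} = m-d$. Hence $\deg u_2^{(i)} \leq m-d-(m+1) = -d-1$. Because $W$ is $\NN$-graded and the action respects the grading, $u_2^{(i)}W_d \subset W_{d+\deg u_2^{(i)}} \subset W_{\leq -1} = 0$, so each term kills $W_d$, giving $\cN_R^{m+1}U_{m-d}W_d = 0$. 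The same calculation applied to $\cN_L^{d+1}U_{m-d} = (UU_{\leq -d-1})_{m-d}$ yields $\cN_L^{d+1}U_{m-d}W_d = 0$ since the rightmost factor already has degree $\leq -d-1$.

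For the extension step, fix $w \in W_d$ and consider the additive map $U_{m-d} \to W$, $u \mapsto uw$. Continuity of $U \times W \to W$ together with discreteness of $W$ implies this map is continuous when $U_{m-d}$ is given its subspace topology; hence its zero set $r_w := \{u \in U_{m-d} : uw = 0\}$ is closed in $U_{m-d}$, being the preimage of the closed singleton $\{0\}$. Now \cref{alm can graded lem} (the graded version of condition \ref{alm can condition} of \cref{almost canonical conditions}) gives $\NL{d+1}U_{m-d} = \cN_L^{d+1}U_{m-d} + \NL{d+2}U_{m-d}$, and iterating together with the tightness of the seminorm shows that $\cN_L^{d+1}U_{m-d}$ is dense in $\NL{d+1}U_{m-d}$; the identical argument (or the compatibility $\NL{n}U_p = \NR{n+p}U_p$ from \cref{good seminorm def}, applied with $p = m-d$ and $n+p = m+1$, i.e.\ $n = d+1$) gives density of $\cN_R^{m+1}U_{m-d}$ in $\NR{m+1}U_{m-d}$. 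Since $r_w$ is closed and contains the dense subsets $\cN_L^{d+1}U_{m-d}$ and $\cN_R^{m+1}U_{m-d}$ by the first step, it contains the full neighborhoods, and as $w \in W_d$ was arbitrary the claim follows.

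The main point requiring care is making sure the topology under which continuity is asserted is the one in which the canonical neighborhoods are dense in the asserted ones. This is not a real obstacle because the two potential topologies on the homogeneous piece $U_{m-d}$, coming from the left and right seminorms, are forced to agree by the compatibility $\NL{n}U_{m-d} = \NR{n+m-d}U_{m-d}$ built into the definition of a good seminorm, so the density and closedness statements are independent of which side one uses. The rest is routine degree bookkeeping.
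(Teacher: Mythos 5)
Your proof is correct and takes essentially the same route as the paper's: degree bookkeeping shows that the canonical left and right neighborhoods of $U_{m-d}$ annihilate $W_d$, iterating \cref{alm can graded lem} gives density of those canonical neighborhoods in the filtered ones, and continuity of the action together with discreteness of $W$ closes the gap. Your phrasing via the closed zero set $r_w$ (rather than choosing an approximant $\alpha'$ with $\alpha'w = \alpha w$) and your direct degree check on both sides (rather than deducing the left case from the right via the good-seminorm compatibility $\NL{n}U_p=\NR{n+p}U_p$) are only cosmetic variants of the paper's argument.
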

\begin{proof}
As a good seminorm is almost canonical, we may iteratively apply \cref{alm can graded lem}, to see that $^c\!\NR {m+1} U_{m - d}$ is dense in $\NR {m+1} U_{m - d}$. Consequently, if $\alpha \in \NR {m+1} U_{m - d}$ and $w \in W_d$, then by continuity, we can find $\alpha' \in \mbox{}^c\!\NR {m+1} U_{m - d}$ such that $\alpha' w = \alpha w$. But therefore $\alpha' w \in U U_{-d-1}w = 0$.
As the seminorm is good, we find $\NL {d + 1} U_{m - d} = \NR {d + 1} U_{m - d}$, giving the final claim.
\end{proof}

\begin{lem}\label{well defined multiplication}
Suppose $U$ is a graded algebra with a good seminorm, and let $W$ be an $\NN$-graded $U$-module endowed with the discrete topology, such that the action of $U \times W \to W$ is continuous. Then the left action of $U$ on $W$ induces well-defined maps for every $p$:
\begin{align*}
(U_p / \NR {p + d + 1} U_p) \times W_d &\to W_{p + d}, \\
([u], w) & \mapsto uw,
\end{align*}
which we will simply write as $[u]w$.
\end{lem}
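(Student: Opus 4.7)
The plan is to reduce this lemma to a direct application of \cref{who kills continuous}. First, since $W$ is an $\NN$-graded $U$-module, the multiplication $U \times W \to W$ respects the grading, so for any $u \in U_p$ and $w \in W_d$ we automatically have $uw \in W_{p+d}$. Hence the map $U_p \times W_d \to W_{p+d}$ given by $(u, w) \mapsto uw$ is well-defined before passing to the quotient.

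The only thing left to verify is that this map descends to a map on $(U_p / \NR{p+d+1} U_p) \times W_d$, which amounts to showing that $\NR{p+d+1} U_p \cdot W_d = 0$. But setting $m = p + d$ in \cref{who kills continuous} gives $m - d = p$, so the conclusion $\NR{m+1} U_{m-d} W_d = 0$ becomes exactly $\NR{p+d+1} U_p \cdot W_d = 0$. This is precisely what is needed, and by bilinearity the induced map on the quotient is well-defined.

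I expect this to be a straightforward corollary; the real content has been packaged into \cref{who kills continuous} (which in turn relies on the density of $\mbox{}^c\!\NR{m+1} U_{m-d}$ in $\NR{m+1} U_{m-d}$, together with continuity of the action and the fact that the good seminorm gives $\NL{d+1} U_{m-d} = \NR{d+1} U_{m-d}$ on the graded piece). Since no obstacle remains beyond invoking the preceding lemma, the proof should consist essentially of the index substitution $m = p + d$ followed by a one-line verification.
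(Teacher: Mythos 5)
Your proof is correct and matches the paper's approach exactly: the paper also proves this as an immediate corollary of the preceding lemma (\cref{who kills continuous}), which directly shows $(\NR{p+d+1} U_p) W_d = 0$. You have merely made explicit the index substitution $m = p+d$ and the bilinearity step, both of which the paper leaves implicit.
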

\begin{proof}
This follows from \cref{who kills continuous}, which shows $(\NR {p + d + 1} U_p) W_d = 0$.
\end{proof}

\begin{prop} \label{general module action}
Let $U$ be a graded algebra with an almost canonical seminorm, and let $W$ be an $\NN$-graded $U$-module endowed with the discrete topology, such that the action of $U \times W \to W$ is continuous. Then we obtain an action of $\Ac(U)$ on $W$ 
such that $\Ac(U)_{d_1, -d_2} W_{d_2} \subset W_{d_1}$ and $\Ac(U)_{d_1, -d_2} W_{d_3} = 0$ when $d_2 \neq d_3$ which, on simple tensors, is given by the products defined in \cref{well defined multiplication}:
\[(\alpha \otimes a \otimes \alpha') \star w = \begin{cases}
\alpha (a (\alpha' w)) & \text{if $d_2 = d_3$} \\
0 & \text{otherwise.}
\end{cases} 
\]
\end{prop}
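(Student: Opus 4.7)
The plan is to define the action bidegree-by-bidegree on simple tensors of $\Ac(U) = \Phi(\Aa_0(U))$, verify it descends to the relevant tensor products, and finally check associativity of the resulting action.

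Fix $\xx = \alpha \otimes a \otimes \alpha' \in \Ac(U)_{d_1, -d_2}$ with homogeneous representatives $\alpha \in U_{d_1}$, $\alpha' \in U_{-d_2}$ and a lift $\widetilde{a} \in U_0$ of $a \in \Aa_0(U)$, and let $w \in W_{d_3}$. When $d_3 < d_2$, the $\NN$-gradedness of $W$ forces $\alpha' w \in W_{d_3 - d_2} = 0$, so the naive formula already produces $0$, consistent with our stipulated value. When $d_3 > d_2$ we stipulate $\xx \star w = 0$, so there is nothing to verify. The essential well-definedness check is therefore concentrated in the case $d_3 = d_2$, which I address next.

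When $d_3 = d_2$, the formula $\alpha(\widetilde{a}(\alpha' w))$ factors as a composite $W_{d_2} \xrightarrow{\alpha'} W_0 \xrightarrow{\widetilde{a}} W_0 \xrightarrow{\alpha} W_{d_1}$, and I would verify that each step descends modulo the appropriate quotient using \cref{well defined multiplication} and \cref{who kills continuous}. Specifically, $\alpha'$ acts well on $W_{d_2}$ modulo $\NR 1 U_{-d_2}$ (taking $p = -d_2$, $d = d_2$, so that $p+d+1 = 1$); $\widetilde{a}$ acts on $W_0$ only through $U_0/\Nn 1 U_0 = \Aa_0(U)$ (taking $m = d = 0$); and $\alpha$ acts on $W_0$ only through $U_{d_1}/\NL 1 U_{d_1}$ (taking $m = d_1$, $d = 0$). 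The two $U_0$-balance identities needed for the tensor product $(U/\NL 1 U)_{d_1} \otimes_{U_0} \Aa_0(U) \otimes_{U_0} (U/\NR 1 U)_{-d_2}$ follow from associativity of the $U$-action on $W$: moving any $u \in U_0$ between adjacent tensor factors yields the same element of $W_{d_1}$.

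Finally, to verify $(\xx \star \yy) \star w = \xx \star (\yy \star w)$ for $\xx \in \Ac(U)_{d_1,-d_2}$, $\yy \in \Ac(U)_{d_3,-d_4}$, and $w \in W_{d_5}$, I would split on bidegrees. In the main case $d_2 = d_3$ and $d_4 = d_5$, both sides unwind to $\alpha_1(\widetilde{a}_1(\alpha_1'(\alpha_2(\widetilde{a}_2(\alpha_2' w)))))$ using $\alpha_1' \innerstar \alpha_2 = [\alpha_1' \alpha_2]_0$ from \cref{innerstar lemma}, the formula for $\star$ on $\Ac(U)$ given in \cref{star product}, and $U$-associativity of the action on $W$. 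In all remaining cases both sides vanish: if $d_4 \neq d_5$ then $\yy \star w = 0$ and, by the bigrading rule of \cref{lem:abstract-module-associativity}, $\xx \star \yy$ lies in bidegree with right coordinate $-d_4$, which also annihilates $w$; if instead $d_4 = d_5$ but $d_2 \neq d_3$, then $\yy \star w \in W_{d_3}$ is killed by $\xx$ (since $d_2 \neq d_3$), and $\xx \star \yy = 0$ by the same bigrading rule. The only real obstacle is the bookkeeping of bidegrees and the three independent kill-conditions from \cref{who kills continuous}, rather than any substantive new computation.
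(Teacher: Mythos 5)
Your proposal is correct and follows the same route as the paper: appeal to \cref{well defined multiplication} (and hence \cref{who kills continuous}) to see that each of the three maps in the composite $W_{d_2}\xrightarrow{\alpha'}W_0\xrightarrow{a}W_0\xrightarrow{\alpha}W_{d_1}$ factors through the appropriate quotient, check $U_0$-balance via associativity of the $U$-action, and then verify associativity of $\star$ by unwinding both sides and invoking $U$-associativity and the bigrading. The paper compresses all of this into a two-sentence proof, so you have simply spelled out the case analysis (in particular the vanishing cases $d_4\neq d_5$ and $d_2\neq d_3$) that the paper leaves implicit.
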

\begin{proof}
Explicitly, if we write $\alpha = [u], a = [x], \alpha' = [v]$ for $u \in U_{d_1}, v \in U_{-d_2}, x \in U_0$, we have by definition (\cref{well defined multiplication}), $\alpha (a (\alpha' w)) = u(x(v(w)))$. It follows from \cref{well defined multiplication} that this is a well defined map. Further, associativity of these products follows from the associativity of the action of $U$ on $W$.
\end{proof}

\subsection{Relationship with higher generalized Zhu algebras}

Throughout this section, let us fix a graded algebra $U$ complete with respect to an almost canonical seminorm. We will write $\Aa_n$ in place of $\Aa_n(U)$ and $\Ac_n$ in place of $\Ac(U)_n$. 

The action of $U_0$ on $\Ac_n$ is continuous where $\Ac_n$ is defined as a discrete module. 

\begin{lem}\label{lem:right exact seq}
For each $d \geq 0$, there is an exact sequence
\begin{equation}\label{right exact seq} 
\Ac_d \overset{\mu_d}{\longrightarrow} \Aa_d \overset{\pi_d}{\longrightarrow} \Aa_{d-1} \longrightarrow 0, \end{equation}  where $\mu_d(\overline{\alpha} \otimes [u]_0 \otimes \overline{\beta})= [\alpha u \beta]_d$, for all $\alpha \in U_d$ (respectively $\beta \in U_{-d}$) and where  $\ov{\alpha}$ (respectively $\ov{\beta}$) denotes its class in $U/\NL{1}U$ (respectively in $U/\NR{1}U$).
\end{lem}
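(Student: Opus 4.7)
\medskip

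\noindent\textbf{Proof proposal.} The plan is to unpack the definitions of $\mu_d$ and $\pi_d$, verify well-definedness, then reduce exactness to a direct sum decomposition of the neighborhood $\NR{d}U_0$ modulo $\NR{d+1}U_0$ in terms of the homogeneous pieces of $U$. Throughout, we use the fact that because the seminorm on $U$ is good, we have $\Nn{n}U_0 = \NR{n}U_0 = \NL{n}U_0$ (see \cref{zero neighborhoods}).

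First I will check that $\pi_d \colon U_0/\Nn{d+1}U_0 \to U_0/\Nn{d}U_0$ is the obvious quotient map, well-defined and surjective because the neighborhoods are decreasing. Second I will verify that $\mu_d$ is well-defined on the balanced tensor product $\Ac_d = (U/\NL{1}U)_d \otimes_{U_0} \Aa_0 \otimes_{U_0} (U/\NR{1}U)_{-d}$. The nontrivial checks are that $[\alpha u \beta]_d = 0$ whenever (i) $\alpha \in \NL{1}U \cap U_d$, (ii) $\beta \in \NR{1}U \cap U_{-d}$, or (iii) $u \in \Nn{1}U_0$. For (i), write $\alpha = \sum \alpha'_i\alpha''_i$ with $\alpha''_i \in U_{\leq -1}$; splitting by degree forces $\alpha'_i \in U_{p_i}$ with $p_i \geq d+1$, so $\alpha u \beta \in U_{\geq d+1} U \cap U_0 = \Nn{d+1}U_0$. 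Cases (ii) and (iii) are analogous, using $\NR{1}U = U_{\geq 1}U$ and the fact that $U_{\geq 1}U \cdot U_{-d} \subseteq U_{\geq d+1}U$ after regrading (and similarly for $u$).

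Third, I will establish the inclusion $\im(\mu_d) \subseteq \ker(\pi_d)$. For $\alpha \in U_d$, $u \in U_0$, $\beta \in U_{-d}$, the element $\alpha u \beta$ lies in $U_{\geq d} \cdot U \cap U_0 = \NR{d}U_0 = \Nn{d}U_0$, so $\pi_d([\alpha u \beta]_d) = 0$. The remaining, and most substantive, step is the reverse inclusion $\ker(\pi_d) \subseteq \im(\mu_d)$. Here I will establish the key splitting
\[\NR{d}U_0 \;=\; U_d \cdot U_{-d} \;+\; \NR{d+1}U_0.\]
This follows from the identity $\NR{d}U_0 = (U_{\geq d}U)_0 = \sum_{p \geq d} U_p \cdot U_{-p}$ (obtained by extracting the degree-zero component), splitting off the $p=d$ term. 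Given any $z \in \Nn{d}U_0$ representing a class in $\ker \pi_d$, this decomposition lets us write $[z]_d = \sum_i [\alpha_i \beta_i]_d$ with $\alpha_i \in U_d$, $\beta_i \in U_{-d}$, and $\mu_d\bigl(\sum_i \overline{\alpha_i} \otimes [1]_0 \otimes \overline{\beta_i}\bigr) = [z]_d$, completing the exactness at $\Aa_d$.

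The main obstacle will be justifying the decomposition $\NR{d}U_0 = U_d \cdot U_{-d} + \NR{d+1}U_0$ rigorously from the definitions of the canonical neighborhoods. This amounts to an analogue of \cref{lem:Ann}/\cref{split the products} in the generalized setting, identifying $(U_{\geq d}U)_0$ with $\bigoplus_{p \geq d} U_p U_{-p}$ by using that $U$ is graded and the filtration is split. Once this is in hand, the rest of the proof is formal.
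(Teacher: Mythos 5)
Your proposal follows the same overall strategy as the paper: verify well-definedness of $\mu_d$, show $\im\mu_d\subseteq\ker\pi_d$ by a degree count, and reduce the reverse inclusion to a decomposition of $\Nn{d}U_0$ modulo $\Nn{d+1}U_0$. But there is a genuine gap in the way you justify the key decomposition
\[
\NR{d}U_0 = U_d\cdot U_{-d} + \NR{d+1}U_0.
\]
You derive it from the claimed identity $\NR{d}U_0 = (U_{\geq d}U)_0$, which asserts that the seminorm neighborhood coincides with the canonical one. This equality is precisely what the paper's framework is set up \emph{not} to assume: the algebra $U$ in \cref{lem:right exact seq} carries only an \emph{almost} canonical seminorm (\cref{alm can def}), and for the central example $U=\UV$, which is a quotient of a graded completion, $\Nn{n}U_0$ is a closure and may strictly contain the canonical neighborhood $\cN^{n}U_0 = (UU_{\leq -n})_0$. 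The same conflation occurs silently in your well-definedness check when you write $\alpha\in\NL{1}U\cap U_d$ as a finite sum $\sum_i\alpha_i'\alpha_i''$ with $\alpha_i''\in U_{\leq -1}$; that is a description of $\cN^{1}U$, not of $\NL{1}U$.

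The correct bridge is exactly \cref{alm can graded lem}: for an almost canonical seminorm one has
\[
\Nn{d}U_0 = \cN^{d}U_0 + \Nn{d+1}U_0,
\]
which is strictly weaker than your claimed identity but is all you need, since you only require the splitting modulo $\Nn{d+1}U_0$. Combined with $\cN^{d}U_0=(UU_{\leq -d})_0=\sum_{p\geq d}U_pU_{-p}\subseteq U_dU_{-d}+\cN^{d+1}U_0$, this gives your decomposition. Likewise, for well-definedness you can avoid any finite-sum description of $\NL{1}U$ by invoking \cref{alm can def}\eqref{alm can 3} directly: $(\NL{1}U_d)\,U_{-d}\subseteq\NL{1+d}U_0=\Nn{d+1}U_0$, and similarly for the other two cases. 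With these substitutions your argument becomes correct and coincides with the paper's proof.
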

\begin{proof}
We first check that the map $\mu_d$ is well defined. Notice  $\mu_d$ is independent on the lifts of $\ov{\alpha}$ and $\ov{\beta}$ to $U$, since $\NL{1}U \cdot U_0 \cdot U_d \subseteq \Nn{1}U_0$ and similarly $U_d \cdot U_0 \cdot \NR{1}U_{-d} \subseteq  \Nn{1}U_0$. Analogously, since $U_d \cdot \Nn{1}U_0 \cdot U_{-d} \subseteq \Nn{d}U_0$, the map $\mu_d$ is independent of the lift of $[u]_0$ to $u \in U_0$. Finally, we need to show that it respects the tensor products over $U_0$. For this we need to check that $\mu_d(\ov{\alpha v} \otimes [u]_0 \otimes \ov{\beta}) = \mu_d(\ov{\alpha} \otimes [vu]_0 \otimes \ov{\beta})$ for every $v \in U_0$. But by definition both are the class of the element $\alpha vu \beta$ in $\Aa_d$. 

We have identifications $\Aa_d = U_0/\Nn{d+1}U_0$ and $\Aa_{d - 1} = U_0/\Nn{d}U_0$. Consequently the kernel of the canonical projection $\pi_d$ can be written as $\Nn{d}U_0/\Nn{d+1}U_0$. It follows from the definition of $\mu_d$ and $\Ac_d$ that the image of the $\mu_d$ consists exactly of sums of element of the form $[\alpha \beta]_d$ with $\deg \alpha = d$ and  $\deg \beta = - d$. Hence the image of $\mu_d$ consists of the image of $\cN^{d}U_0$ in $\Aa_d$. Since we have an almost canonical filtration, we have by \cref{alm can graded lem}, 
\[ \Nn d U_0 = \cN^d U_0 + \Nn {d + 1} U_0,\]
which shows that $\mu_d$ is therefore surjective onto $\Nn d U_0 / \Nn {d + 1} U_0 = \ker \pi_d$, showing right exactness.
\end{proof}

The following result is immediate from the definitions and from the associativity of the actions.

\begin{lem} \label{action lemma}
Let $W_0$ be an $\Aa_0$-module. Then the action of $\Ac_d$ on $\PhiL(W_0)_d$ factors through the action of $\Aa_d$ described in \cref{higher zhu action} via the map $\mu_d$. 
\end{lem}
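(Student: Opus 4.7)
The plan is to verify the factorization by an explicit computation on simple tensors, after which linearity finishes the job. The well-definedness of both actions has already been dispatched (by \cref{lem:abstract-module-associativity} for the $\star$-action of $\Ac_d$ and by \cref{higher zhu action} for the action of $\Aa_d$), so the only content is to check that the two actions agree after applying $\mu_d$.

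First I would recall the explicit form of the $\Aa_d$-action. By \cref{higher zhu action}, this action is induced by the left multiplication of $U_0$ on $\PhiL(W_0) = (U/\NL 1 U) \otimes_{U_0} W_0$. Since left multiplication by $U_0$ preserves the natural grading, under the identification $\PhiL(W_0)_d = (U_d/\NL 1 U_d) \otimes_{U_0} W_0$ the action takes the explicit form
\[[u]_d \cdot (\ov\beta \otimes w) \;=\; \ov{u\beta} \otimes w, \qquad u \in U_0,\ \beta \in U_d,\ w \in W_0.\]

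Second, I would fix a simple tensor $x = \ov\alpha \otimes [a]_0 \otimes \ov{\alpha'} \in \Ac_d = \Ac(U)_{d,-d}$, with representatives $\alpha \in U_d$, $a \in U_0$, $\alpha' \in U_{-d}$, and a simple tensor $\ov\beta \otimes w \in \PhiL(W_0)_d$. Since $\deg(\alpha') + \deg(\beta) = 0$, \cref{innerstar lemma} gives $\alpha' \innerstar \beta = [\alpha'\beta]_0$. Unwinding \cref{star product} then yields
\begin{align*}
x \star (\ov\beta \otimes w)
&= \ov\alpha \otimes [a]_0 \cdot (\alpha' \innerstar \beta) \cdot w \\
&= \ov\alpha \otimes [a\alpha'\beta]_0 \cdot w \\
&= \ov{\alpha \cdot a\alpha'\beta} \otimes w \\
&= \ov{\alpha a \alpha' \beta} \otimes w,
\end{align*}
where the third equality uses that $a\alpha'\beta \in U_0$ together with the fact that the tensor product is taken over $U_0$.

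Third, I would compute the other side using the formula for $\mu_d$ from \cref{lem:right exact seq}, which gives $\mu_d(x) = [\alpha a \alpha']_d \in \Aa_d$. Applying the explicit $\Aa_d$-action recalled above yields
\[\mu_d(x) \cdot (\ov\beta \otimes w) \;=\; [\alpha a \alpha']_d \cdot (\ov\beta \otimes w) \;=\; \ov{\alpha a \alpha' \beta} \otimes w.\]
The two expressions coincide, and both maps are $\CC$-bilinear (by construction of $\Ac_d$, $\Aa_d$, $\PhiL(W_0)_d$ and the actions), so the equality of the two actions on simple tensors extends to the entirety of $\Ac_d \otimes \PhiL(W_0)_d$. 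I do not anticipate a significant obstacle: the only thing one must be careful about is moving $a\alpha'\beta \in U_0$ across the tensor over $U_0$, and invoking the correct degree hypothesis so that $\innerstar$ is nonzero.
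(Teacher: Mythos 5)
Your proof is correct and simply spells out the computation that the paper declares "immediate from the definitions and from the associativity of the actions." The identifications you make on simple tensors --- unwinding $\star$ via $\innerstar$, moving the degree-zero factor across $\otimes_{U_0}$, and comparing with the left-multiplication action and the formula for $\mu_d$ --- are exactly the checks the paper has in mind.
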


We now are ready to state the principal result of this section.

\begin{thm} \label{abstract splitting}
If $\Ac_d$ admits an identity element, 
then the map $\mu_d$ in \eqref{right exact seq} is injective and the sequence splits, giving a ring product
$ \Aa_d \cong \Ac_d \times \Aa_{d - 1}.$
\end{thm}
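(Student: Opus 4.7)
The plan is to verify first that $\mu_d$ is a ring homomorphism and to analyze $e:=\mu_d(\Is_d)$ as a central idempotent of $\Aa_d$, then derive the ring product decomposition, and finally prove injectivity of $\mu_d$ by a direct computation that sandwiches an element between two copies of $\Is_d$ and invokes the almost canonical seminorm estimates.

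First, I would verify $\mu_d$ is a ring homomorphism: for two generators $\bar\alpha\otimes[u]_0\otimes\bar\beta$ and $\bar\gamma\otimes[v]_0\otimes\bar\delta$ of $\Ac_d$ the definition of $\star$, together with $\bar\beta\ostar\bar\gamma=[\beta\gamma]_0$ (valid because $\deg\beta+\deg\gamma=0$), gives $\mu_d$ of the product equal to $[\alpha u\beta\gamma v\delta]_d$, agreeing with the product of images. Set $e:=\mu_d(\Is_d)$; from $\Is_d\star\Is_d=\Is_d$ we get $e^2=e$, and applying $\mu_d$ to $\yy=\Is_d\star\yy\star\Is_d$ for any $\yy\in\Ac_d$ yields $\mu_d(\yy)=e\cdot\mu_d(\yy)\cdot e$, so $e$ is the two-sided identity of the ideal $I:=\im(\mu_d)=\ker(\pi_d)$ (the latter identification coming from \cref{lem:right exact seq}). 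For any $a\in\Aa_d$, both $ae$ and $ea$ lie in $I$, hence $eae=ae$ (by $e$ being the left identity on $I$) and $eae=ea$ (by $e$ being the right identity on $I$), so $ae=ea$ and $e$ is central.

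Second, centrality of the idempotent $e$ gives the ring decomposition $\Aa_d=e\Aa_d\oplus(1-e)\Aa_d=I\oplus(1-e)\Aa_d$, with $(1-e)\Aa_d\cong\Aa_d/I\cong\Aa_{d-1}$ via $\pi_d$; thus $\Aa_d\cong I\times\Aa_{d-1}$ as rings. Surjectivity of $\mu_d\colon\Ac_d\to I$ is already known from the exact sequence, so the remaining—and key—point is injectivity of $\mu_d$.

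For injectivity, write $\Is_d=\sum_i\bar{\alpha_i}\otimes[u_i]_0\otimes\bar{\beta_i}$ with $\alpha_i\in U_d$ and $\beta_i\in U_{-d}$. Suppose $\yy=\bar\alpha\otimes[u]_0\otimes\bar\beta\in\Ac_d$ satisfies $\mu_d(\yy)=0$, i.e.\ $z:=\alpha u\beta\in\Nn{d+1}U_0$. Expanding $\yy=\Is_d\star\yy\star\Is_d$ gives
\[\yy=\sum_{i,j}\bar{\alpha_i}\otimes[u_i\beta_i z\alpha_j u_j]_0\otimes\bar{\beta_j}.\]
Applying \cref{alm can def}\eqref{alm can 3} twice produces $U_{\leq -d}\cdot\NL{d+1}U_{\leq 0}\subseteq\NL{d+1}U_{\leq -d}$ and $\NL{d+1}U_{\leq -d}\cdot U_{\leq d}\subseteq\NL{1}U_{\leq 0}$, so $u_i\beta_i z\alpha_j u_j\in\NL{1}U_{\leq 0}\cap U_0=\Nn{1}U_0$. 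Therefore the inner class $[u_i\beta_i z\alpha_j u_j]_0$ vanishes in $\Aa_0$, forcing $\yy=0$. The main obstacle is precisely this injectivity: a surjective ring homomorphism onto a corner ring need not be injective in general, and the argument genuinely relies both on the tensor-product presentation of $\Ac_d$ (so that sandwiching by $\Is_d$ re-expresses $\yy$) and on the almost canonical seminorm estimates that convert the degree-shifted composite into the defining ideal $\Nn{1}U_0$ of the Zhu algebra $\Aa_0$.
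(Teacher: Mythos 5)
Your decomposition step follows the paper's logic: once $e = \mu_d(\Is_d)$ is a central idempotent, the splitting $\Aa_d \cong e\Aa_d \times (1-e)\Aa_d$ is automatic, and you identify the factors as $I=\ker\pi_d$ and $\Aa_{d-1}$ in essentially the same way the paper does. Your injectivity argument, however, is genuinely different. The paper invokes \cref{action lemma} (that the $\Ac_d$-action on $\PhiL(M)_d$ factors through $\mu_d$), takes $M = \PhiR(\Aa_0)$, and notes that the resulting action on $\Ac_d \subseteq \PhiL(\PhiR(\Aa_0))_d$ is the $\star$-product, so $\xx = \xx\star\Is_d = 0$ for $\xx\in\ker\mu_d$. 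You instead compute directly from the sandwich $\yy = \Is_d\star\yy\star\Is_d$, using two applications of \cref{alm can def}~\eqref{alm can 3} to push the middle Zhu-algebra entry into $\Nn{1}U_0$. Both approaches work; yours is more elementary and self-contained, while the paper's reuses the Verma-module machinery it has already built and keeps the degree-bookkeeping hidden inside \cref{action lemma}. (A small cosmetic point: the cleanest way to see that $e$ is a two-sided identity for $I$ is to apply the ring homomorphism $\mu_d$ to $\Is_d\star\yy = \yy$ and to $\yy\star\Is_d = \yy$ separately, rather than to the double sandwich, which only yields $e\,\mu_d(\yy)\,e=\mu_d(\yy)$ directly.)

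There is one genuine gap in your injectivity step: you take $\yy$ to be a simple tensor $\ov{\alpha}\otimes[u]_0\otimes\ov{\beta}$, but a general element of $\ker\mu_d$ is a finite sum $\yy=\sum_k\ov{\alpha_k}\otimes[u_k]_0\otimes\ov{\beta_k}$, and $\mu_d(\yy)=0$ does not force each summand to map to zero. The repair is straightforward and should be made explicit: expanding $\Is_d\star\yy\star\Is_d$ and summing over the inner index $k$ first, the middle entry for fixed $i,j$ becomes $\sum_k[u_i\beta_i\alpha_k u_k\beta_k\alpha_j u_j]_0 = [u_i\beta_i\,z\,\alpha_j u_j]_0$, where $z=\sum_k\alpha_k u_k\beta_k$ is a lift of $\mu_d(\yy)$ and hence lies in $\Nn{d+1}U_0$. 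Your seminorm estimate then applies to this $z$ exactly as you wrote it, so every middle entry vanishes and $\yy = 0$.
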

\begin{proof}

We first check that the map $\mu_d$ is injective. Suppose  we are given an element $\xx\in \Ac_d$ which is in the kernel of this map. By \cref{action lemma}, the action of $\xx$ on $\PhiL(M)_d$ for any $M$ factors through the action of $\Aa_d$ via $\mu_d$, so it should be $0$. If we consider the case of $M = \PhiR(\Aa_0)$, this says that, in particular, the action of $\xx$ on $\Ac_d \subset \PhiL(\PhiR(\Aa_0))_d$ is $0$. This action is identified with the algebra product via \cref{star product}. It follows that since $\Ac_d$ has an identity element  $\Is_d$, we have $\xx = \xx \star \Is_d = 0$ as claimed.

Since $\mu_d$ is injective we will omit it in the remainder of the proof and see $\Ac_d$ as naturally sitting inside $\Aa_d$. Denote the unity in the higher level Zhu algebras $\Aa_d$ by $1$, and write $e = \Is_d$. Let $f = 1 - e$ so that $e$ and $f$ are orthogonal idempotents. Note that $e$ generates the 2-sided ideal $\Ac_d \triangleleft \Aa_d$. Furthermore, since e is the unity of $\Ac_d$, for every $a \in \Aa_d$, we have $ae = eae = ea$ and so $e \in Z(\Aa_d)$. Consequently $f = 1 - e \in Z(\Aa_d)$ as well. It follows that $f$ and $e$ are orthogonal central idempotents, and therefore $\Aa_d = \Aa_d e \times \Aa_d f$ as rings. But $\Aa_d e = \Ac_d$ and $\Aa_d f \cong \Aa_d / \Ac_d \cong \Aa_{d - 1}$ completing our proof.
\end{proof}

\bibliographystyle{alpha}
\newcommand{\etalchar}[1]{$^{#1}$}

\end{document}
